\theoremstyle{plain}
\newtheorem{thm}{Theorem}[section]
\newtheorem{lem}[thm]{Lemma}
\newtheorem{cor}[thm]{Corollary}
\newtheorem{prop}[thm]{Proposition}
\newtheorem{assump}[thm]{Assumption}
\theoremstyle{definition}
\newtheorem{defn}[thm]{Definition}
\newtheorem{exmp}[thm]{Example}
\theoremstyle{definition}
\newtheorem{note}[thm]{Note}
\newtheorem{rk}[thm]{Remark}
\newtheorem*{notation}{Notation}
\newcommand{\bC}{{\mathbb C}}
\newcommand{\bD}{{\mathbb D}}
\newcommand{\bN}{{\mathbb N}}
\newcommand{\bQ}{{\mathbb Q}}
\newcommand{\bR}{{\mathbb R}}
\newcommand{\bZ}{{\mathbb Z}}
\newcommand{\cB}{{\mathcal B}}
\newcommand{\cC}{{\mathcal C}}
\newcommand{\cE}{{\mathcal E}}
\newcommand{\cF}{{\mathcal F}}
\newcommand{\cK}{{\mathcal K}}
\newcommand{\cL}{{\mathcal L}}
\newcommand{\cM}{{\mathcal M}}
\newcommand{\cN}{{\mathcal N}}
\newcommand{\cO}{{\mathcal O}}
\newcommand{\cP}{{\mathcal P}}
\newcommand{\cU}{{\mathcal U}}
\newcommand{\cY}{{\mathcal Y}}
\newcommand{\Z}{\mathbb{Z}}
\newcommand{\Q}{\mathbb{Q}}
\newcommand{\B}{\mathcal{B}}
\newcommand{\fM}{\mathfrak M}
\newcommand{\fN}{\mathfrak N}
\newcommand{\faM}[2]{\mathfrak M_\alpha^{#2}|_{z=T^{#1}}}
\newcommand{\fh}{\mathfrak{h}}
\newcommand{\relotimes}{\overset{rel}{\pmb{\pmb{\otimes}}}}
\newcommand{\famotimes}{{\pmb{\pmb{\otimes}}}}
\newcommand{\red}[1]{\textcolor{red}{#1}}
\newcommand{\sheafhom}{\mathcal{H} \kern -.5pt \mathit{om}}
\numberwithin{equation}{section}
\numberwithin{figure}{section}
\title{Iterations of symplectomorphisms and $p$-adic analytic actions on the Fukaya category}
\author{Yusuf Bar{\i}\c{s} Kartal}
\address{University of Edinburgh, Edinburgh, UK}
\email{ykartal@ed.ac.uk}
\date{}
\begin{document}
\begin{abstract}
Inspired by the work of Bell on the dynamical Mordell-Lang conjecture, and by family Floer cohomology, we construct $p$-adic analytic families of bimodules on the Fukaya category of a monotone or negatively monotone symplectic manifold, interpolating the bimodules corresponding to iterates of a symplectomorphism $\phi$ isotopic to the identity. This family can be thought of as a $p$-adic analytic action on the Fukaya category. Using this, we deduce that the ranks of the Floer cohomology groups $HF(\phi^k(L),L';\Lambda)$ are constant in $k\in\bZ$, with finitely many possible exceptions. We also prove an analogous result without the monotonicity assumption for generic $\phi$ isotopic to the identity by showing how to construct a $p$-adic analytic action in this case. We give applications to categorical entropy and a conjecture of Seidel. 
\end{abstract}
\keywords{categorical dynamics, Fukaya category, dynamical Mordell-Lang, p-adic analytic action}
	\maketitle
\setcounter{tocdepth}{1}
\tableofcontents
\parskip1em
\parindent0em	

\section{Introduction}\label{sec:intro}
\subsection{Motivation and the main results}
In \cite[Theorem 1.3]{skolemmahler}, Bell proves the following theorem: let $Y$ be an affine variety over a field of characteristic $0$ and $f:Y\to Y$ be an automorphism. Consider a subvariety $X\subset Y$ and a point $x\in Y$. Then, the set $\{n\in\bN: f^n(x)\in X \}$ is a union of finitely many arithmetic progressions and a set of finitely many numbers. In \cite{seideliterates}, Seidel conjectures a symplectic version of this statement. Namely, given a symplectic manifold $(M,\omega_M)$, a symplectomorphism $\phi$, and two Lagrangians $L,L'\subset M$ (for which Floer cohomology is well-defined), the set 
\begin{equation}
\{n\in\bN:\phi^n(L) \text{ and }L'\text{ are Floer theoretically isomorphic} \}
\end{equation}
forms a union of finitely many arithmetic progressions and a set of finitely many numbers. Recall that $L$ and $L'$ are Floer theoretically isomorphic if there are elements $\alpha\in HF(L,L')$, $\alpha'\in HF(L',L)$ such that $\mu^2(\alpha',\alpha)=1_L$ and $\mu^2(\alpha,\alpha')=1_{L'}$. This is equivalent to the existence of isomorphisms $HF(K,L)\cong HF(K,L')$ that is natural in $K$, i.e. $L$ and $L'$ cannot be distinguished by their Floer theory. 

The purpose of this paper is to prove a version of this statement when $\phi\in Symp^0(M,\omega_M)$, i.e. $\phi$ is isotopic to the identity through symplectomorphisms. Our main theorem holds when $(M,\omega_M)$ is monotone or negatively monotone. Namely: 
\begin{thm}\label{thm:mainthm}
Assume that $(M,\omega_M)$ is a monotone or negatively monotone symplectic manifold and $L,L'\subset M$ are Lagrangians such that Assumption \ref{assumption:monotoneplus} is satisfied. Then, the rank of $HF(\phi^k(L),L';\Lambda)$ is constant in $k\in\bZ$ except for finitely many $k$.
\end{thm}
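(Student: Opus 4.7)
The plan is to mirror Bell's proof of the Skolem--Mahler--Lech theorem, with the Fukaya category playing the role of the coordinate ring of an affine variety and iterates of $\phi$ playing the role of iterates of the polynomial automorphism. The heart of the argument is the construction of a $p$-adic analytic family of $\cF(M)$-bimodules $\{\scrP_t\}_{t\in\bZ_p}$ (defined over a suitable mixed Novikov--$p$-adic coefficient ring, e.g.\ a Tate algebra over $\Lambda$) such that at integer parameters $t=k$, $\scrP_k$ is quasi-isomorphic to the graph bimodule of $\phi^k$. Once this family is in place, $HF^*(\phi^k(L),L';\Lambda)$ is identified with the cohomology of the fiber at $k$ of a $p$-adic analytic family of chain complexes, and the conclusion follows from the elementary fact that the zero locus of a non-zero rigid analytic function on $\bZ_p$ is finite.

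To produce the family, I would exploit that $\phi\in\mathrm{Symp}^0(M,\omega_M)$ lies in a symplectic isotopy $\{\phi_s\}$ from the identity, so that $\phi^k$ naturally extends to a real one-parameter family $\{\phi^s\}$ of symplectomorphisms. Following the spirit of family Floer theory, I would define the structure maps of $\scrP_t$ by assembling counts of pseudo-holomorphic polygons with boundary conditions twisted by $\phi^t$, and repackage the $t$-dependence so that each topological curve type contributes a coefficient of the form $T^{E}\cdot f(t)$, where $E$ is the symplectic energy and $f(t)$ is an explicit analytic function of $t$ arising from the interpolation. Monotonicity (through the stated assumption) guarantees a uniform lower bound on the areas $E$ for curves of increasing complexity, which should ensure that the formal sum defining each structure coefficient converges in the tensor product of the $T$-adic and $p$-adic topologies, giving a genuine $A_\infty$-bimodule over the Tate algebra $\Lambda\langle t\rangle$. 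The $A_\infty$-relations for $\scrP_t$ are then forced by the corresponding identities at integer parameters, interpreted via density of $\bZ$ in $\bZ_p$.

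Given $\scrP_t$, the Floer complex $CF^*(\phi^k(L),L';\Lambda)$ appears as the stalk at $k$ of a complex of finitely-generated free modules over $\Lambda\langle t\rangle$ whose differentials have entries that are $p$-adic analytic in $t$. The rank of cohomology is upper semi-continuous, so a generic value $r$ is attained, and the jump locus $\{t:\mathrm{rk}\,H^*>r\}$ is cut out by vanishing of finitely many minors, hence is a $p$-adic analytic subvariety of $\bZ_p$. By Weierstrass preparation, a proper analytic subset of $\bZ_p$ is finite, so the jump locus contains only finitely many integers, proving the theorem.

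The principal obstacle is the convergence step in the construction of $\scrP_t$: interpolating Lagrangians along a symplectic flow is natural over $\bR$, but promoting this to a rigid-analytic family over $\bZ_p$ requires reorganizing Floer structure maps so that $t$-dependence is encoded in series whose coefficients, already weighted by Novikov monomials $T^E$, assemble into elements of a mixed topology. Verifying that the monotonicity hypothesis simultaneously supplies the $T$-adic convergence (as usual) and the $p$-adic convergence in $t$, and then checking that the resulting structure maps satisfy the $A_\infty$-bimodule equations and specialize correctly at every $k\in\bZ$, is where the bulk of the technical work will lie.
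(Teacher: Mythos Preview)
Your high-level strategy matches the paper's, but two concrete mechanisms are missing and without them the argument does not go through.

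First, the interpolation cannot be carried out over a ``mixed'' ring like $\Lambda\langle t\rangle$: there is no way to make sense of $T^{tc}$ for real $c$ as a $p$-adic analytic function of $t$ in such a ring. The paper's move is that under Assumption~\ref{assumption:monotoneplus} the relevant disc counts are \emph{finite} (Lemma~\ref{lem:finitesum}), so $\cF(M)$ and the deformed bimodule descend to a finitely generated subfield $K\subset\Lambda$. One then chooses an embedding $\mu:K\hookrightarrow\bQ_p$ sending each $T^g$ into $1+p\bZ_p$, where the honest $p$-adic power $(1+\nu)^t:=\sum_i\binom{t}{i}\nu^i\in\bQ_p\langle t\rangle$ is available; the family lives over $\bQ_p\langle t\rangle$, not over anything built from $\Lambda$. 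So monotonicity is not used for $T$-adic convergence as you suggest; it is used to make the sums finite so that this descent to $K$ and then to $\bQ_p$ is possible.

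Second, your assertion that $\scrP_k$ agrees with the graph bimodule of $\phi^k$ at every integer $k$ is precisely the hard step, and density of $\bZ$ in $\bZ_p$ does not help: this is a geometric identification, not an analytic identity among structure constants. The paper establishes it only for \emph{small real} $f$ via Fukaya's trick (Lemmas~\ref{lem:halg=h} and~\ref{lem:nearbyfloer}), and then bootstraps using a \emph{group-like} property $\fM|_{t_1+t_2}\simeq\fM|_{t_1}\otimes_{\cF(M)}\fM|_{t_2}$, itself proved by a semi-continuity argument (Proposition~\ref{prop:grouplikepadic}) that only yields it on the subdisc $p^n\bZ_p$ for some $n$. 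Consequently the comparison with $HF(\phi^k(L),L')$ is available only for $k\in p^n\bZ_{(p)}$, giving $p^n$-periodicity of the rank rather than constancy; one must then rerun the argument at a second prime $p'$ and use $\gcd(p^n,p'^{n'})=1$ to conclude. The group-like property and the prime-switching are essential and absent from your outline.
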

Here, $HF(L,L';\Lambda)$ denotes the Lagrangian Floer cohomology group defined with coefficients in the Novikov field \begin{equation}\label{eq:novikovfield}
	\Lambda=\bQ((T^\bR))=\bigg\{\sum_{i=0}^\infty a_iT^{r_i}: a_i\in\bQ, r_i\in\bR,r_i\to\infty \bigg\}.
\end{equation} We will sometimes omit $\Lambda$ from the notation and denote this group by $HF(L,L')$. 
\begin{assump}\label{assumption:monotoneplus}
The monotone Fukaya category $\cF(M;\Lambda)$ is smooth (\Cref{defn:homolsmooth}), and it is split generated by a set of Lagrangians $L_1,\dots ,L_m$ with minimal Maslov number at least $3$ such that one of the following is satisfied:
\begin{enumerate}
	\item\label{cond:torsion} each $L_i$ is monotone and the image of $\pi_1(L_i)\to\pi_1(M)$ is torsion for all $i$, or
	\item\label{cond:balanced} $L_i$ is Bohr-Sommerfeld monotone for all $i$.
\end{enumerate}
Also, $L,L'$ are tautologically unobstructed and have minimal Maslov number at least $3$. 
\end{assump}
We initially assume that either (i) $L,L'$ are also monotone with torsion image in the fundamental group $\pi_1(M)$ (when \eqref{cond:torsion} is satisfied for $L_i$), or (ii) they are Bohr-Sommerfeld monotone (when \eqref{cond:balanced} is satisfied for $L_i$). The theorem is valid essentially whenever $L$ and $L'$ define objects of the Fukaya category. However, we start by explaining the proof under these stronger assumptions on $L$ and $L'$ as the proof is more geometric in this case. At the end of Section \ref{sec:comparisonandmaintheorem}, we will explain how to drop the assumptions on $L$ and $L'$. 

Note that in the examples we have, the Lagrangians $L_i$ satisfy the latter assumption and not the former. 
\begin{exmp}\label{exmp:genus2exmp}
One can let $M=\Sigma_g$ be a surface of genus greater than or equal to $2$. Finite generation of $\cF(\Sigma_g,\Lambda)$ is shown in \cite{seidelgenustwo} and \cite{efimovhighergenus}, and that this category is homologically smooth follows from the fact that matrix factorization categories are homologically smooth by Dyckerhoff \cite{dyckerhoffcompactgenmf}, or alternatively by Auroux-Smith \cite[Lemma 2.18]{smithauroux}. 
That one can let the split generators be Bohr-Sommerfeld monotone follows from the fact that every non-separating curve has such a representative in its isotopy class by Seidel \cite{seidelgenustwo} (note that loc.\ cit.\ uses the term balanced for Bohr-Sommerfeld monotone). 
Let $\ell_1$, $\ell_2$, $\ell_3$ be as in \Cref{figure:exmpsurfaces}. More precisely, let $\ell_1\subset \Sigma_g$ be a non-separating simple closed curve with primitive homology class in $\Sigma_g$ (in particular it is not null-homologous). One can let $\ell_1$ be one of the meridians in a decomposition $\Sigma_g=(T^2)^{\#g}$. Let $\phi$ be a symplectomorphism with small flux that disjoins $\ell_1$ from itself and let $\ell_2=\phi(\ell_1)$, $\ell_3=\phi^2(\ell_1)$ (also assume that $\ell_1\cap \ell_3=\emptyset$). Let $L=\ell_1$ and $L'=\ell_2\cup \ell_3=\ell_2\oplus \ell_3$ equipped with Spin structures. Applying Theorem \ref{thm:mainthm}, we see that the rank of $HF(\phi^k(L),L';\Lambda)$ is constant except for finitely many $k$. In this example, the finite exceptional set is $k=1,2$. Note that $\ell_2\cup\ell_3$ cannot be Bohr-Sommerfeld monotone; however, we will explain how to drop this assumption at the end of Section \ref{sec:comparisonandmaintheorem}. 
\end{exmp}
\begin{exmp}
\Cref{thm:mainthm} is valid for any pair of objects of the Fukaya category. Let $\ell$ be a non-separating curve $\ell\subset \Sigma_2$ with primitive homology class, that is fixed by $\phi$, and that intersects each of $\ell_1$, $\ell_2$ and $\ell_3$ exactly at one point as in \Cref{figure:exmpsurfaces}. Let $L=\ell\oplus\ell_1$ and $L'=\ell_2\oplus \ell_3$. Then, the dimension $HF(\phi^k(L),L';\Lambda)$ is given by $2,4,4,2,2,2,2,2,2,\dots$ for $k=0,1,2,\dots$. Note that the eventual dimension always has to be the minimum of the sequence. 
If instead, one lets $L=\ell^{\oplus 2}\oplus\ell_1$ and $L'=\ell_2\oplus \ell_3$, then, the dimension $HF(\phi^k(L),L';\Lambda)$ is given by $4,6,6,4,4,4,4,\dots$ for $k=0,1,2,\dots$. 
\end{exmp}
\begin{figure}\centering
	\includegraphics[height=4 cm]{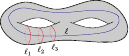}
	\caption{Some Lagrangians on $\Sigma_2$}
	\label{figure:exmpsurfaces}
\end{figure}
\begin{exmp}
To give a more geometric example, let $M=\Sigma_2\times \Sigma_2$. Let $\ell_1,\ell_2,\ell_3,\ell$ be as in \Cref{exmp:genus2exmp}, and let $\phi'=\phi\times\phi$. Consider the Lagrangian tori $\ell_1\times\ell$ and $\ell\times\ell_1$. These tori intersect at one point, defining a morphism $\ell_1\times\ell\to\ell\times\ell_1$ (of non-zero degree possibly). Let $L$ be a Lagrangian representing the cone of this morphism, which can be obtained by Lagrangian surgery. We let $L'=\ell_2\times\ell_3$. Clearly, $\phi'^{-k}(L')$ has non-vanishing Floer cohomology with $\ell_1\times\ell$ only when $k=1$ and with $\ell\times\ell_1$ only when $k=2$. Therefore, $HF(\phi'^k(L),L';\Lambda)$ is non-zero only at $k=1,2$. One can produce examples with more sophisticated finite sets of exceptional $k$ in this way. 
\end{exmp}
\begin{exmp}
For another example of the same nature, let $L=cone(\ell_1\times \ell_2\to \ell\times\ell)$, the cone of the morphism corresponding to the intersection point of $\ell_1\times \ell_2$ and $\ell\times\ell$. One can apply surgery again to represent $L$ by an embedded Lagrangian. Let $L'=\ell_2\times\ell_3$. Applying $\phi'=\phi\times\phi$ iteratively, we find that
\begin{equation}\label{eq:conelagrexmp}
	HF(\phi'^k(L),L')=cone (HF(\phi^k(\ell_1)\times\phi^k(\ell_2),\ell_2\times\ell_3 )\to HF(\ell\times\ell,\ell_2\times\ell_3 ))
\end{equation}
can jump only at $k=1$, as the first term of the cone otherwise vanishes. Hence, the dimension of \eqref{eq:conelagrexmp} is equal to $1$, except $k=1$. At $k=1$, the left-hand side becomes $HF(\ell_2\times\ell_3,\ell_2\times\ell_3 )$, which has dimension $4$, and the morphism is non-zero on $id_{\ell_2\times\ell_3}$. This is sufficient to conclude that the total dimension of \eqref{eq:conelagrexmp} is $3$ for $k=1$. As a result, the dimension of \eqref{eq:conelagrexmp} is given by $1,3,1,1,1,1,1,\dots$ for $k=0,1,2,\dots$. If one replaces $L$, resp. $L'$ by $L\times \ell\subset\Sigma_2^{\times 3}$, resp. $L'\times \ell\subset\Sigma_2^{ \times 3}$ and $\phi'$ by $\phi\times\phi\times\phi$ (or $\phi\times\phi\times id_{\Sigma_2}$), one obtains the dimension sequence $2,6,2,2,2,2,\dots$ for $k=0,1,2,3,\dots$. 
\end{exmp}
The jump locus can be made bigger and the minimum rank can be increased. Indeed, we would hypothesize that for a sequence of natural numbers $(a_k)_k$ to be realized as such a dimension sequence, the only constraint would be $a_k=\min_k a_k$ for $k\gg 0$. This condition is needed as the proof implies that the rank jumps up at the exceptional $k$. 
\begin{exmp}
Let $(a_k)_k$ be a sequence of natural numbers and assume that (i) $a_k=\min_k a_k:=a$ for $k\gg 0$, (ii) the parity of $a_k$ is constant, i.e. $a_k-a$ is always even. In other words, the sequence is of the form $a_k=a+2b_k$ for $a,b_k\in\bN$ such that $b_k=0$ for $k\gg 0$. Let $L=\ell_1$ and $L':=\ell^{\oplus a}\oplus \bigoplus_k \big(\phi^k(\ell_1)\big)^{\oplus b_k}$. Using (i) $\ell$ is fixed by $\phi$, (ii) $HF(\ell_1, \ell)$ is one dimensional, (ii) $HF(\phi^{i}(\ell_1),\phi^j(\ell_1))=0$ unless $i=j$; we calculate that $dim(HF(\phi^k(L),L'))=a+2b_k=a_k$. 
\end{exmp}
\begin{exmp}\label{exmp:nonmonot}
The monotonicity assumption is essential. Indeed, let $M=T^2=\bR^2/\bZ^2$, $\ell\subset T^2$ be given by $x=0$, and $\phi$ be given by $(x,y)\mapsto (x+1/3,y)$. 
Then, the dimension of $HF(\phi^k(\ell),\ell;\Lambda)$ is periodic of period $3$, and is given by $2,0,0,2,0,0,2,0,0,\dots$ for $k=0,1,2,3,,\dots$.
\end{exmp}
After the proof of Theorem \ref{thm:mainthm}, we seek ways to drop the assumption of monotonicity. As illustrated in \Cref{exmp:nonmonot}, it is not valid without this assumption. However, we still prove the following:
\begin{thm}\label{thm:generic}
Assume that $(M,\omega_M)$ is a symplectic manifold with two Lagrangian branes $L,L'\subset \Lambda$ satisfying Assumption	\ref{assumption:generic}. Given generic $\phi\in Symp^0(M,\omega_M)$, the rank of $HF(\phi^k(L),L';\Lambda )$ is constant in $k\in\bZ$ with finitely many possible exceptions. 
\end{thm}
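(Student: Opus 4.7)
\emph{Proof plan.} The approach parallels that of Theorem~\ref{thm:mainthm}: I would construct a $p$-adic analytic family of $\cF(M;\Lambda)$-bimodules $\cP_z$ parametrized by $z$ in a rigid-analytic neighborhood of $\bZ$ inside $\bZ_p$, such that for each integer $k$ the fiber $\cP_k$ is quasi-isomorphic to the graph bimodule of $\phi^k$. Granting such a family, the Floer chain complex computing $HF(\phi^k(L),L';\Lambda)$ interpolates to an analytic family of complexes over this neighborhood, and the locus where the rank of its cohomology deviates from its generic value is a $p$-adic analytic subset of $\bZ_p$. By Strassmann's theorem (as exploited in Bell's proof of dynamical Mordell--Lang), such a subset either is all of $\bZ_p$---impossible since the generic rank is already achieved at some integer $k$---or contains only finitely many integers, which is the conclusion of the theorem.

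The new difficulty, relative to Theorem~\ref{thm:mainthm}, is the absence of a priori energy bounds. Without monotonicity, pseudo-holomorphic strips can contribute at arbitrarily small energies, so each structure map of $\cP_k$ is a genuine Novikov series rather than a finite sum, and $k$-dependence must be interpolated level by level. Genericity of $\phi$ enters precisely to supply this control. Fixing a smooth isotopy $\{\phi_t\}_{t\in[0,1]}$ from $\mathrm{id}$ to $\phi$ and realising $\cP_k$ via the $k$-fold iterate of this isotopy (or its symplectic mapping torus), the energy of any contributing curve of combinatorial type $\tau$ is an affine function of $k$, of the form $k\alpha_\tau + \beta_\tau$, where $\alpha_\tau$ is pinned down by the flux of $\phi_t$ and $\beta_\tau$ records a discrete topological contribution. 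This affine dependence legitimises the substitution $T^{k\alpha_\tau+\beta_\tau}\rightsquigarrow T^{z\alpha_\tau+\beta_\tau}$, which is the formal definition of $\cP_z$.

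To turn this formal substitution into a genuine rigid-analytic bimodule, I would work over a $p$-adic Banach completion of $\Lambda$ consisting of elements analytic in $z$ with Novikov coefficients, and interpret the structure maps of $\cP_z$ as convergent series in that completion. Gromov compactness, together with Assumption~\ref{assumption:generic}---which I expect to stipulate that the flux of $\phi_t$ is irrational enough and that the $\alpha_\tau$ accumulate only at $+\infty$ with appropriate non-resonance---then supplies uniform energy lower bounds on curves of bounded complexity, guaranteeing convergence on a small $p$-adic polydisc. The $A_\infty$-bimodule relations for $\cP_z$ hold identically because they hold at every $k\in\bZ$, which is dense in the analytic domain.

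The central obstacle, and where the bulk of the work will sit, is this convergence step: one must balance the growth in the number of combinatorial types $\tau$ contributing at a given energy against the $p$-adic decay of the $T^{z\alpha_\tau}$-coefficients as $z$ varies in the polydisc, and this is exactly where the precise form of the genericity hypothesis on $\phi$ is indispensable. Once $\cP_z$ has been constructed, extraction of ranks and application of the Skolem--Mahler--Lech dichotomy proceed formally as in the proof of Theorem~\ref{thm:mainthm}.
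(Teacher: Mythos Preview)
Your high-level architecture is correct and matches the paper: build a $p$-adic analytic family of bimodules interpolating the graph bimodules of $\phi^k$, then use semi-continuity over the Tate algebra to conclude that the rank is constant with finitely many exceptions. However, the convergence step---which you correctly flag as the heart of the matter---is left as a hope rather than a mechanism, and your stated plan for it does not work as written.

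The specific issue is your proposal to ``work over a $p$-adic Banach completion of $\Lambda$ consisting of elements analytic in $z$ with Novikov coefficients'' and to make the substitution $T^{k\alpha_\tau+\beta_\tau}\rightsquigarrow T^{z\alpha_\tau+\beta_\tau}$. The Novikov field $\Lambda$ carries no $p$-adic structure, so there is no canonical way to interpret $T^{z\alpha}$ for $z\in\bZ_p$, nor any reason for the resulting series to converge $p$-adically. What the paper does instead is find a countably generated subfield $K_g\subset\Lambda$ over which all the relevant structure maps are defined, and then construct an embedding $\mu_g:K_g\hookrightarrow\bQ_p$ with two carefully engineered properties: it sends each $T^{E_i}$ (where $E_1,\dots,E_n$ freely generate a monoid containing all disc energies, this being the content of Lemma~\ref{lem:freeenergymonoid}) into $p\bZ_p$, so that $\mathrm{val}_p(\mu_g(T^{E(u)}))\to\infty$ and the infinite sums converge $p$-adically; and it sends each $T^{\alpha(C)}$ into $1+p\bZ_p$, so that the $p$-adic power $\mu_g(T^{\alpha(C)})^t$ is defined by the binomial series. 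The precise meaning of ``generic'' is exactly that $\alpha(H_1(M;\bZ))$ is rationally independent from the energy group, which is what allows both conditions to be imposed simultaneously without contradiction. Your guess that genericity means ``the $\alpha_\tau$ accumulate only at $+\infty$'' is not the right condition.

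A secondary point: even once the family $\fM_\alpha^{\bQ_p}$ is built, the paper does not establish that its restriction to $t=k$ computes $HF(\phi^k(L),L')$ for all integers $k$, only for $k\in p^n\bZ_{(p)}$ (because the group-like property is only proved on $\bQ_p\langle t/p^n\rangle$). This is why the paper needs the additional manoeuvre of replacing $L'$ by $\phi_\alpha^{-f_i}(L')$ to cover the other residue classes, and then switching to a second prime $p'$ to promote $p^n$-periodicity to constancy. Your plan to apply Strassmann once and be done skips this.
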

By generic, we mean that the flux of an isotopy from $1$ to $\phi$ is generic. This will be explained further in this section and in Section \ref{sec:generic}. 
\begin{assump}\label{assumption:generic}
The group $\pi_2(M)$ vanishes, and $\cF(M,\Lambda)$ is an (uncurved, $\bZ$ or $\bZ/2\bZ$-graded) $A_\infty$-category over $\Lambda$, it is smooth and proper (\Cref{defn:homolsmooth}), split generated by tautologically unobstructed Lagrangians $L_1,\dots, L_m$. Also, $L,L'$ are tautologically unobstructed Lagrangians with brane structures that bound no discs of Maslov index $2$ or less (so they define objects of the Fukaya category).
\end{assump}
We let $\cF(M,\Lambda)$ denote the category spanned by the split generators $L_1,\dots, L_m$, as before. Theorem \ref{thm:generic} is valid in great generality. As examples of such $M$, one can consider $T^2$ and $T^2\times T^2$.

Next, we discuss applications. The first application is to categorical entropy, as defined in \cite{dynsysandcats}: 
\begin{cor}\label{cor:entropy}
Let $M$ and $\phi$ be as in either one of \Cref{thm:mainthm} or \Cref{thm:generic}. Consider the action of $\phi$ on the derived Fukaya category $D^\pi\cF(M,\Lambda)$, i.e. the split closed triangulated envelope of $\cF(M,\Lambda)$. Then the entropy of the induced equivalence is $0$. 
\end{cor}
\Cref{thm:mainthm} and \Cref{thm:generic} can be seen as analogues of the dynamical Mordell--Lang conjecture for coherent sheaves proven in \cite{dynmordellforcoherent}, rather than analogues of the theorem we cited at the beginning (\cite[Theorem 1.3]{skolemmahler}). However, we can conclude the symplectic analogue of \cite[Theorem 1.3]{skolemmahler} from \Cref{thm:mainthm} and \Cref{thm:generic}. The second application is a stronger version of Seidel's conjecture (\cite{seideliterates}):
\begin{cor}\label{cor:seidelsconj}
Let $M, L,L',\phi$ be as in the setting of either of \Cref{thm:mainthm} or \Cref{thm:generic}. Further, assume that $L$ is connected, and $\pi_1(L)$ is abelian. 
Then, the set 
\begin{equation}
	\{k\in\bN:\phi^k(L) \text{ and }L'\text{ are Floer theoretically isomorphic} \}
\end{equation}
is either empty, a singleton or the entire $\bN$. In other words, if $\phi^k(L)$ and $L'$ are isomorphic for two different $k\in\bN$, then they are isomorphic for all $k\in\bN$. 
\end{cor}

\subsection{Idea of the proof}
To prove the main theorems, we use an analogue of the $p$-adic reduction trick in algebraic geometry. Recall that the field of $p$-adic numbers $(\bQ_p,|\cdot|_p)$ is the completion of $\bQ$ with respect to the $p$-adic topology. More precisely, $\bQ$ carries a non-Archimedean norm $|a|_p:=p^{-val_p(a)}$, where $val_p(a)\in\bZ$ is the power of $p$ that ``divides'' $a$. As a result, two numbers are $p$-adically close if their difference can be divided by a high power of $p$. The norm $|\cdot|_p$ on the completion $\bQ_p$ is non-Archimedean, i.e. $|x+y|_p\leq \max\{|x|_p,|y|_p\}$. Analogous to the normed Archimedean field $(\bC,|\cdot|)$, it is possible to do analytic geometry over $\bQ_p$. If we let $\bD$ denote the unit disc $\{a\in\bQ_p:|a|_p\leq 1\}=:\bZ_{p}$, then the ring of analytic functions on $\bD$ are given by the series $\sum_{i=0}^\infty a_it^i$ that converge for $t\in \bD$. This ring is equal to the \emph{Tate algebra} defined by 
\begin{equation}
	\bQ_p\langle t\rangle:=\Bigg\{\sum_{i=0}^\infty a_it^i:|a_i|_p\xrightarrow{i\to\infty} 0 \Bigg\}.
\end{equation}
While having similarities to the Archimedean geometry, the analytic geometry over $\bQ_p$ has the following properties
\begin{enumerate}
	\item any $f(t)\neq 0\in \bQ_p\langle t\rangle$ has finitely many zeroes in $\bD$
	\item a disc $\bD_{\rho_0}:=\{a\in\bQ_p:|a|_p\leq \rho_0 \}$ centered at $0$ is an additive subgroup of $\bQ_p$
\end{enumerate}
The second claim, which follows from the non-Archimedean property is in sharp contrast with the complex analytic geometry. 

To motivate the proof method, we start by explaining how the proof of \cite[Theorem 1.3]{skolemmahler} goes: as before, let $Y$ be a complex affine variety, $X\subset Y$ be a subvariety (for simplicity, assume that $X$ is a hypersurface defined by $g=0$), $x\in Y$ be a complex point and $f:Y\to Y$ be an automorphism. The data defining $(Y,X=\{g=0\},x,f)$ is finite; therefore, the field of definition can be reduced to a finitely generated extension of $\bQ$. Denote the reduction by $(Y,X=\{g=0\},x,f)$ as well. Bell proves that this extension can be embedded into $\bQ_p$, for an infinite set of primes $p$, in such a way that the action of an iterate $f^m$ on the base change $Y_{\bQ_p}$ is interpolated by an action of the $p$-adic unit disc $\bD$. In other words, there exists an analytic action of the additive group $\bD$ on $Y_{\bQ_p}$ such that $t=1$ acts as $f^m$. Then, Bell considers the set 
\begin{equation}
	\{k\in\bN:f^{mk}(x)\in X_{\bQ_p} \}=\{k\in\bN:g\circ f^{mk}(x)=0 \}.
\end{equation}
It is the intersection of $\{t\in\bD=\bZ_p:g(t.x)=0 \}$ with $m\bN$, and as $t\mapsto g(t.x)$ is analytic, it either vanishes completely, or has finitely many zeroes only. In other words, the orbit of $x$ under $f^{mk}$ is either contained entirely in $X_{\bQ_p}$, or intersects it at finitely many points. Applying the same to $f(x), f^2(x),\dots, f^{m-1}(x)$ in place of $x$ leads to the conclusion, i.e. the set $\{k\in\bN:g\circ f^{k}(x)=0 \}$ is periodic with finitely many possible extensions (equivalently it differs from a union of finitely many arithmetic progressions by a finite set). 

Our proofs of \Cref{thm:mainthm} and \Cref{thm:generic} are similar in the sense that we interpolate the action of (an iterate of) $\phi$ on the Lagrangians by a $p$-adic analytic action. In other words, we construct a family of transformations parametrized by the $p$-adic unit disc $\bD$ that restricts to the identity at $t=0$. We prove the action property only in a small neighborhood $\bD_{p^{-n}}$ of $t=0$, which is an additive subgroup of $\bD$.

Consider the Fukaya category $\cF(M,\Lambda)$ spanned by the generators $\{L_i\}$ (this requires fixing brane structures and perturbation data as defined in \cite{seidelbook}). Every tautologically unobstructed Lagrangian $\tilde L$ with minimal Maslov number at least $3$ can be represented as a right module $h_{\tilde L}$ and a left module $h^{\tilde L}$ over $\cF(M,\Lambda)$. Moreover, the symplectomorphism $\phi$ induces an action on the category of modules over the Fukaya category. As in the classical Morita theory, the transformations of the category of modules are represented by $\cF(M,\Lambda)$-$\cF(M,\Lambda)$ bimodules (see e.g. \cite{kellerdg} for a treatment of Morita theory for dg categories, and \cite{kellerainfty,categoricaldynamics} for $A_\infty$-bimodules). The composition of $\cM_1$ and $\cM_2$ corresponds to the convolution $\cM_1\otimes_{\cF(M,\Lambda)}\cM_2$. 

For the rest of the introduction, we focus on \Cref{thm:mainthm}. The first analogous step is the reduction of the field of coefficients of $\cF(M,\Lambda)$. The Bohr--Sommerfeld monotonicity in \Cref{assumption:monotoneplus} can be thought of as an analogue of the exactness of a Lagrangian in a Liouville manifold, and it guarantees that the disc counts that we use to define the Fukaya category are finite (\cite{wehrheimwoodwardquilted1,ohmonotone}). Note that this condition depends on a choice of data (similar to the exactness of Lagrangians depending on a choice of primitive for the symplectic form). 
The implication is that the Fukaya category can be defined over the finitely generated extension
\begin{equation}
	\bQ(T^{E_1},\dots,T^{E_m})\subset \Lambda
\end{equation}
for some $E_1,\dots, E_m$ (\Cref{sec:definability}). Up to enlarging this subfield to a slightly bigger finitely generated extension, one can also guarantee that the modules $h_{L},h^{L'}$ are defined over the subfield (\Cref{lem:defnhlhl}). This field can be embedded into any $\bQ_p$ (cf. \Cref{lem:fieldmapqp}). As a result of the reduction and base change into $\bQ_p$, we obtain a category $\cF(M,\bQ_p)$ over the $p$-adic numbers, which should be thought of as a non-canonical Fukaya category with $p$-adic coefficients. 

The second major step is the reduction of the bimodule corresponding to $\phi$ such that after extension to the $p$-adic coefficient field, the action of $\phi$ is interpolated by an action of the $p$-adic unit disc. 

Fix a path from $1_M$ to $\phi$ through symplectomorphisms and assume that the flux of the path is $\alpha$ where $\alpha$ is a closed $1$-form. Every closed $1$-form generates a symplectic isotopy $\phi_\alpha^t$ as the flow of the vector field $X_\alpha$ satisfying $\omega(\cdot,X_\alpha)=\alpha$ and $\phi^1_\alpha$ is Hamiltonian isotopic to $\phi$ by \cite[Theorem 10.2.5]{mcduffsalamonsymplectic}. Therefore, the actions of $\phi$ and $\phi^1_\alpha$ on Floer cohomology are the same, and we may without loss of generality assume that $\phi=\phi^1_\alpha$. We can define a family of transformations (aka bimodules) over $\cF(M,\Lambda)$, denoted by $\fM_{\alpha}^\Lambda$. This family is parametrized by a rigid analytic version of the real line. In particular, there is an associated bimodule $\faM{f}{\Lambda}$ for every $f\in \bR$ (the reason for the choice of notation will be clear in the subsequent sections). Note that, we appeal to monotonicity again to ensure the family is defined over the entire ``rigid analytic real line'', rather than for $|f|$ sufficiently small. Fukaya's trick (\cite{abouzaidicm,fukayafamilyfloer}) and other geometric considerations imply that
\begin{enumerate}
	\item\label{item:firstint} for $|f|$ sufficiently small, the corresponding bimodule $\faM{f}{\Lambda}$ represents $\phi_\alpha^f$
	\item\label{item:secondint} for $|f|, |f'|$ sufficiently small, the family behaves like a group-action, i.e. $\faM{f}{\Lambda}\otimes_{\cF(M,\Lambda)}\faM{f'}{\Lambda}\simeq \faM{f+f'}{\Lambda}$
\end{enumerate}
A (slightly weaker) restatement of \eqref{item:firstint} is proven in \Cref{lem:nearbyfloer} and \eqref{item:secondint} is proven in \Cref{lem:grouplikenovikov}. To prove the latter, we write natural transformations (aka bimodule homomorphisms)
\begin{equation}\label{eq:mapintrotransfor}
	\faM{f}{\Lambda}\otimes_{\cF(M,\Lambda)}\faM{f'}{\Lambda}\to \faM{f+f'}{\Lambda}
\end{equation}
that vary ``analytically'' in $f,f'$. It is a standard quasi-isomorphism at $f=f'=0$, and we appeal to a semi-continuity argument to conclude the same for small $|f|,|f'|$. Hence, we obtain a ``local analytic action'' (\Cref{lem:grouplikenovikov}). Unfortunately, this argument does not extend beyond a small neighborhood of $f=0$. 

Analogous to the reduction of the field of definition of $\cF(M,\Lambda)$, we can reduce the field of definition of the family of bimodules $\fM_{\alpha}^{\Lambda}$ to a subfield $K\subset\Lambda$, and obtain a related analytic family $\fM_{\alpha}^{\bQ_p}$ of bimodules over $\cF(M,\bQ_p)$ by using analogous formulae. The parameter space of $\fM_{\alpha}^{\bQ_p}$ is the $p$-adic unit disc. Moreover, there are transformations 
\begin{equation}\label{eq:intropadicmap}
	\fM_{\alpha}^{\bQ_p}|_{t=t_1}\otimes_{\cF(M,\bQ_p)} 	\fM_{\alpha}^{\bQ_p}|_{t=t_2}\to 	\fM_{\alpha}^{\bQ_p}|_{t=t_1+t_2}
\end{equation}
analogous to \eqref{eq:mapintrotransfor}, that vary analytically in $t_1,t_2$ and that induce an isomorphism at $t_1=t_2=0$. A similar semi-continuity argument implies that \eqref{eq:intropadicmap} is an isomorphism for a small $p$-adic disc $\bD_{p^{-n}}=p^n\bZ_{p}$ (\Cref{prop:grouplikepadic}). As we remarked, $\bD_{p^{-n}}=p^n\bZ_{p}$ is an additive subgroup in sharp contrast to the Archimedean case. Therefore, we obtain a $p$-adic analytic action of $\bD_{p^{-n}}=p^n\bZ_{p}$, not just a local action as before. Note that this is one of the major reasons we pass to the $p$-adic numbers. 

As $\faM{f}{\Lambda}$ and $\fM_{\alpha}^{\bQ_p}|_{t=f}$ are related by base change and reduction, the same conclusion holds for $\faM{f}{\Lambda}$ over $\bQ\cap p^n\bZ_{p}=p^n\bZ_{(p)}$. 
In other words, $\faM{f}{\Lambda}\otimes_{\cF(M,\Lambda)}\faM{f'}{\Lambda}\simeq \faM{f+f'}{\Lambda}$ whenever $f,f'\in \bQ$ are small with respect to the $p$-adic topology. 
As a result, the set \begin{equation}
	\{f\in p^n\bZ_{(p)}: \faM{f}{\Lambda}\text{ represents }\phi_\alpha^f \}
\end{equation} is an additive subgroup of $p^n\bZ_{(p)}$ that contains all elements that are sufficiently close to $0$ with respect to the Euclidean topology (by Fukaya's trick). Therefore, for every $f\in p^n\bZ_{(p)}$, $\faM{f}{\Lambda}$ represents $\phi_\alpha^f$. This fact, combined with similar reduction and base change arguments, implies that the dimension of 
\begin{equation}
H^*\big(h_{L'}\otimes_{\cF(M,\Lambda)}\faM{-f}{\Lambda} \otimes_{\cF(M,\Lambda)}h^L\big)\cong HF(\phi_\alpha^f(L),L')
\end{equation}
over $\Lambda$ is the same as the dimension of $H^*(h_{L'}\otimes_{\cF(M,\bQ_p)}\fM^{\bQ_p}_{\alpha}|_{t=-f} \otimes_{\cF(M,\bQ_p)}h^L)$ over $\bQ_p$ (\Cref{prop:compareprop}), and by an application of Strassman's theorem, we show that this dimension can jump only finitely many times over the entire $p^n\bZ_{(p)}$. We use this, together with similar considerations to conclude \Cref{thm:mainthm}.

The proof of \Cref{thm:generic} is similar, with some important differences in the $p$-adic reduction steps. As there may be infinitely many discs, we define the ``$p$-adic Fukaya category'', essentially by replacing the Novikov variable $T$ with a $p$-adically small number. However, this creates obstructions to defining the $p$-adic analytic action using the same method, and we circumvent this problem by assuming $\alpha$ is generic. The key topological property that is guaranteed by the genericity is that the area of holomorphic discs cannot be the same as periods of $\alpha$.  

\subsection{Outline of the paper}
In Section \ref{sec:background}, we recall the basics of Fukaya categories as well as related homological algebra. We also introduce the field $K\subset\Lambda$, over which both the Fukaya category and the family of invertible bimodules are defined. In Section \ref{sec:families}, we recall the notion of families and their homological algebra. We also construct the Novikov and $p$-adic families $\fM_{\alpha}^{\Lambda}$, resp. $\fM_{\alpha}^{\bQ_p}$, and we establish the group-like property of $\fM^{\bQ_p}_{\alpha}$ over $\bQ_p\langle t/p^n\rangle$. Section \ref{sec:comparisonandmaintheorem} is devoted to the comparison of the algebraically constructed bimodule above to the Floer cohomology groups (Proposition \ref{prop:compareprop}). We then use this to conclude the proof of Theorem \ref{thm:mainthm}. We also explain how to the drop the Bohr-Sommerfeld monotonicity assumption on $L$ and $L'$. In Section \ref{sec:generic}, we prove Theorem \ref{thm:generic}, and in \Cref{sec:applications}, we prove \Cref{cor:entropy} and \Cref{cor:seidelsconj}. In \Cref{appendix:tatealgebras}, we recall the basics of Tate algebras. In \Cref{appendix:semicont}, we establish some semi-continuity results and give proof of \Cref{lem:grouplikenovikov}, which is a technical result we postpone into an appendix so as not to interrupt the flow of the paper.

\subsection*{Guide to the notation and the conventions}
In this subsection, we collect some of the notation and conventions in the paper for the convenience of the reader. These notions are discussed in later sections in detail, and we provide references in this subsection. The purpose of this subsection is as a reference guide while reading the paper, rather than introducing the notions below. Therefore, this subsection can be skipped in the first reading, and can be referred to later (except for the next paragraph). 

Recall that a symplectic manifold $(M,\omega_M)$ is called monotone, resp. negatively monotone, if $c_1(M)=\lambda[\omega_M]$ for some $\lambda>0$, resp. $\lambda<0$. Throughout the paper, we use the word \emph{monotone} to refer to both cases, i.e. we drop the prefix ``negatively''. In other words, $(M,\omega_M)$ is monotone if $c_1(M)=\lambda [\omega_M]$, for some $\lambda\neq 0$.

Various fields we use in the paper include the field of $p$-adic numbers $\bQ_p$ (\Cref{appendix:tatealgebras}), and the Novikov field 
\begin{equation}
	\Lambda=\bQ((T^\bR))=\bigg\{\sum_{i=0}^\infty a_iT^{r_i}: a_i\in\bQ, r_i\in\bR,r_i\to\infty \bigg\}.
\end{equation}
For an additive subgroup $G\subset \bR$, let $\bQ((T^G))$ denote the subfield of $\Lambda$ consisting of $\sum_{i=0}^\infty a_iT^{r_i}$ such that $r_i\in G$, and let $\bQ(T^G)$ denote the subfield of $\bQ((T^G))$ generated by $T^g$, $g\in G$. In other words, $\bQ(T^G)$ is the fraction field of the ring of finite sums $\sum_{i=0}^N a_iT^{r_i}$. Later in the paper we work with a specific finitely generated group $G$ (\Cref{defn:G}), and additionally with $G_{(p)}:=\{\frac{g}{n}:g\in G,p\nmid n \}$ (where $p$ is a fixed prime). In the subsequent sections following \Cref{defn:G}, we spare the letter $K$ to denote $\bQ(T^{G_{(p)}})\subset \Lambda$. In \Cref{defn:genericfield}, we define an analogue of the field $K$, denoted by $K_g$ (its definition is more involved). These are the fields over which the Fukaya category is defined in the situation of \Cref{thm:mainthm}, resp. \Cref{thm:generic}. We denote the Fukaya category over these fields by $\cF(M,K)$, resp. $\cF(M,K_g)$. 

We produce field homomorphisms $\kappa:K\to\bQ_p$ (\Cref{lem:fieldmapqp}), resp. $\kappa_g:K_g\to\bQ_p$ (\Cref{defn:genericfield}), and denote the base changes by 
\begin{equation}
	\cF(M,\bQ_p):=\cF(M,K)\otimes_K \bQ_p\text{, resp. }\cF(M,\bQ_p):=\cF(M,K_g)\otimes_{K_g} \bQ_p.
\end{equation}

We recall the $p$-adic numbers and Tate algebras to the extent we need in \Cref{appendix:tatealgebras}. Given $a\in\bQ_p$, let $val_p(a)$ denote the multiplicity of $p$ in $a$. We will frequently use the following Tate algebras:
\begin{align}
	\bQ_p\langle t\rangle:= \Bigg\{\sum_{i=0}^\infty a_it^i:a_i\in\bQ_p, \lim\limits_{i\to\infty} val_p(a_i)=\infty \Bigg\},\\
	\bQ_p\langle t_1,t_2\rangle:= \Bigg\{\sum_{i,j\geq 0} a_{ij}t_1^it_2^j:a_{ij}\in\bQ_p, \lim\limits_{i+j\to\infty} val_p(a_{ij})=\infty \Bigg\}.
\end{align}
As we remarked, $\bQ_p\langle t\rangle$ can be seen as the ring of analytic functions on the $p$-adic unit disc $\bD$. The ring $\bQ_p\langle t/p^n\rangle$ obtained by replacing $t$ with $t/p^n$ can be canonically identified with the ring of functions on the $p$-adic disc $\bD_{p^{-n}}=p^n\bZ_{p}$ of radius $p^{-n}$ (we refer to \eqref{eq:discpadicfnc} for details). Similar statements hold for $\bQ_p\langle t_1,t_2\rangle$. Throughout the paper, we use the notations $\bQ_p\langle t/p^n\rangle$, resp. $\bQ_p\langle t_1/p^n,t_2/p^n\rangle$ to denote the ring obtained from $\bQ_p\langle t\rangle$, resp. $\bQ_p\langle t_1,t_2\rangle$ by replacing $t$ with $t/p^n$, resp. $t_1,t_2$ with $t_1/p^n,t_2/p^n$. Details are given in \Cref{appendix:tatealgebras}. 

In \Cref{sec:families}, we introduce the notion of a family of left/right/bi-modules over an $A_\infty$-category $\cB$ following \cite{flux}. It will have multiple strains, such as a \emph{Novikov family} (\Cref{defn:novikovfamily}) and a \emph{$p$-adic family} (\Cref{defn:padicfamily}). For instance, a $p$-adic family $\fM$ of bimodules over a $\bQ_p$-linear $A_\infty$-category $\cB$ means a bimodule with a $\bQ_p\langle t\rangle$-linear structure that is compatible with the structure maps $\mu_{\fM}$. We refer to $\bQ_p\langle t\rangle$ as the base of the family. 

We denote ordinary modules/bimodules by letters such as $\cM$, $\cN$, $h_L$, $h^L$ (the last two denote right and left Yoneda modules defined as $h_L:=\cB(\cdot, L)$, resp. $h^L:=\cB(L,\cdot)$, \cite{generation,categoricaldynamics} for more details). We use the letters $\fM$, $\fN$ and $\fh$ to denote families. The specific families we define will be denoted by these letters, with the appropriate subscripts and superscripts. If $\fM$ is a $p$-adic family, and $a\in\bZ_{p}=\bD$, we have an evaluation map $\bQ_p\langle t\rangle\to\bQ_p$, $f(t)\mapsto f(a)$, and one can define an ordinary left/right/bi-module 
\begin{equation}
	\fM|_{t=f}:=\fM\otimes_{\bQ_p\langle t\rangle}\bQ_p.  
\end{equation}
Evaluation maps for other types of families are denoted similarly (details are provided later). 

A tensor product with no subscript means it is taken over the base field. Given $A_\infty$-bimodules $\cM$ and $\cN$, their convolution is denoted by $\cM_1\otimes_\cB \cM_2$ (\Cref{defn:ordinaryconv}, \cite{generation}), and it is an $A_\infty$-model for the derived tensor product. If $\fM_1$ is a $p$-adic family and $\cM_2$ is an ordinary bimodule, $\fM_1\otimes_\cB \cM_2$ carries a natural $\bQ_p\langle t\rangle$-linear structure, i.e. it is also a family. To emphasize that it inherits a family structure from $\fM_1$, we denote the convolution by $\fM_1\famotimes_\cB \cM_2$. For different orders of tensor products, and iterated tensor products, we use analogous notation. Note that this does not differ from $\fM_1\otimes_\cB \cM_2$ as a $\cB$-bimodule, and the purpose of the notation is to emphasize the family structure. 

Similarly, if $\fM_1$ and $\fM_2$ are two families of bimodules, one can consider their convolution relative to the base $\bQ_p \langle t\rangle$, which we denote by $\fM_1\relotimes_\cB\fM_2$. It is another family over $\bQ_p\langle t\rangle$, and we elaborate on this in \Cref{defn:reltensor}. For example, if $\cB=\bQ_p$, a $p$-adic family means a dg module over $\bQ_p\langle t\rangle$, and in this case $\fM_1\relotimes_{\cB}\fM_2$ is simply the tensor product over $\bQ_p\langle t\rangle$. 
%
%
%

\section*{Acknowledgments}
We would like to thank John Pardon for suggesting the semi-continuity argument in Proposition \ref{prop:grouplikepadic}, Umut Varolg\"une\c{s} and Mohammed Abouzaid for suggesting to consider negatively monotone and monotone symplectic manifolds, and Ivan Smith and Nick Sheridan for helpful conversations and email correspondence. We would like to thank Sheel Ganatra for pointing out to a reference and some suggestions. Finally, we wish to thank the anonymous referees for several helpful comments. Part of this work is conducted while the author is supported by ERC Starting Grant 850713 (Homological mirror symmetry, Hodge theory, and symplectic topology). 
%
%
%
%
%
%
%
%
%
\section{Background on Fukaya categories}\label{sec:background}
\subsection{Reminders and remarks on Fukaya categories and related homological algebra}\label{subsec:fukayacategory}
In this section, we will recall the basics of Fukaya categories and related homological algebra, and we will explain some of our conventions. Throughout the paper, $\Lambda$ denotes the Novikov field with rational coefficients and real exponents, i.e. $\Lambda=\bQ((T^\bR))$.

Let $L_1,\dots, L_m\subset M$ be monotone Lagrangians with minimal Maslov number at least $3$ that are oriented and equipped with $Spin$-structures. Assume that the Lagrangians $L_i$ are pairwise transverse. To define the Fukaya category whose objects are given by $L_1,\dots, L_m$, one counts marked holomorphic discs. More precisely, define 
\begin{equation}\label{eq:remindfloerdiff}
hom(L_i,L_j)=CF(L_i,L_j;\Lambda)=\Lambda\langle L_i\cap L_j\rangle  \text{ if }i\neq j.
\end{equation}
A generic choice of almost complex structure allows one to endow $hom(L_i,L_j)$ with a differential, defined by the formula $\mu^1(x)=\sum\pm T^{E(u)}.y$, where the sum runs over pseudo-holomorphic strips with boundary on $L_i\cup L_j$ and asymptotic to $x$ and $y$. Here, $E(u)$ denotes the symplectic area of the strip. 

More generally, given $(L_{i_0},\dots L_{i_q})$ that are pairwise distinct and $x_j\in L_{i_{j-1}}\cap L_{i_{j}}$, define 
\begin{equation}\label{eq:remindfukayastructure}
\mu^q(x_q,\dots ,x_1)=\sum\pm T^{E(u)} .y,
\end{equation}
where $y$ runs over intersection points $y\in L_{i_{0}}\cap L_{i_q}$ and $u$ runs over rigid marked pseudo-holomorphic discs with boundary on $\bigcup_j L_{i_j}$, and asymptotic to $\{x_j\}$ and $y$ near the markings. The $spin$ structures on the Lagrangians allow one to orient the moduli of such discs, determining the signs in (\ref{eq:remindfloerdiff}) and (\ref{eq:remindfukayastructure}) (cf. Remark \ref{note:signs}). By standard gluing and compactness arguments, this defines a $\bZ/2\bZ$-graded $A_\infty$-structure over $\Lambda$. The condition that the Maslov numbers of $L_i$ are at least $3$ implies that $A_\infty$ structure has no curvature. 

To include $hom(L_i,L_i)$ (as well as the situation when two of the Lagrangians $L_{i_q}$ and $L_{i_r}$ coincide), one can follow different options: the one that we take here is the approach via the count of pearly trees. Our main references are \cite[Section 7]{seidelgenustwo} and \cite[Section 4]{sheridanpopants}. Fix Morse-Smale pairs $(f_i,g_i)$ on each $L_i$, and define 
\begin{equation}
hom(L_i,L_i)=CM(f_i;\Lambda)=\Lambda \langle crit(f_i)\rangle \text{ if }i=j.
\end{equation}
The differential on $hom(L_i,L_i)$ is defined via the count of Morse trajectories. To define more general structure maps, one has to consider ``holomorphic pearly trees'', i.e. Morse flow lines connected by pseudo-holomorphic ``pearls'', \cite[Section 4]{sheridanpopants}. For simplicity, we assume that the Floer datum corresponding to the pair $(L_i,L_j)$ has vanishing Hamiltonian term, whenever $i\neq j$, while we still use perturbation data with non-vanishing Hamiltonians of the form $H\gamma$ (where $H\in C^\infty ([0,1]\times M)$ is a Hamiltonian function, $\gamma$ is a subclosed $1$-form on the surface that vanishes in the tangent directions to the boundary, and the Floer equation is $(du-X_H\otimes\gamma)^{0,1}=0$). 
The energy of a holomorphic pearly tree is defined to be the sum of the energy of all holomorphic pearls. We also refer the reader to \cite{clusterhomology} and \cite{lagrangianquantumhomology}. We will abuse the notation and denote the hom-sets by $CF(L_i,L_j;\Lambda)$ even when $L_i=L_j$. 

One could replace $T^{E(u)}.y$ terms in the definition of $A_\infty$-maps by $T^{E^{top}(u)}.y$, where $E^{top}(u)$ denotes the topological energy. For a Riemann surface $S$, and a map $u:S\to M$, the topological energy for a perturbation datum with Hamiltonian term $H\gamma$ (where $\gamma$ is a closed $1$-form on $S$) is defined by 
\begin{equation}
	E^{top}(u)=\int_Su^*\omega_M-d(u^*H \gamma).
\end{equation}
However, as $H\gamma$ vanishes on the strip-like ends corresponding to the intersection points of different Lagrangians (as the corresponding Floer datum has no Hamiltonian term, and the perturbation data is chosen consistently), and as $\gamma$ vanishes in directions tangent to the boundary of the surface, we have 
\begin{equation}
	\int_Sd(u^*H \gamma)=\int_{\partial S} u^*H\gamma=0.
\end{equation}
In other words, topological energy coincides with the symplectic area. We assume that the choice of $(H,J)$ is made so that the topological energy is larger than the geometric energy; therefore, positive (for instance, let $H\geq 0$, $d\gamma\leq 0$, cf. \cite[\S7.2]{abousei}). 

The reason we prefer this model of Fukaya categories over the one in \cite{seidelbook} is that it gives us better control over the topological energy of the discs. Namely, the topological energy of the discs all belong to the finitely generated group $\omega_M(H_2(M,\bigcup L_i;\bZ))$. Another reason we use this model is the convenience in applying Fukaya's trick (Lemma \ref{lem:halg=h}). 

Even though formally we are counting pseudo-holomorphic pearly trees, we will refer to them as ``pseudo-holomorphic discs'' throughout the paper, by abuse of terminology.  Similarly, to avoid confusion our figures will present discs, rather than pearly trees. 
\begin{note}
We must warn that in the upcoming figures such as Figure \ref{figure:novikovfamily} or Figure \ref{figure:groupquasi}, we use wavy lines going through the disc. This has nothing to do with the Morse trajectories of the pearly trees, rather they represent the homotopy class of a path in $M$ going from one input to the output. The meaning of this path is also clear for pearly trees. 
\end{note}
\begin{rk}\label{rk:seidelmodel}
	For more general Hamiltonian terms, the topological energy and the symplectic area may be different, but the difference depends on a quantity associated to each generator (namely the integral of the Hamiltonian along the chord, cf. \cite[(7.9)]{abousei}). Therefore, the $A_\infty$-structure defined using $T^{E^{top}(u)}.y$ in place of $T^{E(u)}.y$ is related to the latter by a rescaling of each generator. In particular, one can alternatively use the model presented in \cite{seidelbook}. Up to rescaling of the generators, the crucial property that the energy of discs lies in a finitely generated subgroup of $\bR$ holds. One also needs \Cref{lem:halg=h} to hold for this model, which we will elaborate on in \Cref{rk:seidelmodelftrick}.
\end{rk}

Let $\tilde L\subset M$ be another oriented, tautologically unobstructed 
Lagrangian brane of minimal Maslov number at least $3$ and equipped with a $Spin$-structure. Assume that $\tilde L\pitchfork L_i$ for all $i$. Then there exists a right, resp. left, $A_\infty$-module $h_{\tilde L}$, resp. $h^{\tilde L}$ such that 
\begin{equation}
h_{\tilde L}(L_i)=CF(L_i,\tilde L;\Lambda)\text{, resp. } h^{\tilde L}=CF(\tilde L,L_i;\Lambda),
\end{equation}
where the structure maps are defined analogously. These modules are defined as follows: we extend $\cF(M,\Lambda)$ by adding $\tilde L$, and these are the corresponding Yoneda modules. We denote the restriction of right and left Yoneda modules corresponding to $\tilde L$ to $\cF(M,\Lambda)$ by $h_{\tilde L}$ and $h^{\tilde L}$ respectively. If $\tilde L$ is not transverse to all $L_i$, one can apply a small Hamiltonian perturbation. Different Hamiltonian perturbations give rise to quasi-isomorphic modules over $\cF(M,\Lambda)$.
\begin{note}\label{note:signs}
Throughout the paper, we will omit the signs and write $\pm$, as they are standard (similar to above where we wrote $\sum \pm T^{E(u)}$ for the coefficients of the $A_\infty$-structure maps). Most of the sums we have are merely deformations of standard formulas and the signs do not change. For example, in the sums (\ref{eq:structurenovikov}), (\ref{eq:padicstructuremaps}), (\ref{eq:structuregenericnovikov}) and (\ref{eq:padicgenericstructuremaps}), one obtains the diagonal bimodule by putting $z=1$ (or $t=0$), and the signs are the same as those of the diagonal bimodule (and those of the $A_\infty$-structure coefficients). Similarly, the sums (\ref{eq:structurealgyoneda}), and (\ref{eq:structurealgyonedafamily}) share the same signs as the formulas defining the right Yoneda module. 
\end{note}
Throughout the paper, we will work with smaller fields of definition for the Fukaya category. In other words, if $K\subset \Lambda$ is a subfield containing all the coefficients $\sum\pm T^{E(u)}$ defining the $A_\infty$-structure, then one could as well follow the definition above to obtain a $K$-linear $A_\infty$-category, which we denote by $\cF(M,K)$. By base change along the inclusion map $K\to \Lambda$, one obtains the original category $\cF(M,\Lambda)$ (in other words, $\cF(M,\Lambda)=\cF(M,K)\otimes_K\Lambda$). If $K$ is a smaller field of definition and $\kappa:K\to Q$ is an arbitrary field extension, one obtains a category via base change, and we denote this category by $\cF(M,Q):=\cF(M,K)\otimes_K Q$, omitting $\kappa$ from the notation (we will later specialize to the case where $Q$ is the field of $p$-adics for some prime $p$ or a finite extension of it).

Similarly, if the coefficients defining the modules $h_{\tilde L}$ and $h^{\tilde L}$ belong to $K$, one can define right, resp. left modules over $\cF(M,K)$. Via base change along $\kappa:K\to Q$, one obtains modules over $\cF(M,Q)$. We keep the notation $h_{\tilde L}$ and $h^{\tilde L}$ for these modules. It is not necessarily true that these modules are invariant under Hamiltonian perturbations of $\tilde L$. The coefficients defining the continuation morphisms may not belong to the smaller subfield $K$.

For later use, fix a base point on $M$. In addition, given any generator of any morphism complex between a pair of Lagrangians that define objects of the Fukaya category fix a relative homotopy class of paths on $M$ from the base point to the generator. Similarly, for any such object $L_0$, corresponding to any generator of the complexes $h_{L'}(L_0)$ and $h^L(L_0)$, fix a relative homotopy class of paths on $M$ from the base point to the generator. 

Since $\cF(M,\Lambda)$ is not built to contain all Lagrangians, the following clarification is needed:
\begin{defn}\label{defn:splitgen}
We say \emph{$\{L_i\}$ split generates $\tilde L$}, if $\tilde L$, as an object of the Fukaya category $\cF'$ with objects $\{L_i\}\cup\{\tilde L\}$ is quasi-isomorphic to an element of the subcategory $tw^\pi(\cF)$ of $tw^\pi(\cF')$, where $\cF$ denotes the Fukaya category with objects $\{L_i\}$. We say \emph{$\{L_i\}$ split generates the Fukaya category}, if this holds for any $\tilde L$ as above.
\end{defn}
Recall that $tw(\cF)$ denotes the category of twisted complexes over $\cF$, and $tw^\pi(\cF)$ denotes its idempotent closure. 
\begin{notation}
Throughout the paper, $\cF(M,\Lambda)$ will always consist of objects $\{L_i\}$ split generating the Fukaya category. 
\end{notation}
\begin{rk}
If $\{L_i\}$ split generate $\tilde L$, then the modules $h_{\tilde L}$ and $h^{\tilde L}$ are perfect, i.e. they can be represented as a summand of a complex of Yoneda modules of $\cF(M,\Lambda)$. Equivalently, any closed module homomorphism from $h_{\tilde L}$, resp. $h^{\tilde L}$, to a direct sum of right, resp. left, $A_\infty$-modules factor through a finite sum (in cohomology, i.e. up to an exact module homomorphism). 
\end{rk}
One way to ensure split generation of the Fukaya category is the \emph{non-degeneracy} of $M$, i.e. $\{L_i\}$ split generate the Fukaya category if the open-closed map from the Hochschild homology of the category spanned by $\{L_i\}$ hits the unit in the quantum cohomology by Abouzaid \cite{generation} (loc.\ cit.\ proves this claim for wrapped Fukaya categories, but the proof goes through without change in the compact case).  
Another implication of non-degeneracy is (homological) smoothness:
\begin{defn}\label{defn:homolsmooth}
An $A_\infty$-category is called \emph{homologically smooth} (or just \emph{smooth}), if its diagonal bimodule is perfect, or equivalently if the diagonal bimodule can be represented as a direct summand of a twisted complex of Yoneda bimodules. An $A_\infty$-category is called \emph{proper}, if the hom-complexes have finite dimensional cohomology. Similarly, an $A_\infty$-module is called \emph{proper} if the complexes associated to every object have finite dimensional cohomology.
\end{defn}
In other words, if $M$ is non-degenerate, then $\cF(M,\Lambda)$ is homologically smooth by Ganatra \cite[Theorem 1.2]{sheelthesis}. It is also proper by definition. Similarly, the modules $h_{\tilde L}$ and $h^{\tilde L}$ are proper.
\begin{rk}
The smoothness of a category implies that the category is split generated by finitely many objects. Together with properness, it also implies that proper modules over the category are perfect, i.e. they can be represented as a direct summand of a complex of Yoneda modules (Lemma \ref{lem:propermodule}).
\end{rk}
We will make frequent use of the following lemma:
\begin{lem}\label{lem:basechange}
Given a smooth and proper $A_\infty$-category $\cB$ over a field $K$, 
and given proper (left/right/bi-) modules $\cN_1,\cN_2$, if $hom_{\cB^{mod}}(\cN_1,\cN_2)$ has finite dimensional cohomology, then this dimension does not change under the base change under a field extension $K\subset Q$. Moreover, the cohomologies are related by ordinary base change under $K\subset Q$.
\end{lem}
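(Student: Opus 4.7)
The plan is to leverage the hypotheses to reduce the question to a finite complex of finite-dimensional $K$-vector spaces, for which the statement is essentially trivial. First I would invoke the upcoming Lemma \ref{lem:propermodule}: since $\cB$ is smooth and proper, every proper module over $\cB$ is perfect, i.e., is a direct summand of a finite twisted complex of Yoneda modules. Apply this to $\mathfrak N_1$ and $\mathfrak N_2$ to obtain idempotents splitting off $\mathfrak N_i$ from finite twisted complexes $\mathfrak T_i$ built out of Yoneda modules $h_{X^i_j}$ (or $h^{X^i_j}$ in the left/bimodule cases).

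Next, I would compute $hom_{\cB^{mod}}(\mathfrak T_1,\mathfrak T_2)$ by expanding along the twisted-complex structure. Since $hom$ out of and into a twisted complex is the totalization of the bi-indexed complex of the $hom$'s between the generating Yoneda modules, and since by the Yoneda lemma $hom_{\cB^{mod}}(h_{X},h_{Y})\simeq hom_{\cB}(X,Y)$, the result is a finite iterated cone of the complexes $hom_{\cB}(X^1_j, X^2_{j'})$. Each such $hom$-complex has finite-dimensional cohomology by properness of $\cB$; up to quasi-isomorphism we may replace it by its cohomology, a finite-dimensional graded $K$-vector space. Hence $hom_{\cB^{mod}}(\mathfrak T_1,\mathfrak T_2)$ is quasi-isomorphic to a bounded complex of finite-dimensional $K$-vector spaces, and $hom_{\cB^{mod}}(\mathfrak N_1,\mathfrak N_2)$ is the image of a homotopy idempotent acting on this complex.

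Now pass to the base change $Q\otimes_K -$. Base change along $K\subset Q$ is exact (it is flat, $Q$ being a $K$-vector space), commutes with finite direct sums, shifts, and the formation of mapping cones, and therefore commutes with the formation of twisted complexes and with splitting off summands by homotopy idempotents. It also commutes with Yoneda modules in the sense that $Q\otimes_K h_X$ is the Yoneda module of $X$ viewed in $\cB_Q:=Q\otimes_K\cB$. Hence the base change of the twisted complex representing $\mathfrak N_i$ represents the base change of $\mathfrak N_i$ over $\cB_Q$. Since taking cohomology of a bounded complex of finite-dimensional vector spaces commutes with the flat base change $Q\otimes_K -$, we obtain
\begin{equation}
H^\ast\bigl(hom_{\cB_Q^{mod}}(Q\otimes_K\mathfrak N_1,\,Q\otimes_K\mathfrak N_2)\bigr)\;\cong\;Q\otimes_K H^\ast\bigl(hom_{\cB^{mod}}(\mathfrak N_1,\mathfrak N_2)\bigr),
\end{equation}
and in particular the $Q$-dimension on the left equals the $K$-dimension on the right, which is the claim.

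The only step that requires any care is the first one, namely that the proper modules $\mathfrak N_i$ can indeed be represented by finite twisted complexes of Yoneda modules after splitting off an idempotent; this is the content of Lemma \ref{lem:propermodule}, which is stated later. Once that reduction is in hand, everything else is formal: $hom$ out of a finite twisted complex is itself a finite complex, properness makes it term-wise finite-dimensional, and finite-dimensional complexes behave well under field extensions. No characteristic hypothesis on $K$ is actually used in this argument, but it will be needed when splitting idempotents in the characteristic-free setting; including it as a hypothesis is harmless.
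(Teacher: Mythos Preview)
Your proof is correct and follows essentially the same route as the paper: invoke Lemma~\ref{lem:propermodule} to represent the proper modules $\mathfrak N_i$ as Yoneda modules of objects $X_i\in tw^\pi(\cB)$, then use the Yoneda lemma to identify $hom_{\cB^{mod}}(\mathfrak N_1,\mathfrak N_2)\simeq hom_{tw^\pi(\cB)}(X_1,X_2)$, which is manifestly compatible with base change. The paper's version is terser, simply stating that the right-hand side ``clearly remains the same under base change,'' whereas you spell out why (exactness of $Q\otimes_K-$, compatibility with cones, twisted complexes, and idempotent splittings); this extra detail is fine and does not deviate from the argument.
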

\begin{proof}
As we will see later in Lemma \ref{lem:propermodule}, the properness of $\cN_1$ and $\cN_2$ implies that they are actually perfect, i.e. they can be represented as Yoneda modules corresponding to twisted complexes with idempotents. Call these $X_1, X_2\in tw^\pi(\cB)$. As a result of Yoneda Lemma, 
\begin{equation}
dim_KH^*(hom_{\cB^{mod}}(\cN_1,\cN_2))=dim_KH^*(hom_{tw^\pi(\cB)}( X_1, X_2)).
\end{equation}
The latter clearly remains the same under base change. 
\end{proof}
As mentioned, we will often work with smaller fields of definition; however, smoothness does not depend on the coefficient field as long as the category is also proper. In other words:
\begin{lem}
Let $\cB$ be a proper $A_\infty$-category over a field $K\subset \Lambda$. If the base change $\cB_\Lambda:=\cB\otimes_K\Lambda $ is smooth, then so is $\cB$.
\end{lem}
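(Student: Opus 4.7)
The plan is to descend perfection of the diagonal bimodule from $\cB_\Lambda$ down to $\cB$. Smoothness of $\cB_\Lambda$ gives a perfect representative $Y_\Lambda$ for $\Delta_{\cB_\Lambda} = \Delta_\cB \otimes_K \Lambda$ in the derived category of $\cB_\Lambda$-bimodules: namely, a direct summand of a bounded twisted complex of Yoneda bimodules over $\cB_\Lambda$, equipped with explicit quasi-isomorphism data exhibiting $\Delta_{\cB_\Lambda}$ as a summand of $Y_\Lambda$.

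The key observation is that the combinatorial data describing $Y_\Lambda$ descends in a controlled way to $\cB$. The Yoneda bimodule summands appearing in $Y_\Lambda$ are base changes of Yoneda bimodules over $\cB$, so the underlying graded object already descends. Moreover, by the Yoneda lemma combined with properness of $\cB$, the Hom complexes between Yoneda bimodules over $\cB$ are finite-dimensional $K$-vector spaces and base change tensorially to the corresponding Hom complexes over $\cB_\Lambda$. Consequently, the twisted complex differential of $Y_\Lambda$, the idempotent cutting out the summand, and the quasi-isomorphism witnessing $\Delta_{\cB_\Lambda}$ as a summand, all involve only finitely many $\Lambda$-scalars in their expressions with respect to $K$-rational bases of the relevant Hom complexes. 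Collecting these scalars, one finds an intermediate finitely generated subfield $K \subset K' \subset \Lambda$ over which the entire perfect-bimodule structure is defined, so that $\cB_{K'}$ is smooth.

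The remaining step is to pass from $K'$ down to $K$. For this I plan to invoke faithfully flat descent of perfection for $A_\infty$-bimodules applied to the field extension $K \hookrightarrow K'$: an $A_\infty$-bimodule $M$ over $\cB$ is perfect if and only if $M \otimes_K K'$ is perfect as a $\cB_{K'}$-bimodule. Applied to $M = \Delta_\cB$, this yields smoothness of $\cB$. The main obstacle is precisely this final descent step: the reduction from $\Lambda$ to $K'$ amounts to bookkeeping of finitely many coefficients, but descending perfection along a possibly transcendental field extension $K \subset K'$ is a genuine derived-categorical input. It is standard in derived Morita theory, but requires care to formulate correctly for $A_\infty$-bimodules, and this is where I expect the bulk of the technical effort to lie.
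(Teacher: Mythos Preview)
Your approach is quite different from the paper's, and there are two related problems.

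First, the reduction to a finitely generated intermediate field $K'$ has a gap. You correctly observe that the twisted-complex differential and the idempotent on $Y_\Lambda$ live in Hom-spaces between Yoneda bimodules, which are finite-dimensional by properness and the Yoneda lemma. But the witness that $\Delta_{\cB_\Lambda}$ is a summand of $Y_\Lambda$ requires a bimodule morphism $i:\Delta_{\cB_\Lambda}\to Y_\Lambda$ (the section of the retraction), and $\Delta$ is not a Yoneda bimodule. An $A_\infty$-bimodule morphism has components $i^{k|1|l}$ for all $k,l\geq 0$, so there is no a priori reason only finitely many $\Lambda$-scalars appear. Properness alone does not make $\mathrm{Hom}(\Delta,Y)$ finite-dimensional; that would already use smoothness.

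Second, and more seriously, your final step --- faithfully flat descent of perfection along $K\hookrightarrow K'$ --- is essentially the statement you are trying to prove. If you are willing to invoke such a descent theorem for $A_\infty$-bimodules as a black box, it applies equally well to $K\hookrightarrow\Lambda$ directly, making the reduction to $K'$ redundant. So the plan either cites the lemma to prove the lemma, or defers all of the content to an external result.

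The paper's proof is short and elementary, and avoids both issues. It uses the characterization of perfectness of the diagonal as a compactness condition: every closed morphism $f:\cB\to\bigoplus_\eta \cY_\eta$ into a direct sum of twisted complexes of Yoneda bimodules factors (in cohomology) through a finite sub-sum. Smoothness over $\Lambda$ says that the projection $f'$ of $f$ to some cofinite sub-sum becomes exact after base change, say $f'\otimes_K 1=d(h)$. Now choose a $K$-basis of $\Lambda$ containing $1$, expand each component of $h$ in this basis, and keep only the coefficient of $1$. This produces $\tilde h$ defined over $K$ with $d(\tilde h)=f'$, since $f'$ already had coefficients in $K$ and the differential is $K$-linear. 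Hence $f'$ is exact over $K$, which is exactly the compactness condition. No intermediate field, no descent machinery --- just the observation that exactness of a complex of $K$-vector spaces is detected after any field extension, applied at the level of the morphism complex.
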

\begin{proof}
The smoothness is equivalent to the perfectness of the diagonal bimodule, which is the same as the diagonal bimodule being a compact object, i.e. $RHom(\cB,\cdot)$ commutes with filtered colimits, where the $RHom$ is taken within the category of bimodules. The category of bimodules is compactly generated by Yoneda bimodules over $\cB$ (this follows from the Yoneda lemma and the definition of compact generation, \cite{kellerdg}), and it is closed under arbitrary colimits. Therefore, every bimodule itself is a filtered colimit of compact objects (\cite[Proposition 2.2]{toenvaquiemoduli}), which can also be seen explicitly as follows: consider a bimodule $\cM$ and the colimit $\cM^c$ of compact objects $\cY$ over the diagram given by pairs $(\cY,\cY\to\cM)$, where $\cY$ is a compact object. It is easy to check that the cone of $\cM^c\to\cM$ is right orthogonal to every compact object; therefore, the cone vanishes by compact generation and $\cM^c\simeq \cM$. 
Note that the compact objects are the same as perfect bimodules, i.e. direct summands of twisted complexes of Yoneda bimodules (\cite[Corollary 3.7]{kellerdg}). Therefore, it suffices to show that $RHom(\cB,\cdot)$ commutes with filtered colimits of perfect bimodules. On the other hand, every colimit can be expressed as a co-equalizer of two arbitrary direct sums, and $RHom(\cB,\cdot)$ commutes with co-equalizers (as a co-equalizer is equivalent to an equalizer up to shift in the category of bimodules). Therefore, it suffices to show that $RHom(\cB,\cdot)$ commutes with arbitrary direct sums of compact objects, i.e. if one shows that every closed morphism of bimodules
\begin{equation}\label{eq:diagtosum}
f:\cB\to \bigoplus_\eta \cY_\eta,
\end{equation}where $\{\cY_\eta\}$ is a collection of direct summands of twisted complexes of Yoneda bimodules over $\cB$, factors in cohomology through a finite direct sum. 
We know this holds after the base change to $\Lambda$, due to the smoothness of $\cB_\Lambda$. In other words, a projection $f'$ of (\ref{eq:diagtosum}) to a cofinite sub-sum $\bigoplus_{\eta'}\cY_{\eta'}$ vanishes in cohomology after the base change to $\Lambda$ (i.e. it becomes exact as we work over a field). Assume that $f'\otimes_K 1=d(h)$. Choose a basis for $\Lambda$ over $K$ that includes $1$, and write every component of $h$ in this basis. If we throw away the parts with basis elements other than $1$, we obtain a morphism $\tilde h\otimes_K 1$, whose differential is still equal to $f'\otimes_K 1$. Therefore, there is a morphism $\tilde h:\cB\to \bigoplus_{\eta'}\cY_{\eta'}$, whose differential is $f'$.
\end{proof}
Therefore, the category $\cF(M,K)$ is smooth, whenever it is defined (i.e. the $A_\infty$-structure maps lie in $K$). More generally:
\begin{cor}
If $\cB$ is smooth, proper, and $L_1,\dots,L_m$ is a set of objects of $\cB$ that split generate $\cB_\Lambda$, then they split generate $\cB$.
\end{cor}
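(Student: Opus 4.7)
The plan is to reduce everything to the preceding lemma (descent of smoothness) together with Lemma \ref{lem:basechange} (base change of Hom-complexes between perfect modules). Let $\cA \subset \cB$ denote the full subcategory on $L_1,\dots,L_m$, and fix an arbitrary $X \in \cB$; the goal is to show $X$ lies in the image of $tw^\pi(\cA) \hookrightarrow tw^\pi(\cB)$.

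First I would establish that $\cA$ itself is smooth and proper. Properness is inherited from $\cB$ trivially. For smoothness, the hypothesis that $L_i$ split generate $\cB_\Lambda$ means the inclusion induces a quasi-equivalence $tw^\pi(\cA_\Lambda) \xrightarrow{\sim} tw^\pi(\cB_\Lambda)$; smoothness is Morita invariant, so $\cA_\Lambda$ is smooth, and by the preceding lemma so is $\cA$. With $\cA$ smooth and proper in hand, the restricted Yoneda module $h^\cB_X|_\cA$ is a proper $\cA$-module (its value at each $L_i$ is $hom_\cB(L_i,X)$, finite-dimensional by properness of $\cB$), hence perfect by Lemma \ref{lem:propermodule}. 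So there exists $Y \in tw^\pi(\cA)$ together with a quasi-isomorphism $h^\cB_X|_\cA \simeq h^\cA_Y$ of $\cA$-modules.

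The hard part is lifting this equivalence of $\cA$-modules back to an equivalence $X \simeq Y$ in $tw^\pi(\cB)$. My approach is to show the restriction map of complexes
\begin{equation*}
\rho_{X,Y}: hom_{tw^\pi(\cB)}(X,Y) \longrightarrow hom_{\cA\text{-mod}}(h^\cB_X|_\cA, h^\cA_Y)
\end{equation*}
is already a quasi-isomorphism over $K$. Both complexes have finite-dimensional cohomology, and Lemma \ref{lem:basechange} ensures their cohomologies base-change correctly under $K \subset \Lambda$, using that $\cB$ and $\cA$ are both smooth and proper and all modules in sight are perfect. After $\otimes_K \Lambda$, $\rho_{X,Y}$ becomes the analogous restriction over $\cA_\Lambda \subset \cB_\Lambda$, which is a quasi-isomorphism because $\cA_\Lambda$ split generates $\cB_\Lambda$. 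Faithful flatness of the field extension then forces $\rho_{X,Y}$ itself to be a quasi-isomorphism. Hence a lift $\tilde\phi: X \to Y$ of the module quasi-isomorphism exists in $tw^\pi(\cB)$, and applying the same descent argument to $hom(C,C)$ with $C = \text{cone}(\tilde\phi)$ (whose $\cA$-restriction is acyclic) forces $id_C = 0$, so $C \simeq 0$ and $X \simeq Y$.

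The key obstruction is the descent of fully-faithfulness for the restriction functor, which requires combining both the preceding descent-of-smoothness lemma and Lemma \ref{lem:basechange}; neither alone suffices. Once one has that the restriction $tw^\pi(\cB) \to \cA\text{-mod}$ is fully faithful over $K$, the essential surjectivity onto perfect $\cA$-modules (and hence the split-generation conclusion) follows from the smoothness and properness of $\cA$ by purely module-theoretic arguments.
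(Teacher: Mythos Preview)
Your argument is correct but takes a substantially different route from the paper's proof. The paper characterizes split generation directly via the bar resolution of the diagonal bimodule: writing the portion of the bar resolution built from $L_1,\dots,L_m$ as a filtered union of finite twisted complexes $\cY_k$ with maps $f_k:\cY_k\to\cB$, split generation over $K$ is equivalent to some $f_k$ admitting a splitting, i.e.\ to surjectivity of $RHom(\cB,\cY_k)\xrightarrow{f_k\circ}RHom(\cB,\cB)$. This is a single surjectivity statement between finite-dimensional cohomologies, which holds after $\otimes_K\Lambda$ by hypothesis and therefore over $K$ by Lemma~\ref{lem:basechange}. Your approach is more structural: you first descend smoothness to the subcategory $\cA$, use Lemma~\ref{lem:propermodule} to produce a candidate $Y\in tw^\pi(\cA)$ for each $X$, and then descend full-faithfulness of the restriction functor $tw^\pi(\cB)\to\cA\text{-mod}$ to promote the module equivalence to an equivalence of objects. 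Both routes ultimately rest on Lemma~\ref{lem:basechange} and faithful flatness of $K\subset\Lambda$; the paper's is shorter and avoids the detour through smoothness of $\cA$, while yours proves the stronger intermediate fact that restriction to $\cA$ is fully faithful over $K$, which could be independently useful.
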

\begin{proof}
Consider the part of the bar resolution of the diagonal bimodule of $\cB$ only involving objects $L_i$. This resolution can be filtered by finite twisted complexes, i.e. there exists an infinite sequence $\cY_0\subset\cY_1\subset\dots$ obtained by (stupid truncations) of the bar resolution. Let $f_k:\cY_k\to \cB$ denote the restriction of the resolution map to $\cY_k$. Then, $L_i$ split generate $\cB$ if and only if $f_k$ is split for some $k$, i.e.
\begin{equation}\label{eq:colimto}
	RHom(\cB,\cY_k)\xrightarrow{f_k\circ } RHom(\cB,\cB)
\end{equation}
is surjective. Indeed, if $L_i$ split generate $\cB$, then the corresponding bar complex with objects $L_i$ is a resolution, and $\cB\simeq \operatorname{colim}_k \cY_k$. As $\cB$ is compact, $\operatorname{colim}_k	RHom(\cB,\cY_k)\simeq  RHom(\cB,\cB)$; thus, \eqref{eq:colimto} hits the unit for a large $k$, i.e. $f_k$ is split (which also implies the surjectivity). On the other hand, if \eqref{eq:colimto} is surjective, it implies that $\cB$ is a direct summand of some $\cY_k$. Therefore, for any right $\cB$-module $\cN$, $\cN\simeq \cN\otimes_\cB \cB$ is a direct summand of $\cN\otimes_\cB \cY_k$, and the latter is equivalent to a complex of Yoneda modules of $L_i$ (cf. this argument with \cite[Appendix A]{generation}). 

The surjectivity holds after extending the coefficients to $\Lambda$; therefore, it holds over $K$ as well, by Lemma \ref{lem:basechange}.
\end{proof}
Going back to Fukaya categories, we have the following simple observation, that will be used regularly:
\begin{lem}\label{lem:rightconvlefteqhf}
	Assume that $\tilde L$ and $\tilde L'$ are split generated by $\{L_i\}$. Then, $H^*(h_{\tilde L'}\otimes_{\cF(M,\Lambda)} h^{\tilde L})\cong HF(\tilde L,\tilde L') $.
\end{lem}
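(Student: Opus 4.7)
The plan is to reduce the statement to a Yoneda-type identity by exploiting split generation. First, I would enlarge the Fukaya category by adjoining $\tilde L$ and $\tilde L'$ (with suitable small Hamiltonian perturbations so transversality is preserved), obtaining an $A_\infty$-category $\cF^+(M,\Lambda)$ containing $\cF(M,\Lambda)$ as a full subcategory. In $\cF^+$, the modules $h_{\tilde L'}$ and $h^{\tilde L}$ are literally (the restrictions to $\cF(M,\Lambda)$ of) the right and left Yoneda modules of $\tilde L'$ and $\tilde L$ respectively, and the left-hand side of the desired identity is $hom_{\cF^+}(\tilde L, \tilde L')$ in cohomology, which equals $HF(\tilde L, \tilde L')$ by definition.

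Next, I would invoke the split generation hypothesis (Definition \ref{defn:splitgen}): there exist objects $X, X' \in tw^\pi(\cF(M,\Lambda))$ together with quasi-isomorphisms $\tilde L \simeq X$, $\tilde L' \simeq X'$ in $\cF^+$. Passing to Yoneda modules (which is a fully faithful operation up to quasi-isomorphism), I obtain quasi-isomorphisms of $\cF(M,\Lambda)$-modules
\begin{equation}
h^{\tilde L} \simeq h^X, \qquad h_{\tilde L'} \simeq h_{X'},
\end{equation}
where the right-hand sides are the Yoneda modules of $X$ and $X'$ viewed as objects of $tw^\pi(\cF(M,\Lambda))$ and then restricted to $\cF(M,\Lambda)$. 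Since tensor product respects quasi-isomorphism of (perfect) modules, the convolution in the statement is quasi-isomorphic to $h_{X'}\otimes_{\cF(M,\Lambda)} h^X$.

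Finally, I would apply the $A_\infty$-Yoneda lemma: for a right module $\cM$ over an $A_\infty$-category $\cA$ and an object $Y$ of $\cA$ (or of its split-closed twisted envelope), the bar complex $\cM\otimes_{\cA} h^Y$ is quasi-isomorphic to $\cM(Y)$. Applied with $\cA = tw^\pi(\cF(M,\Lambda))$, $\cM = h_{X'}$, $Y = X$, this yields
\begin{equation}
h_{X'}\otimes_{\cF(M,\Lambda)} h^X \simeq h_{X'}(X) = hom_{tw^\pi(\cF(M,\Lambda))}(X, X'),
\end{equation}
and by the quasi-isomorphisms $X\simeq \tilde L$, $X'\simeq \tilde L'$ in $\cF^+$, this in turn computes $HF(\tilde L, \tilde L')$.

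The only real subtlety is making the first equivalence in the last display honest: one needs to ensure that the bar complex computed over the small category $\cF(M,\Lambda)$ agrees with the one over $tw^\pi(\cF(M,\Lambda))$. This is standard (the Yoneda embedding into modules factors through the twisted envelope and is compatible with bar constructions), but it is the only place where one must be a little careful; the rest is bookkeeping with split generation and the definition of the Yoneda modules.
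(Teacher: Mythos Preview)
Your proposal is correct and follows essentially the same approach as the paper. The paper also enlarges to a category $\tilde\cF(M,\Lambda)$ containing $\tilde L,\tilde L'$, uses the bar/Yoneda identity $HF(\tilde L,\tilde L')\cong H^*(h_{\tilde L'}\otimes_{\tilde\cF}h^{\tilde L})$, and then invokes split generation; the only cosmetic difference is that the paper packages your final ``subtlety'' (comparing the bar complex over $\cF(M,\Lambda)$ with the one over the larger category) as the single statement that $\cF(M,\Lambda)\hookrightarrow\tilde\cF(M,\Lambda)$ is a Morita equivalence, rather than passing through explicit twisted-complex representatives $X,X'$.
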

\begin{proof}
	By Hamiltonian perturbations, one can ensure $\tilde L$ and $\tilde L'$ are transverse to each other and to all $L_i$. Then, it is possible to extend $\cF(M,\Lambda)$ by adding these. Denote this extension by $\tilde \cF(M,\Lambda)$. Standard homological algebra shows that 
	\begin{equation}
	HF(\tilde L, \tilde L';\Lambda)\cong H^*(hom_{\tilde \cF(M,\Lambda)}(\tilde L, \tilde L'))\cong H^*(h_{\tilde L'}\otimes_{\tilde \cF(M,\Lambda)} h^{\tilde L}).
	\end{equation}
	But by the split generation statement $tw^\pi(\tilde \cF(M,\Lambda))=tw^\pi(\cF(M,\Lambda))$, i.e. $\cF(M,\Lambda)$ and $\cF(M,\Lambda)$ are Morita equivalent and \begin{equation}
	H^*(h_{\tilde L'}\otimes_{\tilde \cF(M,\Lambda)} h^{\tilde L})\cong H^*(h_{\tilde L'}\otimes_{\cF(M,\Lambda)} h^{\tilde L}).
	\end{equation}
\end{proof}
\begin{rk}\label{rk:abstractextension}
Under the assumptions of the lemma, if $\cM$ is a $\cF(M,\Lambda)$-bimodule, then it is the restriction of a bimodule $\widetilde{\cM}$ over the larger category $\tilde \cF(M,\Lambda)$, and $\widetilde{\cM}(\tilde L,\tilde L')\simeq h_{\tilde L'}\otimes_{\tilde \cF(M,\Lambda)} \cM \otimes_{\tilde \cF(M,\Lambda)} h^{\tilde L}$. We will prefer to work with $h_{\tilde L'}\otimes_{\tilde \cF(M,\Lambda)} \cM \otimes_{\tilde \cF(M,\Lambda)} h^{\tilde L}$, since in general the extension of $\cM$ is abstract, and should not be confused with the concrete constructions we are going to make.	
\end{rk}
We now explain the notion of Bohr-Sommerfeld monotonicity which appeared in Assumption \ref{assumption:monotoneplus}. We borrow the definition of this notion from \cite[Remark 4.1.4]{wehrheimwoodwardquilted1}. To define this notion, we need to assume that $\omega_M$ is rational, i.e. the monotonicity constant is rational. First, let $[\omega_M]=c_1(M)$ for simplicity. Then there exists a (negative) pre-quantum bundle, i.e. a line bundle $\cL$ with a unitary connection $\nabla$ whose curvature is equal to $-2\pi i\omega_M$, and a bundle isomorphism $\cL\cong \cK^{-1}$, where the latter denotes the anti-canonical bundle. The restriction of $(\cL,\nabla)$ to a Lagrangian $\tilde L$ is flat, and $\cK^{-1}|_{\tilde L}$ carries a natural non-vanishing ``Maslov section''. We call a Lagrangian $\tilde L$ \emph{Bohr-Sommerfeld monotone} if 
\begin{itemize}
	\item 	$(\cL,\nabla)|_{\tilde L}$ has trivial monodromy, 
	\item under the induced identification $\cL|_{\tilde L}\cong \cK^{-1}|_{\tilde L}$, the Maslov section is homotopic to a non-vanishing flat section.
\end{itemize}
More generally, if $[\omega_M]=\frac{k}{l}c_1(M)$, for $k,l\in\bZ\setminus\{0\}$, there exists $(\cL,\nabla)$ with curvature $-2\pi i l\omega_M$ and isomorphism $\cL\cong (\cK^{-1})^{\otimes k}$. We call $\tilde L$ \emph{Bohr-Sommerfeld monotone} if the analogous conditions hold, i.e. $(\cL,\nabla)|_{\tilde L}$ has trivial monodromy and the identification $\cL|_{\tilde L}\cong (\cK^{-1})^{\otimes k}|_{\tilde L}$ carries the ($k^{th}$ power of) the Maslov section to a section homotopic to a flat one. Observe that this is meaningful for a negative $k$ as well. Note a slight notational difference from \cite{wehrheimwoodwardquilted1}, where $\omega_M$ is assumed to be integral (instead of rational), and $(\cL,\nabla)$ is taken to be a negative pre-quantum bundle. Assuming $\omega_M$ is rational is not a loss of generality, as this can be achieved by replacing $\omega_M$ with a positive constant multiple of itself thanks to our (negative) monotonicity assumption. 

We emphasize that the Bohr--Sommerfeld monotonicity is a condition that depends on the choice of $\cL$, $\nabla$, as well as the identification with the tensor power of the canonical bundle. In other words, different Lagrangians can satisfy this condition for different choices, yet they may still fail to satisfy this simultaneously for the same choice of data (in other words, this is a simultaneous condition for a collection of Lagrangians). We omit the dependency on $\cL$, $\nabla$, and the identification from the notation.

For us the crucial implication of this condition is the following lemma that follows from \cite[Lemma 4.1.5]{wehrheimwoodwardquilted1} and the Gromov compactness:
\begin{lem}\label{lem:finitesum}
	If each of $L_i,L$ and 	$L'$ is Bohr-Sommerfeld monotone, then there are only finitely many pseudo-holomorphic marked discs (in the $0$-dimensional moduli) with boundary on these Lagrangians and with fixed asymptotic conditions at the markings.
\end{lem}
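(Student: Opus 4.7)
The plan is to combine the energy-identity that Bohr-Sommerfeld monotonicity yields with Gromov compactness. First I would invoke \cite[Lemma 4.1.5]{wehrheimwoodwardquilted1}: if $\tilde L$ is Bohr-Sommerfeld monotone, then for any class $\beta\in\pi_2(M,\tilde L)$ represented by a smooth disc, one has a linear identity $\omega_M(\beta)=\lambda\cdot\mu(\beta)$ (up to a normalization fixed by the pre-quantum data), where $\mu$ denotes the Maslov index. The trivial monodromy of $(\cL,\nabla)|_{\tilde L}$ implies that $\omega_M(\beta)$ is determined by a local section transport computation, and the second condition (the Maslov section being homotopic to a flat section of $\cK^{-1}|_{\tilde L}$) forces this transport to match the Maslov datum. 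Thus the symplectic area of any such disc is rigidly tied to its Maslov index.

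Next I would extend this from a single Lagrangian boundary to the marked-disc (or pearly-tree) setting with boundary on several Bohr-Sommerfeld monotone Lagrangians $L_{i_0},\dots, L_{i_q}$ chosen among $\{L_i,L,L'\}$. Since each Lagrangian boundary segment satisfies the local area/Maslov identity, and the Maslov indices of the generators at the punctures/markings are fixed once paths in the Lagrangian Grassmannian have been chosen at each asymptote, one concludes that for a marked disc $u$ with prescribed asymptotic intersection points, the symplectic area $E(u)=\omega_M([u])$ is determined up to the constant $\lambda$ times the total Maslov index, plus contributions from the fixed asymptotic labels. In particular, the expected dimension formula for the moduli space pins down the Maslov index once the dimension is fixed at $0$ and the asymptotics are fixed. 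Therefore $E(u)$ takes a single value over the whole $0$-dimensional component.

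Finally I would apply Gromov compactness: the subset of the moduli space of marked pseudo-holomorphic discs with boundary on the fixed Bohr-Sommerfeld monotone Lagrangians, prescribed asymptotics at the markings, and bounded symplectic area is sequentially compact up to bubbling. Bubbling off a disc or sphere would require a positive energy splitting, but after the area constraint all such degenerations are already accounted for in the compactification by broken configurations (whose contributions are counted separately, and which cannot be rigid together with a nontrivial bubble, since the monotonicity constant is positive and the Maslov numbers are $\ge 3$, so sphere bubbling has codimension $\ge 2$ and disc bubbling is ruled out in the rigid stratum). Combined with the transversality assumption used to define the Fukaya category, the $0$-dimensional moduli is a compact $0$-manifold, hence finite.

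The main obstacle I would expect is the bookkeeping for the \emph{marked} (pearly-tree) case: translating the one-Lagrangian area/Maslov identity of Wehrheim-Woodward into a single global identity for a disc with corners on several Bohr-Sommerfeld monotone Lagrangians, so that the total symplectic area is cleanly determined by the topological type and the fixed asymptotics. This is essentially a concatenation argument using the fact that each boundary segment lies on a Bohr-Sommerfeld monotone Lagrangian and that the base-point/path data fixed earlier trivializes the ambiguity in the Maslov index at each corner, but it needs to be set up carefully so that Gromov compactness can be invoked in the standard way.
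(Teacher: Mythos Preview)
Your proposal is correct and follows exactly the route the paper indicates: the paper does not spell out a proof but simply says the lemma ``follows from \cite[Lemma 4.1.5]{wehrheimwoodwardquilted1} and Gromov compactness,'' which are precisely the two ingredients you invoke. One small point: the content of \cite[Lemma 4.1.5]{wehrheimwoodwardquilted1} is already the polygon (multi-Lagrangian) area/Maslov identity for Bohr-Sommerfeld monotone Lagrangians, so the ``main obstacle'' you flag --- extending the single-boundary identity to marked discs with corners on several such Lagrangians --- is exactly what that cited lemma provides, rather than something you need to set up separately.
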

The conclusion of the lemma also follows from the first option in Assumption \ref{assumption:monotoneplus}, by \cite[Lemma 4.1.3]{wehrheimwoodwardquilted1}.


%

\subsection{Energy spectrum and definability of Fukaya category over smaller subfields}\label{sec:definability}
As remarked, the monotonicity of the split generators $L_i$ implies that the coefficients of the $A_\infty$-structure are finite, i.e. the Fukaya category can be defined over the field $\bQ(T^\bR)$, i.e. the fraction field of 
\begin{equation}
\bQ[T^\bR]:=\Bigg\{\sum_{i=0}^Na_iT^{r_i}: a_i\in\bQ, r_i\in\bR\Bigg\}.
\end{equation}
The assumptions on $L,L'$ imply that the Yoneda modules $h_{L'}$ and $h^L$ are also defined over $\bQ(T^\bR)$. The purpose of this section is to find smaller fields of definition for the Fukaya category.

Since the boundary of marked discs used to define $\cF(M,\Lambda)$ are all on various $L_i$, the energy of such discs would take values in the image of $\omega_M:H_2(M,\bigcup_i L_i;\bZ)\to\bR$. In other words, as we construct the Fukaya category using the pearl complex for the immersed Lagrangian $\bigcup_i L_i$, we see that the energies of all discs involved lie in the finitely generated group $\omega(H_2(M,\bigcup_i L_i;\bZ))$. 
Hence, there exists a finitely generated additive subgroup $G_{pre}\subset\bR$ that contains all possible energies. In the statement of Theorem \ref{thm:mainthm}, we used two other 
Lagrangians denoted by $L$ and $L'$.
Without loss of generality, assume that the discs with boundary conditions on $L,L'$ in addition to $L_i$ also have topological energy inside $G_{pre}$.

In this case, the Fukaya category of $M$ with objects $L_i$ is defined over $\bQ(T^g:g\in G_{pre})\subset \Lambda$. As we assume that the discs with boundary conditions on $L,L'$ in addition to $L_i$ also have topological energy inside $G_{pre}$, the left/right modules corresponding to $L,L'$ are also defined over $\bQ(T^g:g\in G_{pre})$. 
\begin{rk}\label{rk:noninvariance}
$G_{pre}$ is not invariant under Hamiltonian perturbations. Hence, the invariance of the Fukaya category holds only after a base change to a larger field.
\end{rk}
We will need to evaluate at $z=T^f$ for arbitrarily small rational numbers $f\in\bQ$. Hence, we define
\begin{defn}\label{defn:G}	Let $G\subset \bR$ be the additive subgroup spanned by $G_{pre}$ and $\alpha(C)$ where $C$ is an integral $1$-cycle in $M$, and $\alpha$ is the closed $1$-form fixed in Section \ref{sec:intro} satisfying $\phi=\phi^1_\alpha$. Given prime $p$, let $G_{(p)}$ denote the set $\{\frac{g}{m}:g\in G, m\in \bZ,p\nmid m \}$. 
\end{defn}
Since $G$ is finitely generated and torsion free, one can find a basis of $G$ over $\bZ$. This basis induces a basis of $G_{(p)}$ over $\bZ_{(p)}=\{\frac{n}{m}:n,m\in \bZ, p\nmid m \}$, and $G_{(p)}$ is a free $\bZ_{(p)}$-module. 

Since $G\subset G_{(p)}$ are ordered groups, $\bQ((T^G))\subset\bQ((T^{G_{(p)}}))$ are defined in the standard way, i.e. they are Novikov series that involve only $T^g$-terms such that $g\in G$, resp. $g\in G_{(p)}$. 

Fix the following notation:
\begin{notation}
	Let $K=\bQ(T^{G_{(p)}})$ be the field of rational functions in $T^g,g\in G_{(p)}$. 
\end{notation}
The field $K$ is not finitely generated over $\bQ$ but it can be obtained by adding roots to finitely generated field $\bQ(T^G)$. Observe,
\begin{lem}\label{lem:defnhlhl}
The coefficients of the structure maps of $\cF(M,\Lambda)$ as well as $h^L,h_{L'}$ are in $K$.	
\end{lem}
As a result, the Fukaya category, as well as the modules $h^L,h_{L'}$ are defined over $K$. In other words, we have a proper $A_\infty$-category $\cF(M,K)$ over $K$ such that $\cF(M,\Lambda)=\cF(M,K)\otimes_K\Lambda$. Similarly, we have proper $A_\infty$-modules over $\cF(M,K)$ still denoted by $h^L,h_{L'}$. 
\begin{proof}
This follows from the observations on the energy of discs defining the Fukaya category and the Yoneda modules $h^L,h_{L'}$. More precisely, the group $G$ is constructed to include all possible energies, and as a result, $T^{E(u)}\in \bQ(T^G)\subset K=\bQ(T^{G_{(p)}})$. \Cref{assumption:monotoneplus} implies the coefficients of the $A_\infty$ structure maps are finite sums in $T^{E(u)}$, and the result follows.  	
\end{proof}
%
\begin{rk}\label{rk:nonisomreduction}
By Remark \ref{rk:noninvariance}, $\cF(M,K)$ is not invariant under Hamiltonian perturbations either: the continuation maps are defined only after a base change to a slightly larger field that depends on the continuation data.	In particular, if we reduce two Hamiltonian isotopic Lagrangians to the smaller field $K$, the reductions are not necessarily quasi-isomorphic. Even a single Yoneda module can have multiple reductions, and this is sufficient for our purposes.
\end{rk}
\section{Families of bimodules and symplectomorphisms}\label{sec:families}
\subsection{Family of bimodules over the Novikov field}
Recall that $\alpha$ is a fixed closed $1$-form on $M$ such that $\phi_\alpha^1=\phi$, where $\phi_\alpha^t$ denotes the flow of $X_\alpha$, which is the vector field satisfying $\omega(\cdot,X_\alpha)=\alpha$. 
One can see $\phi_\alpha^t$ as a family of symplectomorphism and up to some technicalities it defines a class of bimodules by the rule 
\begin{equation}
(L_i,L_j)\mapsto HF( L_i,\phi^t_\alpha(L_j)).
\end{equation}
Our first goal in this section is to give another description of this family inspired by family Floer cohomology and quilted Floer cohomology (\cite{quiltedstrip}, \cite{sheelthesis}) for small $t$. The notion of family we use is essentially due to Seidel \cite{flux}. He allows affine curves as the parameter space of the family. For our purposes, this is insufficient. A natural ``space'' one can work with has the ring of functions
\begin{equation}\label{eq:intervalring}
\Lambda\{z^\bR\}_{[a,b]}:=\bigg\{\sum a_r z^r:\text{, where }r\in\bR, a_r\in\Lambda \bigg\}.
\end{equation}
The series are required to satisfy the convergence condition $val_T (a_r)+r\nu \to\infty$ for all $\nu\in[a,b]$. Recall that the valuation $val_T:\Lambda\setminus\{0 \}\to\bR$ is defined by $val_T(\sum_{i=0}^\infty a_iT^{r_i})=\min\limits_{i\in\bN, a_i\neq 0} r_i$ (one can extend to $\Lambda\to\bR\cup\{\infty\}$ by defining $val_T(0)=\infty$). 

The isomorphism type of \eqref{eq:intervalring} is independent of $a$ and $b$ as long as $a<b$, and we consider it as a non-Archimedean analogue of the interval $[a,b]$. For instance, given $f\in[a,b]$, there exists an evaluation map $\Lambda\{z^\bR\}_{[a,b]}\to \Lambda$ given by $z\mapsto T^f$ (and $z^r\mapsto T^{fr}$, which does not follow automatically). More will be explained in \Cref{appendix:semicont}, but we note that we often omit $a$ and $b$ from the notation, and use $\Lambda\{z^\bR\}$ to denote $\Lambda\{z^\bR\}_{[a,b]}$ for some $a<0<b$ (therefore, ``$z=1$'' can be thought of as a point of the heuristic parameter space). 

On the other hand, by monotonicity, we will only need finite series, except for some semi-continuity statements. Therefore, until Section \ref{sec:generic}, where the monotonicity assumption is dropped, we instead consider the rings
\begin{equation}
\Lambda[z^\bR]=\{\text{finite sums }\sum a_rz^r:\text{ where }r\in\bR, a_r\in \Lambda \}, \atop
K[z^G]=\{\text{finite sums }\sum a_rz^r:\text{ where }r\in G\subset\bR, a_r\in K \}
\end{equation} 
as the heuristic ring of functions of our parameter space. Here, $G$ is the finitely generated additive subgroup of $\bR$ defined in the previous section. For our purposes, it would suffice to only allow monomials of the form $z^{\alpha(C)}$ as well, where $C$ is a $1$-cycle. Note that we will not attempt to associate a geometric spectrum to these rings, and we often refer to them as the parameter space, by abuse of terminology.
\begin{defn}\label{defn:novikovfamily}
Let $\cB$ be a smooth and proper $A_\infty$-category over $\Lambda$. A \emph{Novikov family $\fM$ of bimodules over $\B$} is an assignment of a free ($\bZ/2\bZ$)-graded $\Lambda[z^\bR]$-module, $\fM(\tilde L,\tilde L')$ to every pair of objects together with $\Lambda[z^\bR]$-linear, structure maps 
\begin{equation}\label{eq:strfam}
\cB (L'_1,L'_{0})\otimes \dots \cB (L'_m,L'_{m-1})\otimes \fM(L_n,L_m')\otimes \dots \cB (L_0,L_1)\atop \to \fM(L_0,L_0')[1-m-n]
\end{equation} satisfying the standard $A_\infty$-bimodule equations (recall that tensor products without subscripts are taken over the base field, which is $\Lambda$ in this case). A \emph{(pre-)morphism of two families $\fM$ and $\fM'$} is a collection of $\Lambda[z^\bR]$-linear, maps
\begin{equation}\label{eq:premorph}
f^{m|1|n}:\cB (L'_1,L'_{0})\otimes \dots \cB (L'_m,L'_{m-1})\otimes \fM(L_n,L_m')\otimes \dots \cB (L_0,L_1)\atop \to \fM'(L_0,L_0')[-m-n]
\end{equation} The Novikov families form a $\Lambda[z^\bR]$-linear, pre-triangulated dg category, where the differential and composition are given by standard formulas for bimodules. A \emph{morphism} of families means a closed pre-morphism. The cone of a morphism is defined as the cone of underlying bimodules, equipped with the obvious family structure (i.e. $\Lambda[z^\bR]$-linear, structure) itself. 

Similarly, if $\cB$ is smooth and proper over $K$, we define a \emph{Novikov family} to be an assignment of a free ($\bZ/2\bZ$)-graded $K[z^G]$-module $(L,L')\mapsto \fM(\tilde L,\tilde L')$ with $K[z^G]$-linear structure maps similar to \eqref{eq:strfam} satisfying the $A_\infty$-bimodule equations. The pre-morphisms, and the category of Novikov families are defined analogously, by replacing the prefix ``$\Lambda[z^\bR]$-linear'' with ``$K[z^G]$-linear''. 
%
\end{defn}
\begin{figure}\centering
	\includegraphics[height=4 cm]{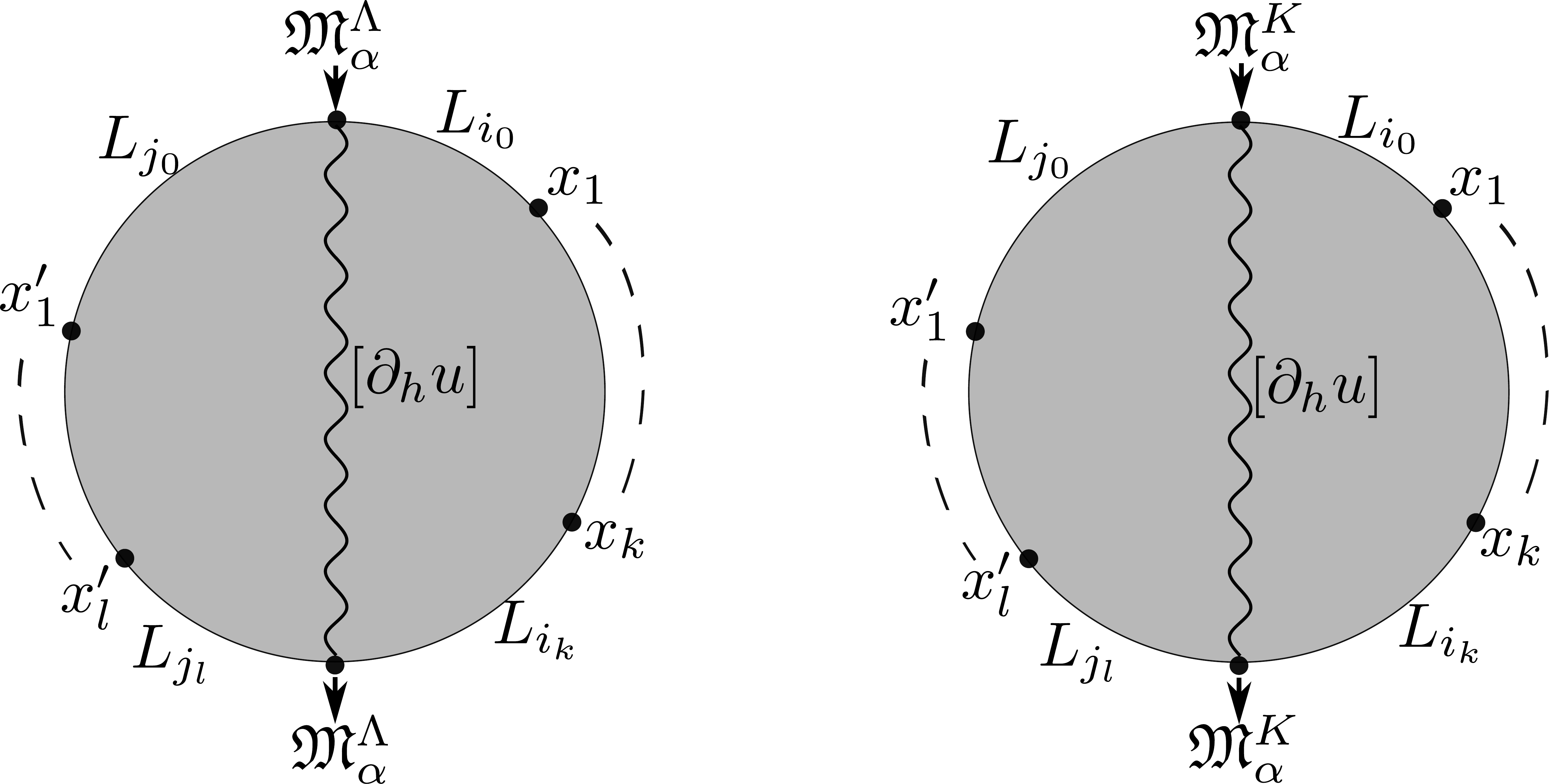}
	\caption{The counts defining $\fM_{\alpha}^\Lambda$ and $\fM^K_\alpha$}
	\label{figure:novikovfamily}
\end{figure}
We use the term Novikov family for both $K$ and $\Lambda$. Which type of family we are working with will be clear from the context.

\begin{defn}\label{defn:Kfukayafamily}
We define the family $\fM_\alpha^K$ of $\cF(M,K)$-bimodules via 
\begin{equation}\label{eq:family1chain}
(L_i,L_j)\mapsto \fM^K_\alpha (L_i,L_j)=CF(L_i,L_j;K)\otimes_K K[z^G] ,
\end{equation}
where $CF(L_i,L_j;K)=K\langle L_i\cap L_j\rangle$. To define the differential, consider the pseudo-holomorphic strips with boundary on $L_i$ and $L_j$ defining the Floer differential. Recall that we chose a base point on $M$ and relative homotopy classes of paths from this point to generators of $CF(L_i,L_j;\Lambda)$. Concatenate the following three paths: (i) the chosen path from the base point to the input chord of the strip, (ii) any path along the strip from the input to the output, and 
(iii) the reverse of the path from the base point to output. Denote this class by $[\partial_h u]$, where $u$ is the Floer strip. 
Then, define the differential for (\ref{eq:family1chain}) via the formula 
\begin{equation}
\mu^1(x)=\sum \pm T^{E(u)}z^{\alpha([\partial_h u])}.y,
\end{equation}
where $x$ and $y$ are generators of $CF(L_i,L_j;\Lambda)$ and $u$ ranges over the Floer strips with given boundary conditions, with input $x$ and output $y$. We obtain the more general structure maps of the family of bimodules by deforming the structure maps for the diagonal bimodule. Namely, the structure maps for diagonal bimodule send the tuple $(x_k,\dots,x_1|x|x_1',\dots x_l')$ to the signed sum
\begin{equation}
\sum \pm T^{E(u)}.y,
\end{equation}
where the sum ranges over the discs with input $x_l'\dots x,\dots, x_k$ and with output $y$. We define the structure maps for the family $\fM_\alpha^K$ (in the sense of \Cref{defn:novikovfamily}) via the formula
\begin{equation}\label{eq:structurenovikov}
\sum \pm T^{E(u)}z^{\alpha([\partial_h u])}.y ,
\end{equation}
where $[\partial_h u]$ denotes the class obtained by concatenating (i) the chosen path from the base point to $x$, (ii) $u\circ \gamma$ where $\gamma$ is a path in the marked disc from the marked point corresponding to $x$ to the marked point corresponding to $y$, and (iii) the reverse of the chosen path from the base point to $y$. In other words, $[\partial_h u]$ is obtained by concatenating the wavy line in Figure \ref{figure:novikovfamily} with paths to the base point. 
\end{defn}
\begin{defn}
Let $\fM_{\alpha}^\Lambda$ denote the $\Lambda[z^\bR]$-linear Novikov family of $\cF(M,\Lambda)$-bimodules that is obtained by replacing $K$ with $\Lambda$ in Definition \ref{defn:Kfukayafamily}. Equivalently, this family can be obtained by extension of the coefficients of $\fM^K_{\alpha}$ along the inclusion map $K\to\Lambda$.
\end{defn}
\begin{note}
Let $\tilde L$ and $\tilde L'$ be two Lagrangians that satisfy the conditions on $L$ and $L'$ in Assumption \ref{assumption:monotoneplus}. As mentioned in Remark \ref{rk:abstractextension}, one can abstractly extend the Fukaya category  to include $\tilde L$ and $\tilde L'$ and the bimodule $\fM^K_\alpha$ to $\widetilde{\fM^K_\alpha}$. However, this extension is by abstract means, whereas the notation $\widetilde{\fM_\alpha^K}(\tilde L,\tilde L')$ suggests the same concrete definition as (\ref{eq:family1chain}). Therefore, we will use the complex 
\begin{equation}
h_{\tilde L'}\famotimes_{\cF(M,K)}  \fM_\alpha^K  \famotimes_{\cF(M,K)} h^{\tilde L}
\end{equation}
instead, which is well-defined whenever the modules $h_{\tilde L'}$ and $h^{\tilde L}$ are defined over $\cF(M,K)$.
\end{note}
Consider $\faM{f}{\Lambda}$ and $\faM{f}{K}$, i.e. the base change of the respective family under the map $\Lambda[z^\bR]\to\Lambda$, resp. $K[z^G]\to K$ that sends $z^r$ to $T^{fr}$. The latter makes sense only for $f\in G_{(p)}\subset \bR$. These are bimodules over $\cF(M,\Lambda)$, resp. $\cF(M,K)$.

Later we will show $\faM{-f}{K}$, together with the left module corresponding to $L$ and right module corresponding to $L'$ can be used to recover the groups $HF(\phi^f_\alpha L,L')$ for some $f$, via the formula
\begin{equation}
HF(\phi^f_\alpha (L),L')\cong H^*(h_{L'}\otimes_{\cF(M,K)} \faM{-f}{K} \otimes_{\cF(M,K)} h^L)\otimes_K \Lambda,
\end{equation}
where $h_{L'}$ denotes the right Yoneda module corresponding to $L'$ and $h^L$ denotes the left Yoneda module corresponding to $L$. 

We also note:
\begin{lem}\label{lem:grouplikenovikov}
For $f,f'\in\bR$ such that $|f|,|f'|$ are small, $\faM{f+f'}{\Lambda}\simeq \faM{f}{\Lambda}\otimes_{\cF(M,\Lambda)} \faM{f'}{\Lambda}$. The same statement holds for $\Lambda$ replaced by $K$ if $f,f'\in \bZ_{(p)}$.
\end{lem}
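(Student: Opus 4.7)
The plan is to prove the claim in two stages: (i) construct a natural bimodule morphism
\[
\Phi\colon \fM_{f\alpha}^\Lambda \otimes_{\cF(M,\Lambda)} \fM_{f'\alpha}^\Lambda \longrightarrow \fM_{(f+f')\alpha}^\Lambda,
\]
and (ii) verify that $\Phi$ is a quasi-isomorphism when $|f|,|f'|$ are small. The analogous statement over $K$ will follow by the same argument, since the construction is defined over $K$ whenever $f,f'\in\bZ_{(p)}$.

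For (i), the key observation is that each $\fM_{g\alpha}^\Lambda$ has the same underlying graded $\Lambda$-module as the diagonal bimodule $\Delta=\cF(M,\Lambda)$, with structure maps obtained from those of $\Delta$ by weighting each contributing disc $u$ with the additional factor $T^{g\alpha([\partial_h u])}$ (specializing $z\mapsto T^g$ in (\ref{eq:structurenovikov})). There is a well-known composition quasi-isomorphism $\mu\colon \Delta\otimes_\cF\Delta\to\Delta$ built from the same disc counts as the $A_\infty$-structure. I define $\Phi$ by repeating exactly these counts, but weighting each disc $u$ by $T^{f\alpha([\partial_{h,1}u])}T^{f'\alpha([\partial_{h,2}u])}$, where $[\partial_{h,1}u]$ and $[\partial_{h,2}u]$ are the horizontal boundary portions lying on the two sides of the two bimodule inputs coming from $\fM_{f\alpha}^\Lambda$ and $\fM_{f'\alpha}^\Lambda$ respectively. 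Since $\alpha$ is closed and the fixed base-point paths cancel under concatenation, $\alpha([\partial_{h,1}u])+\alpha([\partial_{h,2}u])=\alpha([\partial_h u])$, so the total weight is $T^{(f+f')\alpha([\partial_h u])}$, matching the structure weight of $\fM_{(f+f')\alpha}^\Lambda$. Verifying that $\Phi$ satisfies the $A_\infty$-bimodule morphism equations is then the same boundary analysis of one-dimensional moduli as for $\mu$, since the deformation weights multiply under disc gluing.

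For (ii), the most direct route is Yoneda reflection. Because $\cF(M,\Lambda)$ is smooth and proper, a morphism of bimodules is a quasi-isomorphism as soon as it induces one after tensoring on the left with $h_{\tilde L}$ for every object $\tilde L$ of a split generating set. I apply Lemma \ref{lem:nearbyfloer} twice in succession: for $|f|,|f'|$ small,
\[
h_{\tilde L}\otimes_\cF\fM_{f\alpha}^\Lambda\otimes_\cF\fM_{f'\alpha}^\Lambda \simeq h_{\phi_\alpha^f(\tilde L)}\otimes_\cF\fM_{f'\alpha}^\Lambda \simeq h_{\phi_\alpha^{f'}\circ\phi_\alpha^f(\tilde L)} = h_{\phi_\alpha^{f+f'}(\tilde L)} \simeq h_{\tilde L}\otimes_\cF\fM_{(f+f')\alpha}^\Lambda,
\]
and this chain of quasi-isomorphisms is induced by $\Phi$ once one checks that the construction of $\Phi$ is compatible with the Fukaya's trick identifications that underpin Lemma \ref{lem:nearbyfloer}. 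An alternative, purely algebraic argument is to filter both sides by $T$-adic valuation, which is bounded below since $|f|,|f'|$ are small and Gromov compactness produces only finitely many discs below any fixed energy threshold; the associated graded of $\Phi$ is then the classical convolution quasi-isomorphism $\Delta\otimes_\cF\Delta\simeq\Delta$, and a standard spectral sequence comparison concludes.

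The main obstacle will be verifying the compatibility of $\Phi$ with the Fukaya-trick identifications from Lemma \ref{lem:nearbyfloer}, i.e., that pulling $\tilde L$ successively through $\phi_\alpha^f$ and then $\phi_\alpha^{f'}$ produces precisely the cumulative weight corrections predicted by $\Phi$. This requires careful bookkeeping of the fixed base-point paths to each boundary chord, together with the homotopy invariance of integrals of the closed form $\alpha$ along the intermediate Lagrangian isotopies.
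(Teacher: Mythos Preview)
Your construction of $\Phi$ in step (i) matches the paper's: both deform the canonical map $\Delta\otimes_\cF\Delta\to\Delta$ by the weight $T^{f\alpha([\partial_1 u])}T^{f'\alpha([\partial_2 u])}$, and the closedness check is the same boundary analysis.

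Where you diverge is step (ii). The paper does \emph{not} invoke Lemma~\ref{lem:nearbyfloer} or a $T$-adic filtration. Instead it promotes $\Phi$ to a morphism of $2$-parameter families over $\Lambda\{z_1^\bR,z_2^\bR\}$, uses smoothness of $\cF(M,\Lambda)$ to replace $\pi_1^*\fM_\alpha^\Lambda\otimes^{rel}\pi_2^*\fM_\alpha^\Lambda$ by a direct summand of an iterated cone of families that are free of finite rank at every pair of objects, and then applies the semi-continuity Lemma~\ref{lem:semicontcomplexnovsingle} to the cone: acyclic at $z_1=z_2=1$ implies acyclic at $z_i=T^{f_i}$ for small $|f_i|$. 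No compatibility with Fukaya's trick is needed.

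Your Approach A has a real gap. The chain of quasi-isomorphisms via Lemma~\ref{lem:nearbyfloer} only shows that the domain and codomain of $h_{\tilde L}\otimes\Phi$ are \emph{abstractly} quasi-isomorphic; it says nothing about $\Phi$ itself. You correctly flag this, but the ``bookkeeping'' you defer is the entire content: to make it work you would have to rerun the proof of Lemma~\ref{lem:nearbyfloer} tracking the two weight factors simultaneously, which is no easier than the paper's direct argument. Moreover, even to conclude just $\fM_{f\alpha}^\Lambda\otimes\fM_{f'\alpha}^\Lambda\simeq\fM_{(f+f')\alpha}^\Lambda$ as bimodules (not that $\Phi$ is a quasi-isomorphism), you would need the quasi-isomorphisms $h_{\tilde L}\otimes(-)\simeq h_{\phi_\alpha^{f+f'}(\tilde L)}$ to be natural in $\tilde L$, which Lemma~\ref{lem:nearbyfloer} does not assert.

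Your Approach B does not work as stated. The deformation factor $T^{f\alpha([\partial_h u])}$ can have \emph{negative} $T$-valuation (whenever $f\alpha([\partial_h u])<0$), so the deformed structure maps do not reduce to the undeformed ones on the $T$-adic associated graded. Hence the associated graded of $\Phi$ is not the classical convolution map, and the spectral-sequence comparison collapses. The boundedness-below you cite from Gromov compactness controls the total weight $E(u)+f\alpha([\partial_h u])$, not the discrepancy $T^{f\alpha([\partial_h u])}-1$.
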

We prove \Cref{lem:grouplikenovikov} in \Cref{appendix:semicont}. To this end, we write a map
\begin{equation}\label{eq:novconvto}
\faM{f}{\Lambda}\otimes_{\cF(M,\Lambda)} \faM{f'}{\Lambda}\to \faM{f+f'}{\Lambda}
\end{equation}
that varies continuously in $f$ and $f'$, and that restricts to a quasi-isomorphism at $f=f'=0$. 

We also prove a $p$-adic version of \Cref{prop:grouplikepadic} of \Cref{lem:grouplikenovikov}. 
\begin{rk}
In \Cref{cor:groupovernovikov}, we give another proof of \Cref{lem:grouplikenovikov} for $f,f'\in\bZ_{(p)}$ with a small $p$-adic absolute value.
\end{rk}
\subsection{p-adic arcs in Floer cohomology}\label{subsec:padicarcs}
Let $p>2$ be a prime number. We start by constructing an embedding $\bQ(T^G)\hookrightarrow \bQ_p$ such that elements of the form $T^g$ map to elements of $1+p\bZ_p$. More precisely, fix an integral basis $g_1,\dots ,g_k$ of the group $G$. Let $\kappa_1,\dots,\kappa_k\in\bZ_p$ be algebraically independent over $\bQ$. Define a map $g_i\mapsto 1+p\kappa_i$ from $\bQ(T^G)\to \bQ_p$. This is well defined since $T^{g_i}$ are algebraically independent. We will extend it to a map $\bQ(T^{G_{(p)}})\to\bQ_p$ using Definition \ref{defn:padicinterp}. Recall that $\bQ_p\langle t\rangle=\{\sum_n a_nt^n:n\in\bN, a_n\in\bQ_p, val_p(a_n)\to\infty \}$ is the Tate algebra over $\bQ_p$ with one variable, and it can be thought of as the set of analytic functions on the $p$-adic unit disc $\bZ_p$:
\begin{defn}\cite{padicinterp,dynmordellforcoherent}\label{defn:padicinterp}
Let $v=1+\nu\in1+p\bZ_p$. Define $v^t\in \bQ_p\langle t\rangle$ to be the function \begin{equation}\label{eq:padicinterp}
(1+(v-1))^t=(1+\nu)^t:=\sum_{i=0}^{\infty} {t\choose i}\nu^i.
\end{equation}
\end{defn}
Recall that ${t\choose i}$ denotes the polynomial $\frac{t(t-1)\dots(t-i+1)}{i!}$. 
The convergence of (\ref{eq:padicinterp}) on $\bZ_p$ is clear (cf. \cite[Proposition 2.1]{dynmordellforcoherent}). Let us list some properties, mainly inspired by \cite{dynmordellforcoherent}:
\begin{enumerate}
	\item\label{propexp1} $v^t$ is the $n^{th}$ power of $v$ when $t=n\in\bN$,
	\item\label{propexp2} $v^{t+t'}=v^tv^{t'}\in\bQ_p\langle t,t'\rangle$,
	\item\label{propexp3} $(v_1v_2)^t=v_1^tv_2^t$.
\end{enumerate}
\begin{proof}
The claim (\ref{propexp1}) follows from the binomial theorem. To see (\ref{propexp3}), check it first on $\bN\subset\bZ_p$ using (\ref{propexp1}). A functional equation that holds on a dense (or just infinite) subset of $\bZ_p$ holds over $\bQ_p\langle t\rangle$ by Strassman's theorem. More precisely, Strassman's theorem states that a non-zero function $f(t)\in\bQ_p\langle t\rangle$ vanishes only at finitely many elements of $\bQ_p$ (\cite{strassmannuber}, \cite[p. 62, Theorem 4.1]{casselslocal}, \cite[Theorem 3.38]{padicanalysiskatok}). We apply this statement to $f(t)=(v_1v_2)^t-v_1^tv_2^t$.

Similarly, to see (\ref{propexp2}), check it first on $\bN\times\bN\subset\bZ_p\times\bZ_p$ using (\ref{propexp1}), and conclude by the density that the equation holds on $\bZ_p\times\bZ_p$. To obtain (\ref{propexp2}), we apply Strassman's theorem iteratively. More precisely, let $f(t,t')=v^{t+t'}-v^tv^{t'}=\sum g_k(t')t^k$. We noted that $f(a,a')=0$ for $a,a'\in\bZ_p$. Therefore, for any fixed $a'$, $\sum g_k(a')t^k\in\bQ_p\langle t\rangle$ is $0$ at infinitely many $t$, and Strassman's theorem implies that $g_k(a')=0$ for all $k$. As $a'$ was arbitrary, a second application of Strassman's theorem would let us conclude that $g_k(t')=0$; therefore, $f(t,t')=0$. 
%
\end{proof}
\begin{lem}\label{lem:fieldmapqp}
The map $\bQ(T^{G})\to \bQ_p$ that sends $T^{g_i}$ to $1+p\kappa_i$ extends uniquely to a field homomorphism $\kappa:\bQ(T^{G_{(p)}})\to \bQ_p$ such that for a given $g_i$, and given $n$ such that $p\nmid n$, $\kappa(T^{g_i/n})=(1+p\kappa_i)^{1/n}$, i.e. the specialization of $(1+p\kappa_i)^{t}$ to $t=1/n$. 
\end{lem}
Observe that when $p\nmid n$, one can specialize $(1+p\kappa_i)^{t}\in \bQ_p\langle t\rangle$ to $t=1/n$ as $1/n\in\bZ_p$.
\begin{proof}
Observe that, for a given $N\in \bN$ satisfying $p\nmid N$, the elements $x_i:=(1+p\kappa_i)^{1/N}$, $i=1,\dots, k$ are algebraically independent. To see this, first note that $x_i^N=1+p\kappa_i$ by the property \eqref{propexp1}; thus, the elements $x_i^N$ are algebraically independent as $\kappa_i$ are independent. Assume that the elements $x_i$ satisfy an algebraic relation, and without loss of generality, that $x_k$ is algebraic over $\bQ(x_1,\dots, x_{k-1})$. Clearly, $\bQ(x_1,\dots, x_{k-1})$ is an algebraic extension of $\bQ(x_1^N,\dots, x_{k-1}^N)$. Therefore, $x_k$ (and thus $x_k^N$) is algebraic over $\bQ(x_1^N,\dots, x_{k-1}^N)$, which would contradict the independence of the elements $x_i^N$. This proves the independence of the elements $x_i$.

As a result, there exists a unique field homomorphism $\bQ(T^{g_1/N},\dots ,T^{g_k/N})\to \bQ_p$ that sends $T^{g_i/N}$ to $x_i=(1+p\kappa_i)^{1/N}$. Therefore, $T^{-g_i/N}$ maps to the inverse of $(1+p\kappa_i)^{1/N}$, which is equal to $(1+p\kappa_i)^{-1/N}$ ($(1+p\kappa_i)^{1/N}(1+p\kappa_i)^{-1/N}=(1+p\kappa_i)^0=1$ by \eqref{propexp2}).

Let $n\mid N$ and assume that $d:=N/n>0$, then $T^{g_i/n}=T^{g_i/N}\dots T^{g_i/N}$ ($d$ times) maps to 
\begin{equation}
	(1+p\kappa_i)^{1/N}\dots (1+p\kappa_i)^{1/N}\text{ (}d\text{ times)},
\end{equation}
which is equal to $(1+p\kappa_i)^{d/N}=(1+p\kappa_i)^{1/n}$ by \eqref{propexp2}. Similarly, $T^{-g_i/n}=T^{-g_i/N}\dots T^{-g_i/N}$ maps to $(1+p\kappa_i)^{-1/n}=(1+p\kappa_i)^{-1/N}\dots (1+p\kappa_i)^{-1/N}$. 

This proves the unique extension to $\bQ(T^{\frac{1}{N}G})$ satisfying the desired property, where $\frac{1}{N}G:=\{r:Nr\in G \}$. As $\bQ(T^{G_{(p)}})=\bigcup_{p\nmid N}\bQ(T^{\frac{1}{N}G})$, we obtain a unique extension to $\bQ(T^{G_{(p)}})$.
\end{proof}
\begin{notation}
We will denote $\kappa(T^a)$ also by $T_\kappa^a$, where $a\in G_{(p)}$. We denote the category obtained from $\cF(M,K)$ by extending the coefficients through $K=\bQ(T^{G_{(p)}})\to \bQ_p$ by $\cF(M,\bQ_p)$. In other words, $\cF(M,\bQ_p):= \cF(M,K)\otimes_K\bQ_p $.
\end{notation}
%
%
The following is the $p$-adic analogue of Definition \ref{defn:novikovfamily}:
\begin{defn}\label{defn:padicfamily}
For a given smooth and proper $A_\infty$-category $\cB$  over $\bQ_p$, a \emph{$p$-adic family $\fM$ of bimodules over $\B$} is an assignment of a free ($\bZ/2\bZ$)-graded $\bQ_p\langle t\rangle$-module $\fM(L,L')$ to every pair of objects together with $\bQ_p\langle t\rangle$-linear structure maps 
\begin{align}
\cB (L'_1,L'_{0})\otimes \dots \cB (L'_m,L'_{m-1})\otimes \fM(L_n,L_m')\otimes \dots \cB (L_0,L_1)\\ \to \fM(L_0,L_0')[1-m-n]
\end{align} satisfying the standard bimodule equations. A \emph{(pre)-morphism} of two families $\fM$ and $\fM'$ is a collection of $\bQ_p\langle t\rangle$-linear maps
\begin{align}
f^{m|1|n}:\cB (L'_1,L'_{0})\otimes \dots \cB (L'_m,L'_{m-1})\otimes \fM(L_n,L_m')\otimes \dots \cB (L_0,L_1)\\ \to \fM'(L_0,L_0')[-m-n].
\end{align} As before, the $p$-adic families form a $\bQ_p\langle t\rangle$-linear pre-triangulated dg category, where the differential and composition are given by standard formulas for bimodules, and a \emph{morphism} of families means a closed pre-morphism. The cone of a morphism is defined as the cone of the underlying map of bimodules, equipped with the natural $\bQ_p\langle t\rangle$-linear structure.
\end{defn}
Definition \ref{defn:padicfamily} easily generalizes to other non-Archimedean fields extending $\bQ_p$ as well as to the Tate algebras with several variables $\bQ_p\langle t_1,\dots t_n\rangle$. Let $\fM$ be a family and $q\in \bQ_p$ be an element of the $p$-adic unit disc $\bD$. One can consider $q$ as a continuous ring homomorphism $\bQ_p\langle t\rangle\to\bQ_p$ such that $f(t)\mapsto f(q)$. Note that these ring homomorphisms are in correspondence with elements of $\bZ_p$. In other words, given $a\in\bZ_p$, one can define such a continuous ring homomorphism sending $\sum_{k=0}^\infty a_kt^k$ to $\sum_{k=0}^\infty a_ka^k\in\bQ_p$. Conversely, any continuous ring homomorphism is determined in this way by the image of $t$ (by continuity), and for $\sum_{k=0}^\infty a_ka^k$ to converge (i.e. $val_p(a_k)+kval_p(a)\to\infty$) for any $\sum_{k=0}^\infty a_kt^k\in \bQ_p\langle t\rangle$ (i.e. whenever $val_p(a_k)\to \infty $), one needs $val_p(a)\geq 0$. 

Define the restriction $\fM|_{t=q}$ as $\fM\otimes_{\bQ_p\langle t\rangle}\bQ_p$. This is an $A_\infty$-bimodule over $\cB$.
\begin{exmp}\label{exmp:constfams}
For any bimodule $\cM$ over $\cB$, one can define a p-adic family by $\fM(L,L')=\cM(L,L')\otimes_{\bQ_p}{\bQ_p\langle t\rangle}$ with the structure maps obtained by base change. This type of family has the same restrictions at every point. In particular, one can let $\cM$ to be a Yoneda bimodule $h^{L}\boxtimes h_{L'}$ (i.e. the exterior tensor product of left and right Yoneda modules, its structure maps are defined in \cite[(2.83),(2.84)]{sheelthesis}). We call such a family \emph{a constant family of Yoneda bimodules}. Given projective $\bQ_p\langle t\rangle$-module $P$ of finite rank, one can also define a corresponding \emph{locally constant family} by $\fM(L,L')=\cM(L,L')\otimes_{\bQ_p} P$. By Quillen--Suslin theorem for Tate algebras (\cite[Theorem 6.7]{kedlayaoverconvergent}), every finitely generated projective module is free; hence, these two notions coincide.  
\end{exmp}
One can define the convolution of two families. First, recall the convolution of bimodules over $\cB$:
\begin{defn}\label{defn:ordinaryconv}
Let $\cM_1$ and $\cM_2$ be two bimodules over $\cB$. Then, $\cM_1\otimes_\cB \cM_2$	is the bimodule defined by 
\begin{equation}\label{eq:convordin}
(L,L')\longmapsto \bigoplus \cM_1(L_k,L')\otimes \cB(L_{k-1},L_k)\otimes \dots \otimes \cB(L_1,L_2) \otimes \cM_2(L,L_1)[k],
\end{equation}
where the direct sum is over all ordered sets $(L_1,\dots,L_k)$ for all $k\in\bZ_{\geq 0}$. The differential is given by 
\begin{align}
(m_1\otimes b_1\otimes\dots \otimes b_f\otimes m_2)\mapsto \sum\pm \mu_{\cM_1} (m_1\otimes b_1\otimes\dots) \otimes\dots\otimes m_2+\\ \sum\pm m_1\otimes \dots \otimes \mu_{\cM_2} (\dots\otimes m_2)+\sum\pm m_1\otimes \dots \otimes \mu_\cB (\dots)\otimes \dots \otimes m_2\nonumber,
\end{align}
and other structure maps are defined similarly. 
\end{defn}
As we briefly explained in the introduction, if $\fM_1$ is a $p$-adic family and $\cM_2$ is an ordinary bimodule, $\fM_1\otimes_\cB \cM_2$ carries a natural $\bQ_p\langle t\rangle$-linear structure, i.e. it is also a family. To emphasize that it inherits a family structure from $\fM_1$, we denote the convolution by $\fM_1\famotimes_\cB \cM_2$, even though it is the same as $\fM_1\otimes_\cB \cM_2$ as a $\cB$-bimodule. For different orders of tensor products, and iterated tensor products, we use analogous notation (when only one of the bimodules carries a family structure). Similarly,
\begin{defn}\label{defn:reltensor}
Given $p$-adic families $\fM_1$ and $\fM_2$, one can endow $\fM_1\otimes_\cB \fM_2$ with the structure of a family over $\bQ_p\langle t_1,t_2\rangle $. One obtains a family over $\bQ_p\langle t\rangle $ via base change along the (co)diagonal map $\bQ_p\langle t_1,t_2\rangle\to \bQ_p\langle t\rangle, t_1,t_2\mapsto t  $. We denote this family by $\fM_1\relotimes_\cB \fM_2$.
\end{defn}
The family $\fM_1\relotimes_\cB \fM_2$ can also be constructed by performing the construction in Definition \ref{defn:ordinaryconv} $\bQ_p\langle t\rangle$-linearly. In other words, one can see $\fM_1$ and $\fM_2$ as bimodules over $\cB_{\bQ_p\langle t\rangle}:=\cB\otimes \bQ_p\langle t\rangle$, and replace the tensor products in \eqref{eq:convordin} with tensor products over $\bQ_p\langle t\rangle$. In this perspective, $\fM_1\relotimes_\cB \fM_2$ can be seen as the fiberwise convolution of two families, i.e. as the tensor product relative to the base $\bQ_p\langle t\rangle$; hence, the notation.
%
\begin{exmp}\label{exmp:constconvolution}
Let $\fM_1$ and $\fM_2$ be two constant families associated to bimodules $\cM_1$ and $\cM_2$ over $\cB$. Then, $\fM_1\relotimes_\cB \fM_2$ is the constant family associated to $\cM_1\otimes_\cB \cM_2$. In particular, if $\cM_1=h^{L_1}\boxtimes h_{L'_1}$ and $\cM_2=h^{L_2}\boxtimes h_{L'_2}$, then $\fM_1\relotimes_\cB \fM_2$ is the constant family associated to $\cB(L_2,L_1')\otimes (h^{L_1}\boxtimes h_{L'_2})$.
\end{exmp}
By Morita theory, a family of bimodules can be thought of as a family of endomorphisms of the category. Therefore, if the parameter space of the family is a group, one can study the ``actions of this group on the category''. Observe $\bQ_p\langle t\rangle$ is the ring of functions of a group, and is itself a Hopf algebra over $\bQ_p$ with comultiplication  given by $\Delta:t\mapsto t\otimes 1+1\otimes t$ (the counit is given by $e:t\mapsto 0$ and the antipodal map is given by $t\mapsto -t$). 

Let $\pi_i:\bQ_p\langle t\rangle \to \bQ_p\langle t_1,t_2\rangle$ denote the map $t\mapsto t_i$ for $i=1,2$. Given $p$-adic family $\fM$, one can extend the coefficients along $\pi_1$, $\pi_2$ and $\Delta$ to define three $2$-parameter $p$-adic families of bimodules denoted by $\pi_1^*\fM$, $\pi_2^*\fM$ and $\Delta^*\fM$
(we identify $\bQ_p\langle t_1,t_2\rangle$ with a suitable completion of $\bQ_p\langle t\rangle\otimes \bQ_p\langle t\rangle$ such that $t\otimes 1=t_1,1\otimes t=t_2$). The relative tensor product defined in \Cref{defn:reltensor} can also be performed relative to the base $\bQ_p\langle t_1,t_2\rangle$; thus, $\pi_1^*\fM\relotimes_\cB\pi_2^*\fM$ is a well-defined $2$-parameter $p$-adic family.
\begin{defn}\label{defn:grouplike}
A $p$-adic family $\fM$ of bimodules over $\cB$ is called \emph{group-like} if $\Delta^*\fM \simeq \pi_1^*\fM\relotimes_\cB\pi_2^*\fM$ and if the restriction to counit $\fM|_{t=0}$ is quasi-isomorphic to diagonal bimodule.
\end{defn}
\begin{figure}\centering
	\includegraphics[height=4 cm]{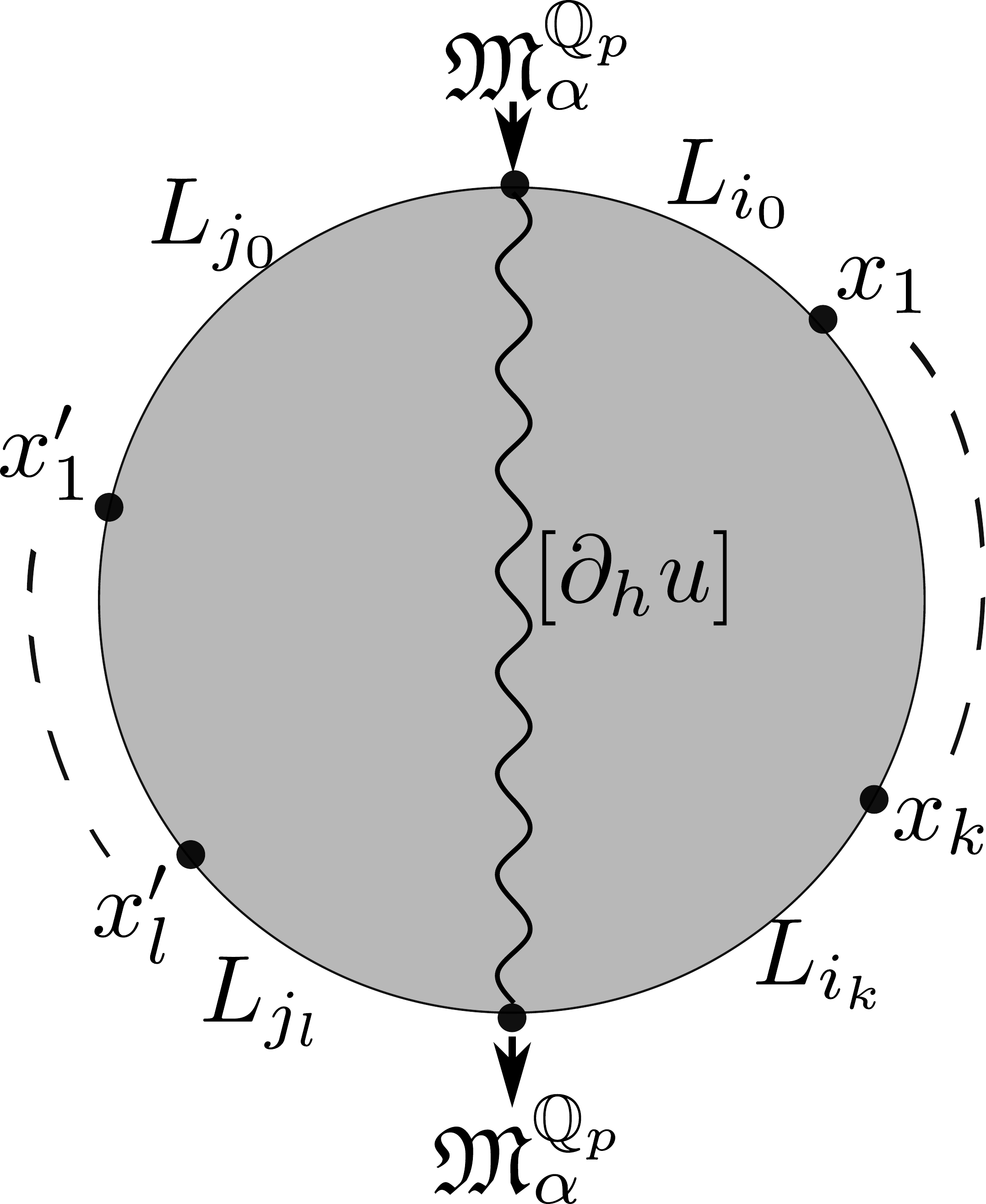}
	\caption{The counts defining $\fM_{\alpha}^{\bQ_p}$}
	\label{figure:padicfamily}
\end{figure}
We observe:
\begin{lem}\label{lem:grouplike}
If $\fM$ is group-like, and $f_1,f_2\in\bZ_p$, then $\fM|_{t=f_1+f_2}\simeq \fM|_{t=f_1}\otimes_\cB \fM|_{t=f_2}$.	
\end{lem}
\begin{proof}
The left-hand side is the restriction of $\Delta^*\fM$ to $(t_1,t_2)=(f_1,f_2)$, and the right-hand side is the restriction of $\pi_1^*\fM\relotimes_\cB\pi_2^*\fM$ to the same point. The conclusion follows from \Cref{defn:grouplike}. 
\end{proof}
\begin{rk}\label{rk:generalcoherent}
Given a group-like family $\fM$, one obtains two morphisms 
\begin{equation}
	\fM|_{t=f_1}\otimes_\cB \fM|_{t=f_2}\otimes_\cB \fM|_{t=f_3}\to \fM|_{t=f_1+f_2+f_3}
\end{equation}
by different orders of composition, and similarly, there are two a priori different compositions of the corresponding three-parameter families. Our definition does not imply that these two are homotopic, and in this sense, it should be considered as a ``weak action'' of the $p$-adic unit disc. In this sense, it should be compared to a homotopy associative action of a topological group, i.e. an action in the homotopy category. This is sufficient for our purposes, although the families we consider also satisfy such coherence conditions. 
\end{rk}

Next, we construct an explicit group-like $p$-adic family $\fM_\alpha^{\bQ_p}$ of bimodules over $\cF(M,\bQ_p)$. Define
\begin{equation}
(L_i,L_j)\mapsto\fM_\alpha^{\bQ_p}(L_i,L_j)= CF(L_i,L_j;\bQ_p)\otimes \bQ_p\langle t\rangle.
\end{equation}
The structure maps are defined via the formula
\begin{equation}\label{eq:padicstructuremaps}
(x_1,\dots, x_k|x|x_1',\dots ,x_l')\mapsto\sum\pm T_\kappa^{E(u)} T_\kappa^{t\alpha([\partial_h u])}.y,
\end{equation}
where the sum ranges over the marked discs with input $x_l'\dots, x,x_k,\dots,x_1$ and output $y$. The class $[\partial_h u]\in H_1(M;\bZ)$ is defined as before (Figure \ref{figure:padicfamily}), and $\alpha([\partial_h u])\in G$ by definition of $G$; therefore, $T_\kappa^{\alpha([\partial_h u])}=\kappa(T^{\alpha([\partial_h u])})\in1+p\bZ_p$ is defined. Let $T_\kappa^{t\alpha([\partial_h u])}\in \bQ_p\langle t\rangle$ be its ``$t^{th}$ power'' as in Definition \ref{defn:padicinterp}. The sum is finite, and 
the bimodule equation is satisfied (a picture explanation is given in Figure \ref{figure:compactificationpadicfamily}). It is immediate that the restriction to $t=0$ is isomorphic to the diagonal bimodule of $\cF(M,\bQ_p)$. 
\begin{figure}\centering
	\includegraphics[height=5.5 cm]{compactificationpadicfamily.pdf}
	\caption{Some degenerations of discs as in Figure \ref{figure:padicfamily}}
	\label{figure:compactificationpadicfamily}
\end{figure}

To prove that this family is group-like, our next task is to write a closed morphism of families 
\begin{equation}\label{eq:convolutiontodiag}
g_{\fM_\alpha^{\bQ_p}}:\pi_1^*\fM_\alpha^{\bQ_p}\relotimes_{\cF(M,\bQ_p)} \pi_2^*\fM_\alpha^{\bQ_p}\to \Delta^*\fM_\alpha^{\bQ_p}
\end{equation}
satisfying the following: the restriction of (\ref{eq:convolutiontodiag}) to $t_1=t_2=0$ is the standard map 
\begin{equation}
	\cF(M,\bQ_p)\otimes_{\cF(M,\bQ_p)}\cF(M,\bQ_p)\to \cF(M,\bQ_p)
\end{equation} of bimodules, which is a quasi-isomorphism. 

Given an $A_\infty$-category $\cB$, it is a general result that $\cB\otimes_{\cB}\cB\simeq\cB$ (\cite[Proposition 2.2]{sheelthesis}). The bimodule quasi-isomorphism from left-hand side to the right is given by
\begin{equation}\label{eq:convolutiontodiagformula}
g^{k|1|l}:(x_1,\dots,x_k|x\otimes b_1\otimes\dots \otimes b_f\otimes x'|x_1',\dots x_l')\mapsto\atop \pm\mu_\cB (x_1,\dots,x_k,x, b_1,\dots b_f, x',x_1',\dots x_l').
\end{equation}
Here, $x\otimes b_1\otimes\dots \otimes b_f\otimes x'$ is an element of $\cB\otimes_\cB\cB$. When $\cB$ is the Fukaya category, this map is geometrically given by the count of marked discs as usual. To deform it, we define the following cohomology classes: given a pseudo-holomorphic disc $u$ with output $y$ and with input given by generators $x_1,\dots,x_k$, $x$, $b_1\dots , b_f$,  $x'$, $x_1',\dots x_l'$ (in counter-clockwise direction after output), define $[\partial_1\alpha]\in H_1(M;\bZ) $ to be the path obtained by concatenating the fixed path from the base point of $M$ to the generator $x$, the image under $u$ of a path from the input marked point for $x$ to the output marked point, and the reverse of the path from the base point to $y$. We think of this class as the portion of the boundary of $u$ from $x$ to $y$. Similarly define $[\partial_2 u]\in H_1(M;\bZ)$ by replacing $x$ with $x'$. The class $[\partial_2 u]$ can be thought of as the portion of boundary from $x'$ to $y$. In other words, the paths $[\partial_1 u]$ and $[\partial_2 u]$ are obtained by concatenating the $u$-image of the respective wavy line in Figure \ref{figure:groupquasi} with chosen paths from the base point to the generator. To define the map (\ref{eq:convolutiontodiag}), fix inputs $x_1,\dots,x_k,x, b_1\dots , b_f, x',x_1',\dots x_l'$ as above. The coefficient of $y$ under the map (\ref{eq:convolutiontodiag}) is given by 
\begin{equation}\label{eq:padicmapformula}
\sum\pm T_\kappa^{E(u)} T_\kappa^{t_1\alpha([\partial_1 u])} T_\kappa^{t_2\alpha([\partial_2 u])}.y,
\end{equation}
where $u$ ranges over the pseudo-holomorphic discs with given input and output (Figure \ref{figure:groupquasi}).  

Recall that $T_\kappa^{\alpha([\partial_1 u])}:=\kappa(T^{\alpha([\partial_1 u])})\in 1+p\bZ_p$. As before $T_\kappa^{t_1\alpha([\partial_1 u])}\in \bQ_p\langle t_1\rangle\subset \bQ_p\langle t_1,t_2\rangle$ is its $t_1^{th}$-power as in Definition \ref{defn:padicinterp}. The element $T_\kappa^{t_2\alpha([\partial_2 u])}\in \bQ_p\langle t_2\rangle\subset \bQ_p\langle t_1,t_2\rangle$ is defined similarly. 
This defines a map of bimodules (\ref{eq:convolutiontodiag}) (Figure \ref{figure:compactificationgroupquasi} provides a picture explanation). Moreover, the map \eqref{eq:convolutiontodiag} of families restricts to the standard quasi-isomorphism of diagonal bimodules at $t_1=t_2=0$ defined by (\ref{eq:convolutiontodiagformula}).
\begin{figure}\centering
	\includegraphics[height=4 cm]{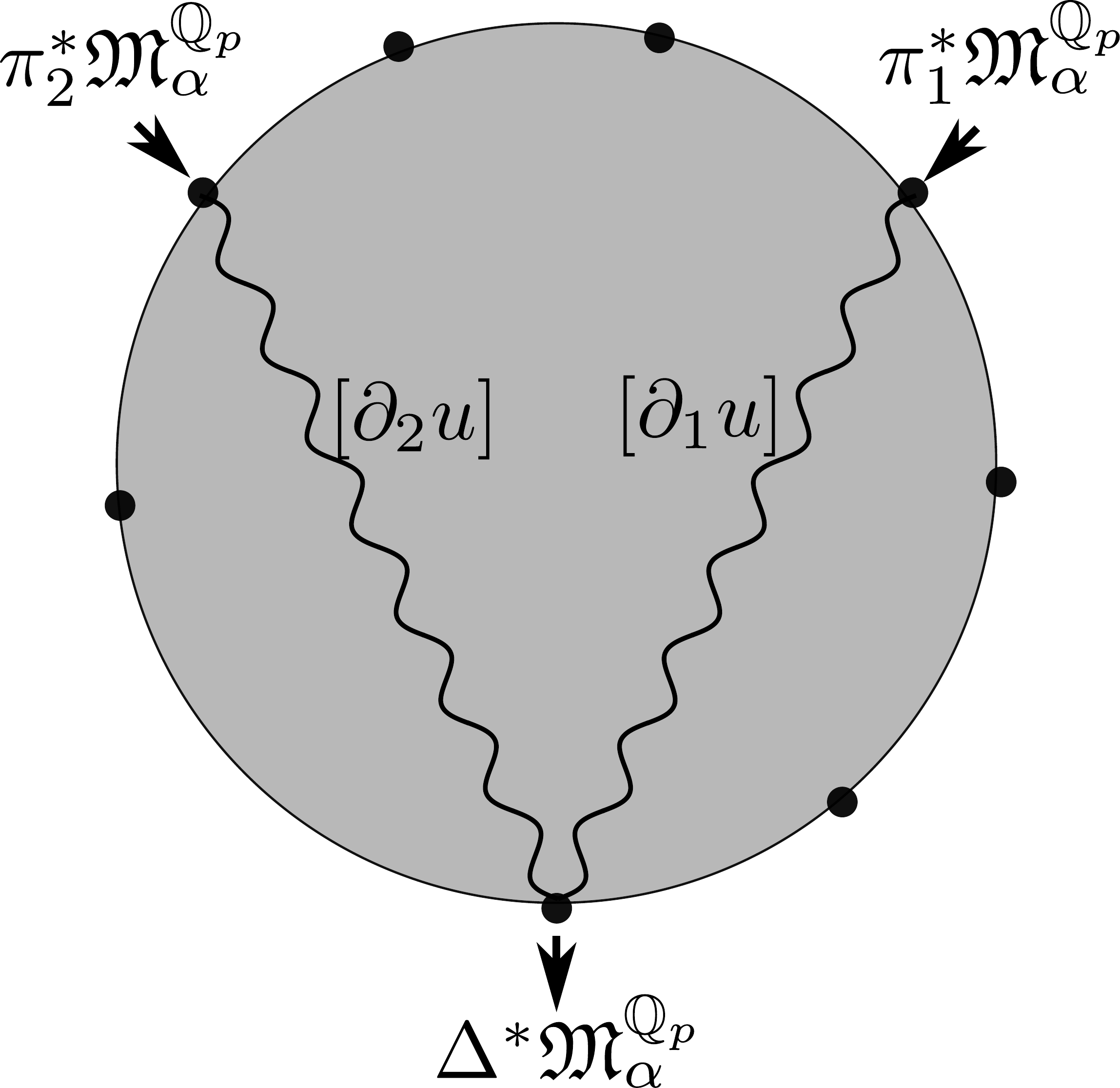}
	\caption{The counts defining (\ref{eq:convolutiontodiag}). The input $x$, resp. $x'$ is associated to the marked point labeled as $\pi_1^*\fM_{\alpha}^{\bQ_p}$, resp. $\pi_2^*\fM_{\alpha}^{\bQ_p}$, and the output $y$ is associated to $\Delta^*\fM_{\alpha}^{\bQ_p}$.}
	\label{figure:groupquasi}
\end{figure}
\begin{figure}\centering
	\includegraphics[height=12 cm]{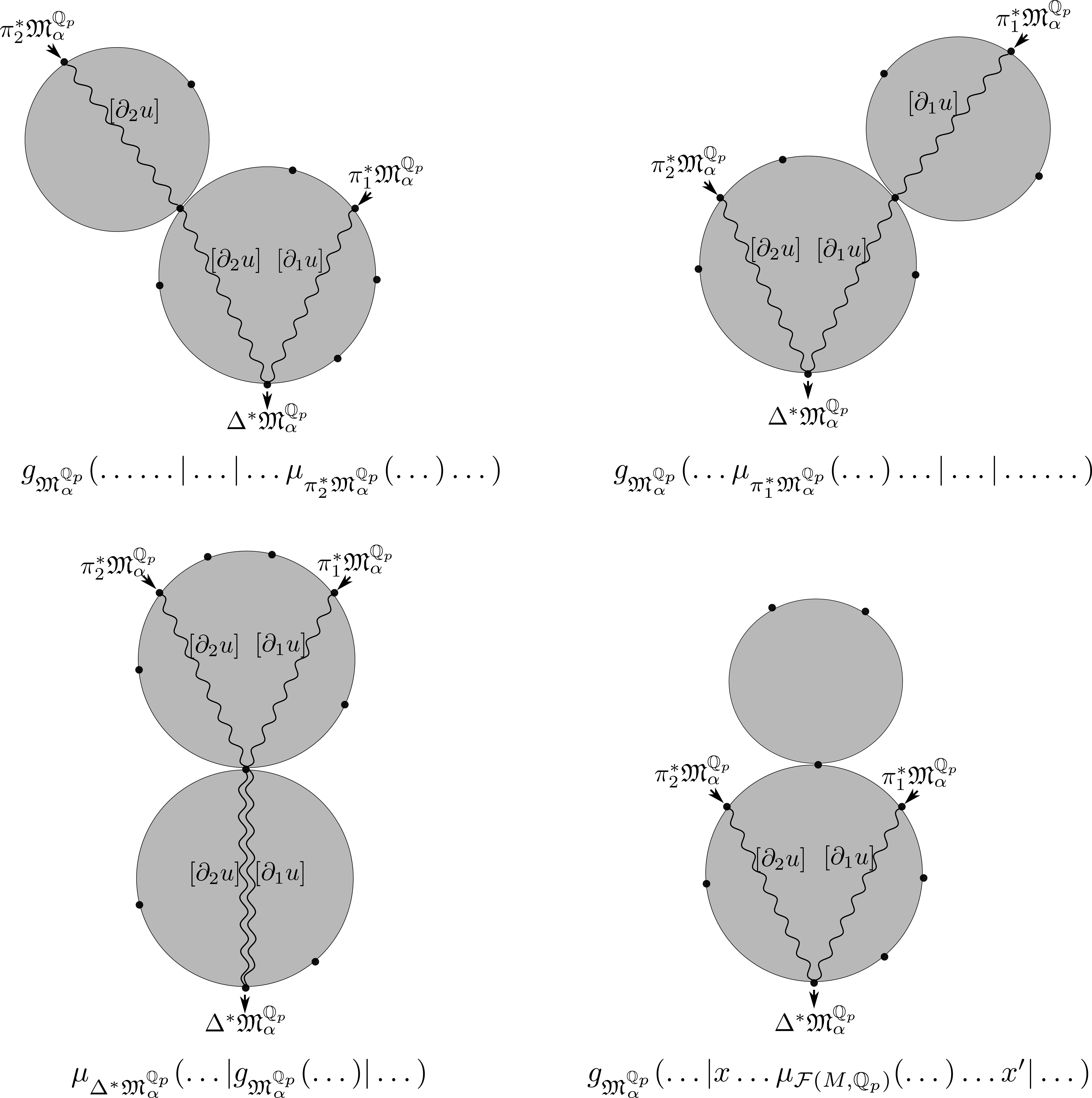}
	\caption{Some degenerations of moduli of discs as in Figure \ref{figure:groupquasi}}
	\label{figure:compactificationgroupquasi}
\end{figure}

Our next task is to prove (\ref{eq:convolutiontodiag}) is a quasi-isomorphism. This relies on a semi-continuity argument for which we need a properness result for the domain of (\ref{eq:convolutiontodiag}), i.e. we need to show that this family is a cohomologically finitely generated $\bQ_p\langle t_1,t_2\rangle$-module at every pair of objects $(L,L')$. We need some technical preparation for this:
\begin{defn}\label{defn:perfectfamily}
A family $\fM$ is called \emph{perfect} if it is quasi-isomorphic to a direct summand of a twisted complex of constant families of Yoneda bimodules in the category of families. 
A family $\fM$ is called \emph{proper} if the cohomology of $\fM(L,L')$ is finitely generated over $\bQ_p\langle t\rangle$ for all $L,L'$.
\end{defn}
\begin{rk}\label{rk:locconst}
Note that our definition of perfect families is a priori more restrictive than the notion of a family of twisted complexes in \cite{flux}. More precisely, in loc.\ cit.\ the author allows \emph{locally constant families of Yoneda bimodules} as defined in \Cref{exmp:constfams}. As we have remarked, in our specific case, this notion is not more general for Tate algebras thanks to the Quillen--Suslin theorem. However, the correct notion of a perfect family over more general rings of functions should allow the locally constant ones. 
\end{rk}
Clearly, 
if $\cB$ is a proper category, 
the perfectness of a family implies its properness. We will see that proper implies perfect for a smooth, proper $A_\infty$-category $\cB$. For simplicity, we start with the following:
\begin{lem}\label{lem:propermodule}
Let $\cB$ be a smooth and proper $A_\infty$-category over a field. 
A proper right/left module or a bimodule over $\cB$ is perfect. 
\end{lem}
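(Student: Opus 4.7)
The plan is to exploit smoothness of $\cB$ to write the diagonal bimodule as a finite twisted complex with idempotents of Yoneda bimodules, then convolve with $M$ and collapse each resulting piece using properness of $M$.

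By smoothness, there is a quasi-isomorphism $P \xrightarrow{\sim} \cB_\Delta$ with $P$ a finite twisted complex with idempotents whose constituents are Yoneda bimodules $h^{X_i} \boxtimes h_{Y_i}$. For a proper right module $M$, the chain of quasi-isomorphisms $M \simeq M \otimes_\cB \cB_\Delta \simeq M \otimes_\cB P$ reduces the problem to showing that each $M \otimes_\cB (h^{X_i} \boxtimes h_{Y_i})$ is perfect as a right module. A standard bar-complex Yoneda computation identifies this convolution, up to quasi-isomorphism, with $M(X_i) \otimes_k h_{Y_i}$, where $k$ is the ground field and $M(X_i)$ is regarded merely as a chain complex of $k$-vector spaces. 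Properness of $M$ gives that $H^*(M(X_i))$ is finite-dimensional, so over the field $k$ the complex $M(X_i)$ is quasi-isomorphic to its cohomology equipped with zero differential, a finite-dimensional graded vector space. Tensoring with $h_{Y_i}$ over $k$ then produces a finite direct sum of shifts of $h_{Y_i}$, manifestly perfect. Assembling, $M \otimes_\cB P$ is built from perfect pieces inside a finite twisted complex with idempotents, hence perfect, and therefore so is $M$.

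The left module case is exactly dual: resolve on the left, $M \simeq P \otimes_\cB M$, and identify each $(h^{X_i} \boxtimes h_{Y_i}) \otimes_\cB M \simeq h^{X_i} \otimes_k M(Y_i)$ by the analogous Yoneda argument. For a bimodule $M$ I would resolve on both sides, $M \simeq P \otimes_\cB M \otimes_\cB P$, and identify each constituent as $M(Y_i, X_j) \otimes_k (h^{X_i} \boxtimes h_{Y_j})$ via two applications of Yoneda; bimodule properness once more reduces $M(Y_i, X_j)$ to a finite-dimensional graded vector space, yielding a finite twisted complex with idempotents of Yoneda bimodules. The one slightly delicate step in the plan is the Yoneda-type identification of the bar convolutions: in the $A_\infty$ setting this is a standard but notation-heavy calculation, and one must keep track of signs, shifts, and the conventions distinguishing left from right Yoneda modules. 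Otherwise, the argument is the classical fact that over a smooth and proper algebra every cohomologically bounded module is perfect.
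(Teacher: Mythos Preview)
Your proposal is correct and follows essentially the same approach as the paper: resolve the diagonal bimodule by a finite twisted complex with idempotents of Yoneda bimodules using smoothness, convolve with $M$, identify each piece $M\otimes_\cB(h^{X_i}\boxtimes h_{Y_i})\simeq M(X_i)\otimes_k h_{Y_i}$, and use properness of $M$ to reduce $M(X_i)$ to a finite-dimensional graded vector space. The paper's proof is slightly terser (it omits the explicit step of replacing $M(X_i)$ by its cohomology) but the argument is the same, including the two-sided resolution for the bimodule case.
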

\begin{proof}
Let $\cM$ be a proper right module over $\cB$. Then, $\cM\otimes_\cB \cB\simeq \cM $ as right modules. One can represent the diagonal bimodule $\cB$ in terms of Yoneda bimodules $h^{L}\boxtimes h_{L'}$, where $h^{L}$ denotes the left Yoneda module, $h_{L'}$ denotes the right Yoneda module, and $\boxtimes$ denotes the exterior tensor product (Ganatra \cite[(2.83),(2.84)]{sheelthesis}). Observe that
\begin{equation}
\cM\otimes_\cB (h^{L}\boxtimes h_{L'})\simeq \cM(L)\otimes h_{L'}.
\end{equation}
Therefore, $\cM$ can be written as direct a summand of a twisted complex (iterated cone) of  modules of type $\cM(L)\otimes_{\cB} h_{L'}$, but the latter is quasi-isomorphic to finitely many copies of $h_{L'}$ as $\cM$ is proper. This concludes the proof.

The proof is the same for left modules, and for bimodules one uses 
\begin{equation}
\cM\simeq \cB\otimes_{\cB}\cM\otimes_{\cB} \cB
\end{equation}
together with the finite resolution of the diagonal on both sides.
\end{proof}
Assume that $\cB$ is smooth and proper $A_\infty$-category over $\bQ_p$ (or over a subfield of $\bQ_p$). Lemma \ref{lem:propermodule} immediately generalizes to:
\begin{lem}\label{lem:properfamily}
A proper $p$-adic family of bimodules over $\cB$ is perfect. 	
\end{lem}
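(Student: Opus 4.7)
The plan is to mirror the proof of Lemma~\ref{lem:propermodule} $\bQ_p\langle t\rangle$-linearly, using smoothness of $\cB$ to spread the diagonal resolution across the parameter and then using the one-dimensionality of the Tate algebra to resolve the fibers.

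First, I would invoke smoothness of $\cB$ to write the diagonal bimodule as a direct summand of a finite twisted complex of exterior tensor products $h^{L_i}\boxtimes h_{L_j'}$. Tensoring this resolution on either side of $\fM$ and using $\fM\simeq \cB\otimes_\cB \fM\otimes_\cB \cB$ (performed $\bQ_p\langle t\rangle$-linearly, i.e.\ as $\otimes^{rel}_\cB$) exhibits $\fM$ as a direct summand of a finite twisted complex of families of the form
\begin{equation}
(h^{L_1}\boxtimes h_{L_1'})\otimes^{rel}_\cB \fM \otimes^{rel}_\cB (h^{L_2}\boxtimes h_{L_2'}) \simeq \fM(L_2,L_1')\otimes_{\bQ_p\langle t\rangle}\bigl(h^{L_1}\boxtimes h_{L_2'}\bigr),
\end{equation}
exactly as in Example~\ref{exmp:constconvolution}, except that now the coefficient complex $\fM(L_2,L_1')$ is a complex of $\bQ_p\langle t\rangle$-modules rather than of $\bQ_p$-vector spaces.

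Second, I would replace the coefficient complex $\fM(L_2,L_1')$ by a genuinely perfect complex over $\bQ_p\langle t\rangle$. By properness of $\fM$ this complex has finitely generated cohomology, and only in finitely many degrees, since we are working $\bZ/2$-graded. The key algebraic input is that $\bQ_p\langle t\rangle$ is a principal ideal domain, hence Noetherian of global dimension one, so any such complex admits a quasi-isomorphism from a bounded complex of finitely generated free $\bQ_p\langle t\rangle$-modules. Substituting this resolution turns each piece above into a finite iterated extension of shifted constant families of Yoneda bimodules $h^{L_1}\boxtimes h_{L_2'}$, and feeding this back into the outer twisted complex exhibits $\fM$ as a direct summand of a twisted complex of constant families of Yoneda bimodules, which is the definition of perfect in Definition~\ref{defn:perfectfamily}.

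The main obstacle, and essentially the only nontrivial point beyond the proof of Lemma~\ref{lem:propermodule}, is the passage from ``finitely generated cohomology'' to ``perfect complex'' for the coefficient module $\fM(L_2,L_1')$; this is precisely where one uses that the parameter ring $\bQ_p\langle t\rangle$ is one-dimensional and regular. Everything else is a formal $\bQ_p\langle t\rangle$-linear repackaging. If one later wants to extend this lemma to multi-variable Tate algebras $\bQ_p\langle t_1,\dots,t_n\rangle$ (as will be needed when invoking the semi-continuity argument in Proposition~\ref{prop:grouplikepadic}), the same proof goes through using that these algebras are Noetherian regular of finite global dimension.
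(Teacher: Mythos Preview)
Your proof is correct and follows essentially the same route as the paper: resolve the diagonal on both sides using smoothness, reduce to families of the form $\fM(L_2,L_1')\otimes(h^{L_1}\boxtimes h_{L_2'})$, then replace the coefficient complex by a finite free one. The only cosmetic difference is the justification for this last step: the paper cites \cite[Proposition 6.5]{kedlayaoverconvergent} (finite free resolutions over affinoid algebras), which already covers the multi-variable case you mention, whereas you invoke the more elementary fact that $\bQ_p\langle t\rangle$ is a PID.
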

\begin{proof}
Let $\fM$ be a proper family. The quasi-isomorphism 	
\begin{equation}
\fM\simeq \cB\famotimes_{\cB}\fM\famotimes_{\cB} \cB
\end{equation}
still holds true. By using the representation of the diagonal bimodule as a direct summand of a twisted complex of Yoneda bimodules, we see that $\fM$ is quasi-isomorphic to a direct summand of a twisted complex of families of the form
\begin{equation}
(h^{L_1}\boxtimes h_{L_1'}) \famotimes_{\cB}\fM\famotimes_{\cB} (h^{L_2}\boxtimes h_{L_2'})\simeq \fM(L_2,L_1')\otimes (h^{L_1}\boxtimes h_{L_2'}).
\end{equation}
Thus, it suffices to show that the last type of family is perfect. Note that here we consider $\fM(L_2,L_1')$ as a chain complex over $\bQ_p\langle t\rangle $, and $(h^{L_1}\boxtimes h_{L_2'})$ is the Yoneda bimodule as before. The family structure on their tensor product is obvious. 

By assumption, $\fM(L_2,L_1')$ has finitely generated cohomology over $\bQ_p\langle t\rangle$ (or whichever parameter space we are using). By \Cref{lem:freeres}, 
there exists a complex $C$ of finitely generated, free modules over the Tate algebra quasi-isomorphic to $\fM(L_2,L_1')$. Then, the bimodule 
\begin{equation}
C\otimes (h^{L_1}\boxtimes h_{L_2'})\simeq \fM(L_2,L_1')\otimes (h^{L_1}\boxtimes h_{L_2'})
\end{equation}
is perfect. This completes the proof.
\end{proof}
\begin{rk}
The notions of $p$-adic family of left/right modules can be defined similarly. Then, Lemma \ref{lem:properfamily} still holds for such families.
\end{rk}
\begin{cor}\label{cor:properconv}
Let $\fM_1$ and $\fM_2$ be two proper $p$-adic families. Then the convolution $\pi_1^*\fM_1\relotimes_{\cB}\pi_2^*\fM_2$ is proper.
\end{cor}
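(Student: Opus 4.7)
The plan is to reduce to the case of constant families of Yoneda bimodules, where the statement becomes a finiteness statement about $\cB$ itself.

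First I would invoke Lemma \ref{lem:properfamily} on both $\fM_1$ and $\fM_2$, which lets me write each of them as a direct summand of a twisted complex whose terms are constant families $h^{L_a}\boxtimes h_{L_a'}$ of Yoneda bimodules. The relative convolution $\otimes^{rel}_\cB$ is bi-exact, commutes with finite direct sums, and respects cones, so $\fM_1\otimes^{rel}_\cB\fM_2$ is itself quasi-isomorphic to a direct summand of a twisted complex whose terms are of the form
\begin{equation}
(h^{L_1}\boxtimes h_{L_1'})\otimes^{rel}_\cB(h^{L_2}\boxtimes h_{L_2'}).
\end{equation}
By Example \ref{exmp:constconvolution}, each such term is the constant family associated to the bimodule $\cB(L_2,L_1')\otimes(h^{L_1}\boxtimes h_{L_2'})$.

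Next I would check that each such constant family is proper. Evaluated at a pair $(L,L')$, it equals
\begin{equation}
\cB(L_2,L_1')\otimes \cB(L,L_1)\otimes \cB(L_2',L')\otimes_{\bQ_p}\bQ_p\langle t\rangle,
\end{equation}
with $\bQ_p\langle t\rangle$-linear differential extending the $\bQ_p$-differential on the first factor. Hence its cohomology equals $H^*(\cB(L_2,L_1')\otimes \cB(L,L_1)\otimes \cB(L_2',L'))\otimes_{\bQ_p}\bQ_p\langle t\rangle$, which is finitely generated over $\bQ_p\langle t\rangle$ because $\cB$ is proper, so each of the three tensor factors has finite-dimensional cohomology over $\bQ_p$.

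Finally I would assemble the pieces. The Tate algebra $\bQ_p\langle t\rangle$ is Noetherian, so the full subcategory of the derived category consisting of complexes with cohomology finitely generated over $\bQ_p\langle t\rangle$ is closed under finite cones, shifts, and direct summands. Since $\fM_1\otimes^{rel}_\cB\fM_2$ is, at every pair $(L,L')$, a direct summand of a finite iterated cone built from complexes of the proper constant-family type described above, its cohomology at every $(L,L')$ is finitely generated over $\bQ_p\langle t\rangle$, which is precisely the properness statement. The main obstacle is really just the bookkeeping needed to pass through the possibly infinite twisted complex produced by Lemma \ref{lem:properfamily}, but since that lemma in fact outputs a \emph{finite} resolution (via the finite free resolution over the affinoid domain used in its proof), this bookkeeping is harmless.
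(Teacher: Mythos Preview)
Your proof is correct and follows essentially the same approach as the paper: apply Lemma~\ref{lem:properfamily} to write each $\fM_i$ as a summand of a finite twisted complex of constant Yoneda families, use Example~\ref{exmp:constconvolution} to identify the convolutions of the pieces, and conclude by closure of properness under cones and summands. The only cosmetic difference is that the paper observes the convolution of two constant Yoneda families is \emph{perfect} (hence proper) rather than verifying properness by the explicit cohomology computation you give, and your final remark about ``possibly infinite twisted complexes'' is unnecessary since perfectness in Definition~\ref{defn:perfectfamily} already means a finite twisted complex.
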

\begin{proof}
By Lemma \ref{lem:properfamily}, both families are perfect; therefore, they can be represented as summands of complexes of constant families of Yoneda bimodules. It follows from Example \ref{exmp:constconvolution} that the convolution of two constant families of Yoneda bimodules is perfect; hence, proper. Therefore, $\pi_1^*\fM_1\relotimes_{\cB}\pi_2^*\fM_2$ can be represented as the direct summand of a twisted complex (iterated cone) of proper modules and it is proper itself.
\end{proof}
\begin{cor}\label{lem:Nproper} Let $\fN$ denote the cone of the morphism (\ref{eq:convolutiontodiag}). Then, $H^*(\fN(L_i,L_j))$ is a finitely generated module over $\bQ_p\langle t_1,t_2\rangle$ for all $L_i,L_j$, i.e. $\fN$ is proper. 
\end{cor}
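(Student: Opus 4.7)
The plan is to show that both the source and target of the morphism (\ref{eq:convolutiontodiag}) are proper families over $\bQ_p\langle t_1,t_2\rangle$, and then conclude by a long exact sequence argument using the Noetherianness of the Tate algebra.

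First, I would check that $\Delta^*\fM_\alpha^{\bQ_p}$ is proper. By construction, $\fM_\alpha^{\bQ_p}(L_i,L_j)=CF(L_i,L_j;\bQ_p)\otimes \bQ_p\langle t\rangle$ is a free $\bQ_p\langle t\rangle$-module of finite rank, since properness of $\cF(M,\bQ_p)$ ensures $CF(L_i,L_j;\bQ_p)$ is finite-dimensional over $\bQ_p$ (using Lemma \ref{lem:finitesum} in the Bohr-Sommerfeld setting). Base change along $\Delta:\bQ_p\langle t\rangle\to\bQ_p\langle t_1,t_2\rangle$ preserves this, so $\Delta^*\fM_\alpha^{\bQ_p}(L_i,L_j)$ is free of finite rank over $\bQ_p\langle t_1,t_2\rangle$. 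Since $\bQ_p\langle t_1,t_2\rangle$ is Noetherian, its cohomology is finitely generated, establishing properness.

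Next, I would handle the source $\pi_1^*\fM_\alpha^{\bQ_p}\otimes^{rel}_{\cF(M,\bQ_p)} \pi_2^*\fM_\alpha^{\bQ_p}$. The same reasoning as above shows that each $\pi_i^*\fM_\alpha^{\bQ_p}$ is a proper family over $\bQ_p\langle t_1,t_2\rangle$, since pulling back $\bQ_p\langle t\rangle\to\bQ_p\langle t_1,t_2\rangle$ along $\pi_i$ keeps hom-sets free of finite rank. The argument of Corollary \ref{cor:properconv} applies verbatim with $\bQ_p\langle t\rangle$ replaced by $\bQ_p\langle t_1,t_2\rangle$: Lemma \ref{lem:properfamily} relies only on the fact that finitely generated modules over the affinoid domain admit finite free resolutions (\cite[Proposition 6.5]{kedlayaoverconvergent}), which holds for $\bQ_p\langle t_1,t_2\rangle$ as well. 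Hence the convolution is perfect, and in particular proper.

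Finally, $\mathfrak{N}$ fits into an exact triangle
\begin{equation}
\pi_1^*\fM_\alpha^{\bQ_p}\otimes^{rel}_{\cF(M,\bQ_p)} \pi_2^*\fM_\alpha^{\bQ_p}\to \Delta^*\fM_\alpha^{\bQ_p}\to \mathfrak{N}\to [1]
\end{equation}
Evaluating at a pair $(L_i,L_j)$ and passing to the associated long exact sequence in cohomology, the terms flanking $H^*(\mathfrak{N}(L_i,L_j))$ are finitely generated over $\bQ_p\langle t_1,t_2\rangle$ by the previous two steps. Since $\bQ_p\langle t_1,t_2\rangle$ is Noetherian, extensions and quotients of finitely generated modules remain finitely generated, so $H^*(\mathfrak{N}(L_i,L_j))$ is finitely generated as well. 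The mildest obstacle I anticipate is verifying that the two-variable generalization of Corollary \ref{cor:properconv} goes through cleanly, but since the only input used was the Noetherian/finite-free-resolution property of the coefficient Tate algebra, this is essentially automatic.
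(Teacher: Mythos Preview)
Your proposal is correct and follows essentially the same argument as the paper: both show that $\pi_1^*\fM_\alpha^{\bQ_p}$, $\pi_2^*\fM_\alpha^{\bQ_p}$, and $\Delta^*\fM_\alpha^{\bQ_p}$ are proper, invoke Corollary~\ref{cor:properconv} for the convolution, and conclude that the cone is proper. Your write-up is simply more explicit about the Noetherian/long exact sequence step, and the reference to Lemma~\ref{lem:finitesum} is unnecessary here since finite rank of $CF(L_i,L_j;\bQ_p)$ already follows from transversality and compactness.
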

\begin{proof}
By construction, $\pi_1^*\fM_\alpha^{\bQ_p}$ and $\pi_2^*\fM_\alpha^{\bQ_p}$ are both proper modules; therefore, by Corollary \ref{cor:properconv}, $\pi_1^*\fM_\alpha^{\bQ_p}\relotimes_{\cF(M,\bQ_p)} \pi_2^*\fM_\alpha^{\bQ_p}$ is also proper. Since $\Delta^*\fM_\alpha^{\bQ_p}$ is proper too, the cone of a morphism 
\begin{equation}
\pi_1^*\fM_\alpha^{\bQ_p}\relotimes_{\cF(M,\bQ_p)} \pi_2^*\fM_\alpha^{\bQ_p}\to \Delta^*\fM_\alpha^{\bQ_p}
\end{equation}
is proper. This completes the proof. 
\end{proof}
\begin{prop}\label{prop:grouplikepadic}
$H^*(\fN)$ vanishes over the Tate algebra $\bQ_p\langle t_1/p^n,t_2/p^n\rangle$ for a sufficiently large $n$. Therefore, $\fM_\alpha^{\bQ_p}|_{\bQ_p\langle t/p^n\rangle}$ is group-like. 
\end{prop}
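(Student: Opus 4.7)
The plan is a $p$-adic semi-continuity argument applied to the cone $\mathfrak N$ of (\ref{eq:convolutiontodiag}). Since (\ref{eq:convolutiontodiag}) restricts at $t_1=t_2=0$ to the standard quasi-isomorphism $\cF(M,\bQ_p)\otimes_{\cF(M,\bQ_p)}\cF(M,\bQ_p)\to\cF(M,\bQ_p)$, each complex $\mathfrak N(L_i,L_j)$ is acyclic at the origin; by Corollary \ref{lem:Nproper} it is also proper. The task is thus to upgrade acyclicity at the origin to acyclicity on some rational polydisc $Sp(\bQ_p\langle t_1/p^n,t_2/p^n\rangle)$.

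First I would use the argument behind Lemma \ref{lem:properfamily} (together with \cite[Proposition 6.5]{kedlayaoverconvergent}) to replace each $\mathfrak N(L_i,L_j)$, up to quasi-isomorphism, by a bounded complex $C_{ij}^\bullet$ of finitely generated free modules over $R=\bQ_p\langle t_1,t_2\rangle$. Let $m=(t_1,t_2)\subset R$. The hypothesis at the origin gives $H^*(C_{ij}^\bullet\otimes_R R/m)=0$, while $C_{ij,m}^\bullet$ is a bounded complex of free modules over the local ring $R_m$. A standard Nakayama argument then forces contractibility of $C_{ij,m}^\bullet$: its top nontrivial differential is surjective modulo $m$, hence surjective on $R_m$ by Nakayama, and splits because its target is projective; peeling off iteratively collapses the whole complex. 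Therefore the support of the finitely generated $R$-module $H^*(C_{ij}^\bullet)$ is a closed analytic subset $V(I_{ij})\subset Sp(R)$ not containing the origin.

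To convert this Zariski neighborhood of $0$ into a rational polydisc, pick $f\in I_{ij}$ with $f(0)\neq 0$; for $n_{ij}$ sufficiently large, on the disc $|t_1|,|t_2|\leq p^{-n_{ij}}$ the higher-order terms of $f$ are $p$-adically strictly dominated by $f(0)$, so $f$ is invertible there and $V(I_{ij})$ is disjoint from that polydisc. Setting $n=\max_{i,j} n_{ij}$ over the finitely many pairs of generators, flat base change along $R\to R':=\bQ_p\langle t_1/p^n,t_2/p^n\rangle$ commutes with taking cohomology of the free complex $C_{ij}^\bullet$, and yields $H^*(\mathfrak N)\otimes_R R'=0$. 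Combined with the already-checked quasi-isomorphism $\fM_\alpha^{\bQ_p}|_{t=0}\simeq\cF(M,\bQ_p)$, this gives the group-like property over $\bQ_p\langle t/p^n\rangle$ in the sense of Definition \ref{defn:grouplike}. The delicate step is the Nakayama/contractibility argument: it is what upgrades pointwise acyclicity to an actual structural splitting of the complex; once this is in hand, the passage from Zariski neighborhood of the origin to a rational polydisc and the uniformity over the finitely many pairs $(L_i,L_j)$ are essentially formal rigid-geometric bookkeeping.
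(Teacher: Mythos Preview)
Your proof is correct and follows the same overall architecture as the paper's---replace each $\mathfrak N(L_i,L_j)$ by a bounded complex of finite free $R$-modules, produce an element $f\in R$ with $f(0,0)\neq 0$ controlling the acyclicity, and then observe that $f$ becomes a unit on a small enough polydisc---but the two arguments diverge at the core step.

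The paper takes $f$ to be the determinant of a maximal-rank square submatrix of the differential; invertibility of $f$ on $\bQ_p\langle t_1/p^n,t_2/p^n\rangle$ then gives vanishing of cohomology at every \emph{field-valued point} of the small polydisc, and the paper still has to promote this pointwise vanishing to vanishing of the cohomology \emph{module}, which it does via a two-step Tor spectral sequence argument (first restrict to curves, then to points). Your route is more structural: Nakayama at the maximal ideal $m=(t_1,t_2)$ gives contractibility of $C^\bullet_{ij,m}$ directly, so the origin lies outside the support and you can take $f$ in the \emph{annihilator} of $H^*(C^\bullet_{ij})$. This buys you the module-level vanishing immediately upon inverting $f$, with no need for the spectral sequence bootstrap. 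In exchange you rely on flatness of the inclusion $\bQ_p\langle t_1,t_2\rangle\hookrightarrow\bQ_p\langle t_1/p^n,t_2/p^n\rangle$, which is standard in rigid geometry but not cited in the paper; the paper's argument sidesteps flatness by working pointwise. Both approaches are valid; yours is shorter and more conceptual, while the paper's is more explicit about the matrices.
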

\begin{proof}
By Lemma \ref{lem:Nproper}, the cohomology of $\fN$ is finitely generated over $\bQ_p\langle t_1,t_2\rangle$, and it vanishes at $t_1=t_2=0$ (as (\ref{eq:convolutiontodiag}) is a quasi-isomorphism at $t_1=t_2=0$). Each $\fN(L_i,L_j)$ is a $\bZ/2\bZ$-graded complex, and can be seen as a doubly periodic infinite complex with finite cohomology at each degree. For some natural number $k\gg 0$, consider the truncation $\tau_{\leq k}\tau_{\geq -k}\fN(L_i,L_j)$, which is a bounded complex with finitely generated cohomology. As $k\gg 0$ (indeed, $k\geq 2$ is sufficient). The cohomology of $\fN(L_i,L_j)$ at even, resp. odd degrees coincide with the cohomology of $\tau_{\leq k}\tau_{\geq -k}\fN(L_i,L_j)$ at degree $0$, resp. $1$ (due to two periodicity). By \cite[Proposition 6.5]{kedlayaoverconvergent}, every finitely generated $\bQ_p\langle t_1,t_2\rangle$-module has a finite, free resolution, which implies that $\tau_{\leq k}\tau_{\geq -k}\fN(L_i,L_j)$ is quasi-isomorphic to a free finite complex of $\bQ_p\langle t_1,t_2\rangle$-modules (cf. \Cref{lem:freeres}). Then, the result follows from \Cref{lem:semiconqpaff}, applied to a free finite complex quasi-isomorphic to $\bigoplus_{i,j} \tau_{\leq k}\tau_{\geq -k}\fN(L_i,L_j)$.
%
\end{proof}

\begin{rk}
As we will explain in more detail in \Cref{appendix:tatealgebras}, the ring $\Q_p\langle t/p^n\rangle$ is the set of power series in $t$ that converge over the $p$-adic disc $\bD_{p^{-n}}=p^n\bZ_p$ of radius $p^{-n}$. It can be thought of as the ring of analytic functions on $\bD_{p^{-n}}$. On the other hand, it is also a subgroup of the $p$-adic unit disc $\bD_1=\bZ_p$. This is in sharp contrast with the Archimedean geometry: if $E$ is a connected Lie group, and $U\subsetneq E$ is a neighborhood of the identity, then $U$ is never closed under multiplication. Indeed, the union of iterates $U$, $UU\subset E$, $UUU\subset E$, $\dots$ cover $E$ for every neighborhood of the identity. 
%
\end{rk}
\begin{rk}\label{rk:padiccoherent}
As pointed out in \Cref{rk:generalcoherent}, the map \eqref{eq:convolutiontodiag} can be used to produce two maps 
\begin{equation}\label{eq:twomaps}
	\pi_1^*\fM_\alpha^{\bQ_p}\relotimes_{\cF(M,\bQ_p)} \pi_2^*\fM_\alpha^{\bQ_p}\relotimes_{\cF(M,\bQ_p)} \pi_3^*\fM_\alpha^{\bQ_p}\rightrightarrows \Delta^*\fM_\alpha^{\bQ_p}
\end{equation}
of families over $\bQ_p\langle t_1,t_2,t_3\rangle$, which are not a priori the same (here, $\pi_i$ used are the projections on the triple product, and $\Delta$ denotes the ordered addition of the three components). It is natural to ask for these two maps to be homotopic and to fix a homotopy. In this specific case, such a homotopy is provided by a count of rigid discs analogous to \Cref{figure:groupquasi}, but with three wavy lines, as shown in \Cref{figure:associator}. We will not need this property though. 
\end{rk}
\begin{figure}\centering
	\includegraphics[height=4 cm]{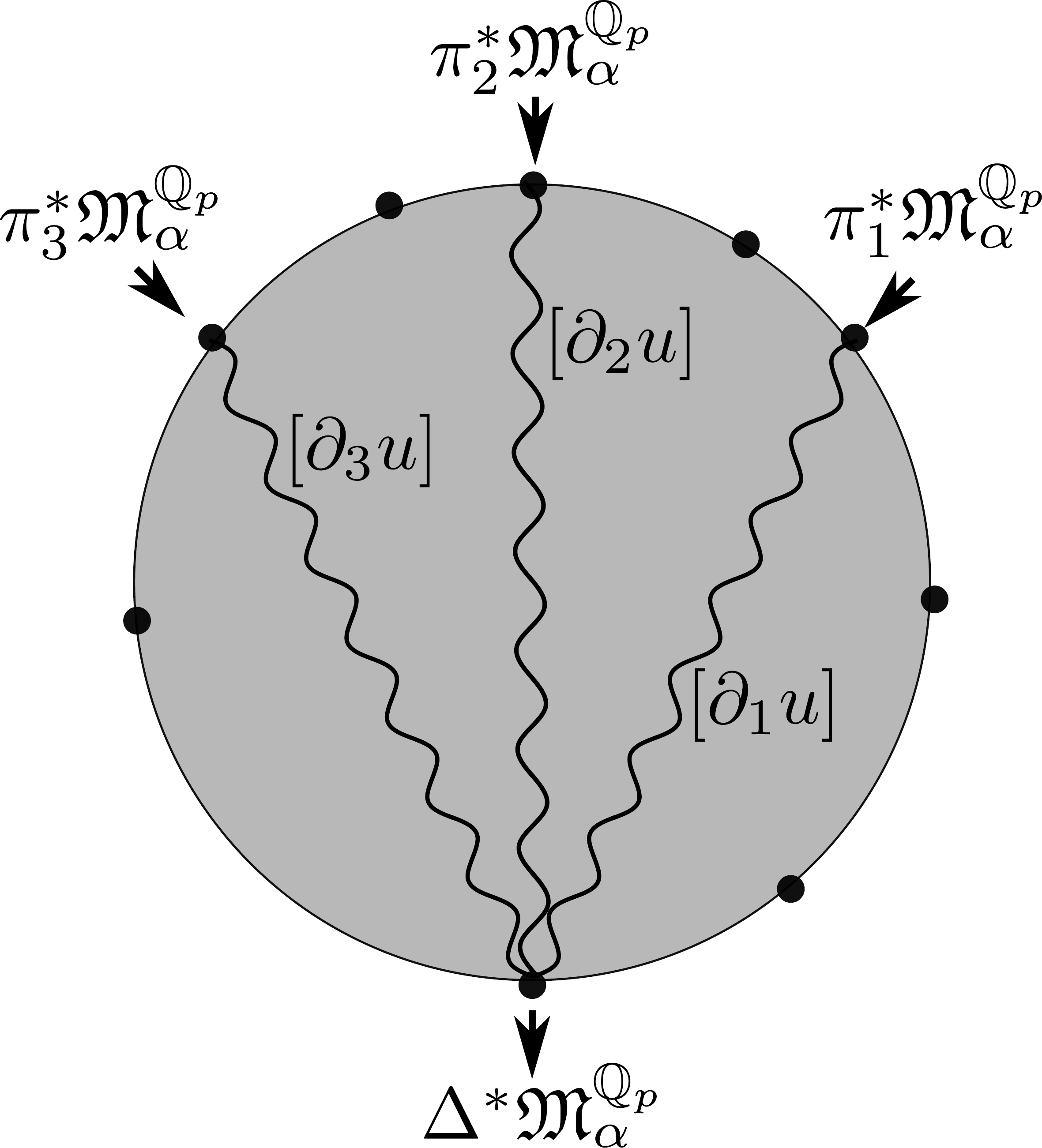}
	\caption{The counts defining the homotopy of two maps \eqref{eq:twomaps}.}
	\label{figure:associator}
\end{figure}
Even though Proposition \ref{prop:grouplikepadic} is about the family $\bQ_p$, it still allows us to conclude a weak group-like statement for the family $\fM_\alpha^\Lambda$. Consider $\faM{f}{K}$, for $f\in\bR$.
\begin{cor}\label{cor:groupovernovikov}
Let $f,f'\in p^n\bZ_{(p)}\subset \bQ$. Then, $\faM{f}{K} \otimes_{\cF(M,K)}  \faM{f'}{K}\simeq \faM{f+f'}{K}$. The same statement holds if $K$ is replaced by $\Lambda$.
\end{cor}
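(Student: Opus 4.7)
The plan is to lift the $p$-adic group-like quasi-isomorphism of Proposition~\ref{prop:grouplikepadic} back to $K$ along the field embedding $\mu\colon K\hookrightarrow\bQ_p$, after first constructing a $K$-linear analogue of the comparison map~(\ref{eq:convolutiontodiag}).

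First, I would write down a $K[z_1^\bR,z_2^\bR]$-linear closed morphism of Novikov families
\begin{equation}
\Psi\colon \pi_1^*\fM_\alpha^K\otimes^{rel}_{\cF(M,K)} \pi_2^*\fM_\alpha^K\longrightarrow \Delta^*\fM_\alpha^K,
\end{equation}
where $\pi_i^*$ denotes pullback along $z\mapsto z_i$ and $\Delta^*$ along $z\mapsto z_1z_2$. The structure maps of $\Psi$ are the direct $K$-linear analogue of (\ref{eq:padicmapformula}): the coefficient of the output $y$ is the signed sum $\sum\pm T^{E(u)}z_1^{\alpha([\partial_1 u])}z_2^{\alpha([\partial_2 u])}$ over the same marked pseudo-holomorphic discs as in Figure~\ref{figure:groupquasi}. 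Lemma~\ref{lem:finitesum} guarantees these sums are finite, so they live in $K[z_1^\bR,z_2^\bR]$. The codimension-one boundary analysis of Figure~\ref{figure:compactificationgroupquasi} shows that $\Psi$ is a closed bimodule pre-morphism, and at $(z_1,z_2)=(1,1)$ the map specializes to the classical quasi-isomorphism $\cF(M,K)\otimes_{\cF(M,K)}\cF(M,K)\simeq \cF(M,K)$ of~(\ref{eq:convolutiontodiagformula}).

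Specializing $\Psi$ at $z_1=T^f$, $z_2=T^{f'}$ produces a chain map
\begin{equation}
\Psi_{f,f'}\colon \fM_{f\alpha}^K\otimes_{\cF(M,K)}\fM_{f'\alpha}^K\longrightarrow \fM_{(f+f')\alpha}^K.
\end{equation}
I would then check that base change of $\Psi_{f,f'}$ along $\mu$ coincides with the specialization of the $p$-adic morphism~(\ref{eq:convolutiontodiag}) at $(t_1,t_2)=(f,f')$. This reduces to the identity $\mu(T^{fg})=(T_\mu^g)^f$ for $g\in G$ and $f\in\bZ_{(p)}$, which follows from the definition of $\mu$ on $G_{(p)}$ via~(\ref{eq:mapnovtopadic1}) together with properties (\ref{propexp1}) and (\ref{propexp3}) of the $p$-adic exponentiation.

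Since $f,f'\in p^n\bZ_{(p)}\subset p^n\bZ_p$, the specialization $(t_1,t_2)=(f,f')$ factors through the restriction to $\bQ_p\langle t_1/p^n,t_2/p^n\rangle$. By Proposition~\ref{prop:grouplikepadic}, (\ref{eq:convolutiontodiag}) is a quasi-isomorphism on that smaller affinoid, so $\Psi_{f,f'}\otimes_K\bQ_p$ is a quasi-isomorphism. The map $\mu$ is injective (the elements $(1+p\mu_i)^{1/n}$ were chosen algebraically independent), so $\bQ_p$ is faithfully flat over $K$; hence the cone of $\Psi_{f,f'}$ is already acyclic over $K$. Base change along $K\hookrightarrow\Lambda$ then yields the $\Lambda$-version.

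The main obstacle is the construction and closedness of $\Psi$: one must enumerate the boundary strata of the relevant moduli of marked discs (strip-breaking on each side, a disc bubbling off at a $\cF(M,K)$-input, at the bimodule input from $\pi_1^*\fM_\alpha^K$, or at the input from $\pi_2^*\fM_\alpha^K$, and the degeneration corresponding to composition in $\Delta^*\fM_\alpha^K$) and verify that the $z_1$- and $z_2$-weights produced by the class decompositions $[\partial_1 u]$ and $[\partial_2 u]$ match across each degeneration so that all contributions cancel. Once $\Psi$ is in hand the rest is a formal combination of Proposition~\ref{prop:grouplikepadic} with faithful flatness of a field extension, with no convergence issues appearing on the $K$-side thanks to Lemma~\ref{lem:finitesum}.
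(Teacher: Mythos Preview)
Your proposal is correct and follows essentially the same route as the paper: define the $K$-linear comparison map by the formula analogous to~(\ref{eq:padicmapformula}), observe that under $\mu$ it specializes to~(\ref{eq:convolutiontodiag}) at $(t_1,t_2)=(f,f')$, invoke Proposition~\ref{prop:grouplikepadic} to get acyclicity of the cone over $\bQ_p$, and descend to $K$ (then to $\Lambda$) via the field extension. The only cosmetic difference is that you first package the comparison as a two-variable Novikov family $\Psi$ over $K[z_1^\bR,z_2^\bR]$ and then specialize, whereas the paper writes down the already-specialized map $\fM^K_{f\alpha}\otimes\fM^K_{f'\alpha}\to\fM^K_{(f+f')\alpha}$ directly; this extra generality is harmless but not needed for the corollary.
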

In other words, the bimodules $\faM{f}{K}$ and $\faM{f}{\Lambda}$ behave like a group action for rational numbers $f\in\bQ$ that are $p$-adically close to $0$. 
\begin{proof}
One can define the map 
\begin{equation}\label{eq:mapfromsum}
\faM{f}{K} \otimes_{\cF(M,K)}  \faM{f'}{K}\to \faM{f+f'}{K}
\end{equation}
similar to (\ref{eq:convolutiontodiag}). More precisely, one would need to replace (\ref{eq:padicmapformula}) by 
\begin{equation}\label{eq:rationalmapformula}
\sum\pm T^{E(u)} T^{f\alpha\cdot[\partial_1 u]} T^{f'\alpha\cdot[\partial_2 u]}.y,
\end{equation}
and 
this defines a bimodule map. Moreover, after the base change along $\kappa:K\to \bQ_p$, the map (\ref{eq:mapfromsum}) becomes the same as (\ref{eq:padicmapformula}) evaluated at $t_1=f,t_2=f'$ considered as elements of $p^n\bZ_p\subset\bQ_p$. The same holds for the cone of (\ref{eq:mapfromsum}). By Proposition \ref{prop:grouplikepadic}, this cone vanishes after the base change, implying it is $0$ before the base change as well. Therefore, (\ref{eq:mapfromsum}) is a quasi-isomorphism before the base change as well. One can then apply base change along the inclusion $K\to\Lambda$ to conclude $\faM{f}{\Lambda} \otimes_{\cF(M,\Lambda)}  \faM{f'}{\Lambda}\simeq \faM{f+f'}{\Lambda}$.
\end{proof}
\section{Comparison with the action of $\phi^t_\alpha$ and proof of Theorem \ref{thm:mainthm}}\label{sec:comparisonandmaintheorem}
We need the following proposition to conclude the proof of Theorem \ref{thm:mainthm}:
\begin{prop}\label{prop:compareprop}
For any $f\in p^n\bZ_{(p)}$, \begin{equation}\label{eq:compareprop1}
HF(\phi^f_\alpha (L),L')\cong HF(L,\phi^{-f}_\alpha (L'))\cong H^*(\faM{-f}{\Lambda}(L,L') ),
\end{equation}
and this group has the same dimension over $\Lambda$ as 
\begin{equation}\label{eq:compareprop2}
dim_{K}\big(H^*\big(h_{L'}\otimes_{\cF(M,K)}\faM{-f}{K} \otimes_{\cF(M,K)}h^L\big)\big)= \atop dim_{\bQ_p}\big(H^*\big(h_{L'}\otimes_{\cF(M,\bQ_p)}\fM^{\bQ_p}_{\alpha}|_{t=-f} \otimes_{\cF(M,\bQ_p)}h^L\big)\big).
\end{equation}
\end{prop}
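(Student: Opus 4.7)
The first isomorphism in (\ref{eq:compareprop1}) is standard: the symplectomorphism $\phi^{-f}_\alpha$ induces a chain-level isomorphism sending the pair $(\phi^f_\alpha(L),L')$ to $(L,\phi^{-f}_\alpha(L'))$, which is moreover compatible with the Novikov weights since $\phi^{-f}_\alpha$ is symplectic. For the second isomorphism, the strategy is to first prove the module-level identification
\[ h_{\phi^{-f}_\alpha(L')} \simeq h_{L'} \otimes_{\cF(M,\Lambda)} \fM^\Lambda_\alpha|_{z=T^{-f}}, \]
then tensor on the right with $h^L$: Lemma \ref{lem:rightconvlefteqhf} converts the resulting cohomology to $HF(L,\phi^{-f}_\alpha(L'))$, while Remark \ref{rk:abstractextension} identifies it with $H^*(\fM^\Lambda_\alpha|_{z=T^{-f}}(L,L'))$ via the Morita extension of the bimodule to the larger Fukaya category containing $L,L'$.

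\textbf{Establishing the module identification.} I combine the small-flux geometric input of Lemma \ref{lem:nearbyfloer} with the group-like property of Corollary \ref{cor:groupovernovikov}. Choose an integer $r$ with $p\nmid r$ and $r$ large enough that $|-f/r|$ lies in the regime where Lemma \ref{lem:nearbyfloer} applies; since $-f\in p^n\bZ_{(p)}$ and $1/r\in\bZ_{(p)}$, we have $-f/r\in p^n\bZ_{(p)}$ as well. Iterated application of Lemma \ref{lem:nearbyfloer} along the chain of Lagrangians $\phi^{-jf/r}_\alpha(L')$ for $j=0,\dots,r$ (each of which still satisfies the hypotheses, as these are preserved under symplectic isotopy) yields
\[ h_{\phi^{-f}_\alpha(L')} \simeq h_{L'} \otimes_{\cF(M,\Lambda)} \underbrace{\fM^\Lambda_{(-f/r)\alpha} \otimes_{\cF(M,\Lambda)} \cdots \otimes_{\cF(M,\Lambda)} \fM^\Lambda_{(-f/r)\alpha}}_{r \text{ factors}}. \]
Each partial sum $j\cdot(-f/r)$ lies in $p^n\bZ_{(p)}$, so iterated application of Corollary \ref{cor:groupovernovikov} collapses the $r$-fold convolution to $\fM^\Lambda_{-f\alpha} = \fM^\Lambda_\alpha|_{z=T^{-f}}$, completing the proof of (\ref{eq:compareprop1}).

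\textbf{Dimension comparison in (\ref{eq:compareprop2}).} The bimodule $\fM^K_\alpha|_{z=T^{-f}}$ is defined over $\cF(M,K)$ with underlying $K$-module $CF(L_i,L_j;K)$ at each pair, hence proper; the Yoneda modules $h_{L'}$ and $h^L$ are proper over $\cF(M,K)$ by the definability discussion in Section \ref{sec:definability} together with smoothness and properness of $\cF(M,K)$. Applying Lemma \ref{lem:basechange} to the base changes $K\hookrightarrow\Lambda$ and $\mu:K\to\bQ_p$ then yields the claimed equalities of dimensions, provided one verifies that the base change of $\fM^K_\alpha|_{z=T^{-f}}$ along $\mu$ agrees with $\fM^{\bQ_p}_\alpha|_{t=-f}$. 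This amounts to comparing two ways of computing $\mu(T^{-f\alpha([\partial_h u])})$: applying $\mu$ directly to the rational element $T^{-f\alpha([\partial_h u])}\in K$ versus forming the series $(1+(\mu(T^{\alpha([\partial_h u])})-1))^t\in\bQ_p\langle t\rangle$ and specializing at $t=-f$. Their agreement follows from the multiplicativity properties of the $p$-adic exponential listed after Definition \ref{defn:padicinterp} together with the defining formula (\ref{eq:mapnovtopadic1}) of $\mu$ on rational roots of $T^{g_i}$.

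\textbf{Main obstacle.} The geometric content is already packaged in Lemma \ref{lem:nearbyfloer} and Corollary \ref{cor:groupovernovikov}; the hard part of the proposition is really organizational, namely choosing a decomposition of $-f$ into equal-sized steps that are simultaneously Archimedean-small (to invoke the small-flux identification) and lie in $p^n\bZ_{(p)}$ (to invoke the $p$-adic group-like property). The key observation that makes this work is that $p^n\bZ_{(p)}$ is a $\bZ_{(p)}$-submodule of $\bR$, so dividing by any integer coprime to $p$ preserves it; this explains the restriction $f\in p^n\bZ_{(p)}$ in the statement. The algebraic compatibility of base changes is routine.
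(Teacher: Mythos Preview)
Your argument is essentially the paper's own proof: the paper also establishes $h_{\phi^{-f}_\alpha(L')}\simeq h_{L'}\otimes_{\cF(M,\Lambda)}\fM^\Lambda_{-f\alpha}$ by breaking $[-f,0]$ into small steps lying in $p^n\bZ_{(p)}$ (Lemma~\ref{lem:iteratedconv} and Corollary~\ref{cor:uniterated}), applies $(\cdot)\otimes h^L$ and Lemma~\ref{lem:rightconvlefteqhf}, and then invokes base change along $K\hookrightarrow\Lambda$ and $\mu:K\to\bQ_p$ for the dimension statement.

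One technical point deserves care. You fix a single $r$ with $p\nmid r$ so that $|{-f/r}|$ is small enough for Lemma~\ref{lem:nearbyfloer} to apply, and then iterate along $\tilde L=\phi^{-jf/r}_\alpha(L')$. But the smallness threshold in Lemma~\ref{lem:nearbyfloer} depends on $\tilde L$ (through the choice of $\psi^f$ and the preservation of tameness/regularity in Fukaya's trick), so choosing $r$ based only on $L'$ does not a priori guarantee that the same $r$ works at every intermediate step. The paper handles this in Lemma~\ref{lem:iteratedconv} via a compactness argument: each $s\in[0,-f]$ has its own $\epsilon(s)$, and one covers the interval by finitely many such neighborhoods before choosing the partition. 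Your equal-step scheme works once you insert the same compactness step to obtain a uniform $\epsilon$ over $\{\phi^t_\alpha(L'):t\in[0,-f]\}$ and then take $r$ with $|f|/r<\epsilon$; without it the iteration is not justified.
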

We want to show that the bimodules $\faM{f}{\Lambda}$ act on the perfect modules coming from Lagrangians in the expected way. 

Recall that for a given $\tilde L\subset M$ satisfying some assumptions (such as $\tilde L\pitchfork L_i$ for all $i$), we have a right $\cF(M,\Lambda)$-module $h_{\tilde L}$ that satisfies $h_{\tilde L}(L_i)=CF(L_i,\tilde L)=CF(L_i,\tilde L;\Lambda)$. The differential and structure maps are defined by counting marked discs with one boundary component on $\tilde L$, and with other boundary components on various $L_i$. We will deform this module algebraically to give a simpler description of $h_{\phi_\alpha^f(\tilde L)}$ for small $f$: 
\begin{defn}\label{defn:algyoneda}
Fix homotopy classes of paths from the base point of $M$ to intersection points $\tilde L\cap L_i$. Let $h_{\phi_\alpha^f,\tilde L}^{alg}$ denote the right $\cF(M,\Lambda)$-module defined by \begin{equation}
L_i\mapsto CF(L_i,\tilde L)
\end{equation} and whose differential/structure maps are given by formulae of the form
\begin{equation}\label{eq:structurealgyoneda}
\sum \pm T^{E(u)}T^{f\alpha([\partial_{\tilde L} u])}.y ,
\end{equation}
where $u$ ranges over holomorphic curves with one boundary component on $\tilde L$ (the one on the clockwise direction from the output) and other components on $L_i$. Here, $y$ is the output marked point and $[\partial_{\tilde L} u]\in H_1(M;\bZ)$ denotes the homology class of the path obtained by concatenating the fixed paths from the base point with ($u$-image of) the wavy path in Figure \ref{figure:hphilalg} from the module input to module output $y$. 
\end{defn}
Clearly, $h_{\phi_\alpha^0,\tilde L}^{alg}=h_{\tilde L}$; therefore, one can see $h_{\phi_\alpha^f,\tilde L}^{alg}$ as a deformation of $h_{\tilde L}$. The proof of the following Lemma implies (\ref{eq:structurealgyoneda}) converge $T$-adically, i.e. $h_{\phi_\alpha^f,\tilde L}^{alg}$ is well-defined, for small $|f|$:
\begin{figure}\centering
	\includegraphics[height=4 cm]{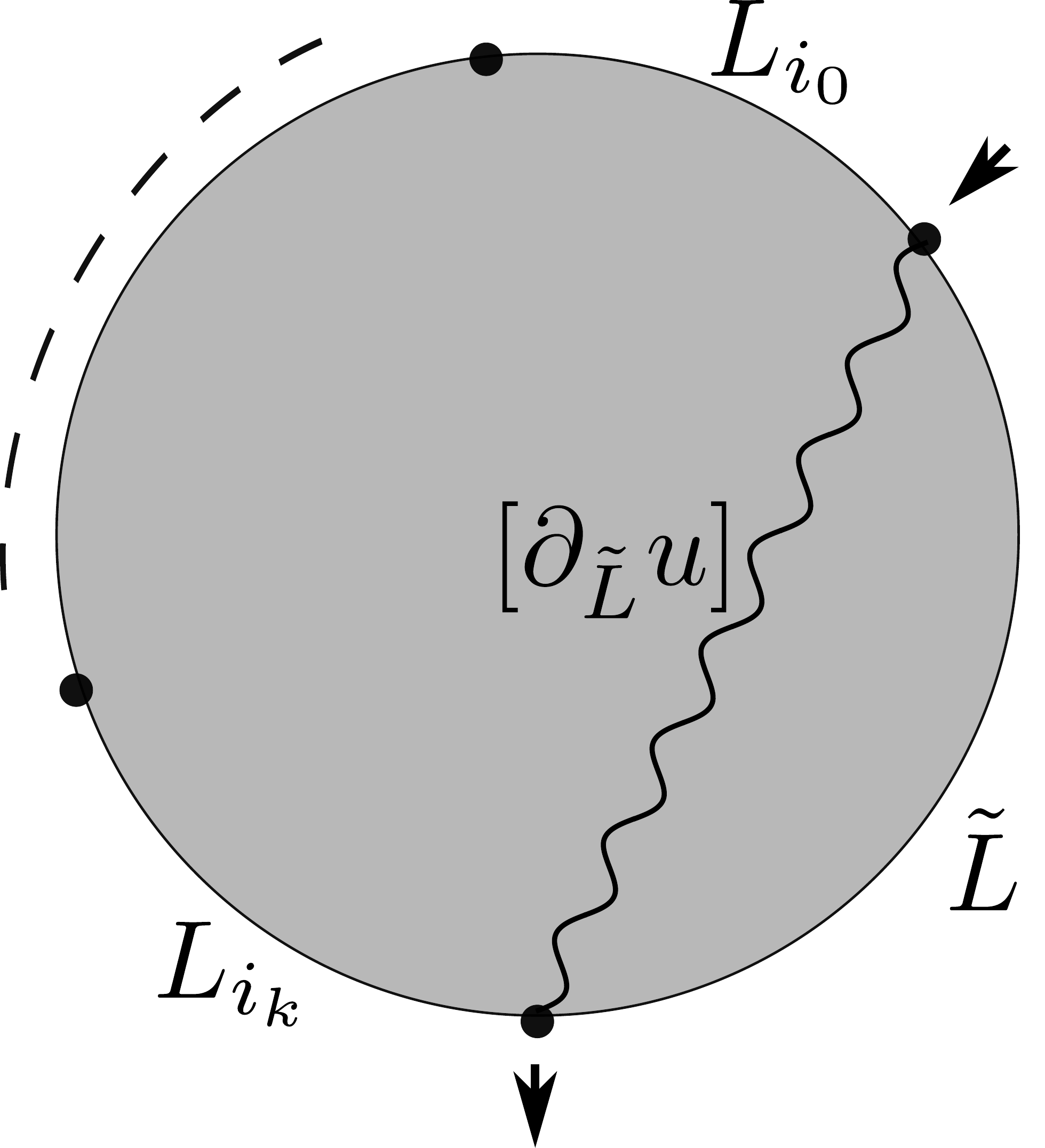}
	\caption{The counts defining $h^{alg}_{\phi_\alpha^f,\tilde L}$, which gives a simpler quasi-isomorphic description of $h_{\phi^f_{\alpha}(\tilde{L})}$ for small $f$}
	\label{figure:hphilalg}
\end{figure}
\begin{lem}\label{lem:halg=h}
For $f\in \bR$ such that $|f|$ is small, we have $h_{\phi_\alpha^f,\tilde L}^{alg}\simeq h_{\phi_\alpha^f(\tilde L)}$. In other words, $h_{\phi_\alpha^f,\tilde L}^{alg}$ gives what is expected geometrically.
\end{lem}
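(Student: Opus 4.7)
The proof will use Fukaya's trick. The plan is to choose a smooth diffeomorphism $\psi \colon M \to M$ that is the identity outside a tubular neighborhood $U$ of $\tilde L$ and agrees with $\phi_\alpha^{-f}$ in a smaller neighborhood of $\phi_\alpha^f(\tilde L)$, so that $\psi(\phi_\alpha^f(\tilde L)) = \tilde L$. For $|f|$ small, $U$ can be taken disjoint from all the other $L_i$ except in small neighborhoods of their intersection points with $\tilde L$ (after small Hamiltonian perturbation of $\tilde L$ if necessary to achieve the transversality assumed in Definition \ref{defn:algyoneda}). Push-forward by $\psi$ then induces a bijection between $J$-holomorphic pearly discs with one boundary on $\phi_\alpha^f(\tilde L)$ (and other boundaries on $L_i$) and $\psi_*J$-holomorphic pearly discs with one boundary on $\tilde L$.

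The key step is comparing $\omega$-energies. For $u$ with boundary on $\phi_\alpha^f(\tilde L)$ and $u' = \psi \circ u$ with boundary on $\tilde L$, let $\partial_2 u'$ denote the component of $\partial u'$ on $\tilde L$. Consider the $2$-chain $C \colon [0,f] \times \partial_2 u' \to M$ given by $C(s,t) = \phi_\alpha^s(\partial_2 u'(t))$, filling the gap between $u'$ and $u$ along this boundary component. A direct flux computation using that $\phi_\alpha^s$ is the flow of $X_\alpha$ gives
\begin{equation}
\int_C \omega \;=\; \int_0^f \alpha\bigl([\partial_2 u']\bigr)\, ds \;=\; f\,\alpha\bigl([\partial_2 u']\bigr),
\end{equation}
where the cohomology class is defined using the fixed base-point paths of Definition \ref{defn:algyoneda} (the paths to intersection points with $\tilde L$ are transported in the obvious way by $\psi^{-1}$, matching the paths to the intersection points with $\phi_\alpha^f(\tilde L)$ up to the homotopy used to define the flux term). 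Applying Stokes to the closed $2$-chain built from $u$, $u'$ and $C$ yields the identity $E(u) = E(u') + f\,\alpha([\partial_2 u'])$.

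Consequently, the module $h_{\phi_\alpha^f(\tilde L)}$, pulled back under $\psi$, is computed by counting $\psi_*J$-holomorphic pearly discs with boundary on $\tilde L$, weighted by $T^{E(u)} = T^{E(u')}\,T^{f\alpha([\partial_2 u'])}$. This is exactly the formula (\ref{eq:structurealgyoneda}) defining $h^{alg}_{\phi_\alpha^f,\tilde L}$, except that the underlying almost complex structure is $\psi_*J$ rather than the default choice $J_{\tilde L}$. For $|f|$ sufficiently small, $\psi_*J$ is arbitrarily close to $J_{\tilde L}$, and the standard continuation/homotopy-of-perturbation-data argument of Lagrangian Floer theory produces a quasi-isomorphism of right $\cF(M,\Lambda)$-modules between the two counts, giving $h^{alg}_{\phi_\alpha^f,\tilde L} \simeq h_{\phi_\alpha^f(\tilde L)}$.

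The main obstacle is bookkeeping: one has to verify the energy identity with all base-point and path conventions in place so that $[\partial_2 u']$ on the algebraic side matches the class arising from the flux computation, and one has to ensure $T$-adic convergence of all sums. Convergence holds precisely because, under the identity $E(u) = E(u') + f\alpha([\partial_2 u'])$ and Lemma \ref{lem:finitesum}, the sums defining $h^{alg}_{\phi_\alpha^f,\tilde L}$ are the same finite sums as those defining $h_{\phi_\alpha^f(\tilde L)}$ reorganized by this identity; the smallness of $|f|$ is used both to guarantee that the continuation from $\psi_*J$ to $J_{\tilde L}$ is carried out in a regime where regularity and Gromov compactness hold uniformly, and to control energy errors introduced by the perturbation.
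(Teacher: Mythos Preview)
Your approach is essentially the same as the paper's --- Fukaya's trick via a diffeomorphism identifying moduli spaces --- but there are two points where the argument is not quite right as written.

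First, the energy identity $E(u)=E(u')+f\alpha([\partial_2 u'])$ is missing correction terms. The class $[\partial_2 u']$ in Definition \ref{defn:algyoneda} is the \emph{closed} loop obtained by capping the boundary arc with the fixed base-point paths $\gamma_x,\gamma_y$, so $\alpha([\partial_2 u'])=\int_{\gamma_x}\alpha+\int_{\text{arc}}\alpha-\int_{\gamma_y}\alpha$. Your flux computation $\int_C\omega$ only sees the open arc, hence produces $f\int_{\text{arc}}\alpha$, not $f\alpha([\partial_2 u'])$. The discrepancy is $-g(f,x)+g(f,y)$ where $g(f,\cdot)$ depends only on the generator, not on the disc. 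The paper absorbs these terms by rescaling generators $x\mapsto T^{g(f,x)}x$; without this step the two sets of structure maps are not literally equal, only conjugate by a diagonal change of basis. Your remark about ``transporting paths by $\psi^{-1}$'' does not make this go away, since the formula \eqref{eq:structurealgyoneda} for $h^{alg}$ is pinned to the fixed base-point paths.

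Second, the final continuation step is phrased as a continuation for $h^{alg}_{\phi_\alpha^f,\tilde L}$ between the two almost complex structures $\psi_*J$ and $J_{\tilde L}$. That is not a standard statement: continuation maps for deformed modules like $h^{alg}$ require checking that the continuation discs carry the correct $T^{f\alpha([\partial_2 u])}$ weights. The cleaner route (which the paper takes) is to choose the almost complex structure for $\phi_\alpha^f(\tilde L)$ to be $(\psi^{-1})_*J_{\tilde L}$ from the outset; then the bijection of moduli identifies $h_{\phi_\alpha^f(\tilde L)}$ (for this $J$) with $h^{alg}_{\phi_\alpha^f,\tilde L}$ (for the default $J_{\tilde L}$), and one invokes the standard $J$-independence of the geometric Yoneda module $h_{\phi_\alpha^f(\tilde L)}$ rather than of $h^{alg}$.

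A minor additional point: your diffeomorphism $\psi$ must fix each $L_i$ setwise (not just be the identity away from a neighborhood of $\tilde L$) for the claimed bijection of moduli to hold; otherwise $\psi\circ u$ has boundary on $\psi(L_i)\neq L_i$ near the corners.
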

How small $|f|$ should be depends on the Lagrangian $\tilde{L}$.
\begin{proof}
This is an application of Fukaya's trick, as in \cite{abouzaidicm}, for instance. Namely, if $f$ is small enough, the intersection points $\phi^f_\alpha(\tilde L)\cap L_i$ can be identified with $\tilde L\cap L_i$, and this holds throughout the isotopy. Consider a smooth isotopy $\psi^f$ of $M$ fixing all $L_i$ setwise and mapping $\tilde L$ to $\phi^f_\alpha(\tilde L)$. Choose the Floer data (with vanishing Hamiltonian term) for $(L_i,\tilde L)$ and $(L_i,\phi_\alpha^f(\tilde L))$ to be related by $\psi^f$. Similarly, choose the perturbation data for discs with boundary on $(L_{i_0},\dots, L_{i_k}, \tilde L)$ and $(L_{i_0},\dots, L_{i_k}, \phi_\alpha^f(\tilde L))$ to be related by $\psi^f$. For small $|f|$, the tameness and regularity will be preserved. Then, one can identify the moduli of pseudo-holomorphic discs labeled by $(L_{i_0},\dots, L_{i_k}, \tilde L)$ with pseudo-holomorphic discs labeled by $(L_{i_0},\dots, L_{i_k}, \phi^f_{\alpha}(\tilde L))$, where the identification is via the composition by $\psi^f$. One has the energy identity 
\begin{equation}\label{eq:energyidentity}
E(\psi^f\circ u)=E(u)+f\alpha([\partial_{\tilde L} u])-g(f,x)+g(f,y),
\end{equation}
where $x$ is the input, $y$ is the output and $g(f,y)$ is a real number that only depends on $f$ and $y$ (it may depend on the homotopy class of the isotopy, but the isotopy is given in our situation). A version of this identity is given in \cite[Lemma 3.2]{abouzaidicm}. In our situation, this is still a similar application of Stokes theorem, namely if one moves the disc by $\psi^f$, then the energy difference can be measured as the area traced by the part of boundary labeled by $\tilde L$ (which is homotopic to the wavy path in Figure \ref{figure:hphilalg}). This energy difference can be measured by $f\alpha([\partial_{\tilde L} u])$, except one has to correct it by the areas traced by the fixed paths from the base
point of $M$ to intersection points $x,y$. The correction can be written in the form $g(f,x)-g(f,y)$, where $g(f,x)$ is a number that depends on $x$ and continuously on $f$ (we neither need nor attempt to compute it).

After rescaling the generators via $x\mapsto T^{g(f,x)}x$, one can identify the structure maps of $h_{\phi_\alpha^f(\tilde L)}$ with respect to the above perturbation data, and the structure maps of $h_{\phi_{\alpha}^f,\tilde L}^{alg}$ defined in (\ref{eq:structurealgyoneda}). The quasi-isomorphism class of $h_{\phi_\alpha^f(\tilde L)}$ is independent of the perturbation data; therefore, $h_{\phi_{\alpha}^f,\tilde L}^{alg}$ is well-defined and quasi-isomorphic to $h_{\phi_\alpha^f(\tilde L)}$. 
\end{proof}
\begin{rk}\label{rk:seidelmodelftrick}
It is possible to apply Fukaya's trick even if one uses the model of the Fukaya category described in \cite{seidelbook}. In this case, as $\tilde L$ is assumed to be transverse to all $L_i$, we can choose the Floer data for the pair $(L_i,\tilde L)$ with vanishing Hamiltonian term (we do not need Floer data for $(\tilde L,\tilde L)$ in order to define $h_{\tilde L}$). When we choose the smooth isotopy $\psi^f$ as above, we have to make sure it fixes a neighborhood of the intersection points and Hamiltonian chords between various $L_i$. We deform Floer data and perturbation data by $\psi^f$, and we also have to make sure that for a small time regularity is preserved and no new Hamiltonian chords are introduced between various $L_i$. An analogue of the energy identity (\ref{eq:energyidentity}) holds, and the rest of the argument works in the same way.
\end{rk}
The module $h^{alg}_{\phi^f_{\alpha},\tilde{L}}$ comes from a Novikov family of right modules over $\cF(M,\Lambda)$:
\begin{defn}
Let $\fh^{alg}_{\phi_\alpha,\tilde L}$ be the family of right $\cF(M,\Lambda)$-modules defined by \begin{equation}
L_i\mapsto CF(L_i,\tilde L)\otimes \Lambda\{z^\bR\}
\end{equation} and whose differential/structure maps are given by 
\begin{equation}\label{eq:structurealgyonedafamily}
\sum \pm T^{E(u)}z^{\alpha([\partial_{\tilde L} u])}.y ,
\end{equation}where the sum is analogous to (\ref{eq:structurealgyoneda}).
\end{defn}
Recall that $\Lambda\{z^\bR\}$ denotes the ring $\Lambda\{z^\bR\}_{[a,b]}$ for some $a<0<b$, which we introduced in Section \ref{sec:families} briefly, and explain more in \Cref{appendix:semicont}. The series (\ref{eq:structurealgyonedafamily}) belongs to $\Lambda\{z^\bR\}_{[a,b]}$ for small $|a|$ and $|b|$ (this is equivalent to convergence of (\ref{eq:structurealgyoneda}) for $f\in[a,b]$). One can replace this ring with $\Lambda[z^\bR]$ when $\tilde L$ is Bohr-Sommerfeld monotone. Clearly, $h^{alg}_{\phi^f_\alpha,\tilde L}=\fh^{alg}_{\phi_\alpha,\tilde L}|_{z=T^f}$. 

Using Lemma \ref{lem:halg=h}, we establish:
\begin{lem}\label{lem:nearbyfloer}
For $f\in\bR$ with small $|f|$, $h_{\tilde L}\otimes_{\cF(M,\Lambda)}\faM{f}{\Lambda}\simeq h_{\phi^f_\alpha(\tilde L )}$ and $h_{\tilde L}\simeq h_{\phi^f_\alpha(\tilde L )}\otimes_{\cF(M,\Lambda)}\faM{-f}{\Lambda}$.
\end{lem}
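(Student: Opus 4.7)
The plan is to reduce to Lemma \ref{lem:halg=h} by constructing an explicit morphism
\begin{equation}\label{eq:planmap}
\Phi_f: h_{\tilde L}\otimes_{\cF(M,\Lambda)}\fM^\Lambda_{f\alpha}\to h^{alg}_{\phi^f_\alpha,\tilde L}
\end{equation}
and then showing it is a quasi-isomorphism for small $|f|$ by a semi-continuity argument. Since Lemma \ref{lem:halg=h} gives $h^{alg}_{\phi^f_\alpha,\tilde L}\simeq h_{\phi^f_\alpha(\tilde L)}$ for small $|f|$, this will prove the first claim.

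First I would build the morphism at the family level. Both constructions involve the same underlying data (intersection points with $L_i$, paths to the base point, and disc counts), so one can write down a family morphism
\begin{equation}
\Phi:h_{\tilde L}\otimes_{\cF(M,\Lambda)}\fM^\Lambda_{\alpha}\to h^{alg}_{\phi_\alpha,\tilde L}
\end{equation}
of families of right $\cF(M,\Lambda)$-modules parametrized by $\Lambda\{z^{\bR}\}_{[a,b]}$. On a bar element $(y\otimes b_k\otimes\dots\otimes b_1\otimes x)$ the component of $\Phi$ to an output $y'$ is a signed count
\begin{equation}
\sum\pm T^{E(u)}z^{\alpha([\partial_h u])}\,y'
\end{equation}
over pseudo-holomorphic discs $u$ with boundary on the appropriate sequence of Lagrangians (including $\tilde L$ on the segment between the two $\fM^\Lambda_\alpha$-slots, which collapses to the usual $\tilde L$-boundary after taking $\mu$). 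Geometrically this is the bar analogue of composition: $\Phi$ glues the strip-like end coming from $\fM^\Lambda_\alpha$ with the $h_{\tilde L}$ input along the $\tilde L$-boundary. The exponent $\alpha([\partial_h u])$ combines the horizontal boundary contributions coming from both tensor factors; on $\fM^\Lambda_\alpha$ this is the $z^{\alpha([\partial_h u])}$-weighted term and on $h^{alg}$ it corresponds exactly to the wavy path in Figure \ref{figure:hphilalg}.

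Next I would verify specialization. At $z=1$ the bimodule $\fM^\Lambda_\alpha|_{z=1}$ specializes to the diagonal bimodule and $h^{alg}_{\phi_\alpha,\tilde L}|_{z=1}=h_{\tilde L}$, so $\Phi|_{z=1}$ becomes the classical quasi-isomorphism $h_{\tilde L}\otimes_{\cF(M,\Lambda)} \cF(M,\Lambda)\xrightarrow{\sim} h_{\tilde L}$ obtained by concatenation of bar elements (cf. (\ref{eq:convolutiontodiagformula})). Hence the cone $\mathfrak{C}$ of $\Phi$ has acyclic restriction at $z=1$. Now one argues that the analogue of Lemma \ref{lem:properfamily}/Corollary \ref{cor:properconv} applies to $\mathfrak{C}$, in particular that $\mathfrak{C}(L_i)$ has cohomology which is a finitely generated $\Lambda\{z^\bR\}_{[a,b]}$-module: the bar-tensor factor is perfect because $\fM^\Lambda_\alpha$ is a deformation of the diagonal bimodule of a smooth proper category and $h_{\tilde L}$ is perfect by split-generation, and $h^{alg}_{\phi_\alpha,\tilde L}$ is perfect for the same reason. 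Then the Novikov semi-continuity result sketched in Appendix \ref{appendix:semicont} (i.e. the $\Lambda\{z^\bR\}$-version of the argument in Proposition \ref{prop:grouplikepadic}: a finite free complex over $\Lambda\{z^\bR\}$ acyclic at one point is acyclic on a neighborhood) implies acyclicity of $\mathfrak{C}$ after restriction to $\Lambda\{z^\bR\}_{[a',b']}$ for some possibly smaller $[a',b']\ni 0$. Evaluating at $z=T^f$ for $f\in[a',b']$ then gives the desired quasi-isomorphism $h_{\tilde L}\otimes_{\cF(M,\Lambda)}\fM^\Lambda_{f\alpha}\simeq h^{alg}_{\phi^f_\alpha,\tilde L}\simeq h_{\phi^f_\alpha(\tilde L)}$.

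For the second quasi-isomorphism, apply the first one with $\tilde L$ replaced by $\phi^f_\alpha(\tilde L)$ and $f$ replaced by $-f$, noting that $\phi^{-f}_\alpha(\phi^f_\alpha(\tilde L))=\tilde L$; for small enough $|f|$ both applications are in range. The main obstacle is the semi-continuity step: one must confirm that the cone $\mathfrak{C}$ is perfect (or at least properly represented by a finite free complex) over $\Lambda\{z^\bR\}_{[a,b]}$ so that the determinant/rank argument from Proposition \ref{prop:grouplikepadic} can be adapted to the Novikov-analytic setting. Modulo this, which is what the appendix is set up to handle, the rest of the proof is a direct combination of Fukaya's trick (already exploited in Lemma \ref{lem:halg=h}) and the algebraic identification of the bar-complex deformation with $h^{alg}_{\phi^f_\alpha,\tilde L}$.
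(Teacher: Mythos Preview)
Your proposal is correct and, for the first quasi-isomorphism, essentially identical to the paper's proof: the paper constructs exactly the family map $h_{\tilde L}\otimes_{\cF(M,\Lambda)}\fM^\Lambda_\alpha\to h^{alg}_{\phi_\alpha,\tilde L}$ (with coefficient $\sum\pm T^{E(u)}z^{\alpha([\partial_1 u])}$), observes it specializes to the standard bar-resolution quasi-isomorphism at $z=1$, and then uses smoothness to resolve the diagonal and reduce the cone to a finite free complex so that Lemma~\ref{lem:semicontcomplexnovsingle} applies.

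There is a small difference in how you and the paper handle the second quasi-isomorphism. You deduce it from the first by substituting $\phi^f_\alpha(\tilde L)$ for $\tilde L$ and $-f$ for $f$; this is legitimate but implicitly uses that the $\epsilon$-bound coming from the semi-continuity step is locally uniform in the moving Lagrangian $\phi^f_\alpha(\tilde L)$. The paper instead writes down a second explicit map $h^{alg}_{\phi^f_\alpha,\tilde L}\otimes_{\cF(M,\Lambda)}\fM^\Lambda_{-f\alpha}\to h_{\tilde L}$ (weighted by a different boundary class $[\partial_{l,m}u]$, the arc from the $h^{alg}$-input to the $\fM$-input) and runs the same semi-continuity argument directly, this time resolving the diagonal on both sides. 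The paper's route sidesteps the uniformity issue; yours is slightly slicker but needs that extra sentence of justification.
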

\begin{figure}\centering
	\includegraphics[height=4 cm]{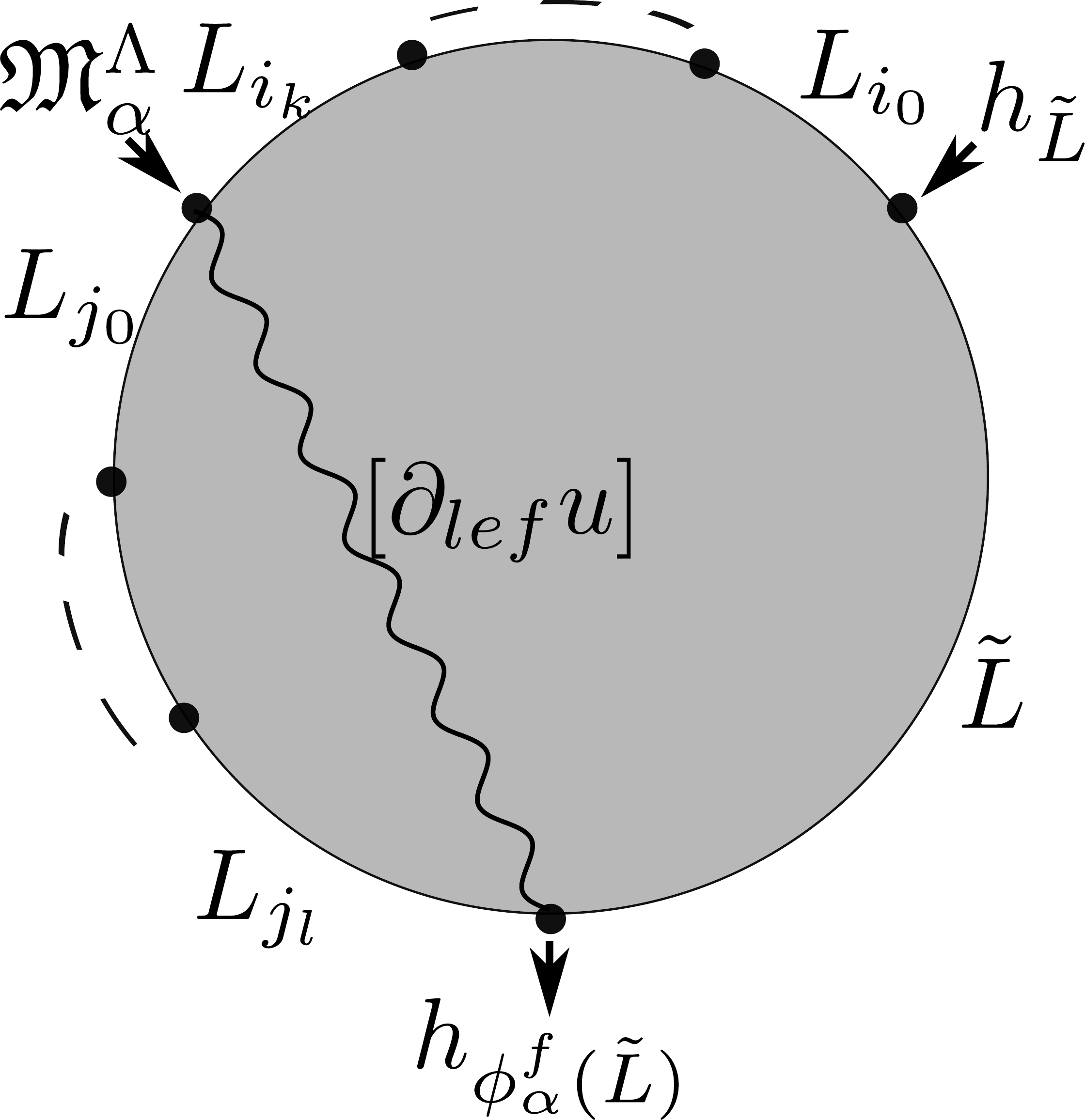}
	\caption{The counts defining (\ref{eq:hlconvtophihl})}
	\label{figure:hlconvtophihl}
\end{figure}
\begin{proof}
By Lemma \ref{lem:halg=h}, it suffices to prove the corresponding statement for $h^{alg}_{\phi_{\alpha}^f,\tilde L}$. We define a map
\begin{equation}\label{eq:hlconvtophihl}
h_{\tilde L}\otimes_{\cF(M,\Lambda)}\faM{f}{\Lambda} \to h^{alg}_{\phi^f_{\alpha},\tilde{L}}
\end{equation}
in a way very similar to (\ref{eq:convolutiontodiagformula}). Namely, send
\begin{equation}
(x\otimes x_1\otimes\dots \otimes x_k\otimes x';x_1',\dots x_l')
\end{equation}
to the sum 
\begin{equation}\label{eq:formulahconvtophihl}
\sum\pm T^{E(u)} T^{f\alpha([\partial_{lef} u])}.y,
\end{equation}
where $[\partial_{lef} u]$ denotes the $u$-image of the part of the boundary of the disc from $\faM{f}{\Lambda}$-input $x'$ to output $y$ (i.e. the wavy line in Figure \ref{figure:hlconvtophihl}, up to homotopy) concatenated with paths from the base point of $M$. One can check that (\ref{eq:hlconvtophihl}) is an $A_\infty$-module homomorphism in a way similar to (\ref{eq:convolutiontodiag}) by considering the degenerations of discs in Figure \ref{figure:hlconvtophihl} analogous to Figure \ref{figure:compactificationgroupquasi}. When $f=0$, (\ref{eq:hlconvtophihl}) is a quasi-isomorphism as its cone is the standard bar resolution of $h_{\tilde L}$. Also, (\ref{eq:hlconvtophihl}) can be thought of as the specialization of a map of families 
\begin{equation}\label{eq:hlconvtophihlfamilyversion}
h_{\tilde L}\famotimes_{\cF(M,\Lambda)}\fM_{\alpha}^\Lambda \to \fh^{alg}_{\phi_{\alpha},\tilde{L}}
\end{equation} at $z=T^f$, which can be defined by replacing (\ref{eq:formulahconvtophihl}) by $\sum\pm T^{E(u)} z^{\alpha([\partial_{lef} u])}.y$.

The proof that (\ref{eq:hlconvtophihl}) is a quasi-isomorphism for small $|f|$ is similar to the proof of \Cref{lem:grouplikenovikov}, which we give in \Cref{appendix:semicont}. More precisely, this claim follows by \Cref{lem:semicontcomplexnovsingle}, i.e. from a semi-continuity argument (cf. the proof of \Cref{prop:grouplikepadic}). The application of \Cref{lem:semicontcomplexnovsingle} requires a lift of the map \eqref{eq:hlconvtophihlfamilyversion} under the ring homomorphism $\Lambda\{z^H\}\hookrightarrow \Lambda\{z^\bR\}$ explained in \Cref{appendix:semicont} in detail. Here, $H=H_1(M,\bZ)/ker(\alpha)$ is a free abelian group, $\Lambda\{z^H\}$ is the ring of series $\sum_{C\in H} a_Cz^C$ with a convergence condition analogous to $\Lambda\{z^\bR\}$. The injection $\Lambda\{z^H\}\to \Lambda\{z^\bR\}$ is given by $z^C\mapsto z^{\alpha(C)}$. The fact that \eqref{eq:hlconvtophihlfamilyversion} lifts is analogous to \Cref{lem:lifts}, namely one replaces $z^{\alpha([\partial_{lef} u])}$ in \eqref{eq:hlconvtophihlfamilyversion} by $z^{[\partial_{lef} u]}$. Let $\fN$ denote the cone of the lift of \eqref{eq:hlconvtophihlfamilyversion}, which is perfect similar to \Cref{cor:properconv}. This cone vanishes at $\mathbf{z=1}$. As a result, analogous to the proof of \Cref{lem:grouplikenovikov}, \eqref{eq:hlconvtophihlfamilyversion} is a quasi-isomorphism at $z=T^f$ for small $|f|$. We omit the details.

Now, define 
\begin{equation}\label{eq:phihlconvtohl}
h_{\phi^f_{\alpha},\tilde{L}}^{alg}\otimes_{\cF(M,\Lambda)}\faM{-f}{\Lambda} \to h_{\tilde{L}}
\end{equation}
similarly by replacing (\ref{eq:formulahconvtophihl}) by $\sum\pm T^{E(u)} T^{f\alpha([\partial_{top} u])}.y$, where we define $[\partial_{top} u]\in H_1(M;\bZ)$ to be the class obtained by concatenating the path from $h_{\phi^f_{\alpha},\tilde{L}}^{alg}$ input ($h_{\tilde{L}}$ input in Figure \ref{figure:hlconvtophihl}) to $\faM{-f}{\Lambda}$ input ($\fM_{\alpha}^\Lambda$ input in Figure \ref{figure:hlconvtophihl}) with the fixed paths to the base point. Then (\ref{eq:phihlconvtohl}) is also an $A_\infty$-module homomorphism, and it is a quasi-isomorphism when $f=0$. Similarly, (\ref{eq:phihlconvtohl}) extends to a map of families
\begin{equation}
\fh_{\phi_{\alpha},\tilde{L}}^{alg}\relotimes_{\cF(M,\Lambda)}\fM_{-\alpha}^\Lambda \to \fh^{const}_{\tilde{L}},
\end{equation} where $\fM_{-\alpha}^\Lambda$ is defined by the obvious modification of $\fM_{\alpha}^\Lambda$ (here, $\fh^{const}_{\tilde{L}}$ denotes the constant family of Yoneda modules, defined analogous to \Cref{exmp:constfams}). The same proof applies to show (\ref{eq:phihlconvtohl}) is a quasi-isomorphism for small $|f|$. 
\end{proof}
An immediate corollary of Lemma \ref{lem:nearbyfloer} is the following:
\begin{cor}\label{cor:definable}
The Yoneda module $h_{\phi^f_\alpha(L')}$ is definable over $\cF(M,K)$ for $f\in \bZ_{(p)}$ such that $|f|$ is sufficiently small. In other words, there exists a perfect module over $\cF(M,K)$ such that one obtains a module quasi-isomorphic to $h_{\phi^f_\alpha(L')}$ after base change to $\Lambda$. 
\end{cor}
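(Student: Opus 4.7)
The plan is to exhibit $\mathfrak{h}_f := h_{L'} \otimes_{\cF(M,K)} \fM^K_{f\alpha}$ as the desired perfect module; everything else is formal given the preceding lemmas. Since $f \in \bZ_{(p)}$ and every $\alpha([\partial_h u])$ lies in $G$, each product $f\alpha([\partial_h u])$ lies in $G_{(p)}$, so the coefficients $T^{f\alpha([\partial_h u])}$ appearing in the structure maps of $\fM^K_\alpha|_{z=T^f}$ lie in $K$. Hence $\fM^K_{f\alpha}$ is well-defined as a $\cF(M,K)$-bimodule, and combined with the fact recorded in Section \ref{sec:definability} that $h_{L'}$ is definable over $\cF(M,K)$, the convolution $\mathfrak{h}_f$ is a bona fide right $\cF(M,K)$-module.

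To check the base-change identification, I note that tensoring along the inclusion $K \hookrightarrow \Lambda$ commutes strictly with the convolution $\otimes_{\cF(M,K)}$, so $\mathfrak{h}_f \otimes_K \Lambda$ is isomorphic as a right $\cF(M,\Lambda)$-module to $h_{L'} \otimes_{\cF(M,\Lambda)} \fM^\Lambda_{f\alpha}$. For $|f|$ sufficiently small, Lemma \ref{lem:nearbyfloer} applied to $\tilde{L} = L'$ identifies the latter with $h_{\phi^f_\alpha(L')}$ up to quasi-isomorphism, which is exactly the claimed property.

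For perfectness, I would first observe that $\mathfrak{h}_f$ is proper: at each generator $L_i$, the step above together with Lemma \ref{lem:rightconvlefteqhf} gives
\begin{equation}
H^*(\mathfrak{h}_f(L_i)) \otimes_K \Lambda \;\cong\; H^*\!\big((\mathfrak{h}_f \otimes_K \Lambda)(L_i)\big) \;\cong\; HF(L_i,\phi^f_\alpha(L');\Lambda),
\end{equation}
which is finite-dimensional over $\Lambda$, so $H^*(\mathfrak{h}_f(L_i))$ is finite-dimensional over $K$. Perfectness then follows from Lemma \ref{lem:propermodule} applied to $\cF(M,K)$, which is smooth (by the lemma following Lemma \ref{lem:basechange}) and proper.

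The main conceptual content is already absorbed into Lemma \ref{lem:nearbyfloer}; the only remaining point is arithmetic bookkeeping ensuring $T^{f\alpha([\partial_h u])} \in K$, which is precisely why $f$ is restricted to $\bZ_{(p)}$ rather than to an arbitrary rational number. I do not anticipate a genuine obstacle here.
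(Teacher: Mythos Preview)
Your proof is correct and follows essentially the same route as the paper: both exhibit $h_{L'}\otimes_{\cF(M,K)}\fM^K_{f\alpha}$ as the desired module, check that the coefficients lie in $K$ because $f\in\bZ_{(p)}$, and appeal to Lemma~\ref{lem:nearbyfloer} for the identification after base change to $\Lambda$. Your explicit perfectness argument via properness and Lemma~\ref{lem:propermodule} is a detail the paper leaves implicit, simply calling the convolution a ``perfect module'' without further comment; the paper also records in a remark the alternative of using $h^{alg}_{\phi^f_\alpha,L'}$ directly via Lemma~\ref{lem:halg=h}.
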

\begin{proof}
Consider the module $h_{L'}\otimes_{\cF(M,K)}\faM{f}{K}$ over $\cF(M,K)$, which is well-defined as $f\in\bZ_{(p)}$ and $T^{f\alpha(C) }\in K$. Both $h_{L'}$ and $\faM{f}{K}$ are perfect as a consequence of \Cref{lem:propermodule}, implying the same for $h_{L'}\otimes_{\cF(M,K)}\faM{f}{K}$ (c.f. \Cref{cor:properconv}). Once one extends the coefficients to $\Lambda$, one obtains $h_{L'}\otimes_{\cF(M,\Lambda)}\faM{f}{\Lambda}$, which is quasi-isomorphic to $h_{\phi^f_\alpha(L')}$
by Lemma \ref{lem:nearbyfloer}.
\end{proof}
\begin{rk}
One could use Lemma \ref{lem:halg=h} directly to prove the corollary. Since the sum defining $h_{L'}$ is assumed to be finite, (\ref{eq:structurealgyoneda}) also remains finite. Therefore, if $f\in \bZ_{(p)}$, $h^{alg}_{\phi^f_\alpha,L'}$ is definable over $\cF(M,K)$ in the sense above, and by Lemma \ref{lem:halg=h}, it becomes quasi-isomorphic to $h_{\phi^f_\alpha(L')}$ after extending the coefficients $K\subset \Lambda$. 	
\end{rk}
We can drop the assumption that $|f|$ is small from Lemma \ref{lem:nearbyfloer} after modifying it as follows:
\begin{lem}\label{lem:iteratedconv}
For any $f>0$ and any $\tilde L$, one can find a sequence of numbers $0=s_0<s_1<\dots<s_r=f$ such that
\begin{equation}\label{eq:iteratedconv}
h_{\phi^f_\alpha(\tilde L)}\simeq h_{\tilde L}\otimes_{\cF(M,\Lambda)} \faM{s_1-s_0}{\Lambda} \otimes_{\cF(M,\Lambda)} \faM{s_2-s_1}{\Lambda} \dots \otimes_{\cF(M,\Lambda)} \faM{s_r-s_{r-1}}{\Lambda}.
\end{equation}
Similarly, when $f<0$, there is a sequence $0=s_0>s_1>\dots>s_r=f$ such that (\ref{eq:iteratedconv}) is satisfied. 
Moreover, if $f\in \bZ_{(p)}$, resp. $f\in p^n\bZ_{(p)}$ then one can assume that all $s_i\in \bZ_{(p)}$, resp. $s_i\in p^n\bZ_{(p)}$.
\end{lem}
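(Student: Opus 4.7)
The strategy is to iterate Lemma \ref{lem:nearbyfloer} along the path $t\mapsto \phi_\alpha^t(\tilde L)$ for $t\in[0,f]$, subdividing into sufficiently small time steps so that the lemma applies at each stage. Concretely, I want to build a chain
\begin{equation}
h_{\phi_\alpha^{s_i}(\tilde L)}\;\simeq\;h_{\phi_\alpha^{s_{i-1}}(\tilde L)}\otimes_{\cF(M,\Lambda)}\fM^\Lambda_{(s_i-s_{i-1})\alpha},
\end{equation}
because then concatenating these quasi-isomorphisms and using associativity of the convolution yields (\ref{eq:iteratedconv}). The identity $\phi^{s_i}_\alpha = \phi^{s_i-s_{i-1}}_\alpha \circ \phi^{s_{i-1}}_\alpha$ lets me apply Lemma \ref{lem:nearbyfloer} with the base Lagrangian taken to be $\phi^{s_{i-1}}_\alpha(\tilde L)$ and the time parameter equal to $s_i - s_{i-1}$.

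To find the subdivision, I would argue by compactness. For each $s\in[0,f]$ the Lagrangian $\phi^s_\alpha(\tilde L)$ (after a small Hamiltonian perturbation making it transverse to all $L_i$, which does not change the quasi-isomorphism class of its Yoneda module) satisfies the hypothesis of Lemma \ref{lem:nearbyfloer}, so there is an open neighborhood $s\in U_s\subset[0,f]$ and an $\epsilon_s>0$ such that for every $s'\in U_s$ and every $|h|<\epsilon_s$ one has $h_{\phi^{s'+h}_\alpha(\tilde L)}\simeq h_{\phi^{s'}_\alpha(\tilde L)}\otimes_{\cF(M,\Lambda)}\fM^\Lambda_{h\alpha}$ (the uniformity follows from the continuous dependence of the Fukaya-trick constants on $s'\in U_s$; the $\epsilon_s$ governs both the identification of intersections/regularity of moduli and the semicontinuity argument used to prove Lemma \ref{lem:nearbyfloer}). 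Extract a finite subcover of $[0,f]$ and pick any partition $0=s_0<s_1<\dots<s_r=f$ with each $s_i-s_{i-1}$ smaller than the corresponding $\epsilon$. The case $f<0$ is identical with inequalities reversed.

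For the final ``moreover'' clause, I use that $\bZ_{(p)}$ and $p^n\bZ_{(p)}$ are dense in $\bR$, while the smallness conditions on $s_i-s_{i-1}$ are open. So I may perturb each $s_i$ (for $0<i<r$) to a nearby element of $\bZ_{(p)}$, respectively $p^n\bZ_{(p)}$, keeping all consecutive differences small enough that Lemma \ref{lem:nearbyfloer} still applies; the endpoint $s_r=f$ is already in the required set by hypothesis, and $s_0=0$ is in both. Note that $p^n\bZ_{(p)}$ is closed under addition and under subtraction of elements of $p^n\bZ_{(p)}$, so the consecutive differences $s_i-s_{i-1}$ automatically lie in the relevant set, which will be needed when later invoking this lemma together with Corollary \ref{cor:groupovernovikov}.

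The only real subtlety is the uniformity of the smallness constant $\epsilon_s$ along the compact path, i.e.\ making sure that Fukaya's trick, the regularity of the relevant perturbation data, and the $T$-adic convergence in the semicontinuity argument of Lemma \ref{lem:nearbyfloer} can be arranged to vary continuously with the basepoint $\phi^s_\alpha(\tilde L)$. Once one fixes a one-parameter family of Floer/perturbation data along the isotopy $\{\phi^s_\alpha(\tilde L)\}_{s\in[0,f]}$ (using that $[0,f]$ is compact and all these choices form a contractible space of parameters), local uniformity is immediate and the compactness argument closes.
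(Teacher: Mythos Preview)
Your approach is correct and reaches the same conclusion, but it differs from the paper's in how the local-to-global passage is handled. You argue for a \emph{uniform} smallness constant $\epsilon_s$ valid for all base points $s'$ in a neighborhood $U_s$, justified by re-examining the proof of Lemma~\ref{lem:nearbyfloer} and appealing to continuous dependence of the Fukaya-trick and semi-continuity constants on the base Lagrangian. The paper instead avoids any uniformity claim: it applies \emph{both} parts of Lemma~\ref{lem:nearbyfloer} only at the single base point $\phi_\alpha^s(\tilde L)$ (giving relations $h_{\phi_\alpha^s(\tilde L)}\simeq h_{\phi_\alpha^{s'}(\tilde L)}\otimes\fM_{(s-s')\alpha}^\Lambda$ and $h_{\phi_\alpha^{s''}(\tilde L)}\simeq h_{\phi_\alpha^{s}(\tilde L)}\otimes\fM_{(s''-s)\alpha}^\Lambda$), and then invokes Lemma~\ref{lem:grouplikenovikov} to fuse $\fM_{(s-s')\alpha}^\Lambda\otimes_{\cF(M,\Lambda)}\fM_{(s''-s)\alpha}^\Lambda\simeq\fM_{(s''-s')\alpha}^\Lambda$, yielding the desired relation between arbitrary $s',s''$ in the interval. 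Your route is more self-contained (no appeal to the group-like lemma) but requires re-opening the black box of Lemma~\ref{lem:nearbyfloer} to justify uniformity---in particular, one must check that the $\epsilon$ coming from the semi-continuity argument of Lemma~\ref{lem:semicontcomplexnovsingle} is locally bounded below in $s'$, which is plausible but not quite ``immediate''. The paper's route is more modular, trading that verification for Lemma~\ref{lem:grouplikenovikov} as an extra black-box input. The final step---perturbing the $s_i$ into $\bZ_{(p)}$ or $p^n\bZ_{(p)}$ by density---is identical in both.
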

\begin{proof}
Without loss of generality, assume that $f>0$ and consider the isotopy $\phi_{\alpha}^{s}(\tilde L), s\in[0,f]$. By Lemma \ref{lem:nearbyfloer}, any $s\in[0,f]$ has an open neighborhood $(s-\epsilon,s+\epsilon)$ such that
\begin{equation}
h_{\phi_\alpha^{s}(\tilde L)}\simeq h_{\phi_\alpha^{s'}(\tilde L)}\otimes_{\cF(M,\Lambda)} \faM{s-s'}{\Lambda},\atop
h_{\phi_\alpha^{s''}(\tilde L)}\simeq h_{\phi_\alpha^{s}(\tilde L)}\otimes_{\cF(M,\Lambda)} \faM{s''-s}{\Lambda}
\end{equation}
for $s',s''\in(s-\epsilon,s+\epsilon)$. 

Assume that $\epsilon$ is small enough so that Lemma \ref{lem:grouplikenovikov} is also satisfied for $|f|,|f'|<\epsilon$, i.e. $\faM{f}{\Lambda}\otimes_{\cF(M,\Lambda)} \faM{f'}{\Lambda}\simeq \faM{f+f'}{\Lambda}$ for $|f|,|f'|<\epsilon$. Then 
\begin{align}
h_{\phi_\alpha^{s''}(\tilde L)}\simeq h_{\phi_\alpha^{s}(\tilde L)}\otimes_{\cF(M,\Lambda)} \faM{s''-s}{\Lambda}\simeq \\
h_{\phi_\alpha^{s'}(\tilde L)}\otimes_{\cF(M,\Lambda)} \faM{s-s'}{\Lambda} \otimes_{\cF(M,\Lambda)} \faM{s''-s}{\Lambda}\simeq \\
h_{\phi_\alpha^{s'}(\tilde L)}\otimes_{\cF(M,\Lambda)} \faM{s''-s'}{\Lambda}
\end{align}
for any $s',s''\in(s-\epsilon,s+\epsilon)$. The last equivalence comes from Lemma \ref{lem:grouplikenovikov}. 

Call such $s',s''$ algebraically related. By Heine--Borel theorem, one can cover $[0,f]$ by finitely many such small intervals, and this gives a sequence $0=s_0<s_1<\dots< s_r=f$ such that $s_i$ and $s_{i+1}$ are algebraically related, concluding the proof. We could choose any $(r+1)$-tuple $0=s_0'<\dots<s_r'=f$ sufficiently close to $0=s_0<s_1<\dots< s_r=f$, in particular, we can assume that they are in $\bZ_{(p)}$, resp. $p^n\bZ_{(p)}$.
\end{proof}
\begin{cor}\label{cor:uniterated}
If $f\in p^n\bZ_{(p)}$, then 
\begin{equation}\label{eq:uniteratedconv}
h_{\phi_\alpha^f(L') }\simeq h_{L'}\otimes_{\cF(M,\Lambda)} \faM{f}{\Lambda}.
\end{equation}
\end{cor}
\begin{proof}
Without loss of generality assume that $f>0$. If $f\in p^n\bZ_{(p)}$, we can choose $0=s_0<\dots<s_r=f$ in Lemma \ref{lem:iteratedconv} from $p^n\bZ_{(p)}$. Therefore, $s_{i+1}-s_i\in p^n\bZ_{(p)}$ and $\faM{f}{\Lambda}\otimes_{\cF(M,\Lambda)} \faM{f'}{\Lambda}\simeq \faM{f+f'}{\Lambda}$ holds when $f,f'\in p^n\bZ_{(p)}$ by Corollary \ref{cor:groupovernovikov}. This fact, together with (\ref{eq:iteratedconv}) implies the corollary.
\end{proof}
Observe that if one confines themselves to the case of $f\in p^n\bZ_{(p)}$, one can prove Lemma \ref{lem:iteratedconv} and Corollary \ref{cor:uniterated} by using Corollary \ref{cor:groupovernovikov}, rather than Lemma \ref{lem:grouplikenovikov}.
\begin{proof}[Proof of Proposition \ref{prop:compareprop}]
Let $f\in p^n\bZ_{(p)}$. Then by Corollary \ref{cor:uniterated} 
\begin{equation}\label{eq:unitconv}
h_{\phi_\alpha^{-f}(L') }\simeq h_{L'}\otimes_{\cF(M,\Lambda)} \faM{-f}{\Lambda}.
\end{equation}
Applying $(\cdot)\otimes_{\cF(M,\Lambda)} h^L$ to both sides of (\ref{eq:unitconv}), we obtain 
\begin{equation}\label{eq:yonyon}
h_{\phi_\alpha^{-f}(L') }\otimes_{\cF(M,\Lambda)} h^L\simeq h_{L'}\otimes_{\cF(M,\Lambda)} \faM{-f}{\Lambda}\otimes_{\cF(M,\Lambda)} h^L.
\end{equation}
Moreover, \begin{equation}\label{eq:chyonyon}
CF(L,\phi_\alpha^{-f}(L'))
\simeq  h_{\phi_\alpha^{-f}(L') }\otimes_{\cF(M,\Lambda)} h^L
\end{equation}
by Lemma \ref{lem:rightconvlefteqhf}. Combining these, we get 
\begin{equation}\label{eq:combining}
CF(\phi_\alpha^f(L),L')\simeq CF(L,\phi_\alpha^{-f}(L'))\simeq  h_{L'}\otimes_{\cF(M,\Lambda)} \faM{-f}{\Lambda}\otimes_{\cF(M,\Lambda)} h^L,
\end{equation}
which proves (\ref{eq:compareprop1}) as \begin{equation}
h_{L'}\otimes_{\cF(M,\Lambda)} \faM{-f}{\Lambda}\otimes_{\cF(M,\Lambda)} h^L\simeq \faM{-f}{\Lambda}(L,L').
\end{equation} 

The statement about the dimension is straightforward, namely the proper module \begin{equation}
h_{L'}\otimes_{\cF(M,K)}\faM{-f}{K} \otimes_{\cF(M,K)}h^L
\end{equation}
is well-defined. By extending the coefficients under $K\hookrightarrow \Lambda$, one obtains  \begin{equation}
h_{L'}\otimes_{\cF(M,\Lambda)}\faM{-f}{\Lambda} \otimes_{\cF(M,\Lambda)}h^L
\end{equation}
on the one hand, and by extending coefficients under $K\hookrightarrow\bQ_p$, one obtains
\begin{equation}
h_{L'}\otimes_{\cF(M,\bQ_p)}\fM^{\bQ_p}_{\alpha}|_{t=-f} \otimes_{\cF(M,\bQ_p)}h^L.
\end{equation}
Base change under field extensions do not change the dimensions of cohomology groups, and this finishes the proof.
\end{proof}
\begin{rk}
One can generalize Proposition \ref{prop:compareprop} to all $f\in\bZ_{(p)}$ by using an argument that will also be used in the proof of Theorem \ref{thm:mainthm}. 
\end{rk}
\begin{rk}
	A corollary of the discussion in \Cref{rk:padiccoherent} is that the isomorphisms
	\begin{equation}\label{eq:oneisom}
		\faM{f}{\Lambda}\otimes_{\cF(M,\Lambda)} \faM{f'}{\Lambda}\simeq \faM{f+f'}{\Lambda},f,f'\in p^n\bZ_{(p)}
	\end{equation}
	satisfy a coherence condition (\Cref{rk:generalcoherent}). On the other hand, such a condition is not necessary. More precisely, to conclude \eqref{eq:uniteratedconv} from \eqref{eq:iteratedconv}, we apply the isomorphism \eqref{eq:oneisom} iteratively, $r-1$ times to 
	\begin{equation}
		\faM{s_1-s_0}{\Lambda} \otimes_{\cF(M,\Lambda)} \faM{s_2-s_1}{\Lambda} \dots \otimes_{\cF(M,\Lambda)} \faM{s_r-s_{r-1}}{\Lambda}.
	\end{equation}
	A priori, this iterated multiplication can be done in multiple ways: in $2$ ways if $r=3$, in $5$ ways if $r=4$, in $14$ ways if $r=5$, and so on. 
	Therefore, without a coherence condition, we can obtain several different isomorphisms between $h_{\phi_\alpha^f(L') }$ and $h_{L'}\otimes_{\cF(M,\Lambda)} \faM{f}{\Lambda}$. On the other hand, regardless of the isomorphism \eqref{eq:unitconv} that we use, the conclusions such as \eqref{eq:yonyon}, \eqref{eq:chyonyon} and \eqref{eq:combining} in the proof of \Cref{prop:compareprop} hold. 
\end{rk}
We can now prove the main theorem:
\begingroup
\def\thethm{\ref*{thm:mainthm}}
\begin{thm} Under given assumptions, the rank of $HF(\phi^k(L),L')$ is constant in $k\in\bZ$ except for finitely many $k$.
\end{thm}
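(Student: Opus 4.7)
The plan is to first establish $p^n$-periodicity of the rank for one prime $p$ (with finitely many exceptions), and then run the argument for a second prime $p'$ to upgrade periodicity to constancy by coprimality.

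I would begin by fixing a prime $p > 2$ and an integer $n$ large enough that Proposition \ref{prop:grouplikepadic} applies, so that $\fM_\alpha^{\bQ_p}|_{\bQ_p\langle t/p^n\rangle}$ is group-like. By Proposition \ref{prop:compareprop}, for every $f \in p^n\bZ_{(p)}$ we have
\begin{equation}
\dim_\Lambda HF(\phi_\alpha^f(L),L';\Lambda) = \dim_{\bQ_p} H^*\bigl(h_{L'} \otimes_{\cF(M,\bQ_p)} \fM_\alpha^{\bQ_p}|_{t=-f} \otimes_{\cF(M,\bQ_p)} h^L\bigr).
\end{equation}
The complex $C := h_{L'} \otimes_{\cF(M,\bQ_p)} \fM_\alpha^{\bQ_p}|_{\bQ_p\langle t/p^n\rangle} \otimes_{\cF(M,\bQ_p)} h^L$ is a proper family of chain complexes over the affinoid $\bQ_p\langle t/p^n\rangle$ by Corollary \ref{cor:properconv} applied in the module setting, so by \cite[Proposition 6.5]{kedlayaoverconvergent} it is quasi-isomorphic to a finite free complex over $\bQ_p\langle t/p^n\rangle$. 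A standard argument (comparing ranks via the vanishing locus of a minor of the differential) shows that the rank of $H^*(C|_{t=-f})$ is constant in $f \in p^n\bZ_p$ away from a proper analytic subset, and in particular, constant for $f \in p^n\bZ_{(p)}$ with only finitely many exceptions.

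Next, to obtain $p^n$-periodicity on all of $\bZ$ (not just the class of $0$ mod $p^n$), I would pick, for each $i = 1,\dots,p^n-1$, a rational $f_i \in \bZ_{(p)}$ with $f_i \equiv i \pmod{p^n}$ and $|f_i|$ small (in the Archimedean sense). By Corollary \ref{cor:definable}, the Yoneda module $h_{\phi_\alpha^{-f_i}(L')}$ admits a $\cF(M,K)$-model, namely $h_{L'} \otimes_{\cF(M,K)} \fM_\alpha^K|_{z=T^{-f_i}}$, which base changes along $\mu\colon K \to \bQ_p$ to give a perfect $\cF(M,\bQ_p)$-module. Replacing $h_{L'}$ in the construction of $C$ above by this base-changed module and reapplying Proposition \ref{prop:compareprop} (now with target Lagrangian $\phi_\alpha^{-f_i}(L')$), I get that for $f \in p^n\bZ_{(p)}$, the dimension of $HF(\phi_\alpha^{f+f_i}(L),L';\Lambda)$ is constant with finitely many exceptions. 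Combining over $i = 0,1,\dots,p^n-1$ covers every residue class mod $p^n$, establishing $p^n$-periodicity for $k \in \bZ$ with finitely many exceptions.

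Finally, I would pick a second prime $p' \neq p$ and repeat the entire argument to obtain $p'^{n'}$-periodicity (with finitely many exceptions). Since $\gcd(p^n, p'^{n'}) = 1$, the two periodicities combine to force the rank of $HF(\phi^k(L),L';\Lambda)$ to be constant on $\bZ$ with finitely many exceptions. The main obstacle, already largely handled by the material cited above, is the transition from "constant rank except on a proper analytic subset of $\bZ_p$" to "constant rank on $p^n\bZ_{(p)}$ except finitely many"; this requires noting that the exceptional set, cut out by the vanishing of a convergent power series on $\bZ_p$, meets $\bZ_{(p)}$ in a set whose image modulo any fixed power of $p$ is finite, so that only finitely many integer translates can fail.
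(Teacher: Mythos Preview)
Your proposal is correct and follows the same overall strategy as the paper: establish $p^n$-periodicity of the rank via the $p$-adic family and Proposition \ref{prop:compareprop}, cover the remaining residue classes by replacing $L'$ with $\phi_\alpha^{-f_i}(L')$ using Corollary \ref{cor:definable}, then switch to a second prime and use coprimality. The paper handles the ``constant rank except finitely many'' step by invoking that $\bQ_p\langle t\rangle$ is a PID and decomposing $H^*(C)$ as free plus finitely-supported torsion (then computing the fiber rank via a Tor/universal-coefficients argument), whereas you argue via minors of the differential of a finite free model; both approaches work and yield the same conclusion.

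Your final paragraph, however, is confused and should be rewritten. The exceptional locus for the rank of $H^*(C|_{t=-f})$ is the zero set in $p^n\bZ_p$ of a nonzero element of $\bQ_p\langle t/p^n\rangle$, and this zero set is already \emph{finite} by Strassman's theorem (equivalently, because the one-variable Tate algebra is a PID). That is exactly the statement you need, and you should say so directly. The property you wrote instead---that the exceptional set ``meets $\bZ_{(p)}$ in a set whose image modulo any fixed power of $p$ is finite''---is vacuous (every subset of $\bZ_{(p)}$ has finite image in $\bZ/p^m\bZ$) and would not by itself give finiteness of the exceptional integers.
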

\addtocounter{thm}{-1}
\endgroup
\begin{proof}
Recall that we assume that $\phi=\phi_{\alpha}^1$ without loss of generality. By Proposition \ref{prop:compareprop}, the rank of $HF(\phi^k(L),L')$ is equal to the rank of 
\begin{equation}\label{eq:krestr}
H^*\big(h_{L'}\otimes_{\cF(M,\bQ_p)}\fM^{\bQ_p}_{\alpha}|_{t=-k} \otimes_{\cF(M,\bQ_p)}h^L\big)
\end{equation}as long as $k\equiv 0\, (mod\, p^n)$. As $h_{L'}\famotimes_{\cF(M,\bQ_p)}\fM^{\bQ_p}_{\alpha} \famotimes_{\cF(M,\bQ_p)}h^L$ has cohomology that is finitely generated over $\bQ_p\langle t\rangle$, by \Cref{lem:freeres}, there exists a free, finite complex over $\bQ_p\langle t\rangle$ that is quasi-isomorphic to $h_{L'}\famotimes_{\cF(M,\bQ_p)}\fM^{\bQ_p}_{\alpha} \famotimes_{\cF(M,\bQ_p)}h^L$. The structure sheaf of $t=-k$ has a resolution given by $\bQ_p\langle t\rangle \xrightarrow{t+k} \bQ_p\langle t\rangle$. As a result, the restriction \eqref{eq:krestr} of $C\simeq h_{L'}\famotimes_{\cF(M,\bQ_p)}\fM^{\bQ_p}_{\alpha} \famotimes_{\cF(M,\bQ_p)}h^L$ to $t=-k$ is equivalent to  
\begin{equation}
C|_{t=-k}=	C\otimes_{\bQ_p\langle t\rangle} cone(\bQ_p\langle t\rangle \xrightarrow{t+k} \bQ_p\langle t\rangle)\simeq cone(C\xrightarrow{t+k}C).
\end{equation}
One can see $C$ as an unbounded $2$-periodic complex of finitely generated modules, and we have a long exact sequence
\begin{equation}
	\dots\to H^*(C)\xrightarrow{t+k} H^*(C)\to H^*(C|_{t=-k})\to  H^{*+1}(C)\xrightarrow{t+k} H^{*+1}(C)\to \dots  .
\end{equation}
Therefore, $H^*(C|_{t=-k})$ is an extension of $H^*(C)/(t+k)H^*(C)$ and $ker (H^{*+1}(C)\xrightarrow{t+k} H^{*+1}(C))$.

As $\bQ_p\langle t\rangle$ is a PID (\cite[Section 2, Cor 10]{bosch}), the finitely generated module $H^*(C)$ is isomorphic to a direct sum of a finitely generated free module over $\bQ_p\langle t\rangle$ and finitely many modules of the form $\bQ_p\langle t\rangle/(f(t))$, where $f(t)\neq 0$. By Strassman's theorem (\cite{strassmannuber}, \cite[p.62, Theorem 4.1]{casselslocal}, \cite[Theorem 3.38]{padicanalysiskatok}), every such $f(t)$ has finitely many roots. As a result, for all but finitely many $k$, $t+k$ acts invertibly on the torsion part of $H^*(C)$ and $H^{*+1}(C)$. Thus, $ker (H^{*+1}(C)\xrightarrow{t+k} H^{*+1}(C))$ vanishes, and $H^*(C)/(t+k)H^*(C)$ --and therefore $H^*(C|_{t=-k})$-- has the same rank as $H^*(C)$. In other words, the rank of $H^*(C|_{t=-k})$ is constant for all but finitely many $k\in p^n\bZ_{(p)}$. Therefore, the rank of $HF(\phi^k(L),L')$ is constant for $k\in p^n\bZ_{(p)}$ with finitely many possible exceptions.

%

Now choose $f_1,f_2,\dots,f_{p^n-1}\in \bZ_{(p)}$ such that $f_i\equiv i\,(mod\, p^n)$ and $|f_i|$ is small so that Lemma \ref{lem:halg=h} and Lemma \ref{lem:nearbyfloer} hold for $\tilde L=L'$ and $f=-f_i$. In other words, $h^{alg}_{\phi^{-f_i}_\alpha,L'}\simeq h_{\phi^{-f_i}_\alpha(L')}\simeq h_{L'}\otimes_{\cF(M,\Lambda)}\faM{-f_i}{\Lambda}$. In particular, $h_{\phi^{-f_i}_\alpha(L')}$ is definable over $\cF(M,K)$, and therefore over $\cF(M,\bQ_p)$ (Corollary \ref{cor:definable}). Given $f\equiv f_i\,(mod \, p^n)$, one can prove \begin{equation}
h_{\phi^{-f}_\alpha (L') }\simeq h_{L'}\otimes_{\cF(M,\Lambda)}\faM{-f_i}{\Lambda} \otimes_{\cF(M,\Lambda)}\faM{-f+f_i}{\Lambda} 
\end{equation}as a corollary of Lemma \ref{lem:iteratedconv}, similar to Corollary \ref{cor:uniterated}. Then one can simply follow the proof of Proposition \ref{prop:compareprop} to prove that the rank of $HF(\phi_\alpha^{f}(L),L' )$ is the same as the rank of
\begin{equation}
H^*\big(h_{L'} \otimes_{\cF(M,\bQ_p)} 
\fM^{\bQ_p}_{\alpha}|_{t=-f_i} \otimes_{\cF(M,\bQ_p)}\fM^{\bQ_p}_{\alpha}|_{t=-f+f_i}
\otimes_{\cF(M,\bQ_p)}h^L\big),
\end{equation}
i.e. the rank of $\bQ_p\langle t\rangle$-module (or $\bQ_p\langle t/p^n\rangle$-module, after restriction)
\begin{equation}
H^*\big(h_{L'} \famotimes_{\cF(M,\bQ_p)} 
\fM^{\bQ_p}_{\alpha}|_{t=-f_i} \famotimes_{\cF(M,\bQ_p)}\fM^{\bQ_p}_{\alpha}
\famotimes_{\cF(M,\bQ_p)}h^L\big)
\end{equation}
at the point $t=-f+f_i$. This allows one to conclude the rank of $HF(\phi_\alpha^{f}(L),L' )$ is constant among all but finitely many $f$ satisfying $f\equiv f_i\,(mod\,p^n)$. Therefore, the rank of $HF(\phi_\alpha^{f}(L),L' ), f\in\bZ$ is periodic of period (dividing) $p^n$, except for finitely many $f\in\bZ$.

Notice that we can replace $p$ by another prime $p'>2$, and we can conclude that this sequence is periodic of period $(p')^{n'}$ for some $n'$, except for finitely many terms. Since, $p$ and $p'$ are coprime, this implies that the rank of $HF(\phi_\alpha^{f}(L),L' )$ is constant in $ f\in\bZ$ except for finitely many $f$. 
\end{proof}
\begin{rk}\label{rk:denseresult}
The proof actually implies that the rank of $HF(\phi_\alpha^{f}(L),L' )$ is constant in $f\in\bZ_{(pp')}=\{\frac{a}{b}:a,b\in\bZ, p\nmid b,p'\nmid b \}$ except for finitely many $f$. Applying this version to $r\alpha,r\in\bR$ in place of $\alpha$ implies that rank of $HF(\phi_\alpha^{rf}(L),L' )$ is constant except for finitely many $f\in\bZ_{(pp')}$. However, from this, we cannot immediately conclude that the rank of $HF(\phi_\alpha^{f}(L),L' ),f\in\bR$ is constant except for finitely many $f\in\bR$, as the union of all these finite sets corresponding to classes in $\bR/\bZ_{(pp')}$ may still be infinite.
\end{rk}
\begin{rk}
Under the assumption that $L$ and $L'$ are Bohr-Sommerfeld monotone, one can assume that they are two of the fixed split generators $\{L_i\}$ without loss of generality. Therefore, it is possible to calculate the rank of $h_{L'}\famotimes_{\cF(M,\bQ_p)}  \fM^{\bQ_p}_\alpha \famotimes_{\cF(M,\bQ_p)} h^L$ as the dimension of the cohomology of $\fM^{\bQ_p}_{\alpha}(L,L')$ or $\fM^{K}_{\alpha}(L,L')$. In particular, if $L\cap L'=\emptyset$, then this complex is $0$, and Proposition \ref{prop:compareprop} implies that $HF(\phi^k (L),L';\Lambda)=0$ for all $k\in p^n\bZ_{(p)}$. It is possible that the group-like property holds over the base $\bQ_p\langle t\rangle$, which would let one conclude the same for all $k\in \bZ_{(p)}$. Therefore, one would have $HF(\phi^1_\alpha(L),L';\Lambda)=0$ for all $\alpha$. This may seem to contradict the theorem; for instance, by letting $L'=\phi^1_\alpha(L)$. However, $L$ and $\phi^1_\alpha(L)$ cannot be Bohr-Sommerfeld monotone at the same time, i.e. this remark does not apply in this case (unless $[\alpha|_L]=0$, in which case $L$ and $\phi^1_\alpha(L)$ are Hamiltonian isotopic). To see this, assume the converse. Consider a loop $C$ in $L$, and the cylinder $u$ traced by $C$ under the isotopy $\phi_\alpha^s$, $s\in[0,1]$. The area of the cylinder is equal to $-\alpha(C)$, and the calculations in the proof of \cite[Lemma 4.1.5]{wehrheimwoodwardquilted1} imply that it is proportional to the index $I(u)$ (this also follows from of \cite[Definition 4.1.2]{wehrheimwoodwardquilted1} and \cite[Lemma 4.1.5]{wehrheimwoodwardquilted1}). The index is given by the differences of the Maslov indices of totally real bundles $u(0,\cdot)^*T\phi_\alpha^0(L)$ and $u(1,\cdot)^*T\phi_\alpha^1(L)$ with respect to a trivialization of $u^*TM$, and it is easy to see that this index is independent of $s$ for $u(s,\cdot)^*T\phi_\alpha^s(L)$. In other words, $I(u)=0$, which implies $\alpha(C)=0$ for every loop on $L$, i.e. $\alpha|_L$ has a vanishing cohomology class. 
Note that the situation is analogous to the deformations of exact Lagrangians in exact symplectic manifolds: unless $\alpha|_L$ is also exact, such a deformation will not remain exact with respect to the same primitive. As we will explain, one can get rid of the Bohr-Sommerfeld monotonicity assumption on $L$ and $L'$ by passing to a finitely generated extension $K\subset \tilde K$ over which $h_{L'}$ and $h^L$ are definable. However, this does not mean that the modules $h_{L'}$ and $h^L$ (defined in the usual way as in Section \ref{sec:background}, not as Yoneda modules of twisted complexes over $\cF(M,\Lambda)$) have coefficients in $K$ or $\tilde K$.
\end{rk}
\subsection*{Dropping the assumption that $L$ and $L'$ are Bohr-Sommerfeld monotone}
We briefly explain how to drop the assumption that $L$ and $L'$ are Bohr-Sommerfeld monotone, while holding the assumption that they are tautologically unobstructed and have minimal Maslov number at least $3$. The idea is simple: one can represent the modules $h_{L'}$ and $h^L$ as iterated cones/twisted complexes of Yoneda modules of $\{L_i\}$. Since the data to define a twisted complex is finite, these modules are definable over a finitely generated extension of $K$, if not $K$ itself. 

More precisely, represent $L'$ as an element of $tw^\pi (\cF (M,\Lambda))$, i.e. $L'\simeq (\bigoplus_k L_{i_k},\sigma,\pi)$, where $\sigma$ is the differential of the twisted complex and $\pi$ is the idempotent. Write components of $\sigma$ and $\pi$ as linear combinations of the generators of $CF(L_i,L_j)$. 
One needs to add only finitely many elements from $\Lambda$ to the subfield $K$ to include the coefficients of these linear expressions. In other words, there exists a finitely generated extension $K\subset \tilde K$ such that $\sigma$ and $\pi$, and hence this twisted complex is defined over $\cF(M,\tilde K)$--- the category obtained from $\cF(M,K)$ via base change along $K\to\tilde K$. Let $h'_{L'}$ denote the image of this twisted complex under Yoneda embedding. By construction, $h'_{L'}$ turns into a module quasi-isomorphic to $h_{L'}$, when we base change under $\tilde K\subset\Lambda$. Denote this module by $h'_{L'}$ as well. Similarly, $h^L$ is also definable over a finitely generated extension of $K$, i.e. by further extending $\tilde K$ by finitely many elements, one can ensure that there exists a left $\cF(M,\tilde K)$-module $h'^{L}$ that becomes quasi-isomorphic to $h^{L}$ after base change along $\tilde K\subset \Lambda$.
\begin{rk}
Analogous to \Cref{rk:nonisomreduction}, the reductions of two Hamiltonian isotopic Lagrangians to a smaller field are not necessarily quasi-isomorphic. 
\end{rk}
Since $\tilde K$ is finitely generated and countable, one can find a finite (not only finitely generated) extension $\bQ_p'$ of $\bQ_p$ and a map $\tilde \kappa :\tilde K\to\bQ_p'$ extending $\kappa:K\to\bQ_p$. For this, one can first extend $\kappa$ to a map to $\bQ_p$ from a maximal purely transcendental extension of $K$ inside $\tilde K$ (by choosing a set of elements of $\bQ_p$ that are algebraically independent over $\kappa(K)$). Then, $\tilde K$ is finite over this extension of $K$, and there exists a finite extension $\bQ_p'$ of $\bQ_p$ and a map $\tilde{\kappa}:\tilde K\to \bQ_p'$, whose restriction to $K$ is equal to $\kappa$. 

The field $\bQ_p'$ carries a unique discrete valuation extending that on $\bQ_p$ by \cite[Theorem 9.5]{shimuraarith}, and one can use the base change under $\bQ_p\langle t\rangle \to \bQ_p'\langle t\rangle$ to obtain a family $\fM^{\bQ_p'}_{\alpha}$ over the latter that is group-like over $\bQ_p'\langle t/p^n\rangle$. The proof of Proposition \ref{prop:compareprop} still applies and we have
\begin{equation}
dim_\Lambda\big(HF(\phi^k(L),L')\big)=dim_{\bQ_p'}\big( H^*\big(h'_{L'}\otimes_{\cF(M,\bQ_p')}\fM^{\bQ_p'}_{\alpha}|_{t=-f} \otimes_{\cF(M,\bQ_p')}h'^L\big) \big)
\end{equation}
for all $k\in p^n\bZ_{(p)}$. This dimension is constant for all but finitely many $k$ as before.

Replacing $L'$ by $\phi^{-i}(L')$, where $i=0,1,\dots  p^n-1$, we see that $dim_\Lambda\big(HF(\phi^k(L),L')\big)$ is constant for all but finitely many $k\in i+p^n\bZ_{(p)}$. Therefore, $dim_\Lambda\big(HF(\phi^k(L),L')\big)$ is $p^n$ periodic for all but finitely many $k\in \bZ$, and by replacing $p$ by another prime, we see that this dimension is actually constant among  $k\in\bZ$ except for finitely many. This concludes the proof of Theorem \ref{thm:mainthm} without the Bohr-Sommerfeld monotonicity assumption on $L$ and $L'$.
\begin{rk}
Notice that the important difference as one drops the Bohr--Sommerfeld monotone assumption is that we are now able to directly replace $L'$ by any $\phi^{-i}(L')$ (which was not the case before, as $\phi^{-i}(L')$ is not necessarily Bohr--Sommerfeld monotone). Previously we had circumvented this issue by replacing $i$ with $f_i\in i+p^n\bZ_{(p)}$ such that $|f_i|$ is small, and replacing $h_{\phi^{-f_i}(L')}$ by $h^{alg}_{\phi^{-f_i}_\alpha,L'}$, that is definable over $K$ and equivalent to $h_{\phi^{-f_i}(L')}$ over $\Lambda$. 
%
\end{rk}

\section{Generic $\alpha$ and the proof of Theorem \ref{thm:generic}}\label{sec:generic}
In this section, we explain a way to construct group-like $p$-adic families of bimodules (i.e. ``analytic $\bZ_p$-actions'') without the assumption of monotonicity of $M$ at the cost of assuming $\alpha$ is generic. Using this, we will deduce Theorem \ref{thm:generic}.

The main reason we had to assume monotonicity was that, we had no way of ensuring convergence for an infinite series of the form (\ref{eq:padicstructuremaps}) as well as (\ref{eq:padicmapformula}) in $\bQ_p$. One could try to choose the map $\kappa:K\to\bQ_p$ so that $\kappa(T^{E(u)})\in p\bZ_p$ and $\kappa(T^{\alpha([\partial_h u])})\in 1+p\bZ_p$, but such a map may not exist if there are algebraic relations between the elements of the form $T^{E(u)}$ and $T^{\alpha([\partial_h u])}$. On the other hand, for generic $[\alpha]\in H^1(M,\bR)$, there are no such algebraic relations. Therefore, one can define the embedding of the field of definition into $\bQ_p$ such that the images of $T^{E(u)}$ converge in $p$-adic topology, just like $T^{E(u)}$ converging in $T$-adic topology by Gromov compactness.
 
Our construction works in the general setting under \Cref{assumption:generic}. In other words, we assume that $\pi_2(M)=0$, $\cF(M,\Lambda)$ is an (uncurved, $\bZ$ or $\bZ/2\bZ$-graded) smooth and proper $A_\infty$-category over $\Lambda$, , and it is split generated by tautologically unobstructed Lagrangians $L_1,\dots, L_m$. Also, $L,L'$ are tautologically unobstructed Lagrangians with brane structures that bound no discs of Maslov index $2$ or less (so they define objects of the Fukaya category).
We denote the span of $L_1,\dots,L_m$ by $\cF(M,\Lambda)$. Two natural examples of such $M$ are an elliptic curve and the product of two elliptic curves. Let $L,L'$ be two branes satisfying Assumption \ref{assumption:generic}, which are represented by elements of $tw^\pi(\cF(M,\Lambda))$ by the generation assumption. Assume that $L_i,L,L'$ are pairwise transverse. 

Fix perturbation data. Our first goal is to embed the additive semi-group spanned by all $E(u)>0$ into the multiplicative semi-group  $p\bZ_p$. For this, we prove that there exists a discrete free submonoid of $\bR_{\geq 0}$ that contains the topological energy of all pseudo-holomorphic curves in $M$ with boundary on $L_i,L,L'$:
\begin{lem}\label{lem:freeenergymonoid}
For appropriate choices of perturbation data, there exist rationally independent elements 
\begin{equation}
E_1,\dots,E_n\in \bR_{>0}\cap \omega_M (H_2(M, L\cup L'\cup\bigcup_i L_i;\bQ))
\end{equation}	
such that the monoid spanned by them contains the topological energy of every non-constant pseudo-holomorphic curve with boundary on $L\cup L'\cup\bigcup_i L_i$.
\end{lem}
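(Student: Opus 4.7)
The plan is to reduce the statement to a polyhedral-geometric fact about lattices and then exploit the discreteness of the energy set coming from Gromov compactness.

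First I would set up the algebra. The relative homology $\mathcal H := H_2(M, L\cup L'\cup\bigcup_i L_i;\bZ)$ is finitely generated, so its image $G := \omega_M(\mathcal H) \subset \bR$ under the period map is a finitely generated torsion-free abelian subgroup of $\bR$, hence free abelian of some rank $n$. Set $V := G\otimes_\bZ\bQ$, the $n$-dimensional $\bQ$-subspace of $\bR$ in which the $E_i$ must live. Every energy $E(u)$ of a non-constant $J$-holomorphic curve with the prescribed boundary conditions belongs to $G\cap\bR_{>0}$; call this set $S$. Gromov compactness gives $\#\{s\in S: s\leq E\}<\infty$ for all $E>0$, so $S$ is discrete and along any injective enumeration $s_1,s_2,\dots$ one has $s_k\to\infty$.

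Next I would bring in convex geometry. Write $V_\bR := V\otimes_\bQ\bR$ and let $\ell:V_\bR\to\bR$ be the $\bR$-linear extension of the embedding $V\hookrightarrow\bR$. Form the closed convex conic hull $C := \overline{\operatorname{cone}(S)} \subset V_\bR$. The geometric heart of the argument is the claim that $C$ is pointed, i.e., $C\cap\ker\ell=\{0\}$. Granting this, $\ell$ lies in the interior of the dual cone $C^\vee\subset V^*_\bR$, which is in particular non-empty and open. Let $G^\vee := \operatorname{Hom}_\bZ(G,\bZ)\subset V^*_\bR$, a lattice of full rank. By choosing $n$ lattice points along independent rational rays arbitrarily close to the direction of $\ell$, I can find $\phi_1,\dots,\phi_n\in G^\vee\cap\operatorname{int}(C^\vee)$ that are $\bR$-linearly independent and whose simplicial cone contains $\ell$ in its interior. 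Taking $E_1,\dots,E_n\in V$ as the dual basis characterized by $\phi_j(E_i)=\delta_{ij}$ then does the job: positivity of the coordinates of $\ell$ in the $\phi$-basis gives $\ell(E_i)>0$; rational independence of the $E_i$ in $\bR$ follows from their being a $\bQ$-basis of $V$; and for each $s\in S\subset G$ one has $\phi_j(s)\in\bZ$ (since $\phi_j\in G^\vee$) and $\phi_j(s)\geq 0$ (since $\phi_j\in C^\vee$), so $s$ has non-negative integer coordinates in the $E_i$-basis and hence $s\in\bN E_1+\dots+\bN E_n$.

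The hard part will be the pointedness of $C$. Gromov compactness alone gives $\omega(u)\to\infty$ along sequences of distinct representable classes, but this does not by itself forbid the existence of such sequences whose directions in $V_\bR$ accumulate on $\ker\ell$. Ruling this out calls for an isoperimetric-type control of the form $\|[u]\|\lesssim\omega(u)$ bounding the size of the homology class of a $J$-holomorphic curve with boundary on compact Lagrangians linearly by its energy; this is the kind of statement that should follow from tameness of the almost complex structure together with the compactness of $M$ and the Lagrangians, and it is the input I would need to secure to complete the argument.
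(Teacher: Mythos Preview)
Your polyhedral framework is sound, and you have correctly located the crux: everything hinges on the pointedness of the cone $C\subset V_{\bR}$, which you reduce to an isoperimetric bound $\|[u]\|\lesssim\omega_M([u])$. You are also right that Gromov compactness alone cannot supply this. Indeed, the paper remarks immediately after its proof that there exist discrete subsets of $\bZ+\sqrt{2}\,\bZ\subset\bR_{>0}$ whose generated monoid is not contained in any finitely generated submonoid of $\bR_{\geq 0}$; in your language, the closed cone of such a set meets $\ker\ell$ nontrivially. So the proposal as it stands has exactly the gap you name, and the proof cannot be completed without the extra geometric input.

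The paper furnishes that input directly, and in a way that also streamlines the polyhedral step. One perturbs $\omega_M$ to nearby closed $2$-forms $\omega_1,\dots,\omega_{n'}$ that (i) still tame the fixed $J$, (ii) vanish on the Lagrangians and represent a \emph{rational} basis of $H^2(M,L;\bQ)$, and (iii) have $[\omega_M]$ in the positive cone they span. Taming gives $\omega_i([u])>0$ for every non-constant $J$-holomorphic $u$; rationality gives $\omega_i([u])\in\frac{1}{N}\bZ$. Hence the classes of holomorphic curves lie in a lattice orthant $(\frac{1}{N}\bZ_{>0})^{n'}$, and their $\omega_M$-images land in a finitely generated submonoid of $\bR_{\geq 0}$. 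A separate combinatorial lemma then enlarges any finitely generated submonoid of $\bR_{\geq 0}$, within its rational span, to one freely generated by rationally independent positive reals. Note that the taming perturbations are precisely your isoperimetric bound in disguise: since $\omega_M=\sum c_i\omega_i$ with $c_i>0$, one gets $\max_i\omega_i([u])\leq c_{\min}^{-1}\,\omega_M([u])$, which pushes forward under $\omega_M$ to the estimate $\|s\|_{V_{\bR}}\lesssim\ell(s)$ for $s\in S$ and hence gives pointedness of $C$. So your route and the paper's converge on the same geometric fact; the paper's packaging (rational $\omega_i$) has the advantage of yielding integrality and positivity simultaneously, landing in a finitely generated monoid in one step and avoiding the need to transport bounds from $H_2$ to $V_{\bR}$.
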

\begin{proof}
For simplicity, we work with curves with boundary on a single Lagrangian $L$. 
Let $\omega_1,\dots ,\omega_{n'}$ denote closed $2$-forms that are obtained by small perturbations of $\omega_M$ and that satisfy:
\begin{enumerate}
\item\label{condbasis:perturbtame} the forms $\omega_i$ are still symplectic and tamed by the almost complex structure term of the perturbation data,
\item\label{condbasis:rationalbasis} $\omega_i|_L=0$ for all $i$ and $\{[\omega_i] \}$ form a basis of $H^2(M,L;\bQ)$ (in particular they are rational),
\item\label{condbasis:convexhull} $[\omega_M]\in H^2(M,L;\bR)$ is in the convex hull of the rays generated by $[\omega_i]$. 
\end{enumerate}
%
To see that such $\{\omega_i\}$ exists, choose closed $2$-forms $\theta_1,\theta_2,\dots, \theta_{n'}$ representing a basis of $H^2(M,L;\bR)$ and containing $\omega_M$ is their span $\bigoplus_i \bR.\theta_i\subset \Omega^2(M)$. The space $\bigoplus_i \bR.\theta_i$ has a canonical identification with $H^2(M,L;\bR)$. Note that small perturbations of $\omega_M$ within $\bigoplus_i \bR.\theta_i$ remain symplectic and an almost complex structure taming $\omega_M$ tames the small perturbation as well. Therefore, assuming the variation of almost complex structures is bounded, we can guarantee that the forms in a small neighborhood of $\omega_M$ in $\bigoplus_i \bR.\theta_i$ are still tamed by the same almost complex structures. Choosing $\{\omega_i\}$ from this neighborhood guarantees \eqref{condbasis:perturbtame}. It is clear that one can choose such $\{\omega_i\}$ that is a basis and such that $\omega_M$ is in the interior of their convex hull. Moreover, these two properties are open conditions, and as the rational forms are dense in $\bigoplus_i \bR.\theta_i\cong H^2(M,L;\bR)$, we can also assume that each $\omega_i$ is rational. Therefore, \eqref{condbasis:rationalbasis} and \eqref{condbasis:convexhull} also hold. 

Note that the topological energy is still bigger than the geometric energy, as we do not change the Hamiltonian term (cf. \cite[(7.6)]{abousei}, which holds for the tame almost complex structures as well). In particular, the topological energy of a pseudo-holomorphic curve is still positive, and as before the topological energy of such a curve representing class $\beta\in H_2(M,L;\bZ)$ can be calculated as  $\omega_i(\beta)$.
%
Furthermore, there exists a natural number $N$ such that $\omega_i (H_2(M,L;\bZ))\subset \frac{1}{N}\bZ$. Choose a common $N$ for all $\omega_i$. 

Consider the set 
\begin{equation}\label{eq:somebetas}
\{\beta\in H_2(M,L;\bQ) :\omega_i(\beta)>0\text{ and } \omega_i(\beta)\in\frac{1}{N}\bZ \text{ for all }i  \}\cup\{0\}.
\end{equation}
If we use $(\omega_i)_i$ to identify $H_2(M,L;\bQ)$ with $\bQ^{n'}$, then (\ref{eq:somebetas}) corresponds to $(\frac{1}{N}\bZ_+)^{n'}\cup\{0\}$. Therefore, (\ref{eq:somebetas}) is a finitely generated submonoid of $H_2(M,L;\bQ)$. It follows from the discussion above that \eqref{eq:somebetas} contains the classes of all pseudo-holomorphic curves. Also, as $\omega_M$ can be represented as a positive linear combination of $\omega_i$, the image of (\ref{eq:somebetas}) under $\omega_M$ is a submonoid of $\bR_{\geq 0}$, and it is finitely generated as (\ref{eq:somebetas}) is. We conclude the proof by applying Lemma \ref{lem:fgmonoid} below to the image of (\ref{eq:somebetas}) under $\omega_M$. 
\end{proof}
\begin{lem}\label{lem:fgmonoid}
Every finitely generated additive submonoid of $\bR_{\geq 0}$ can be extended in its rational span to a submonoid generated by positive, rationally independent elements. 
\end{lem}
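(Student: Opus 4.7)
The plan is to reduce to a convex-geometric problem and solve it by a rational perturbation argument. Let $M$ have generators $g_1,\dots,g_k > 0$ and let $V = \bQ\cdot M \subset \bR$ with $n = \dim_\bQ V$; choose a $\bQ$-basis $e_1,\dots,e_n$ of $V$ consisting of positive real numbers (possible since $V$ has $\bQ$-dimension $n$ and contains positive elements), and identify $V \cong \bQ^n$ via this basis. The inclusion $V\hookrightarrow\bR$ then extends $\bR$-linearly to a functional $\lambda : \bR^n \to \bR$ whose coordinates $e_i$ are positive reals, and $g_i > 0$ becomes $\lambda(g_i) > 0$. It suffices to produce $\bR$-linearly independent $E_1,\dots,E_n \in \bQ^n$ with $\lambda(E_j) > 0$ such that $\bQ_{\geq 0}\langle E_1,\dots,E_n\rangle$ contains every $g_i$: clearing a common denominator $D$ of the resulting non-negative rational coefficients by replacing each $E_j$ with $E_j/D$ converts them to non-negative integers, preserving positivity and $\bR$-linear independence.

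I construct the $E_j$ dually. The cone $C = \mathrm{cone}(g_1,\dots,g_k) \subset \bR^n$ is pointed since $\lambda$ is strictly positive on $C\setminus\{0\}$, so its dual cone $C^*$ has non-empty interior containing $\lambda$. I will find $n$ rational vectors $\mu_1,\dots,\mu_n \in \bQ^n \cap \mathrm{int}(C^*)$ that are $\bR$-linearly independent and satisfy $\lambda = \sum_j a_j\mu_j$ with every $a_j > 0$; the dual basis $E_1,\dots,E_n$ defined by $\mu_i(E_j) = \delta_{ij}$ then lies in $\bQ^n$, has $\lambda(E_j) = a_j > 0$, and spans the simplicial cone $\{v : \mu_i(v)\geq 0 \text{ for all } i\}$, which contains every $g_\ell$ since $\mu_i \in C^*$ forces $\mu_i(g_\ell) \geq 0$.

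To produce the $\mu_j$ explicitly, fix a small rational $\epsilon > 0$ and a rational $\mu_0 \in \mathrm{int}(C^*) \cap \bQ^n$ with $\|\lambda - \mu_0\| \ll \epsilon$ (by rational density), and set $\mu_j := \mu_0 + \epsilon\bigl(e_j - \tfrac{1}{n}\sum_i e_i\bigr)$ where $e_1,\dots,e_n$ is the standard basis of $\bQ^n$. Setting $M := [\mu_1 \mid \cdots \mid \mu_n]$, the matrix determinant lemma gives $\det M = \epsilon^{n-1}\sum_i(\mu_0)_i$, which is non-zero because $\sum_i (\mu_0)_i$ is close to $\sum_i e_i > 0$, so the $\mu_j$ are $\bR$-linearly independent. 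The identity $\sum_j\mu_j = n\mu_0$ shows that $\mu_0$ has coordinates $(1/n,\dots,1/n)$ in the basis $(\mu_j)$; combined with the bound $\|M^{-1}\| = O(\epsilon^{-1})$ this yields that the coordinates $a_j$ of $\lambda$ in the basis $(\mu_j)$ differ from $1/n$ by $O(\epsilon^{-1}\|\lambda-\mu_0\|) = o(1)$, hence are strictly positive. For small enough $\epsilon$ the $\mu_j$ also remain in the open set $\mathrm{int}(C^*)$.

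The main obstacle is that the positivity functional $\lambda$ is generically irrational, so one cannot use $\lambda$ itself as a rational dual vector; the argument instead produces a rational basis near $\lambda$ whose positive combinations still realize $\lambda$. Modulo this core convex-geometric construction, the proof is a routine combination of density of $\bQ^n$ in $\bR^n$, openness of $\mathrm{int}(C^*)$, and genericity.
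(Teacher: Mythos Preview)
Your argument is correct and takes a genuinely different route from the paper. The paper proceeds by induction on the number of generators: assuming $x_1,\dots,x_{n-1}$ already lie in a monoid generated by rationally independent positives $y_1,\dots,y_k$, it absorbs $x_n$ by an explicit rewriting of the $y_i$ (splitting the negative part of the rational expression for $x_n$ across the positive terms and rescaling). Your approach instead linearizes the problem by identifying the rational span with $\bQ^n$, recasts it as finding a rational simplicial cone containing the cone $C=\mathrm{cone}(g_i)$ inside the open half-space $\{\lambda>0\}$, and solves this dually by perturbing the positivity functional $\lambda$ to a rational basis of $\mathrm{int}(C^*)$ whose positive span still hits $\lambda$. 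This is more conceptual and immediately yields the minimal number $n=\dim_\bQ V$ of generators; the paper's argument is more hands-on but entirely elementary and avoids any convex-geometry vocabulary.

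Two small points of presentation you should tighten. First, you overload $e_i$: initially it denotes the chosen positive $\bQ$-basis of $V\subset\bR$ (so that $\lambda$ has coordinates $e_i$), and later it denotes the standard basis of $\bQ^n$ in the definition of $\mu_j$. These are different objects and the double use is confusing. Second, the order of quantifiers deserves one more sentence: you must first choose $\epsilon$ small enough that the ball of radius $O(\epsilon)$ around $\lambda$ lies in $\mathrm{int}(C^*)$, and \emph{then} choose the rational $\mu_0$ with $\|\lambda-\mu_0\|\ll\epsilon$ so that both $\mu_j\in\mathrm{int}(C^*)$ and $|a_j-1/n|\le \|M^{-1}\|\cdot\|\lambda-\mu_0\|<1/n$ hold simultaneously. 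As written, the concluding sentence about $\mu_j\in\mathrm{int}(C^*)$ for ``small enough $\epsilon$'' reads as an afterthought rather than a constraint fixed before choosing $\mu_0$.
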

\begin{proof}
We proceed by induction on the number of generators. Assume that the statement holds when there are less than $n$ generators. Consider an additive submonoid of $\bR_{\geq 0}$ generated by $x_1,\dots ,x_n> 0$. By induction hypothesis for $x_1,\dots,x_{n-1}$, we can find a monoid in their rational span generated by rationally independent elements $y_1,\dots y_k>0$ that contains $x_1,\dots,x_{n-1}$. If $x_n$ is rationally independent of $\{y_i\}$, then $y_1,\dots,y_k,x_n$ is a rationally independent set of generators of a monoid containing $x_1,\dots,x_{n}$. 

Otherwise, one can write $x_n$ as $x_n=a_1y_{i_1}+\dots +a_qy_{i_q}-b_1y_{j_1}-\dots- b_ry_{j_r}$, where $a_i\in\bQ_+,b_j\in \bQ_{\geq 0}$, such that the sets $\{y_{i_1}\dots,y_{i_q}\}$ and $\{y_{j_1},\dots,y_{j_r}\}$ form a partition of $\{y_1,\dots,y_k\}$ (i.e. they are disjoint, and their union contains all $y_i$). Let $b_1y_{j_1}+\dots+ b_ry_{j_r}$ be denoted by $\eta$. Since $x_n>0$, there exists $\lambda_1,\dots ,\lambda_q\in\bQ_{\geq 0}$ such that $\lambda_1+\dots +\lambda_q=1$, and $a_hy_{i_h}-\lambda_h \eta> 0$ (to see this, assume that $\eta>0$ and $a_hy_{i_h}=\iota_h \eta$, where $\iota_h >0$. Thus, $\sum \iota_h >1$, choose $\lambda_h$ to be a rational partition of $1$ such that $\iota_h-\lambda_h>0$). We can find a new basis that consists of positive rational multiples of $y_{j_g}$ and $a_hy_{i_h}-\lambda_h \eta$. For instance, let $\tau_1\in\bZ_+$ be the product of numerators of all $a_h$, and let $\tau_2\in\bZ_+$ be the product of denominators of all $\lambda_h$ and $b_g$. Then, 
the monoid spanned by positive rationally independent elements $\frac{1}{\tau_1}(a_hy_{i_h}-\lambda_h \eta),h=1,\dots, q$, $\frac{1}{\tau_1\tau_2}y_{j_g}, g=1,\dots,r $ contains $y_1,\dots,y_k,x_n$, and thus it contains $x_1,\dots,x_n$.
This finishes the proof.
\end{proof}
\begin{rk}
Gromov compactness implies that the set of energies of non-constant pseudo-holomorphic curves is a discrete subset of $\bR_+$; however, we are unable to use this statement to give a simpler proof of Lemma \ref{lem:freeenergymonoid}. The main difficulty is the finite generation statement, it is not true that an additive submonoid of $\bR_{\geq 0}$ that lies in a finitely generated subgroup of $\bR$ is always contained in a finitely generated submonoid of $\bR_{\geq 0}$. For instance, consider the monoid generated by $1$ and $\sqrt{2}$. It is a discrete monoid, and we can expand it to another discrete monoid by adding an element of the set $\bZ+\sqrt{2}\bZ$ that is outside the original monoid (such as $3\sqrt{2}-1$). Then, we can go on by adding another element of $\bZ+\sqrt{2}\bZ$ from outside the monoid, to expand it further. We continue in this way, and the union of this nested sequence of monoids is a monoid that is not finitely generated. If we assume that the element added at the $n^{th}$ step is larger than $n$, then this monoid is discrete. 
\end{rk}
Let $\cE_+\subset\bR_{\geq 0}$ denote the monoid spanned by $E_1,\dots,E_n$. Since these generators are rationally independent, there is no algebraic relation between $T^{E_i}$. 
\begin{defn}
$\alpha$ is called \emph{generic} if $\alpha(H_1(M,\bZ))$ intersects the group generated by $\cE_+$ only at $0$.
\end{defn}
This condition is weaker than \begin{equation}
\alpha(H_1(M,\bZ))\cap \omega_M (H_2(M, L\cup L'\cup\bigcup_i L_i;\bQ))=\{0\}.
\end{equation}
As a result, there is an abundance of generic $1$-forms. 

Our next goal is to find a field $K_{g}\subset\Lambda$, that contains all the coefficients of $A_\infty$-structure maps, and that we can embed into $\bQ_p$. Using this embedding, we obtain a category over $\bQ_p$, which we still denote by $\cF(M,\bQ_p)$. We also want to define a similar $p$-adic family of $\cF(M,\bQ_p)$-bimodules over $\bQ_p\langle t\rangle$ such that the restriction to $t=f$ comes from a natural $\cF(M,K_g)$-bimodule. 

Fix a prime $p>2$. For a generic $\alpha$, there are no non-trivial algebraic relations between various $T^{\alpha(C)}$ and $T^{E_i}$. Consider the Novikov series \begin{equation}\label{eq:sumgeneric}
\sum_{k} a_k T^{E(k)+f\alpha(k) },
\end{equation} where $a_k\in\bZ_{(p)}$, $E(k)\in\cE_+$, $\alpha(k)\in \alpha (H_1(M;\bZ))$, $f\in\bZ_{(p)}$ satisfying
\begin{enumerate}
	\item\label{cond:egoesinfty} $E(k)+f\alpha(k) \to \infty$,
	\item $E(k)\to\infty $. 
\end{enumerate}
The first condition is for convergence in $\Lambda$. Denote the set of such series by $R_{big}\subset \Lambda$. Then, 
$R_{big}$ is a subring of $\Lambda$. To write a ring homomorphism $R_{big}\to \bZ_p$, choose a basis $\{\alpha_j:j=1,\dots l \}\subset\alpha(H_1(M;\bZ) )\subset \bR$. Choose a set of algebraically independent elements of $\bZ_p$ corresponding to each $E_i$ and each $\alpha_j$. Call these elements $\kappa_i$ and $\kappa_j'\in\bZ_p$. Define $\kappa_{big}:R_{big}\to\bZ_p$ as follows: we send
\begin{equation}
 T^{\alpha_j}\to 1+p\kappa_j', T^{E_i}\to p\kappa_i, T^{f\alpha_j}\to (1+p\kappa_j')^f.
\end{equation}
The image of (\ref{eq:sumgeneric}) converges in $\bZ_p$ as $E(k)\to\infty$; therefore, one can extend it to $R_{big}$. More precisely, the total number of $E_i$ in $E(k)\in\cE_+$ goes to $\infty$; therefore, the p-adic valuation of their image goes to $\infty$. It is possible to show that the coefficients defining $\cF(M,\Lambda)$ are in $R_{big}$; therefore, one can define the Fukaya category over $R_{big}$ and base change along $\kappa_{big}$, giving us a category over $\bZ_p$ and eventually over $\bQ_p$. Then, it is easy to construct $p$-adic families of bimodules, and prove similar properties such as group-like property. On the other hand, when making dimension comparisons in the previous sections, we implicitly took advantage of the flatness of the map $K\to\bQ_p$. We do not know this for $R_{big}\to \bQ_p$, it is not always true that the map $\kappa_{big}$ is injective, and hence extends to a map from the field of fractions of $R_{big}$ to $\bQ_p$. Presumably, generic choice of $\kappa_i$, $\kappa_j'$ can ensure injectivity; however, we have not been able to prove this. Instead, we will construct a countable subring of $R_{big}$ that satisfies this property. First observe,
\begin{lem}
Fix a non-zero element $w=\sum_{k} a_k T^{E(k)+f\alpha(k) } \in R_{big}$. The set of $(\kappa_1,\dots ,\kappa_n,\kappa_1',\dots ,\kappa_l')\in\bZ_p^{n+l}$ such that $\kappa_{big}(w)=0$ (for $\kappa_{big}:R_{big}\to  \bZ_p$ corresponding to this particular choice of $\kappa_i$, $\kappa_j'$) has measure zero with respect to a Haar measure on $\bZ_{p}^{n+l}$.	
\end{lem}
\begin{proof}
As $E_i, \alpha_j$ are independent, we obtain an element $F_w\in \bQ_p\langle X_1,\dots ,X_n,Y_1,\dots,Y_l\rangle$ by replacing $T^{E_i}$ terms within $w$ by $pX_i$ and $T^{f\alpha_j}$ terms by $(1+pY_j)^f$ (the condition \eqref{cond:egoesinfty} guarantees that $F_w$ converges on the unit polydisc, i.e. $F_w\in \bQ_p\langle X_1,\dots ,X_n,Y_1,\dots,Y_l\rangle$). Moreover, $F_w\neq 0$ as $w\neq 0$. The set of $(\kappa_1,\dots ,\kappa_n,\kappa_1',\dots ,\kappa_l')\in\bZ_p^{n+l}$ such that $\kappa_{big}(w)=0$ is the same as the zero set of $F_w$. 

Zero set of any non-vanishing analytic function on a $p$-adic polydisc has measure zero. This follows from Strassman's theorem for one variable, and the general case can be proven by induction on the number of variables. Assume that the claim holds for functions with less than $N$ variables. Given $0\neq F\in \bQ_p\langle t_1,\dots, t_N\rangle$, $F(t_1,\dots ,t_{N-1},a)\neq 0$ for all but finitely many $a\in\bZ_p$ (by Strassman's theorem applied to non-Archimedean normed ring $\bQ_p\langle t_1,\dots, t_{N-1}\rangle$). By induction hypotheses, the zero set of any non-vanishing $F(t_1,\dots ,t_{N-1},a)$ has measure zero in $\bZ_{p}$. Therefore, by Fubini's theorem (\cite[Theorem 8.8]{rudinanalysis}), the zero set of $F$ has measure zero. This completes the proof. 
\end{proof}
\begin{cor}
Fix a countable subring $R$ of $R_{big}$. For generic $\kappa_i,\kappa_j'$, the function $\kappa_{big}|_R:R\to\bZ_p$ is injective, and therefore extends to a field homomorphism from its fraction field to $\bQ_p$.
\end{cor}
We will let $R_g\subset R$ be a countable ring that contains the series defining the Fukaya category $\cF(M,\Lambda)$, the modules $h^L,h_{L'}$, the analogues of the bimodules $\faM{f}{\Lambda}$, and the map (\ref{eq:novconvto}). This will allow us to define the field of definition $K_g$ as the field of fractions of $R_g$ and $\kappa_g:K_g\to \bQ_p$ as the extension of $\kappa_{big}|_{R_g}$. Then, one can prove analogues of Proposition \ref{prop:compareprop} and Theorem \ref{thm:mainthm}, following the same reasoning. More precisely, 
\begin{defn}\label{defn:genericfield}
Let $R_g$ be the subring of $R_{big}$ containing $\bZ_{(p)}$ and the following series	
\begin{enumerate}
	\item\label{cpcond:variables2} the elements $T^{E_i}$ for $i=1,\dots, n$, and $T^{\alpha_j}$ for $j=1,\dots,l$	,
	\item\label{cpcond:ainfty2} the signed sums $\sum \pm T^{E(u)}$ where $u$ ranges over pseudo-holomorphic curves with fixed inputs and one fixed output, and with boundary on $L_i$, $L$, and $L'$ ,
	\item\label{cpcond:discswithfixed2} the signed sums $\sum \pm T^{E(u)}$ where $u$ ranges over pseudo-holomorphic curves with fixed inputs, one fixed output and fixed $[\partial_h u]\in H^1(M;\bZ)$, and with boundary on $L_i$,
	\item\label{cpcond:halgf2} the signed sums $\sum \pm T^{E(u)+f\alpha([\partial_{\tilde L} u]) }$ where $u$ ranges over the same set of pseudo-holomorphic curves as (\ref{eq:structurealgyoneda}) with fixed inputs and one fixed output, where $\tilde L=L'$, and $f\in\bZ_{(p)}$ is sufficiently small so that the series converge in $\Lambda$,
	\item\label{cpcond:family2} the signed sums $\sum \pm T^{E(u)+f\alpha([\partial_h u]) }$ where $u$ ranges over the same set of pseudo-holomorphic curves as (\ref{eq:structurenovikov}) and (\ref{eq:padicstructuremaps}) with fixed inputs and one fixed output, and $f\in\bZ_{(p)}$ is sufficiently small so that the series converge in $\Lambda$,
	\item\label{cpcond:gplikemap2} the signed sums $\sum\pm T^{E(u)+f_1\alpha([\partial_1 u])+f_2\alpha([\partial_2 u])}$ where $u$ ranges over the same set of pseudo-holomorphic curves as (\ref{eq:padicmapformula}) with fixed inputs and one fixed output, and $f_1,f_2\in\bZ_{(p)}$ are sufficiently small so that the series converge in $\Lambda$. 
\end{enumerate}
Let $K_g$ denote the fraction field of $R_g$ and let $\kappa_g:K_g\to\bQ_p$ denote the extension of $\kappa_{big}|_{R_g}$.
\end{defn}
Condition (\ref{cpcond:ainfty2}) ensures that the coefficients $\sum\pm T^{E(u)}$ of the $A_\infty$-structure, and of the modules $h^L,h_{L'}$ are in $R_g$. Thanks to condition (\ref{cpcond:halgf2}), the module analogous to $h^{alg}_{\phi_{\alpha}^f,L'}$ is definable over $R_g$ and $K_g$. Similarly, condition (\ref{cpcond:family2}) guarantees that the analogous bimodule to $\faM{f}{\Lambda}$ is definable over $R_g$ and $K_g$, and condition (\ref{cpcond:gplikemap2}) is for the definability of the bimodule homomorphisms
\begin{equation}
	\faM{f_1}{\Lambda}\otimes_{\cF(M,\Lambda)} \faM{f_2}{\Lambda}\to \faM{f_1+f_2}{\Lambda}.
\end{equation}
Finally, the purpose of condition (\ref{cpcond:discswithfixed2}) is that the coefficients of the family $\fM^\Lambda_{\alpha}$ lie in $R_g$ and in $K_g$ so that one can define the family $\fM^{K_g}_{\alpha}$ (analogous to $\fM^{K}_{\alpha}$ defined before). This condition is not strictly necessary, but it is more convenient to include it. 
\begin{note}
	The signs in conditions (\ref{cpcond:ainfty2})-(\ref{cpcond:gplikemap2}) come from the orientation of the corresponding moduli space of discs, and they are identical to signs in the analogous sums defined before. For instance, the signs of the summands of (\ref{cpcond:ainfty2}) (and its subsum (\ref{cpcond:discswithfixed2})) are the same as the signs in the sums defining the $A_\infty$-structure. Similarly, the signs in (\ref{cpcond:family2}) are the same as the signs in (\ref{eq:structurenovikov}) and (\ref{eq:padicstructuremaps}). As mentioned in Note \ref{note:signs} these signs are standard and omitted throughout the paper. 
\end{note}
\begin{note}
The convergence assumptions in (\ref{cpcond:halgf2})-(\ref{cpcond:gplikemap2}) are needed, since we are no longer in the monotone setting, and these sums are not finite. However, using Fukaya's trick (or reverse isoperimetric inequalities \cite{gromansolomonreverse,duvalreverse},\cite[Appendix A]{abouzaidhmscorrections}) for $\tilde L$, as well as $\{L_i\}$, one can show that the convergence holds for sufficiently small non-zero $|f|$, resp. $|f_1|$, $|f_2|$. 	
\end{note}

After this setup, it is possible to follow the steps of the proof of \Cref{thm:mainthm}. We describe these steps in less detail. First, we define Novikov and $p$-adic families. Because of the convergence issue, we let the base of a Novikov family be the ring $\Lambda\{z^\bR \}$ defined in \Cref{sec:families} and further analyzed in \Cref{appendix:semicont}. In other words, \begin{equation}
\Lambda\{z^\bR\}:=\bigg\{\sum a_r z^r:\text{, where }r\in\bR, a_r\in\Lambda \bigg\},
\end{equation}
where the series satisfy the convergence condition $val_T (a_r)+r\nu \to\infty$, for all $\nu\in[a,b]$. Here, $a<0<b$ are fixed numbers with small absolute values. Recall that the isomorphism type of the ring does not depend on $a$ and $b$, but the coefficients of series we will encounter will belong to this ring for small $|a|$ and $|b|$ only (c.f. convergence in family Floer cohomology). We like to think of $\Lambda\{z^\bR \}$ as a non-Archimedean analogue of a small closed interval containing $0$. Let $K_g\{z^\bR\}$ be the set of elements of $\Lambda\{z^\bR \}$ where the coefficients of all $z^r$-terms are in $K_g$.
\begin{defn}\label{defn:genericnovikovfamily}
Let $\fM^{K_g}_\alpha$ denote the family of $\cF(M,K_g)$-bimodules defined via 
\begin{equation}\label{eq:familygenericchain}
(L_i,L_j)\mapsto \fM^{K_g}_\alpha (L_i,L_j)=CF(L_i,L_j;K_g)\otimes_{K_g} K_g\{z^\bR\} ,
\end{equation} and with structure maps 
\begin{equation}\label{eq:structuregenericnovikov}
(x_k,\dots,x_1|x|x_1',\dots x_l')\mapsto \sum \pm T^{E(u)}z^{\alpha([\partial_h u])}.y ,
\end{equation}
where the sum ranges over pseudo-holomorphic discs as in (\ref{eq:structurenovikov}). 
Define the family of $\cF(M,\Lambda)$-bimodules $\fM^\Lambda_\alpha$ by replacing $K_g$ with $\Lambda$.
\end{defn}
For the series $\sum \pm T^{E(u)}z^{\alpha([\partial_h u])}$ to be in $\Lambda\{z^\bR \}$, one needs  
\begin{equation}
E(u)+f\alpha([\partial_h u])\to\infty
\end{equation}
for small $|f|$. This holds, as can be shown using Fukaya's trick. When we restrict the family $\fM^{\Lambda}_\alpha$ to $z=T^f$, we implicitly assume that $|f|$ is small and convergence holds. For $f\in\bZ_{(p)}$, the coefficients defining $\fM^{\Lambda}_\alpha|_{z=T^f}$ are in $K_g$ due to condition (\ref{cpcond:family2}), and we denote the corresponding $\cF(M,K_g)$ bimodule by $\fM^{K_g}_\alpha|_{z=T^f}$. In other words, it is the bimodule defined by replacing $z^{\alpha([\partial_h u])}$ in \eqref{eq:structuregenericnovikov} by $T^{f\alpha([\partial_h u])}$.
\begin{note}
Observe that we have not defined $\fM^{K_g}_\alpha|_{z=T^f}$ via base change along a map $K_g\{z^\bR\}\to K_g$. This is because the natural map $\Lambda\{z^\bR\}\to \Lambda$ sending $z^r\mapsto T^{fr}$ does not restrict to a map $K_g\{z^\bR\}\to K_g$ ($K_g$ is too small). We can get around this by restricting to a small subring $K_g\{z^\bR\}_{small}$ of $K_g\{z^\bR\}$ that contains enough elements to define $\fM^{K_g}$, but also small enough so that the evaluation $K_g\{z^\bR\}_{small}\to K_g$ at $z=T^f$ is well-defined. We do not need this, as we work with the individual bimodules $\fM^{K_g}_\alpha|_{z=T^f}$. 
\end{note}

We also define:
\begin{defn}
Let $\fM^{\bQ_p}_\alpha$ be the $p$-adic family of $\cF(M,\bQ_p)$ modules defined via 
\begin{equation}\label{eq:padicfamilygenericchain}
(L_i,L_j)\mapsto \fM^{\bQ_p}_\alpha (L_i,L_j)=CF(L_i,L_j)\otimes_{K_g} \bQ_p\langle t\rangle, 
\end{equation}and with the structure maps 
\begin{equation}\label{eq:padicgenericstructuremaps}
(x_1,\dots, x_k|x|x_1',\dots ,x_l')\mapsto\sum\pm \kappa_g(T^{E(u)}) \kappa_g(T^{\alpha([\partial_h u])})^t.y.
\end{equation}
\end{defn}
The convergence of the series (\ref{eq:padicgenericstructuremaps}) is guaranteed by the fact that $val_p(\kappa_g(T^{E(u)}))\to \infty$. The following lemma follows from the definitions:
\begin{lem}
Given $f\in\bZ_{(p)}$ such that $|f|$ is small, the bimodule $\faM{f}{K_g}$ becomes	$\fM^{\bQ_p}_\alpha|_{t=f}$ under base change along $\kappa_g:K_g\to \bQ_p$. 
\end{lem}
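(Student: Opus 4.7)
The plan is to verify the identification by a direct term-by-term comparison of structure maps, using nothing more than the construction of $\mu_g$ and the multiplicativity properties of the $p$-adic exponentiation recorded in Definition~\ref{defn:padicinterp}.

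First I would check that the underlying chain groups agree. By construction, $\fM^{K_g}_\alpha(L_i,L_j) = CF(L_i,L_j;K_g)\otimes_{K_g} K_g\{z^\bR\}$, so specializing at $z=T^f$ yields $CF(L_i,L_j;K_g)$, and base changing along $\mu_g$ produces $CF(L_i,L_j)\otimes_{K_g}\bQ_p$. This matches the underlying $\bQ_p$-vector space of $\fM^{\bQ_p}_\alpha(L_i,L_j)|_{t=f}$, which is $CF(L_i,L_j)\otimes_{K_g}\bQ_p\langle t\rangle\otimes_{\bQ_p\langle t\rangle}\bQ_p = CF(L_i,L_j)\otimes_{K_g}\bQ_p$.

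Next I would compare structure coefficients disc by disc. A rigid pseudo-holomorphic disc $u$ contributing to (\ref{eq:structuregenericnovikov}) produces the coefficient $\pm T^{E(u)}z^{\alpha([\partial_h u])}$; after setting $z=T^f$ and applying $\mu_g$, this becomes $\pm\mu_g(T^{E(u)})\,\mu_g(T^{f\alpha([\partial_h u])})$. The same disc in (\ref{eq:padicgenericstructuremaps}) contributes $\pm\mu_g(T^{E(u)})\,\mu_g(T^{\alpha([\partial_h u])})^t$, which at $t=f$ specializes to $\pm\mu_g(T^{E(u)})\,\mu_g(T^{\alpha([\partial_h u])})^f$. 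Therefore it suffices to show
\begin{equation}
\mu_g(T^{f\alpha([\partial_h u])}) = \mu_g(T^{\alpha([\partial_h u])})^f
\end{equation}
for every $f\in\bZ_{(p)}$ and every class $[\partial_h u]\in H_1(M;\bZ)$. Decomposing $\alpha([\partial_h u]) = \sum_j \eta_j\alpha_j$ with $\eta_j\in\bZ$, the right-hand side equals $\prod_j(1+p\mu_j')^{f\eta_j}$ by properties (\ref{propexp2}) and (\ref{propexp3}) of $p$-adic exponentiation, while by the definition of $\mu_g$ (extending $\mu_{big}$) the left-hand side is also $\prod_j \mu_g(T^{f\alpha_j})^{\eta_j} = \prod_j(1+p\mu_j')^{f\eta_j}$. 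The two agree.

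The last thing to record is convergence, which is essentially free in both directions. The hypothesis that $|f|$ is small, combined with Fukaya's trick as in Lemma~\ref{lem:halg=h}, ensures that the $T$-adic sums defining $\fM^{K_g}_\alpha|_{z=T^f}$ converge, so the bimodule is well-defined over $K_g$. On the $p$-adic side, convergence of the specialization at $t=f$ is immediate from $\mu_g(T^{E_i})\in p\bZ_p$ together with the fact that the total $E_i$-degree $\nu_1(u)+\cdots+\nu_n(u)$ of disc classes goes to infinity by Gromov compactness. I do not anticipate a real obstacle here: the statement is a tautological compatibility between two parallel constructions, and the only subtlety is the elementary identity relating $\mu_g(T^{fc})$ to $\mu_g(T^c)^f$, which is built into the definitions.
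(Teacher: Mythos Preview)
Your proposal is correct and matches the paper's approach: the paper simply asserts that the lemma ``follows from the definitions'' and gives no further argument, so your detailed unpacking of the term-by-term comparison is exactly the intended verification, just made explicit. One small notational point: the individual monomials $T^{f\alpha_j}$ need not lie in $K_g$ (only the full sums from condition~(\ref{cpcond:family}) do), so strictly speaking the identity $\mu_g(T^{f\alpha([\partial_h u])})=\mu_g(T^{\alpha([\partial_h u])})^f$ is verified in $R_{big}$ via $\mu_{big}$ rather than in $K_g$ via $\mu_g$; since $\mu_g$ is the restriction of $\mu_{big}$ this changes nothing substantively.
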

The analogue of Lemma \ref{lem:grouplikenovikov} also holds:
\begin{lem}\label{lem:grouplikegenericnovikov}
For $f_1,f_2\in\bR$ such that $|f_1|,|f_2|$ are small, $\faM{f_1+f_2}{\Lambda}\simeq \faM{f_1}{\Lambda}\otimes_{\cF(M,\Lambda)} \faM{f_2}{\Lambda}$. The same statement holds for $\fM^{K_g}_{\alpha}|_{z=T^{f_i}}$ if $f_1,f_2\in \bZ_{(p)}$.
\end{lem}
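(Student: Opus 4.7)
The plan is to run the same two-variable semi-continuity argument used for the $p$-adic family in Proposition \ref{prop:grouplikepadic}, but over a Novikov ``polyannulus'' of the form $\Lambda\{z_1^\bR, z_2^\bR\}$ in place of $\bQ_p\langle t_1,t_2\rangle$. First I would construct an explicit morphism of two-variable Novikov families of $\cF(M,\Lambda)$-bimodules whose specialization at $z_1 = T^{f_1}$, $z_2=T^{f_2}$ gives the desired comparison map $\fM^\Lambda_{\alpha}|_{z=T^{f_1}}\otimes_{\cF(M,\Lambda)} \fM^\Lambda_{\alpha}|_{z=T^{f_2}}\to \fM^\Lambda_{\alpha}|_{z=T^{f_1+f_2}}$. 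The formula is the Novikov transcription of (\ref{eq:padicmapformula}): the coefficient of $y$ in the image of $(x_k,\dots,x_1|x\otimes b_1\otimes\cdots\otimes b_f\otimes x'|x_1',\dots,x_l')$ is $\sum \pm T^{E(u)} z_1^{\alpha([\partial_1 u])} z_2^{\alpha([\partial_2 u])}$, where $u$ runs over marked pseudo-holomorphic discs with the indicated asymptotics and $[\partial_j u]$ is the horizontal boundary class from the $j$-th bimodule input to the output, exactly as in Figure \ref{figure:groupquasi}.

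Next I would verify that this defining series converges in the appropriate two-variable Novikov polyannulus and assembles into a closed family-level bimodule morphism. Convergence $E(u)+f_1\alpha([\partial_1 u])+f_2\alpha([\partial_2 u])\to\infty$ for $|f_j|$ small follows from Fukaya's trick exactly as in Lemma \ref{lem:halg=h}, and is already guaranteed by the fact that the analogous series occur among the generators of $\cP$ by condition (\ref{cpcond:gplikemap}). The bimodule equation follows from the standard codimension-one boundary analysis of the relevant moduli (compare Figure \ref{figure:compactificationgroupquasi}); the wavy-line weights $z_j^{\alpha([\partial_j u])}$ behave correctly under gluing because $[\partial_1 u]$ and $[\partial_2 u]$ add over breaking of discs. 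At $z_1=z_2=1$ all weights become $1$, and the map restricts to the standard quasi-isomorphism $\cF(M,\Lambda)\otimes_{\cF(M,\Lambda)}\cF(M,\Lambda)\to\cF(M,\Lambda)$ given by (\ref{eq:convolutiontodiagformula}).

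The hard part is the semi-continuity step. Let $\mathfrak{N}$ denote the cone of this two-variable family morphism. Using smoothness of $\cF(M,\Lambda)$ (Assumption \ref{assumption:generic}) together with the analogues of Lemma \ref{lem:properfamily} and Corollary \ref{cor:properconv} adapted to the Novikov polyannulus, each $\mathfrak{N}(L_i,L_j)$ is quasi-isomorphic to a bounded complex of finitely generated free modules over the two-variable Novikov polyannulus. Since $\mathfrak{N}|_{z_1=z_2=1}$ is acyclic, a maximal-rank minor of the differential has determinant which is a unit at $(1,1)$, and invertibility propagates to a full non-Archimedean neighborhood of $(1,1)$ by the two-variable version of the semi-continuity machinery in Appendix \ref{appendix:semicont}, which is the direct Novikov analogue of the argument in the proof of Proposition \ref{prop:grouplikepadic} and of Lemma \ref{lem:semicontcomplexnovsingle}. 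This forces acyclicity of $\mathfrak{N}$ at every $(T^{f_1},T^{f_2})$ with $|f_1|,|f_2|$ sufficiently small, giving the quasi-isomorphism over $\Lambda$.

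Finally, for the $\cF(M,K_g)$-statement with $f_1,f_2\in\bZ_{(p)}$, condition (\ref{cpcond:gplikemap}) was built into $\cP$ precisely so that each specialized coefficient $\sum\pm T^{E(u)}T^{f_1\alpha([\partial_1 u])}T^{f_2\alpha([\partial_2 u])}$ lies in $R_\cP\subset K_g$; hence the comparison morphism and its cone descend to $\cF(M,K_g)$ from the outset. Since $K_g\hookrightarrow\Lambda$ is flat and the $\Lambda$-version has been shown acyclic, the cone is acyclic already over $K_g$, yielding the desired quasi-isomorphism and finishing the lemma.
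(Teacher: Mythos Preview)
Your proposal is correct and follows essentially the same approach as the paper: construct the two-variable Novikov family morphism with coefficients $\sum\pm T^{E(u)} z_1^{\alpha([\partial_1 u])} z_2^{\alpha([\partial_2 u])}$, observe it restricts to the canonical quasi-isomorphism at $z_1=z_2=1$, reduce the cone to a finite free complex via smoothness, and apply the two-variable version of Lemma \ref{lem:semicontcomplexnovsingle}; for $K_g$ use condition (\ref{cpcond:gplikemap}) and faithful flatness of $K_g\hookrightarrow\Lambda$. One small caution: the literal analogue of Lemma \ref{lem:properfamily} over $\Lambda\{z_1^\bR,z_2^\bR\}$ is delicate since that ring is not Noetherian, and the paper instead resolves the diagonal bimodule directly (as in the Appendix proof of Lemma \ref{lem:grouplikenovikov}) to land in a finite free complex without appealing to a finite-free-resolution theorem; your argument goes through once you phrase that step this way or restrict to the Noetherian subring $\Lambda\{z^{\pm r_1},\dots,z^{\pm r_k}\}$ the paper mentions.
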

\begin{proof}
The proof for $\fM_{\alpha}^\Lambda$ is identical. To prove the statement about $\fM_{\alpha}^{K_g}$, one needs to ensure the coefficients 
$\sum\pm T^{E(u)} T^{f_1\alpha([\partial_1 u])} T^{f_2\alpha([\partial_2 u])}$ defining the map 
\begin{equation}
\faM{f_1}{\Lambda}\otimes_{\cF(M,\Lambda)} \faM{f_2}{\Lambda}\to \faM{f_1+f_2}{\Lambda}
\end{equation}
are in $K_g$. This follows from the construction of $K_g$ (condition \eqref{cpcond:gplikemap2}).
\end{proof}
Similarly, we have the following analogue of Proposition \ref{prop:grouplikepadic}:
\begin{lem}
For $n\gg 0$, the restriction $\fM^{\bQ_p}_\alpha|_{\bQ_p\langle t/p^n\rangle}$	is group-like. 
\end{lem}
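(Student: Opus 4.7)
The plan is to follow the strategy used in Proposition \ref{prop:grouplikepadic} essentially verbatim, with the convergence of infinite sums now guaranteed by the construction of $K_g$ and the choice of $\mu_g$. First I would construct a closed morphism of $2$-parameter $p$-adic families of $\cF(M,\bQ_p)$-bimodules
\begin{equation}\label{eq:genericconvtodiag}
\pi_1^*\fM_\alpha^{\bQ_p}\otimes^{rel}_{\cF(M,\bQ_p)} \pi_2^*\fM_\alpha^{\bQ_p}\to \Delta^*\fM_\alpha^{\bQ_p}
\end{equation}
given by the same formula as (\ref{eq:padicmapformula}): on $(x_1,\dots,x_k|x\otimes b_1\otimes\dots\otimes b_f\otimes x'|x_1',\dots,x_l')$ the coefficient of $y$ is
\begin{equation}
\sum\pm \mu_g(T^{E(u)})\,\mu_g(T^{\alpha([\partial_1 u])})^{t_1}\,\mu_g(T^{\alpha([\partial_2 u])})^{t_2},
\end{equation}
where $u$ ranges over pseudo-holomorphic discs as in Figure \ref{figure:groupquasi}. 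By condition (\ref{cpcond:gplikemap}) in the construction of $\cP$, each finite-energy truncation of this series is the image under $\mu_g$ of a series in $R_\cP$; the full series converges in $\bQ_p\langle t_1,t_2\rangle$ because $val_p(\mu_g(T^{E(u)}))\to\infty$ by Gromov compactness together with the fact that $\mu_g(T^{E_i})\in p\bZ_p$. That (\ref{eq:genericconvtodiag}) is a closed map of bimodules follows from a standard analysis of the boundary of the $1$-dimensional moduli of discs (identical to the monotone case, see Figure \ref{figure:compactificationgroupquasi}), and at $t_1=t_2=0$ it specializes to the canonical quasi-isomorphism $\cF(M,\bQ_p)\otimes_{\cF(M,\bQ_p)}\cF(M,\bQ_p)\to \cF(M,\bQ_p)$.

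Next I would establish that the cone $\mathfrak{N}$ of (\ref{eq:genericconvtodiag}) is proper, meaning that $H^*(\mathfrak{N}(L_i,L_j))$ is a finitely generated $\bQ_p\langle t_1,t_2\rangle$-module for every pair. Since $\cF(M,\Lambda)$ is smooth and proper by Assumption \ref{assumption:generic}, so is $\cF(M,\bQ_p)$ (smoothness descends, properness is preserved under base change). The family $\fM_\alpha^{\bQ_p}$ is proper on each pair of objects because each $\fM_\alpha^{\bQ_p}(L_i,L_j)$ is a free finite $\bQ_p\langle t\rangle$-module; Corollary \ref{cor:properconv} then shows that $\pi_1^*\fM_\alpha^{\bQ_p}\otimes^{rel}_{\cF(M,\bQ_p)}\pi_2^*\fM_\alpha^{\bQ_p}$ is proper, and the cone of a morphism between two proper families is proper. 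This is the analogue of Corollary \ref{lem:Nproper}.

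Finally I would invoke the semi-continuity lemma used in the proof of Proposition \ref{prop:grouplikepadic}. By \cite[Proposition 6.5]{kedlayaoverconvergent}, each $\mathfrak{N}(L_i,L_j)$ admits a free finite resolution over $\bQ_p\langle t_1,t_2\rangle$; applying it to $\bigoplus_{i,j}\mathfrak{N}(L_i,L_j)$, the same determinantal argument (the key step being that a function $f\in\bQ_p\langle t_1,t_2\rangle$ with $f(0,0)=1$ becomes invertible in $\bQ_p\langle t_1/p^n,t_2/p^n\rangle$ for $n\gg 0$) shows the cohomology vanishes after restriction to $\bQ_p\langle t_1/p^n,t_2/p^n\rangle$. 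Combined with the obvious fact that $\fM_\alpha^{\bQ_p}|_{t=0}\simeq \cF(M,\bQ_p)$, this proves $\fM_\alpha^{\bQ_p}|_{\bQ_p\langle t/p^n\rangle}$ is group-like.

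The main obstacle, compared to the monotone case, is verifying convergence and well-definedness of (\ref{eq:genericconvtodiag}) in $\bQ_p\langle t_1,t_2\rangle$; this is precisely why $K_g$ was built to contain the relevant series (condition (\ref{cpcond:gplikemap})) and why $\mu_g$ was chosen to send $T^{E_i}$ into $p\bZ_p$. Once convergence is in hand, the semi-continuity and properness arguments carry over unchanged from Section \ref{sec:families}.
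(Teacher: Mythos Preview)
Your proposal is correct and follows essentially the same approach as the paper; indeed, the paper's own proof is a two-line remark that ``the proof of Proposition \ref{prop:grouplikepadic} applies verbatim'' once one notes that the series analogous to (\ref{eq:padicmapformula}) converges in $\bQ_p\langle t_1,t_2\rangle$ because $val_p(\mu_g(T^{E(u)}))\to\infty$, which is exactly the structure of your argument. One small point: your appeal to condition (\ref{cpcond:gplikemap}) is not really needed for convergence in $\bQ_p\langle t_1,t_2\rangle$---that follows directly from $\mu_g(T^{E_i})\in p\bZ_p$ and Gromov compactness---condition (\ref{cpcond:gplikemap}) is rather what ensures definability of the Novikov version of the map over $K_g$.
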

\begin{proof}
The proof of Proposition \ref{prop:grouplikepadic} applies verbatim. To write the map (\ref{eq:convolutiontodiag}), one has to ensure convergence of the series analogous to (\ref{eq:padicmapformula}). This also follows from the fact that $val_p(\kappa_g(T^{E(u)}))\to \infty$.
\end{proof}
As observed, the modules $h_{L'}$ and $h^L$ are defined over $K_g$. Similarly, given $f\in\bZ_{(p)}$, the deformation $h^{alg}_{\phi_\alpha^f, L'}$ is defined over $K_g$, whenever $|f|$ is small and the convergence holds for the defining series. 
Therefore, there exists a complex
\begin{equation}
h_{L'}\famotimes_{\cF(M,\bQ_p)} \fM^{\bQ_p}_\alpha|_{\bQ_p\langle t/p^n\rangle} \famotimes_{\cF(M,\bQ_p)} h^L
\end{equation}
of $\bQ_p\langle t/p^n\rangle$-modules, which has finitely generated cohomology and whose cohomological rank at $t=-f\in p^n\bZ_{(p)}$ is the same as $HF(\phi^f_\alpha(L),L';\Lambda)$. The proof of this is identical to Proposition \ref{prop:compareprop}. By the same reasoning, the rank of $HF(\phi^f_\alpha(L),L';\Lambda)$ is constant in $f\in p^n\bZ_{(p)}$ with finitely many possible exceptions. Replacing $h_{L'}$ by $h_{\phi_{\alpha}^{f_i},L'}^{alg}$, where $f_i\in i+p^n\bZ_{(p)}$ for $i=0,\dots,p^n-1$ and $|f_i|$ is small, we conclude the same for $HF(\phi^f_\alpha(L),L';\Lambda)$, for $f\in i+p^n\bZ_{(p)}$. In other words, $HF(\phi_{\alpha}^k(L),L';\Lambda )$ has $p^n$-periodic rank in $k\in\bZ$ with finitely many exceptions, and since we have no restriction on $p$, we can replace it with another prime to conclude $p'^{n'}$-periodicity, and thus constancy in $k$. In other words, we have:
\begingroup
\def\thethm{\ref*{thm:generic}}
\begin{thm} Suppose Assumption \ref{assumption:generic} holds. Given generic $\phi\in Symp^0(M,\omega_M)$, the rank of $HF(\phi^k(L),L';\Lambda )$ is constant in $k\in\bZ$ with finitely many possible exceptions. 
\end{thm}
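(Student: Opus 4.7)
The plan is to parallel the proof of Theorem \ref{thm:mainthm}, and the key new difficulty, as noted in the introduction, is that without monotonicity the Novikov sums defining the $A_\infty$-structure, the Yoneda modules $h^L, h_{L'}$, the bimodules $\fM_{\alpha}^{\Lambda}|_{z=T^f}$, and the convolution morphisms used to prove group-likeness are all infinite. To run the strategy of Section \ref{sec:comparisonandmaintheorem} I must produce a single subfield $K_g\subset \Lambda$ containing all of these coefficients together with a field embedding $\mu_g\colon K_g\to \bQ_p$ that sends each energy monomial $T^{E(u)}$ into $p\bZ_p$ (for $p$-adic convergence of the analogue of (\ref{eq:padicstructuremaps})) and each period monomial $T^{\alpha(C)}$ into $1+p\bZ_p$ (so that the $t$-th power of (\ref{eq:padicinterp}) makes sense). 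The genericity hypothesis on $\alpha$ is exactly what makes these two requirements compatible.

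First I would apply Lemma \ref{lem:freeenergymonoid} to produce rationally independent energies $E_1,\dots,E_n$ generating a submonoid $\cE_+$ of $\bR_{\geq 0}$ containing every pseudo-holomorphic energy, and then pick a $\bZ$-basis $\alpha_1,\dots,\alpha_l$ of $\alpha(H_1(M,\bZ))$. Genericity means $\alpha(H_1(M,\bZ))$ meets the group generated by $\cE_+$ only at zero, so there are no nontrivial algebraic relations among the $T^{E_i}, T^{\alpha_j}$ in $\Lambda$. Next I would assemble a countable collection $\cP$ of formal series in variables $X_i, Y_j^{\pm}$ designed so that the evaluation at $X_i=T^{E_i}, Y_j=T^{\alpha_j}$ produces every Novikov coefficient needed downstream: the $A_\infty$-operations and their restrictions to fixed $[\partial_h u]$, the modules $h^L, h_{L'}$, the deformed modules $h^{alg}_{\phi^f_\alpha, L'}$, the families $\fM^{\Lambda}_{\alpha}|_{z=T^f}$, and the comparison maps defining the group-like structure, exactly as in items (\ref{cpcond:variables})--(\ref{cpcond:gplikemap}). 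Let $R_{\cP}\subset \Lambda$ be the subring thus generated, and $K_g$ its field of fractions. Picking $\mu_i, \mu_j'\in\bZ_p$ algebraically independent over $\bQ$ and sending $T^{E_i}\mapsto p\mu_i, T^{\alpha_j}\mapsto 1+p\mu_j'$, a measure-zero avoidance argument over the countable set $\bZ[\cP]\setminus\{0\}$ (each nonzero series cuts out an analytic hypersurface of Haar measure zero in $\bZ_p^{n+l}$) shows that the resulting ring map $R_\cP\to\bQ_p$ is injective for generic choices of $\mu_i, \mu_j'$, and hence extends to the desired $\mu_g\colon K_g\to \bQ_p$.

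With $K_g$ and $\mu_g$ in hand, I would then define the Novikov family $\fM^{K_g}_\alpha$ over $K_g\{z^{\bR}\}$ and the $p$-adic family $\fM^{\bQ_p}_\alpha$ over $\bQ_p\langle t\rangle$ by the same formulas as in Definition \ref{defn:Kfukayafamily} and (\ref{eq:padicstructuremaps}). Convergence of the $p$-adic sums is now automatic because $\mathrm{val}_p(\mu_g(T^{E(u)}))\to\infty$ by Gromov compactness together with the choice of $\{E_i\}$ as rationally independent generators. The semi-continuity argument of Proposition \ref{prop:grouplikepadic} then applies verbatim, giving that $\fM^{\bQ_p}_\alpha|_{\bQ_p\langle t/p^n\rangle}$ is group-like for some $n\gg 0$, and Lemma \ref{lem:grouplikegenericnovikov} follows similarly, using condition (\ref{cpcond:gplikemap}) to keep the comparison morphism defined over $K_g$. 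The Fukaya-trick arguments of Lemmas \ref{lem:halg=h}, \ref{lem:nearbyfloer}, \ref{lem:iteratedconv} go through without structural change, the only new concern being convergence of their defining series for small $|f|$, which is arranged by condition (\ref{cpcond:halgf}).

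The endgame is then structurally identical to Section \ref{sec:comparisonandmaintheorem}. An analogue of Proposition \ref{prop:compareprop} shows that for $f\in p^n\bZ_{(p)}$ the rank $\dim_\Lambda HF(\phi^f_\alpha(L), L';\Lambda)$ equals the rank of the cohomology of $h_{L'}\otimes_{\cF(M,\bQ_p)} \fM^{\bQ_p}_\alpha|_{t=-f} \otimes_{\cF(M,\bQ_p)} h^L$, which is a finitely generated module over the PID $\bQ_p\langle t/p^n\rangle$ and therefore has constant rank with finitely many exceptions. Shifting by small representatives $f_i\in i+p^n\bZ_{(p)}$ for $i=0,1,\dots,p^n-1$ (replacing $h_{L'}$ by $h^{alg}_{\phi^{-f_i}_\alpha, L'}$, which by condition (\ref{cpcond:halgf}) is defined over $K_g$ hence over $\bQ_p$) promotes this to $p^n$-periodicity of the rank in $k\in\bZ$ outside a finite set; repeating with a coprime prime $p'$ forces true constancy. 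The only genuinely new obstacle is the simultaneous definability / embedding step in the second paragraph: one has to arrange that a single field and a single $\mu_g$ accommodate every Novikov series that enters the $p$-adic comparison, and this is exactly where both the genericity of $\alpha$ and the countable avoidance argument are needed.
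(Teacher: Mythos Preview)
Your proposal is correct and follows essentially the same approach as the paper: you construct the countably generated field $K_g$ via the collection $\cP$ of series (items (\ref{cpcond:variables})--(\ref{cpcond:gplikemap})), use the Haar-measure avoidance argument to embed it into $\bQ_p$, build the $p$-adic family $\fM^{\bQ_p}_\alpha$, establish group-likeness by the semi-continuity argument of Proposition \ref{prop:grouplikepadic}, and conclude by the $p^n$-periodicity plus prime-switching endgame of Theorem \ref{thm:mainthm}. Your identification of the genuinely new step---simultaneous definability over $K_g$ and injectivity of $\mu_g$, resolved precisely by genericity of $\alpha$ together with the countable avoidance argument---matches the paper's emphasis exactly.
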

\addtocounter{thm}{-1}
\endgroup
\begin{rk}
It may be possible to prove a version of Theorem \ref{thm:generic} by the following method: it is expected that $HF(\phi_{\alpha}^f(L),L';\Lambda), f\in\bR$ forms a coherent analytic sheaf over $\bR$. Therefore, one can attempt to check that the rank of such a sheaf jumps only at a discrete set, and for a generic $r\in\bR$, the sequence $\{r.k:k\in\bZ\}$ intersects this set only at finitely many points. Hence, the rank of $HF(\phi_{\alpha}^{r.k}(L),L';\Lambda )$ is constant in $k\in\bZ$, with finitely many possible exceptions, for generic $r$. On the other hand, using this method, we do not see how to describe the generic set more concretely. 
\end{rk}

\section{Applications}\label{sec:applications}
In this section, we prove \Cref{cor:entropy} and \Cref{cor:seidelsconj}. Let $M,L,L'$ and $\phi$ be as in the setting of either of \Cref{thm:mainthm} or \Cref{thm:generic}. First,
\begingroup
\def\thethm{\ref*{cor:entropy}}
\begin{cor}
The action of $\phi$ on the derived Fukaya category $D^\pi\cF(M,\Lambda)$ has vanishing categorical entropy.
\end{cor}
\addtocounter{thm}{-1}
\endgroup
\begin{proof}
As $\cF(M,\Lambda)$ is smooth and proper, and $\{L_i\}$ generates the derived Fukaya category, the categorical entropy of the equivalence induced by $\phi$ it can be computed via the formula 
\begin{equation}
	\limsup_k\frac{1}{k}\log\Bigg(\sum_{i,j} dim HF(\phi^k(L_i),L_j;\Lambda)\Bigg).
\end{equation}
This is a consequence of \cite[Theorem 2.6]{dynsysandcats} (loc.\ cit.\ uses a single generator $G$ to state their theorem, we can let $G=\bigoplus_i L_i$). It follows from \Cref{thm:mainthm}, resp. \Cref{thm:generic} that $HF(\phi^k(L_i),L_j;\Lambda)$ has constant dimension with finitely many exceptions. Therefore, as there are finitely many $i,j$, $\sum_{i,j} dim HF(\phi^k(L_i),L_j;\Lambda)$ is bounded for $k\in\bN$, and the conclusion follows. 	
\end{proof}
\Cref{cor:entropy} is not very surprising: in the case of surfaces the logarithmic growth of fixed point Floer cohomology can be computed as the smallest topological entropy in the mapping class and this confirms a categorical analogue for the identity component. We believe our techniques can be useful in showing the deformation invariance of the categorical entropy for other symplectic mapping classes as well.

Next,
\begingroup
\def\thethm{\ref*{cor:seidelsconj}}
\begin{cor}
Assume that $L$ is connected, and $\pi_1(L)$ is abelian. Then, the set 
\begin{equation}
	\{k\in\bN:\phi^k(L) \text{ and }L'\text{ are Floer theoretically isomorphic} \}
\end{equation}
is either empty, a singleton or the entire $\bN$. In other words, if $\phi^k(L)$ and $L'$ are isomorphic for two different $k\in\bN$, then they are isomorphic for all $k\in\bN$. 	
\end{cor}
\addtocounter{thm}{-1}
\endgroup
To prove \Cref{cor:seidelsconj}, we need
\begin{lem}\label{lem:nonexactdefo}
Assume that $\alpha|_L$ is not exact. Then, for sufficiently small $|f|\neq 0$, the dimension of $HF(\phi_\alpha^f(L),L;\Lambda)$ is strictly less than the dimension of $H^*(L,\Lambda)\cong HF(L,L;\Lambda)$.
\end{lem}
\begin{proof}
Consider the $\Lambda$-local system $\xi_f$ on $L$ corresponding to the rank $1$ representation of $\pi_1(L)$ that is defined by $\pi_1(L)\to \Lambda^*$, $C\mapsto T^{f\alpha(C)}$. The chain complex defined in \Cref{defn:algyoneda} can be interpreted as $CF(\cdot, (L,\xi_f))$, and it follows from the proof of \Cref{lem:halg=h} that $HF(L,\phi_\alpha^f(L))=HF(L,(L,\xi_f))$ for small $|f|$. A twisted version of the PSS-isomorphism implies that $HF(L,(L,\xi_f))\cong H^*(L,\xi_f)$. We prove the claim in three steps: (i) we first show that the dimension of $H^*(L,\xi_f)$ is upper semi-continuous in $f$, i.e. for sufficiently small $|f|\neq 0$, the dimension of $H^*(L,\xi_f)$ is at most $H^*(L,\Lambda)$, (ii) we show that $H^1(L,\xi_f)$ is isomorphic to the group cohomology $H^1(\pi_1(L),\xi_f)$, and (iii) we prove that $H^1(\pi_1(L),\xi_f)$ is $0$ for sufficiently small $|f|$, unless $f=0$. These imply together that the total dimension of $H^*(L,\xi_f)$ is strictly less than that of $H^*(L,\Lambda)$, for small $|f|>0$, concluding the proof.

For the first step, we consider the cohomology with local coefficients as the sheaf cohomology computed by the \v{C}ech complex. Namely, endow $L$ with a finite cover $\cU$ with contractible intersections. Let $H=\pi_1(L)/ker(\alpha)$, and consider the $\pi_1(L)$-module $\Lambda[z^H]$. There is a corresponding locally constant sheaf of $\Lambda[z^H]$-modules, which we temporarily denote by $\xi$. Consider the ring homomorphism $\mathbf{z}_f:\Lambda[z^H]\to \Lambda$ given by $z^C\mapsto T^{f\alpha(C)}$ (we use the notation of \Cref{cor:semicontfnc}), and notice that the base change $\xi|_{\mathbf{z}=\mathbf{z}_f}:=\xi\otimes_{\Lambda[z^H]} \Lambda$ is the same as $\xi_f$. The \v{C}ech complex $C^*(\cU,\xi)$ is a complex of finite rank free $\Lambda[z^H]$-modules, and the cohomology of $\cC|_{\mathbf{z}=\mathbf{z}_f}:=\cC\otimes_{\Lambda[z^H]} \Lambda$ is the same as $H^*(L,\xi_f)$. We show that the dimension of this cohomology is upper semi-continuous analogous to \Cref{lem:semicontcomplexnovsingle}: namely, as in the proof of \Cref{lem:semicontcomplexnovsingle}, the dimension of the cohomology of $(\cC,d)|_{\mathbf{z}=\mathbf{z}_f}$ is given by 
\begin{equation}\label{eq:rankmank}
	rank(\cC)-2dim(im(d|_{\mathbf{z}=\mathbf{z}_f}))=rank(\cC)-2rank(d|_{\mathbf{z}=\mathbf{z}_f}).
\end{equation}
As in the proof of \Cref{lem:semicontcomplexnovsingle}, choose a basis for $\cC$, and consider a square submatrix of $d$ of size $rank(d|_{\mathbf{z}=\mathbf{z}_0=\mathbf{1}})$ whose determinant does not vanish at $\mathbf{z}=\mathbf{z}_0=\mathbf{1}$ (i.e. at $f=0$). Let $F\in \Lambda[z^H]$ denote the determinant of this submatrix (in particular, $F(\mathbf{z}_0)=F(\mathbf{1})\neq0$). By \Cref{cor:semicontfnc}, $F(\mathbf{z}_f)\neq 0$ for small $|f|$; hence, the rank of $d$ is lower semi-continuous at $0$. This implies the claim about the dimension of $H^*(L,\xi_f)$ by \eqref{eq:rankmank}.

Second, we show that $H^1(L,\xi_f)\cong H^1(\pi_1(L),\xi_f)$. As $H^*(\pi_1(L),\xi_f)\cong H^*(B\pi_1(L),\xi_f)$, we have to show $H^1(L,\xi_f)\cong H^1(B,\xi_f)$ for a $K(\pi_1(L),1)$ space $B$. One can obtain such a space by attaching (possibly infinitely many) cells to $L$ of dimension $3$ and higher. This procedure does not change the first cohomology (with local coefficients, which follows from the same Mayer--Vietoris argument). 

Finally, we prove that $H^*(\pi_1(L),\xi_f)\cong 0$ for sufficiently small $|f|$ unless $f=0$.  The group $\pi_1(L)$ admits a splitting $\pi_1(L)\cong\bZ\times \pi$ such that $\alpha$ is non-vanishing on the $\bZ$ component. Indeed, $\alpha$ defines a surjection onto a non-zero additive subgroup $G_0$ of $\bR$, which is necessarily torsion free and finitely generated; hence, free abelian. Consider any surjection $\beta:G_0\to\bZ$, and the composition $\pi_1(L)\xrightarrow{\alpha} G_0\xrightarrow{\beta}\bZ$. Let $\pi=ker(\beta\circ\alpha)$ and let $C\in\pi_1(L)$ be any element such that $\beta(\alpha(C))=1\in \bZ$. Then, $\pi_1(L)=\langle C\rangle\times \pi$ (this is where we use the abelian assumption on $\pi_1(L)$). Clearly, $\alpha(C)\neq 0$, and thus $C$ is not torsion. 

The $\pi_1(L)$-module $\xi_f$ can be seen as an exterior tensor product of the $\bZ=\langle C\rangle$-module $\xi_1=\xi_f|_{\langle C\rangle}$ and the $\pi$-module $\xi_2=\xi_f|_\pi$. As a result, $\xi_f$, considered as a module over the group ring $\Lambda[z^{\pi_1(L)}]\cong \Lambda[z^\bZ]\otimes \Lambda[z^\pi]$ is also an exterior tensor product of the $\Lambda[z^\bZ]$-module $\xi_1$ and the $\Lambda[z^\pi]$-module $\xi_2$. We have 
\begin{equation}\label{eq:something}
H^*(\pi_1(L),\xi_f)\cong RHom_{\Lambda[z^{\pi_1(L)}]}(\Lambda\boxtimes\Lambda,\xi_1\boxtimes\xi_2)\cong RHom_{\Lambda[z^\bZ]}(\Lambda,\xi_1)\otimes RHom_{\Lambda[z^\pi]}(\Lambda,\xi_2) .
\end{equation}
This can be seen using explicit resolutions of $\Lambda[z^\bZ]$-module, resp. $\Lambda[z^\pi]$-module $\Lambda$, and the duality. 
%
On the other hand, $RHom_{\Lambda[z^\bZ]}(\Lambda,\xi_1)$ can be explicitly computed as 
\begin{equation}
Hom_{\Lambda[z^\bZ]}(\Lambda[z^\bZ]\xrightarrow{z-1}\Lambda[z^\bZ],\xi_1 )=\Lambda\xrightarrow{T^{f\alpha(C)}-1}\Lambda.
\end{equation}
This complex is acyclic, unless $f=0$, which implies \eqref{eq:something} is $0$. This concludes the proof. 


\end{proof}
\begin{proof}[Proof of \Cref{cor:seidelsconj}]
If $L'\simeq \phi^{k_1}(L)\simeq \phi^{k_2}(L)$ for some $k_1\neq k_2$, then $L\simeq \phi^{k_1-k_2}(L)$. Therefore, it suffices to prove \Cref{cor:seidelsconj} when $L'=L$. Assume that $L\simeq \phi^{k_0}(L)$ for some $k_0>0$. Therefore, the dimension of $HF(\phi^k(L),L;\Lambda)$ is equal to the dimension of $H^*(L,\Lambda)$ for infinitely many $k$. If $\alpha|_L$ is exact, then $L$ and $\phi(L)$ are Hamiltonian isotopic, and there is nothing to prove. Assume that $\alpha|_L$ is not exact.

Let $p$ be a prime. The proofs of \Cref{thm:mainthm} and \Cref{thm:generic} imply the stronger statement that the dimension of $HF(\phi^f_\alpha (L),L;\Lambda)$ is constant for $f\in p^n\bZ_{(p)}$, with finitely many possible exceptions. On the other hand, we have observed that this dimension is the same as the dimension of $H^*(L,\Lambda)$ for infinitely many $k\in\bN$ (in particular for every $k\in k_0p^n\bN\subset p^n\bZ_{(p)}$). Hence, $dim_\Lambda(HF(\phi^f_\alpha (L),L;\Lambda))=dim_\Lambda(H^*(L,\Lambda))$ for all but finitely many $f\in p^n\bZ_{(p)}$. This contradicts \Cref{lem:nonexactdefo}, as a non-zero $f\in p^n\bZ_{(p)}$ can be arbitrarily small in Euclidean topology, and $dim_\Lambda(HF(\phi^f_\alpha (L),L;\Lambda))<dim_\Lambda(H^*(L,\Lambda))$ for such $f$. This concludes the proof. 
\end{proof}

\appendix
\section{$p$-adic numbers and Tate algebras}\label{appendix:tatealgebras}
In this \namecref{appendix:tatealgebras}, we remind the basics of $p$-adic numbers and Tate algebras, largely following \cite{kedlayaoverconvergent,bosch}. We focus on $\bQ_p$, but the constructions below go through for other complete non-Archimedean fields with a multiplicative norm. In particular, the statements below hold for the Novikov field as well.

For a given prime $p$, there is a discrete valuation $val_p:\bQ^*\to\bZ$, i.e. a function satisfying (i) $val_p(ab)=val_p(a)+val_p(b)$, (ii) $val_p(a+b)\geq \min\{val_p(a),val_p(b)\}$, if $a+b\in\bQ^*$. The value of $val_p(q)$ is defined to be the multiplicity of $p$ in the denominator of $q$ subtracted from that of the numerator. There is an associated norm defined by $|q|_p=p^{-val_p(q)}$, which makes $\bQ$ an incomplete normed field. Its completion is denoted by $\bQ_p$, and is called the field of $p$-adic numbers. This field is non-Archimedean, i.e. $|a+b|_p\leq \max\{|a|_p,|b|_p \}$. Thanks to this property, the unit disc is a unital subring, which is denoted by $\bZ_p$.  

One can define the corresponding Tate algebras as follows: given $\rho=(\rho_1,\dots,\rho_l)\in\bR^l$, let 
\begin{equation}
	\bQ_p\langle t_1,\dots,t_l\rangle_\rho:=\Bigg\{\sum_{I\in \bN^l}a_It^I:\lim\limits_{|I|\to\infty} \rho^I|a_I|_p=0 \Bigg\}.
\end{equation}
Assume from now on that all $\rho_i$ is in the image of $|\cdot|_p$. 
Due to the non-Archimedean feature, the convergence condition is equivalent to the convergence of $\sum_{I\in \bN^l}a_It^I$ over the polydisc $\bD_\rho:=\{(q_1,\dots,q_l)\in\bQ_p:|q_i|_p\leq \rho_i\}$. 
In other words, this ring can be seen as the ring of analytic functions on the polydisc. Note that $\bQ_p\langle t_1,\dots,t_l\rangle_\rho$ itself is a complete normed ring, where the norm is given by $||f||_\rho:=\max\limits_{I\subset\bN^l} \rho^I|a_I|_p$ (\cite[\S6]{kedlayaoverconvergent}). This norm is also non-Archimedean (i.e. $||f+g||_\rho\leq \max\{||f||_\rho,||g||_\rho\}$) and submultiplicative (i.e. $||fg||_\rho\leq ||f||_\rho.||g||_\rho$). The former is immediate from the definition and the latter follows easily by using the former. More precisely, if $f=\sum_{I\in \bN^l}a_It^I$, $g=\sum_{I\in \bN^l}b_It^I$, then the $t^I$-coefficient of $fg$ is given by $c_I=\sum_{I=I_0+I_1}a_{I_0}b_{I_1}$, and \begin{equation}
	\rho^I|c_I|_p\leq \rho^I\max\limits_{I=I_0+I_1}\{|a_{I_0}|_p.|b_{I_1}|_p\}=\max\limits_{I=I_0+I_1}\{\rho^{I_0}|a_{I_0}|_p.\rho^{I_1}|b_{I_1}|_p\}\leq ||f||_\rho.||g||_\rho.
\end{equation}
When $\rho=(1,\dots,1)$, this norm is actually multiplicative (\cite[p. 13]{bosch}).  

Given $\bQ_p\langle t_1,\dots,t_l\rangle_\rho$-valued matrix $M=(M^{ij})$, define $||M||_\rho=\max\limits_{i,j}||M^{ij}||_\rho$. It follows from the non-Archimedean property and the submultiplicativity that $||M_1M_2||_\rho\leq ||M_1||_\rho.||M_2||_\rho$, for any pair of matrices that can be multiplied. The proof is similar: namely, the $(ij)^{th}$ entry of $M_1M_2$ is given by $\sum_k M_1^{ik}M_2^{kj}$, and
\begin{equation}
	 \Bigg|\Bigg| \sum_k M_1^{ik}M_2^{kj}   \Bigg|\Bigg|_\rho\leq \max\limits_{k}\{||M_1^{ik}M_2^{kj}||_\rho \}\leq \max\limits_{k}\{||M_1^{ik}||_\rho.||M_2^{kj}||_\rho \}\leq ||M_1||_\rho.||M_2||_\rho.
\end{equation} 

We are mostly concerned with the case all $\rho_1=\dots =\rho_l=1$, and in this case we will drop the subscript $\rho$. In other words, $\bQ_p\langle t\rangle:=\bQ_p\langle t\rangle_1$, is the ring of analytic functions on the $p$-adic unit disc, and $\bQ_p\langle t_1,t_2\rangle=\bQ_p\langle t_1,t_2\rangle_{(1,1)}$. Observe that when $\rho_i=p^{-n_i}$, 
\begin{equation}\label{eq:discpadicfnc}
	\bQ_p\langle t_1,\dots,t_l\rangle_\rho=\bQ_p\langle t_1/p^{n_1},\dots,t_l/p^{n_l}\rangle:=\bQ_p\langle t_1/p^{n_1},\dots,t_l/p^{n_l}\rangle_{(1,\dots,1)}	.
\end{equation}
An easy way to see this is as follows: the left side is the ring of series that are convergent for $|t_i|_p\leq \rho_i$, whereas the right-hand side consists of series that converge when $|t_i/p^{n_i}|_p\leq 1$. These conditions are equivalent as $|p^{n_i}|_p=p^{-n_i}$. Throughout the paper, we prefer $\bQ_p\langle t_1/p^{n_1},\dots,t_l/p^{n_l}\rangle$ as our notation.

Despite being the ring of functions on a polydisc, the algebras $\bQ_p\langle t_1,\dots,t_l\rangle$ are Noetherian (\cite[Chapter 2, Proposition 14]{bosch}), and $\bQ_p\langle t\rangle$ is a PID (\cite[Section 2, Cor 10]{bosch}). Moreover, $f(t)\in\bQ_p\langle t\rangle$ vanishes only at finitely many elements of $\bQ_p$ by Strassman's theorem (\cite{strassmannuber}, \cite[p.62, Theorem 4.1]{casselslocal}, \cite[Theorem 3.38]{padicanalysiskatok}). These statements exhibit some odd features of the $p$-adic analytic geometry that contrast the complex analysis. 

One can associate a ringed space to algebras as above, denoted by $Sp(\bQ_p\langle t_1,\dots,t_l\rangle)$ (\cite{bosch}). We will avoid formal use of rigid geometry; however, we believe keeping the geometric perspective in mind is helpful in understanding the ideas.

The following semi-continuity result was used to prove \Cref{prop:grouplikepadic}:
\begin{lem}\label{lem:semiconqpaff}
	Let $(C,d)$ be a $\bZ$-graded complex of free $\bQ_p\langle t_1,\dots, t_l\rangle$-modules that is of finite rank at each degree. Assume that the restriction of $(C,d)$ to $t_1=\dots=t_l=0$ has vanishing cohomology at degree $i$, i.e. $H^i(C|_{\mathbf{t=0}},d|_{\mathbf{t=0}})=0$. Then for sufficiently small $\rho_1,\dots\rho_l>0$, 
	\begin{equation}
		H^i(C\otimes_{\bQ_p\langle t_1,\dots,t_l\rangle}\bQ_p\langle t_1,\dots,t_l\rangle_\rho,d)=0.
	\end{equation}
\end{lem}
\begin{proof}
	Let $(C,d)=\{\dots\to C^{i-1}\xrightarrow{d_{i-1}} C^i \xrightarrow{d_i}	C^{i+1}\to\dots \}$. By choosing bases for the free modules $C^{i-1}$, $C^i$ and $C^{i+1}$, one can see $d_{i-1}$ and $d_i$ as matrices such that $d_id_{i-1}=0$. Let $(B,d')=\{\dots\to B^{i-1}\xrightarrow{d_{i-1}'} B^i \xrightarrow{d_i'}	B^{i+1}\to\dots \}$ denote the restriction of $(C,d)$ to $t_1=t_2=0$. As $H^i(B,d')=0$ and $(B,d')$ is defined over a field $\bQ_p$, there exists maps $h:B^i\to B^{i-1}$, $h':B^{i+1}\to B^i$ such that $d_{i-1}'h+h'd_i'$ is the identity matrix on $B^i$. 
	%
	
	Fix extensions of the maps $h,h'$ to $\bQ_p\langle t_1,\dots,t_l\rangle$-valued matrices $h:C^i\to C^{i-1}$, $h':C^{i+1}\to C^i$ (such as the constant extensions, we denote them by the same letters). Let $D=id_{C^i}-d_{i-1}h-h'd_i:C^i\to C^i$. For any $s\in C^i$, $D(s)$ vanishes at $\mathbf{t=0}$. Therefore, the matrix coefficients of $D$ are all multiples of $t_i$, i.e. they are $O(t_1,\dots,t_l)$. 
	
	Given $s\in ker(d_i)$, $s=d_{i-1}h(s)+D(s)$; thus, $D(s)\in ker(d_i)$ as well. As a result, $D(s)=d_{i-1}h(D(s))+D(D(s))$, and $D^2(s)\in ker(d_i)$ as well. By iterating this process, we find that
	\begin{equation}\label{eq:iterates}
		s=d_{i-1}h(s)+d_{i-1}hD(s)+d_{i-1}hD^2(s)+\dots +d_{i-1}hD^k(s)+D^{k+1}(s)
	\end{equation}
	for every $k\in\bN$ and $s\in ker(d_i)$. We will produce a primitive for $s$ by showing that $\sum_{i=0}^{\infty}hD^i(s)$ converges in the operator norm (possibly for a smaller radius), and its differential is equal to $s$, under the assumption that $d_i(s)=0$. 
	
	First, as the matrix entries of $D$ are in $\bQ_p\langle t_1,\dots,t_l\rangle$, the $p$-adic norms of the coefficients of the entries are bounded above (if $f=\sum_{I\in\bN^l} a_It^I\in \bQ_p\langle t_1,\dots,t_l\rangle$, then $|a_I|_p\to 0$). Let $D^{ij}=\sum_{I\in \bN^l} D^{ij}_It^I$ denote the $(ij)^{th}$-entry of $D$, and let $N$ be such that $|D^{ij}_I|_p\leq N$ for all $I,i,j$. Let $0<\rho_i<\epsilon< 1/N$. Then
	\begin{equation}\label{eq:operatornorm}
		||D||_\rho=\max\limits_{I,i,j}|D^{ij}_I|_p\rho^I\leq (\max\limits_{I,i,j}|D^{ij}_I|_p).\epsilon<1.
	\end{equation}
The first inequality $\max\limits_{I,i,j}|D^{ij}_I|_p\rho^I\leq (\max\limits_{I,i,j}|D^{ij}_I|_p).\epsilon$ holds as $D|_{\mathbf{t=0}}=0$; therefore, either $D^{ij}_I=0$ or $I\neq (0,\dots,0)$.

By submultiplicativity of the norm, $||hD^i(s)||_\rho\leq ||D||_\rho^i||h||_\rho||s||_\rho$, which goes to $0$ as $i\to \infty$ by \eqref{eq:operatornorm}. Thus, $\sum_{i=0}^{\infty}hD^i(s)$ converges to a $\bQ_p\langle t_1,\dots ,t_l\rangle_\rho$-valued column matrix. Moreover, as the matrix multiplication is continuous, its differential is equal to $\sum_{i=0}^{\infty}d_{i-1}hD^i(s)$ and it converges to $s$ by \eqref{eq:iterates}, as $||D^{k+1}(s)||_\rho\leq ||D||_\rho^{k+1}||s||_\rho\xrightarrow{k\to\infty}0$. Therefore, every $s\in ker(d_i)$ is a coboundary $C\otimes_{\bQ_p\langle t_1,\dots,t_l\rangle}\bQ_p\langle t_1,\dots ,t_l\rangle_\rho$ (observe that the radius $\rho$ does not depend on $s$). This finishes the proof. 	
\end{proof}
We also have the following result:
\begin{lem}\label{lem:freeres}
Every ($\bZ/2\bZ$-graded) complex $(B,d)$ over the Tate algebra $A=\bQ\langle t_1,\dots,t_l\rangle_\rho$ with finitely generated cohomology is quasi-isomorphic to a ($\bZ/2\bZ$-graded) complex of finitely generated free modules.
\end{lem}
The lemma requires the standing assumption that each $\rho_i$ is in the image of the norm.
\begin{proof}
Let $F$ be a free, finitely generated $A$-module with a surjection $F\to H^0(B)$. Choose a lift $F\to ker(d^0)\subset B^0$, and consider the complex $B'=\{F\rightleftarrows 0\}$ (i.e. the complex is $F$ in even degrees, $0$ in odd degrees, in particular, $H^1(B')=0$). There is a clear map of complexes $B'\to B$, extending $F\to B^0$, and it induces a surjection $H^0(B')\twoheadrightarrow H^0(B)$. 
Let $B''$ denote the cone of this map; hence, there exists a long exact sequence
\begin{equation}
\dots \to H^1(B')=0\to H^1(B)\to H^1(B'')\to H^0(B')\twoheadrightarrow H^0(B)\to H^0(B'')\to 0=H^1(B')\to\dots .
\end{equation}
It follows that $H^0(B'')=0$. As $B'$ is a complex of finitely generated free modules, proving $B$ is quasi-isomorphic to such a complex is equivalent to proving $B''$ is quasi-isomorphic to such a complex. Therefore, we can assume without loss of generality that $B=B''$ and that $H^0(B)=0$. 

By \cite[Proposition 6.5]{kedlayaoverconvergent}, every finitely generated module over the Tate algebra has a finite free resolution. 
Fix such a resolution 
\begin{equation}
	F_k\to \dots F_1\to H^1(B).
\end{equation}
Choose a lift $F_1\to ker(d^1)\subset B^1$. The composition $F_2\to F_1\to B^1$ is not necessarily $0$; however, its image is in $im(d^0)$ (as the composition $F_2\to F_1\to H^1(B)$ is $0$). Therefore, we can fix a lift $F_2\to B^0$ such that
\begin{equation}\label{eq:shortchainmap}
	\xymatrix{ B^0\ar[r]&B^1\ar[r]& B^0 \\
	F_2\ar[u]\ar[r]&F_1\ar[u]\ar[r]& 0\ar[u] }
\end{equation}
commutes. The right square commutes as the image of $F_1\to B^1$ is in $ker(d^1)$. We can inductively extend \eqref{eq:shortchainmap} to a map of $\bZ$-graded complexes
\begin{equation}\label{eq:longchainmap}
	\xymatrix{ \dots\ar[r]&B^0\ar[r]&B^1\ar[r]& B^0\ar[r]&B^1\ar[r]& B^0 \ar[r]&B^1\ar[r]&\dots  \\
	\dots\ar[r]&F_4\ar[r]\ar[u]& F_3\ar[u]\ar[r]&	F_2\ar[u]\ar[r]&F_1\ar[u]\ar[r]& 0\ar[u]\ar[r]& 0\ar[u]\ar[r]&\dots  }
\end{equation}
Denote the bottom complex by $\tilde F$, and the top complex by $\tilde B$, and assume that they are graded so that $F_i$ is the degree $(-i)$-part of $\tilde F$. The chain map \eqref{eq:longchainmap} induces an isomorphism in cohomology in (i) even degrees (as both have vanishing cohomology in even degrees), (ii) at degree $(-1)$ (by construction). Consider the doubly periodic complex $\bigoplus_{m\in\bZ}\tilde F[2m]$ and the natural doubly periodic chain map 
\begin{equation}\label{eq:doublyper}
	\bigoplus_{m\in\bZ}\tilde F[2m] \to\tilde B. 
\end{equation}
Then \eqref{eq:doublyper} is a quasi-isomorphism. Note that the left-hand side is of finite rank in each degree, as we used a finite resolution to begin with. Therefore, \eqref{eq:longchainmap} can be seen as a quasi-isomorphism from a $\bZ/2\bZ$-graded complex of finite rank free modules to $B$. This finishes the proof.
\end{proof}
\begin{rk}
In the $\bZ$-graded case, \cite[Proposition 6.5]{kedlayaoverconvergent} implies the analogue of \Cref{lem:freeres} by a standard method. The subtlety of \Cref{lem:freeres} was due to $\bZ/2\bZ$-gradings. 
\end{rk}
We also work frequently with the non-Archimedean field $\Lambda$, defined by
\begin{equation}
	\Lambda=\bQ((T^\bR))=\bigg\{\sum_{i=0}^\infty a_iT^{r_i}: a_i\in\bQ, r_i\in\bR,r_i\to\infty \bigg\},
\end{equation}
which is called \emph{the Novikov field}. This field carries a valuation $val_T$, defined by $val_T(\sum_{i=0}^\infty a_iT^{r_i})=\min\limits_{i\in\bN}\{r_i\}$, and a complete non-Archimedean norm $|\cdot|_T:=e^{-val_T(\cdot)}$. As we remarked, the constructions and the statements hold verbatim for $(\Lambda,|\cdot|_T)$. Notice that $|\cdot|_T$ is surjective onto $[0,\infty)$; therefore, the assumption that $\rho_i$ is in the image of $|\cdot|_T$ is automatic.

We also need to consider ``annuli'' over $\Lambda$. Consider the ring 
\begin{equation}
\Lambda\langle \zeta^{-1} z,1/z \rangle=	\Lambda\langle w,y \rangle/(wy-\zeta),
\end{equation}
where $\zeta\in\Lambda\setminus\{0\}$. Morally, this is the ring of functions on a subspace of the polydisc corresponding to $\Lambda\langle w,y \rangle$, and its points are given by $|\zeta^{-1} z|_T\leq 1$, $|1/z|_T\leq 1$, i.e. $1\leq |z|_T\leq |\zeta|_T$ (equivalently $val_T(\zeta)\leq val_T(z)\leq 0$). More generally, consider the ring
\begin{equation}
	\Lambda\langle T^{-b}z, T^c/z \rangle =\Lambda\langle w, y \rangle/(wy-T^{c-b}). 
\end{equation}
This ring can be thought of as the ring of functions on the annulus of points $z$ satisfying $b\leq val_T(z)\leq c$. It is naturally isomorphic to the ring $\Lambda\{ z^\pm\}_{[b,c]}$ of series $\sum_{k\in\bZ}a_kz^k$ such that $val_T(a_k)+vk\to \infty$ for every $v\in [b,c]$. 
The quotient map $\Lambda\langle w, y \rangle\to \Lambda\{ z^\pm\}_{[b,c]}$ is given by $w\mapsto T^{-b}z$, $y\mapsto T^c/z$. 
This map is well-defined and surjective. The ring $\Lambda\{ z^\pm\}_{[b,c]}$ is Noetherian (as it is a quotient of a Tate algebra, which is itself Noetherian by \cite[Chapter 2, Proposition 14]{bosch}), regular, and of finite Krull dimension (as the Tate algebra has finite Krull dimension by \cite[p22 Proposition 17]{bosch}). As a result, it has a finite global dimension, and in particular, every finitely generated module over it has finite projective resolutions. 

More generally, given $b_1\leq c_1, \dots ,b_k\leq c_k$, one can define $\Lambda\{z_1^\pm,\dots, z_k^\pm \}$ to be the ring of series $\sum_{I=(i_1,\dots)\in\bZ^k} a_Iz^I$ such that $val_I(a_I)+v_1i_1+\dots+v_ki_k\to\infty$ for all $v_i\in[b_i,c_i]$. Similar statements hold, which we can summarize as 
\begin{lem}\label{lem:propsofring}
The ring $\Lambda\{z_1^\pm,\dots, z_k^\pm \}$ is Noetherian of finite global dimension. In particular, every finitely generated module over it has a finite projective resolution. 
\end{lem}
\begin{proof}
Analogous to before, one identifies $\Lambda\{z_1^\pm,\dots, z_k^\pm \}$ with the quotient of $\Lambda\langle w_1,y_1,\dots, w_k,y_k \rangle$ (which is Noetherian by \cite[Chapter 2, Proposition 14]{bosch}) by the ideal generated by $w_iy_i-T^{c_i-b_i}$. The rest of the statements follow analogously. 	
\end{proof}
\begin{cor}\label{lem:freeresannulus}
Every ($\bZ/2\bZ$-graded) complex $(B,d)$ over the algebra $\Lambda\{z_1^\pm,\dots, z_k^\pm\}$ with finitely generated cohomology is quasi-isomorphic to a ($\bZ/2\bZ$-graded) complex of finitely generated projective modules.
\end{cor}
\begin{proof}
The proof of \Cref{lem:freeres} works almost verbatim. One needs to replace the free resolutions with the projective resolutions, which exist by \Cref{lem:propsofring}. 
\end{proof}

\section{Semi-continuity statements and the proof of \Cref{lem:grouplikenovikov}}\label{appendix:semicont}
In this \namecref{appendix:semicont}, we collect the semi-continuity statements required for Lemma \ref{lem:grouplikenovikov} and Lemma \ref{lem:nearbyfloer}, and we give the proof of the former. 
We start with the following definition:
\begin{defn}
Given $b\leq c$, let $\Lambda\{z^\bR\}_{[b,c]}$ denote the ring consisting of series 
\begin{equation}
\sum a_rz^r,
\end{equation}
where $r\in\bR$, $a_r\in \Lambda$, 
and 
\begin{equation}
val_T(a_r)+r\nu\to \infty
\end{equation} 
for any $\nu\in[b,c]$. Only countably many $a_r$ can be non-zero, but we do not impose any other condition on the set of $r$ satisfying $a_r\neq 0$ (e.g. they can accumulate).
\end{defn}
We think of this ring as the ring of functions on the universal cover of $\{x\in\Lambda: b\leq val_T(x)\leq c \}$. Another heuristic is that one can think of this ring as a non-Archimedean analogue of the interval $[b,c]$. Observe that if $b<c$, $\Lambda\{z^\bR\}_{[b,c]}$ is independent of $b$ and $c$ up to isomorphism. The isomorphism is given by
\begin{equation}
\Lambda\{z^\bR\}_{[b,c]} \to \Lambda\{z^\bR\}_{[b',c']}\atop z^r\mapsto T^{\alpha r}z^{\beta r},
\end{equation}	where $\beta,\alpha$ are such that $\beta\nu+\alpha$ is the increasing linear bijection $[b',c']\to[b,c]$. We will often omit $b,c$ from the notation and denote this ring by $\Lambda\{z^\bR\}$. We will assume that $0\in(b,c)$ unless stated otherwise.

One can replace $\Lambda[z^\bR]$ in Definition \ref{defn:novikovfamily} with $\Lambda\{z^\bR\}$. Throughout this appendix, we will refer to this modified notion. In particular, we use the notation $\fM_{\alpha}^\Lambda$ to mean a Novikov family in the latter sense. For instance, $\fM_\alpha^\Lambda(L,L')=CF(L,L';\Lambda)\otimes_\Lambda \Lambda\{z^\bR\} $.

The ring $\Lambda\{z^\bR\}$ is not Noetherian. However, for our purposes, we only need a restricted set of exponents for $z$. In other words, our examples will be defined over a Noetherian subring of $\Lambda\{z^\bR\}$. More precisely, let $H:=H_1(M,\bZ)/ker(\alpha)$ denote the integral homology modulo the cycles that vanish under $\alpha$. If $C\in H_1(M,\bZ)$ has a multiple in $ker(\alpha)$, then $C\in ker(\alpha)$; therefore, $H$ is torsion free (thus also free). Consider the group ring $\Lambda[z^H]$, and the ring homomorphism 
\begin{equation}\label{eq:ringhom}
	\Lambda[z^H] \to \Lambda\{z^\bR\}
\end{equation}
given by $z^C\mapsto z^{\alpha(C)}$. As $C\mapsto \alpha(C)$ is injective, so is \eqref{eq:ringhom}. Therefore, $\Lambda[z^H]$ inherits a topology from $\Lambda\{z^\bR\}$, and its completion consists of series $\sum_{C\in H}a_Cz^C$ satisfying $val_T(a_C)+\alpha(C)v\to \infty$ for every $v\in[b,c]$. The closure of the image can be written as $\Lambda\{z^{\pm r_1},\dots z^{\pm r_k} \}$ for a fixed set $r_i$ of real numbers (not to be confused with $\Lambda\{z_1^\pm,\dots,z_k^\pm\}$ from \Cref{appendix:tatealgebras}). Our families are defined over this ring; however, this ring is still too big for our purposes. 

Instead, let $C_1,\dots,C_k\in H$ be a basis such that $\alpha(C_i)>0$. Define $b_i:=b.\alpha(C_i)$, $c_i:=c.\alpha(C_i)$, and $z_i:=z^{C_i}$ (therefore, $\Lambda[z^H]=\Lambda[z_1^\pm,\dots,z_k^\pm]$). Consider the ring $\Lambda\{z^H\}=\Lambda\{z^H\}_{[b,c]}$ consisting of series $\sum_{I=(i_1,\dots)\in \bZ^k}a_Iz^I$ such that $val_T(a_I)+i_1v_1+\dots +i_kv_k\to\infty$ for every $v_1\in[b_1,c_1],\dots v_k\in[b_k,c_k]$. Clearly, $\Lambda\{z^H\}$ is identified with the ring $\Lambda\{z_1^\pm,\dots,z_k^\pm\}$ from \Cref{appendix:tatealgebras}. 

The condition that $val_T(a_I)+i_1v_1+\dots +i_kv_k\to\infty$ for every $v_1\in[b_1,c_1],\dots v_k\in[b_k,c_k]$ implies the convergence condition for $\sum_{C\in H}a_Cz^C$ inherited from $\Lambda\{z^\bR\}$ (i.e. $val_T(a_C)+\alpha(C)v\to \infty$ for every $v\in[b,c]$) by letting $v_i=v.\alpha(C_i)$. 
As a result, we have a map 
\begin{equation}
	\Lambda\{z^H\} \to \Lambda\{z^\bR\}	.
\end{equation}
Clearly, this map is injective. 
%
\begin{lem}\label{lem:lifts}
The family $\fM_\alpha^\Lambda$ parametrized by $\Lambda\{z^\bR\}$ lifts to a family parametrized by $\Lambda\{z^H\}$. 
\end{lem}
\begin{proof}
We assign to $(L_i,L_j)$ the $\Lambda\{z^H\}$-algebra $CF(L_i,L_j;\Lambda)\otimes_\Lambda \Lambda\{z^H\}$, and define the structure maps similar to 	\eqref{eq:structurenovikov} and \eqref{eq:structuregenericnovikov}, with the coefficient $T^{E(u)}z^{\alpha([\partial_h u])} $ replaced by $T^{E(u)}z^{[\partial_h u]}$ (where we abuse the notation to denote the image of $[\partial_h u]$ in $H=H_1(M,\bZ)/ker(\alpha)$ by $[\partial_h u]$ as well).

One still needs to check that the coefficients belong to $\Lambda\{z^H\}$. In the monotone case, this is immediate as there are only finitely many terms. The general case can be achieved by using Fukaya's trick (cf. \cite{abouzaidicm,abouzaidfamilyfloerfaithful}), or alternatively the reverse isoperimetric inequalities (\cite{gromansolomonreverse,duvalreverse},\cite[Appendix A]{abouzaidhmscorrections}). 
More precisely, the length of the boundary of a disc is bounded above by its area (up to multiplication by a uniform constant). Moreover, the size of the class $[\partial_h u]$ is also bounded above by the length of the boundary (up to multiplying and adding with a uniform constant). Therefore, (for any choice of a metric on $H\otimes_\bZ\bR$) the size of $[\partial_h u]$ is bounded above by the energy (up to a constant), which guarantees the convergence of the series in $\Lambda\{z^H\}$ for $b_i,c_i$ sufficiently close to $0$. 
\end{proof}
\begin{defn}
Let $b\leq a\leq c$. Then, there is a ring homomorphism 
\begin{equation}
ev_{T^a}:\Lambda\{z^\bR\}_{[b,c]}\to \Lambda \atop \sum a_r z^r\longmapsto \sum a_rT^{a r}.
\end{equation}	
We call this map \emph{evaluation map at $z=T^a$}. Given $f\in\Lambda\{z^\bR\}_{[b,c]}$, $ev_{T^a}(f)$ is also denoted by $f(T^a)$, and the base change of a $\Lambda\{z^\bR\}_{[b,c]}$-module $M$ along $ev_{T^a}$ is denoted by $M|_{z=T^a}$. One can also think of $ev_{T^a}$ as a $\Lambda$-point of $\Lambda\{z^\bR\}_{[b,c]}$. 
\end{defn}
Even though we use real exponents for both $\Lambda$ and $\Lambda \{z^\bR \}$, there is no a priori relation between the topology of these rings and the topology in $\bR$. For instance, $t\mapsto T^t$ is not a continuous assignment. Nevertheless, we can still prove some semi-continuity results as $t\in\bR$ varies continuously. First, we prove:  
\begin{lem}\label{lem:uppersemicontfnc}
Let $f(z)\in\Lambda\{z^\bR\}$ be such that $f(1)\neq 0$. Then there exists $\delta>0$ such that $f(T^t)\neq 0$ for $|t|<\delta$.
\end{lem}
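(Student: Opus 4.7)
My plan is a Newton-polygon-style argument that isolates a specific coefficient of $f(T^t)$ which survives for all small $t$. Expand $f(z)=\sum_r a_r z^r$ with $a_r=\sum_s c_{r,s}T^s\in\Lambda$ and let $S=\{(r,s)\in\bR^2:c_{r,s}\neq 0\}$ be the bigraded support. The convergence hypothesis on $f$, together with the Novikov condition on each $a_r$, implies that $S\cap\{(r,s):s+tr\leq K\}$ is finite for every $K\in\bR$ and $t\in[b,c]$, so each coefficient $[T^v]f(T^t)=\sum_{(r,s)\in S,\,s+tr=v}c_{r,s}$ is a genuine finite sum. Set $v_0=val_T(f(1))$; since $[T^{v_0}]f(1)=\sum_{r:(r,v_0)\in S}c_{r,v_0}\neq 0$, pick $r^*$ with $c_{r^*,v_0}\neq 0$.

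For any $t\in\bR$,
\[
[T^{v_0+tr^*}]f(T^t)=c_{r^*,v_0}+\sum_{\substack{(r,s)\in S,\,r\neq r^*\\ s-v_0=t(r^*-r)}}c_{r,s},
\]
so it suffices to show that the exceptional set
\[
T_*:=\left\{\frac{s-v_0}{r^*-r}:(r,s)\in S,\ r\neq r^*,\ s\neq v_0\right\}
\]
does not accumulate at $0$. Given this, for $0<|t|<\delta$ the coefficient above reduces to $c_{r^*,v_0}\neq 0$, hence $f(T^t)\neq 0$; the case $t=0$ is the hypothesis.

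To rule out accumulation of $T_*$ at $0$, I use that the convergence of $f$ at the slopes $\nu=b$ and $\nu=c$ (with $b<0<c$) provides attained infima $A_b=\min_r(\lambda_r+br)$ and $A_c=\min_r(\lambda_r+cr)$, where $\lambda_r=val_T(a_r)$; this yields linear lower bounds $\lambda_r\geq A_b+|b|r$ for $r\geq 0$ and $\lambda_r\geq A_c+c|r|$ for $r\leq 0$, so $\lambda_r\to+\infty$ linearly in $|r|$. I then split the ratios in $T_*$ into two regimes. When $|r-r^*|$ is large, $s\geq\lambda_r$ together with the linear growth of $\lambda_r$ produces a positive constant $\delta_1$ with $|s-v_0|\geq\delta_1|r-r^*|$. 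When $|r-r^*|\leq M$ and $|(s-v_0)/(r^*-r)|<\delta_1$, then $|s-v_0|\leq M\delta_1$, placing $(r,s)$ in a compact box; combining $\lambda_r\leq s\leq v_0+M\delta_1$ with bounded $r$ gives an upper bound on $\lambda_r+br$, and convergence at $\nu=b$ then implies the box meets $S$ in only finitely many points, over which the nonzero ratios take only finitely many values.

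The main technical step is this finiteness of $S$ inside a bounded box, which genuinely uses convergence at a nonzero slope; merely knowing $\lambda_r\to\infty$ is not enough, since the set of $r$ with $a_r\neq 0$ could in principle be dense. Once the finiteness is in hand, the two cases combine to produce $\delta>0$ with $T_*\cap(-\delta,\delta)=\varnothing$, completing the proof.
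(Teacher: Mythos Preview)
Your proof is correct, and the underlying idea is the same Newton-polygon mechanism the paper uses, but the bookkeeping is organized differently. The paper normalizes $f(1)=1$, splits $f=f_1+f_2$ with $f_1$ a finite subsum and $f_2$ satisfying $val_T(a_r)-\epsilon|r|\geq 1$, and then tracks \emph{all} of the leading coefficients $a_r(0)$ for $r$ in the finite set simultaneously: the finite sum $\sum_{r\in F}a_r(0)T^{tr}$ is nonzero for small $t$ (distinct exponents when $t\neq 0$, and equal to $1$ at $t=0$), and it is valuation-separated both from the higher terms of $f_1$ and from $f_2(T^t)=O(T)$. You instead fix a \emph{single} term $c_{r^*,v_0}$ and examine only the coefficient of $T^{v_0+tr^*}$, ruling out each possible interfering $(r,s)$ by showing the set $T_*$ of exceptional slopes has no accumulation at $0$. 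Your version is a bit longer but more explicit about the finiteness arguments; the paper's version is terser but needs some unpacking (for instance, the valuation-separation claim when some $a_r$ has support at negative exponents is left implicit).

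One small remark: in your last paragraph you stress that finiteness of $S$ in the bounded box ``genuinely uses convergence at a nonzero slope.'' In fact convergence at $\nu=0$ already suffices there: inside the box $s$ is bounded, and $\lambda_r\leq s$ together with the condition that only finitely many $r$ have $\lambda_r\leq K$ (which is exactly convergence at $\nu=0$, since $0\in(b,c)$) gives finitely many $r$; then the Novikov condition on each $a_r$ finishes. Your argument via $\nu=b$ is also valid, just not the minimal hypothesis for that step. The nonzero slopes are genuinely needed earlier, for the linear growth of $\lambda_r$ that handles the large-$|r-r^*|$ regime.
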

\begin{proof}
Let $f(z)=\sum a_rz^r$. Without loss of generality, assume that $f(1)=1$. By the convergence condition, there exists an $\epsilon>0$ such that $val_T(a_r)-\epsilon |r|\to\infty$. Therefore, $val_T(a_r)- \epsilon |r|\geq 1$ for almost every $r$. We split $f$ as $f_{1}+f_{2}$ where $f_{1}$ is a finite sub-sum $\sum_{r\in F} a_rz^r$ of $\sum a_rz^r$, and $f_2$ is the sum of remaining terms such that the coefficients of $f_2$ have $val_T(a_r)-\epsilon |r|\geq 1$. In particular, $f_2(1)=O(T)$, i.e. $val_T(f_2(1))\geq 1$, and $f_1(1)=1+O(T)$. Also, by assumption, $f_2(T^t)=O(T)$ for $t\in[-\epsilon,\epsilon]$. 

Let $a_r=\sum_\alpha a_r(\alpha)T^\alpha$ be the expansion of $a_r$. Then, $f_1(z)=\sum_{r\in F,\alpha}a_r(\alpha)T^\alpha z^r$ and $\sum_{r\in F} a_r(0)=1$ as $f_1(1)=1+O(T)$. Consider $f_1(T^t)=\sum_{r\in F,\alpha}a_r(\alpha)T^\alpha T^{tr}$. As $F$ is finite, there is a positive gap between the valuations of $a_r(0)T^{tr}$ terms of the sum and the terms $a_r(\alpha)T^{\alpha+ tr}$ with $\alpha\neq 0$, as long as $|t|$ is small. Therefore, these two types of terms of the sum cannot cancel out each other. Furthermore, for small $|t|$, $a_r(0)T^{tr}$ have valuation less than $1$, so these terms cannot cancel out with terms of $f_2(T^t)$. As $\sum_{r\in F} a_r(0)T^{tr}$ remains non-zero and has a different valuation than the remaining terms of $f(T^t)$, for a small variation of $t$, $f(T^t)$ remains non-zero.
\end{proof}
\begin{cor}\label{cor:semicontfnc}
Let $f\in\Lambda\{z^H\}$ be such that $f(\mathbf{1})=f(1,\dots,1)\neq 0$. Define $\mathbf{z}_t:=(T^{t\alpha(C_1)},\dots ,T^{t\alpha(C_k)})$. Then, $f(\mathbf{z}_t)\neq 0$ for $|t|$ sufficiently small.
\end{cor}
The point $\mathbf{z}_t$ corresponds to the composition of the map $\Lambda\{z^H\}\to\Lambda\{z^\bR\}$ such that $ z_j=z^{C_j}\mapsto z^{\alpha(C_j)}$ with the map $\Lambda\{z^\bR\}\to\Lambda$ such that $z^r\mapsto T^{tr}$. \Cref{cor:semicontfnc} follows by applying \Cref{lem:uppersemicontfnc} to the image $f(z^{\alpha(C_1)},\dots,z^{\alpha(C_k)})$ of $f$ under the former map. 
\begin{lem}\label{lem:semicontcomplexnovsingle}
Let $(C,d)$ be a (possibly unbounded) complex of finite rank projective $\Lambda\{z^H\}$-modules. Assume that $(C|_{\mathbf{z=1}},d|_{\mathbf{z=1}})$ have vanishing cohomology in degree $i$, i.e. $H^i(C|_{\mathbf{z=1}},d|_{\mathbf{z=1}})$. Then, for sufficiently small $|t|$, $(C,d)$ is acyclic at degree $i$ at $\mathbf{z}=\mathbf{z}_t:=(T^{t\alpha(C_1)},\dots,T^{t\alpha(C_k)})$, i.e. \begin{equation}
	H^i(C|_{\mathbf{z}=\mathbf{z}_t},d|_{\mathbf{z}=\mathbf{z}_t})=0. 
\end{equation}
More generally, the rank of this group is upper semi-continuous in $t$. 
\end{lem}
\begin{proof}
The statement is concerned with degree $i$; therefore, without loss of generality we can assume that $C$ is bounded (or even supported at degrees $i-1,i,i+1$). 
Recall that projective modules over local rings are free (\cite[\href{https://stacks.math.columbia.edu/tag/058Z}{Tag 058Z}]{projlocalstacks-project}). As a result, the localization of such a module at the ideal of $\mathbf{z=1}=(1,\dots,1)$ is free, which implies that there exists an element $g\in \Lambda\{z^H\}$ such that $g(\mathbf{1})\neq 0$ and the localization $C_g$ of $C$ at $g$ (obtained by inverting $g$) is a complex of free modules (we use boundedness here). By \Cref{lem:uppersemicontfnc}, $g(\mathbf{z}_t)\neq 0$ for sufficiently small $|t|$. In particular, the evaluation at $\mathbf{z}_t$ is well-defined over $\Lambda\{z^H\}_g$, for $|t|$ sufficiently small. By \Cref{cor:semicontfnc}, if $f\in \Lambda\{z^H\}_g$ is such that $f(\mathbf{1})\neq 0$, then $f(\mathbf{z}_t)\neq 0$ for sufficiently small $t$. 

Choose trivializations for every $C_g^j$ and consider $C^{i-1}_g\xrightarrow{d_{i-1}} C^i_g\xrightarrow{d_{i}} C^{i+1}_g$. We can see $d_{i-1}$ and $d_i$ as matrices with coefficients in $\Lambda\{z^H\}_g$ such that $d_id_{i-1}=0$. The rank of $H^i(C|_{\mathbf{z}=\mathbf{z}_t},d|_{\mathbf{z}=\mathbf{z}_t})$ is given by 
\begin{equation}\label{eq:rankhom}
rank(C_i)-dim(d_{i-1}|_{\mathbf{z}=\mathbf{z}_t})-rank(d_{i}|_{\mathbf{z}=\mathbf{z}_t}).	
\end{equation}
Choose a square sub-matrix of $d_{i-1}$, resp. $d_i$, such the restriction to $\mathbf{z=1}$ is non-singular and of size equal to $rank(d_{i-1}|_{\mathbf{z=1}})$, resp. $rank(d_i|_{\mathbf{z=1}})$. Let $f\in \Lambda\{z^H\}_g$ denote the product of determinants of these matrices. As $f(\mathbf{1})\neq 0$, $f(\mathbf{z}_t)\neq 0$ for sufficiently small $|t|$. As a result \eqref{eq:rankhom} has a local maximum at $t=0$. This finishes the proof. 
%
%
%
\end{proof}
It is possible to define multivariable versions of $\Lambda\{ z^\bR\}$, and prove analogues of Lemma \ref{lem:uppersemicontfnc}. For instance, let $b_1<c_1$ and $b_2<c_2$ be real numbers. Define $\Lambda\{z_1^\bR,z_2^\bR \}$ to be the series of the form 
\begin{equation}
\sum a_{\mathbf{r}} z_1^{r_1}z_2^{r_2},
\end{equation}
where $\mathbf{r}=(r_1,r_2)\in\bR^2$, $a_{\mathbf{r}}\in\Lambda$, and satisfying 
\begin{equation}
val_T(a_{\mathbf{r}})+r_1\nu_1+r_2\nu_2\to\infty
\end{equation} 
for any $\nu_1\in[b_1,c_1],\nu_2\in[b_2,c_2]$. As before, this ring does not depend on $b_1<c_1,b_2<c_2$, and we omit these from the notation. We will keep the assumption $b_1<0<c_1,b_2<0<c_2$. Lemma \ref{lem:uppersemicontfnc} and Lemma \ref{lem:semicontcomplexnovsingle} have their obvious generalizations. 
\begin{rk}
There is a ring homomorphism $\Lambda\{z^\bR \}\otimes \Lambda\{z^\bR \}\to \Lambda\{z_1^\bR,z_2^\bR \}$ such that $f\otimes g\mapsto fg$. One can presumably topologize the rings $\Lambda\{z^\bR \}$, $\Lambda\{z_1^\bR,z_2^\bR \}$ and prove that the map induces an isomorphism from the completed tensor product. We do not need this for our current purposes.
\end{rk}
We have given the proof of Lemma \ref{lem:nearbyfloer}. Finally, we have:
\begin{proof}[Proof of Lemma \ref{lem:grouplikenovikov}]
One can write the morphism (\ref{eq:novconvto}) of bimodules, which comes from a morphism of families
\begin{equation}\label{eq:novconvtofam}
\pi_1^*\fM_{\alpha}^\Lambda\relotimes_{\cF(M,\Lambda)} \pi_2^*\fM_{\alpha}^\Lambda\to \Delta^*\fM_{\alpha}^\Lambda.
\end{equation}
This is a map of families over $\Lambda\{z_1^\bR,z_2^\bR\}$; however, analogous to \Cref{lem:lifts}, both sides of \eqref{eq:novconvtofam} as well as the map in between lifts to $\Lambda\{z_1^H,z_2^H\}$. One defines the lift of (\ref{eq:novconvtofam}) by a formula similar to (\ref{eq:padicmapformula}), namely
\begin{equation}
\sum\pm T^{E(u)} z_1^{[\partial_1 u]} z_2^{[\partial_2 u]}.y
\end{equation}
is the coefficient of $y$, and the sum is over the same set of discs as in (\ref{eq:padicmapformula}). At $\mathbf{z_1=z_2=1}$, this defines the canonical quasi-isomorphism (\ref{eq:convolutiontodiagformula}). As remarked in \Cref{rk:locconst}, the notion of perfect families generalize to more general rings, and \Cref{lem:properfamily} holds to families over $\Lambda\{z_1^H,z_2^H\}$ (the proof requires  \Cref{lem:freeresannulus} instead of \Cref{lem:freeres}). In other words, proper families are perfect; therefore, so are their convolution (cf. \Cref{cor:properconv}). This implies that the lift of the cone of \eqref{eq:novconvtofam} to $\Lambda\{z_1^H,z_2^H\}$ is perfect and proper. Let $\fN$ denote this lift. Consider $\bigoplus_{i,j}\fN(L_i,L_j)$, and let $C$ be a projective replacement for it (which exists by \Cref{lem:freeresannulus}). As before, the (derived) restriction of $\bigoplus_{i,j}\fN(L_i,L_j)$ to $\mathbf{z=1}$ vanishes; therefore, $C|_{\mathbf{z=(1,1)}}\simeq 0$. One can see $C$ as a $2$-periodic unbounded complex. By a two-variable version of \Cref{lem:semicontcomplexnovsingle}, 
\begin{equation}
	H^0(C|_{\mathbf{z}=(\mathbf{z}_{t_1},\mathbf{z}_{t_2})})=H^1(C|_{\mathbf{z}=(\mathbf{z}_{t_1},\mathbf{z}_{t_2})})=0
\end{equation}
for sufficiently small $|t_1|$, $|t_2|$. Since $C$ is $2$-periodic, this means $C|_{\mathbf{z}=(\mathbf{z}_{t_1},\mathbf{z}_{t_2})}$ is acyclic and the lift of \eqref{eq:novconvtofam} to $\Lambda\{z_1^H,z_2^H\}$ is an isomorphism at $(\mathbf{z}_{t_1},\mathbf{z}_{t_2})$. This is just a restatement of \Cref{lem:grouplikenovikov}.

\end{proof}

\bibliographystyle{alpha}
\bibliography{bibliogeneral}	

\begin{thebibliography}{DHKK14}

\bibitem[Abo10]{generation}
Mohammed Abouzaid.
\newblock A geometric criterion for generating the {F}ukaya category.
\newblock {\em Publ. Math. Inst. Hautes \'Etudes Sci.}, (112):191--240, 2010.

\bibitem[Abo14]{abouzaidicm}
Mohammed Abouzaid.
\newblock Family {F}loer cohomology and mirror symmetry.
\newblock In {\em Proceedings of the {I}nternational {C}ongress of
  {M}athematicians---{S}eoul 2014. {V}ol. {II}}, pages 813--836. Kyung Moon Sa,
  Seoul, 2014.

\bibitem[Abo17]{abouzaidfamilyfloerfaithful}
Mohammed Abouzaid.
\newblock The family {F}loer functor is faithful.
\newblock {\em J. Eur. Math. Soc. (JEMS)}, 19(7):2139--2217, 2017.

\bibitem[Abo21]{abouzaidhmscorrections}
Mohammed Abouzaid.
\newblock Homological mirror symmetry without correction.
\newblock {\em J. Amer. Math. Soc.}, 34(4):1059--1173, 2021.

\bibitem[AS10]{abousei}
Mohammed Abouzaid and Paul Seidel.
\newblock An open string analogue of {V}iterbo functoriality.
\newblock {\em Geom. Topol.}, 14(2):627--718, 2010.

\bibitem[AS20]{smithauroux}
Denis {Auroux} and Ivan {Smith}.
\newblock {Fukaya categories of surfaces, spherical objects, and mapping class
  groups}.
\newblock {\em arXiv e-prints}, page arXiv:2006.09689, June 2020.

\bibitem[BC09]{lagrangianquantumhomology}
Paul Biran and Octav Cornea.
\newblock A {L}agrangian quantum homology.
\newblock In {\em New perspectives and challenges in symplectic field theory},
  volume~49 of {\em CRM Proc. Lecture Notes}, pages 1--44. Amer. Math. Soc.,
  Providence, RI, 2009.

\bibitem[Bel06]{skolemmahler}
Jason~P. Bell.
\newblock A generalised {S}kolem-{M}ahler-{L}ech theorem for affine varieties.
\newblock {\em J. London Math. Soc. (2)}, 73(2):367--379, 2006.

\bibitem[Bos14]{bosch}
Siegfried Bosch.
\newblock {\em Lectures on formal and rigid geometry}, volume 2105 of {\em
  Lecture Notes in Mathematics}.
\newblock Springer, Cham, 2014.

\bibitem[BSS17]{dynmordellforcoherent}
Jason~P. Bell, Matthew Satriano, and Susan~J. Sierra.
\newblock On a dynamical {M}ordell-{L}ang conjecture for coherent sheaves.
\newblock {\em J. Lond. Math. Soc. (2)}, 96(1):28--46, 2017.

\bibitem[Cas86]{casselslocal}
J.~W.~S. Cassels.
\newblock {\em Local fields}, volume~3 of {\em London Mathematical Society
  Student Texts}.
\newblock Cambridge University Press, Cambridge, 1986.

\bibitem[CL06]{clusterhomology}
Octav Cornea and Fran\c{c}ois Lalonde.
\newblock Cluster homology: an overview of the construction and results.
\newblock {\em Electron. Res. Announc. Amer. Math. Soc.}, 12:1--12, 2006.

\bibitem[DHKK14]{dynsysandcats}
G.~Dimitrov, F.~Haiden, L.~Katzarkov, and M.~Kontsevich.
\newblock Dynamical systems and categories.
\newblock In {\em The influence of {S}olomon {L}efschetz in geometry and
  topology}, volume 621 of {\em Contemp. Math.}, pages 133--170. Amer. Math.
  Soc., Providence, RI, 2014.

\bibitem[{Duv}15]{duvalreverse}
Julien {Duval}.
\newblock {On a result by Y. Groman and J. P. Solomon}.
\newblock {\em arXiv e-prints}, page arXiv:1502.04929, February 2015.

\bibitem[Dyc11]{dyckerhoffcompactgenmf}
Tobias Dyckerhoff.
\newblock Compact generators in categories of matrix factorizations.
\newblock {\em Duke Math. J.}, 159(2):223--274, 2011.

\bibitem[Efi12]{efimovhighergenus}
Alexander~I. Efimov.
\newblock Homological mirror symmetry for curves of higher genus.
\newblock {\em Adv. Math.}, 230(2):493--530, 2012.

\bibitem[Fuk02]{fukayafamilyfloer}
Kenji Fukaya.
\newblock Floer homology for families---a progress report.
\newblock In {\em Integrable systems, topology, and physics ({T}okyo, 2000)},
  volume 309 of {\em Contemp. Math.}, pages 33--68. Amer. Math. Soc.,
  Providence, RI, 2002.

\bibitem[Gan12]{sheelthesis}
Sheel Ganatra.
\newblock {\em Symplectic {C}ohomology and {D}uality for the {W}rapped {F}ukaya
  {C}ategory}.
\newblock ProQuest LLC, Ann Arbor, MI, 2012.
\newblock Thesis (Ph.D.)--Massachusetts Institute of Technology.

\bibitem[GS14]{gromansolomonreverse}
Yoel Groman and Jake~P. Solomon.
\newblock A reverse isoperimetric inequality for j-holomorphic curves.
\newblock {\em Geometric and Functional Analysis}, 24(5):1448–1515, August
  2014.

\bibitem[Kat07]{padicanalysiskatok}
Svetlana Katok.
\newblock {\em {$p$}-adic analysis compared with real}, volume~37 of {\em
  Student Mathematical Library}.
\newblock American Mathematical Society, Providence, RI; Mathematics Advanced
  Study Semesters, University Park, PA, 2007.

\bibitem[Ked04]{kedlayaoverconvergent}
Kiran~S. Kedlaya.
\newblock Full faithfulness for overconvergent {$F$}-isocrystals.
\newblock In {\em Geometric aspects of {D}work theory. {V}ol. {I}, {II}}, pages
  819--835. Walter de Gruyter, Berlin, 2004.

\bibitem[Kel01]{kellerainfty}
Bernhard Keller.
\newblock Introduction to {$A$}-infinity algebras and modules.
\newblock {\em Homology Homotopy Appl.}, 3(1):1--35, 2001.

\bibitem[Kel06]{kellerdg}
Bernhard Keller.
\newblock On differential graded categories.
\newblock In {\em International {C}ongress of {M}athematicians. {V}ol. {II}},
  pages 151--190. Eur. Math. Soc., Z\"urich, 2006.

\bibitem[Ma'15]{quiltedstrip}
Sikimeti Ma'u.
\newblock Quilted strips, graph associahedra, and {$A_\infty$} {$n$}-modules.
\newblock {\em Algebr. Geom. Topol.}, 15(2):783--799, 2015.

\bibitem[MS98]{mcduffsalamonsymplectic}
Dusa McDuff and Dietmar Salamon.
\newblock {\em Introduction to symplectic topology}.
\newblock Oxford Mathematical Monographs. The Clarendon Press, Oxford
  University Press, New York, second edition, 1998.

\bibitem[Oh93]{ohmonotone}
Yong-Geun Oh.
\newblock Floer cohomology of {L}agrangian intersections and pseudo-holomorphic
  disks. {I}.
\newblock {\em Comm. Pure Appl. Math.}, 46(7):949--993, 1993.

\bibitem[Poo14]{padicinterp}
Bjorn Poonen.
\newblock {$p$}-adic interpolation of iterates.
\newblock {\em Bull. Lond. Math. Soc.}, 46(3):525--527, 2014.

\bibitem[Rud87]{rudinanalysis}
Walter Rudin.
\newblock {\em Real and complex analysis}.
\newblock McGraw-Hill Book Co., New York, third edition, 1987.

\bibitem[Sei08]{seidelbook}
Paul Seidel.
\newblock {\em Fukaya categories and {P}icard-{L}efschetz theory}.
\newblock Zurich Lectures in Advanced Mathematics. European Mathematical
  Society (EMS), Z\"urich, 2008.

\bibitem[Sei11]{seidelgenustwo}
Paul Seidel.
\newblock Homological mirror symmetry for the genus two curve.
\newblock {\em J. Algebraic Geom.}, 20(4):727--769, 2011.

\bibitem[Sei13]{categoricaldynamics}
Paul Seidel.
\newblock Lectures on categorical dynamics and symplectic topology, 2013.
\newblock URL: \url{http://math.mit.edu/~seidel/937/lecture-notes.pdf}. Last
  visited on 2017/08/30.

\bibitem[Sei14]{flux}
Paul Seidel.
\newblock Abstract analogues of flux as symplectic invariants.
\newblock {\em M\'em. Soc. Math. Fr. (N.S.)}, (137):135, 2014.

\bibitem[Sei16]{seideliterates}
Paul Seidel.
\newblock Growth and complexity of iterations, 2016.
\newblock URL: \url{http://www-math.mit.edu/~seidel/}. Last visited on
  2020/01/14.

\bibitem[She11]{sheridanpopants}
Nick Sheridan.
\newblock On the homological mirror symmetry conjecture for pairs of pants.
\newblock {\em J. Differential Geom.}, 89(2):271--367, 2011.

\bibitem[Shi10]{shimuraarith}
Goro Shimura.
\newblock {\em Arithmetic of quadratic forms}.
\newblock Springer Monographs in Mathematics. Springer, New York, 2010.

\bibitem[{Sta}24]{projlocalstacks-project}
The {Stacks project authors}.
\newblock The stacks project.
\newblock \url{https://stacks.math.columbia.edu}, 2024.

\bibitem[Str28]{strassmannuber}
Reinhold Stra{\ss}mann.
\newblock \"{U}ber den {W}ertevorrat von {P}otenzreihen im {G}ebiet der
  {$\mathfrak{p}$}-adischen {Z}ahlen.
\newblock {\em J. Reine Angew. Math.}, 159:13--28, 1928.

\bibitem[TV07]{toenvaquiemoduli}
Bertrand To\"{e}n and Michel Vaqui\'{e}.
\newblock Moduli of objects in dg-categories.
\newblock {\em Ann. Sci. \'{E}cole Norm. Sup. (4)}, 40(3):387--444, 2007.

\bibitem[WW10]{wehrheimwoodwardquilted1}
Katrin Wehrheim and Chris~T. Woodward.
\newblock Quilted {F}loer cohomology.
\newblock {\em Geom. Topol.}, 14(2):833--902, 2010.

\end{thebibliography}


\end{document}